\theoremstyle{plain}
\newtheorem{theorem}{Theorem}[section]
\newtheorem{prop}[theorem]{Proposition}
\newtheorem{lemma}[theorem]{Lemma}
\newtheorem{corollary}[theorem]{Corollary}
\theoremstyle{definition}
\newtheorem{definition}[theorem]{Definition}
\newtheorem{remark}[theorem]{Remark}
\newtheorem{assumption}[theorem]{Assumption}
\def \T {\mathbb{T}}
\def \Z {\mathbb{Z}}
\def \R {\mathbb{R}}
\def \P {\mathbb{P}}
\def \N {\mathbb{N}}
\def \E {\mathbb{E}}
\def \dd {\mathrm{d}}
\def \sgn {\mathrm{sign}}
\def \TA {\tilde\tau} 
\def \e {\mathrm e}
\newcommand{\ind}[1]{\mathds{1}_{\{#1\}}}
\renewcommand{\tilde}{\widetilde}
\title{Finite-volume approximation of the invariant measure \\ of a viscous stochastic scalar conservation law}
\thanks{This work is partially supported by the French National Research Agency (ANR) under the programs ANR-19-CE40-0010 (QuAMProcs) and ANR-17-CE40-0030 (EFI)}
\author{S\'ebastien Boyaval}
\address{{\bf S\'ebastien Boyaval}\newline
{\rm \indent LHSV, Ecole des Ponts, CEREMA, EDF R et D, Chatou, France}\newline
{\rm \indent Inria Paris, Matherials}}
\email{\href{mailto:sebastien.boyaval@enpc.fr}{sebastien.boyaval@enpc.fr}}
\author{Sofiane Martel}
\address{{\bf Sofiane Martel}\newline
{\rm \indent Inria Rennes - Bretagne Atlantique, SIMSMART}}
\email{\href{mailto:sofiane.martel@inria.fr}{sofiane.martel@inria.fr}}
\author{Julien Reygner}
\address{{\bf Julien Reygner}\newline
{\rm \indent CERMICS, Ecole des Ponts, Marne-la-Vallée, France}}
\email{\href{mailto:julien.reygner@enpc.fr}{julien.reygner@enpc.fr}}
\keywords{Stochastic conservation laws, Invariant measure, Finite Volume schemes}
\subjclass[2010]{60H15,30R60,65M08,60H35}
\begin{document}

\begin{abstract}
We study the numerical approximation of the invariant measure of a viscous scalar conservation law, one-dimensional and periodic in the space variable, and stochastically forced with a white-in-time but spatially correlated noise. The flux function is assumed to be locally Lipschitz continuous and to have at most polynomial growth. The numerical scheme we employ discretises the SPDE according to a finite-volume method in space, and a split-step backward Euler method in time. As a first result, we prove the well-posedness as well as the existence and uniqueness of an invariant measure for both the semi-discrete and the split-step scheme.
Our main result is then the convergence of the invariant measures of the discrete approximations, as the space and time steps go to zero, towards the invariant measure of the SPDE, with respect to the second-order Wasserstein distance. We investigate rates of convergence theoretically, in the case where the flux function is globally Lipschitz continuous with a small Lipschitz constant, and numerically for the Burgers equation.
\end{abstract}

\maketitle


\section{Introduction}

\subsection{Viscous scalar conservation law with random forcing}\label{sec:intro1}

We consider the following viscous scalar conservation law with stochastic forcing on the one-dimensional torus $\T=\R / \Z$:
\begin{equation}\label{SSCL}
\dd u = - \partial_x A(u) \dd t + \nu \partial_{xx} u \dd t + \sum_{k\geq1} g^k \dd W^k(t) , \qquad x \in \T , \quad t \geq 0,
\end{equation}
where $(W^k)_{k\geq1}$ is a family of independent real Brownian motions and $(g^k)_{k\geq1}$ is a 
family of smooth functions on $\T$. 
The viscosity coefficient $\nu$ is assumed to be positive. Under regularity and polynomial growth assumptions on the \emph{flux} function $A$, Equation~\eqref{SSCL} is well-posed in a strong sense,
and there exists a unique invariant measure for its solution, see Proposition~\ref{SPDEresults} below,
which is proved in the companion paper \cite{MR19}.

In this work, we construct a numerical scheme, based on the finite-volume method, that allows to approximate this invariant measure. We place ourselves in the setting of~\cite{MR19} and first recall our main notations, assumptions and results. 


\subsubsection{Notations}

For any $p \in [1,+\infty]$, we denote by $L^p_0(\T)$ the set of functions $v \in L^p(\T)$ such that
\begin{equation*}
  \int_\T v(x)\dd x = 0,
\end{equation*}
and we write $\|v\|_{L^p_0(\T)}$ for the $L^p$ norm induced on $L^p_0(\T)$. For any integer $m \geq 0$, we denote by $H^m_0(\T)$ the intersection of $L^2_0(\T)$ with the Sobolev space $H^m(\T)$. Combining the Jensen inequality
\begin{equation}\label{eq:holder}
  \forall 1\leq p\leq q\leq+\infty, \qquad \|v\|_{L^p_0(\T)}\leq\|v\|_{L^q_0(\T)},
\end{equation}
with the gradient estimate
\begin{equation}\label{eq:strongerpoincare}
  \|v\|_{L^\infty_0(\T)}\leq\|\partial_xv\|_{L^1_0(\T)},
\end{equation}
we observe that $\|v\|_{H^m_0(\T)} := \|\partial_x^m v\|_{L^2_0(\T)}$ defines a norm on $H^m_0(\T)$, which is associated with the scalar product $\langle v, w\rangle_{H^m_0(\T)}$ and makes $H^m_0(\T)$ a separable Hilbert space.

We denote by $\N$ the set of non-negative integers, and by $\N^*$ the set of positive integers.


\subsubsection{Assumptions on the flux and the noise}

We shall assume that the flux function $A$ and the family of functions $(g^k)_{k \geq 1}$ satisfy the following condition.

\begin{assumption}[On $A$ and $(g^k)_{k \geq 1}$]\label{ass:Ag}
 The function $A : \R \to \R$ is of class $C^2$, its first derivative has at most polynomial growth:
    \begin{equation}\label{PGA}
      \exists \mathsf{C}_A>0 , \quad \exists \mathsf{p}_A \in \N^* , \quad \forall v \in \R , \qquad | A'(v) | \leq \mathsf{C}_A \left( 1+ |v|^{\mathsf{p}_A} \right),
    \end{equation}
    and its second derivative $A''$ is locally Lipschitz continuous on $\R$. Furthermore, for all $k \geq 1$, $g^k \in H^2_0(\T)$ and
  \begin{equation}\label{boundgk}
    \mathsf{D} := \sum_{k\geq1} \left\| g^k \right\|_{H^2_0(\T)}^2 < + \infty .
  \end{equation}
\end{assumption}

The assumptions~\eqref{PGA} and~\eqref{boundgk} will be needed in the arguments contained in this paper while the local Lipschitz continuity of $A''$ is only necessary for Proposition~\ref{SPDEresults}, which is proved in~\cite{MR19}.

The family of Brownian motions $(W^k)_{k \geq 1}$ is defined on a probability space $(\Omega,\mathcal{F},\P)$ equipped with a normal filtration $(\mathcal{F}_t)_{t \geq 0}$ in the sense of~\cite[Section~3.3]{DZ92}. Under Assumption~\ref{ass:Ag}, the series $\sum_k g^k W^k$ has to be understood as an $H^2_0(\T)$-valued Wiener process $W^Q$ with trace class covariance operator $Q : H^2_0(\T) \to H^2_0(\T)$ given by $Qv = \sum_{k\geq1}g^k\langle v,g^k\rangle_{H^2_0(\T)}$, see~\cite{MR19} for details. In the sequel, we shall call $W^Q$ a \emph{$Q$-Wiener process}.

\subsubsection{Main results from~\cite{MR19}}

Given a normed vector space $E$, $\mathcal B(E)$ denotes the Borel $\sigma$-field on $E$, $\mathcal P(E)$ denotes the set of probability measures over $(E,\mathcal B(E))$, and for $p\in[1,+\infty)$, $\mathcal P_p(E)$ denotes the subset of $\mathcal P(E)$ of probability measures with finite $p$-th order moment. The well-posedness of~\eqref{SSCL}, as well as the existence and uniqueness of an invariant measure for its solution, is proved in~\cite[Theorem 1, Theorem 2]{MR19}.
\begin{prop}[Well-posedness and invariant measure for~\eqref{SSCL}]\label{SPDEresults}
 Let $u_0 \in H^2_0(\T)$. Under Assumption~\ref{ass:Ag}, there exists a unique strong solution $(u(t))_{t \geq 0}$ to Equation~\eqref{SSCL} with initial condition $u_0$. That is, an $(\mathcal{F}_t)_{t \geq 0}$-adapted process $(u(t))_{t \geq 0}$ with values in $H^2_0(\T)$ such that, almost surely:
  \begin{enumerate}
    \item the mapping $t \mapsto u(t)$ is continuous from $[0,+\infty)$ to $H^2_0(\T)$;
    \item for all $t \geq 0$, the following equality holds:
    \begin{equation}\label{strongsolution}
    u(t) = u_0 + \int_0^t \left( - \partial_x A \left( u(s) \right) + \nu \partial_{xx} u(s) \right) \dd s + W^Q(t) .
    \end{equation}
  \end{enumerate}
  Furthermore, the process $(u(t))_{t\geq0}$ admits a unique invariant measure $\mu\in\mathcal P(H^2_0(\T))$, and if $v$ is a random variable with distribution $\mu$, then $\E[\|v\|_{H^2_0(\T)}^2]<+\infty$ and for all $p\in[1,+\infty)$, $\E[\|v\|_{L^p_0(\T)}^p]<+\infty$.
\end{prop}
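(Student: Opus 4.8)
\emph{Well-posedness.} The plan is to exploit the additive structure of the noise in order to reduce \eqref{SSCL} to a pathwise parabolic problem. First I would introduce the stochastic convolution $z(t)=\int_0^t S(t-s)\,\dd W^Q(s)$, where $(S(t))_{t\ge0}$ is the analytic semigroup generated by $\nu\partial_{xx}$ on $L^2_0(\T)$; since each $g^k\in H^2_0(\T)$ and \eqref{boundgk} holds, a factorization argument shows that $z$ admits a modification in $C([0,+\infty);H^2_0(\T))$ with finite moments of every order. Setting $w:=u-z$, the unknown $w$ then solves, for $\P$-almost every fixed realization, the deterministic equation $\partial_t w=\nu\partial_{xx}w-\partial_x A(w+z)$ with $w(0)=u_0$. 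I would solve this by a contraction argument on a small time interval in $C([0,T];H^1_0(\T))$, using the smoothing of $(S(t))_{t\ge0}$ and the local Lipschitz continuity of $A'$, combined with the embedding $H^1_0(\T)\hookrightarrow L^\infty_0(\T)$ valid in one dimension, to control the nonlinearity on bounded sets.

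\emph{Global existence and uniqueness.} To upgrade local to global existence I would derive a priori estimates in which the crucial observation is that the conservative nonlinearity is invisible at the level of the energy: testing the drift against $u$ and integrating by parts gives $\int_\T \partial_x A(u)\,u\,\dd x=-\int_\T \partial_x\big(\mathcal A(u)\big)\,\dd x=0$, where $\mathcal A'=A$, by periodicity; the same cancellation occurs when testing against $|u|^{p-2}u$. Via the Poincar\'e (spectral gap) inequality on $L^2_0(\T)$ this yields dissipative $L^2$ and $L^p$ estimates that rule out finite-time blow-up. Higher-order control is then obtained by differentiating the equation and estimating the $H^1$ and $H^2$ norms, now genuinely using the polynomial growth bound \eqref{PGA} together with $H^1_0(\T)\hookrightarrow L^\infty_0(\T)$ and interpolation to dominate the nonlinear contributions. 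Pathwise uniqueness follows by a Gronwall argument applied to the difference $r=u_1-u_2$ of two solutions: the noise cancels and $\frac12\frac{\dd}{\dd t}\|r\|_{L^2_0(\T)}^2=-\nu\|\partial_x r\|_{L^2_0(\T)}^2+\int_\T(A(u_1)-A(u_2))\partial_x r\,\dd x$ is closed using the local Lipschitz bound on $A$ over the range of the two solutions, which is bounded in $L^\infty_0(\T)$.

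\emph{Invariant measure.} Existence I would obtain by the Krylov--Bogoliubov method: the dissipative estimates above, being uniform in time, furnish uniform-in-time moment bounds $\sup_{t\ge0}\E[\|u(t)\|_{H^2_0(\T)}^2]<+\infty$ and $\sup_{t\ge0}\E[\|u(t)\|_{L^p_0(\T)}^p]<+\infty$; combined with the compactness of the embeddings $H^2_0(\T)\hookrightarrow H^m_0(\T)$ for $m<2$, these make the family of time-averaged laws tight, and the Feller property of the semigroup, inherited from the continuity of the solution map in the initial datum (itself a Gronwall consequence), lets one pass to the limit and identify an invariant measure $\mu\in\mathcal P(H^2_0(\T))$. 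The announced moment bounds for a random variable $v$ distributed according to $\mu$ are then read off from the same uniform estimates.

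\emph{The main obstacle.} I expect uniqueness of the invariant measure to be the delicate point. When $A$ is globally Lipschitz with a small constant, the difference estimate above contracts exponentially, the spectral gap beating the Lipschitz constant, so that a pathwise coupling driven by the same noise gives uniqueness directly. In the general polynomial-growth regime this contraction is lost, and I would instead establish uniqueness through the Doob--Khasminskii theorem, by proving that the associated Markov semigroup is strong Feller and irreducible: the former via a Bismut--Elworthy--Li type gradient bound relying on the additive, spatially non-degenerate forcing, and the latter via a controllability argument exploiting the smoothing of $\nu\partial_{xx}$.
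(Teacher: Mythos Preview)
The paper does not prove this proposition itself; it is imported from the companion paper~\cite{MR19}. However, the structure of that proof is described explicitly here (see Section~\ref{sec:mainresults} and the paragraph preceding Lemma~\ref{L1Cdrift}), so a comparison is possible.

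Your outline for well-posedness (stochastic convolution plus pathwise parabolic problem, energy estimates using the cancellation $\int_\T \partial_x A(u)\,u\,\dd x=0$, bootstrap to $H^2$) and for existence of an invariant measure via Krylov--Bogoliubov is consistent with the approach of~\cite{MR19}.

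The genuine divergence is in the uniqueness of the invariant measure. The argument in~\cite{MR19}, as summarised in this paper, does \emph{not} go through Doob--Khasminskii. Instead it exploits the $L^1_0(\T)$ contraction property of scalar conservation laws: two solutions driven by the same $W^Q$ satisfy $\|u(t)-v(t)\|_{L^1_0(\T)}\le\|u(0)-v(0)\|_{L^1_0(\T)}$ for all $t\ge0$ (the continuous analogue of Proposition~\ref{L1C}). Combined with the $L^2_0(\T)$ dissipativity, a coupling argument---whose discrete version is written out in full in Section~\ref{sec:uniqueness}---shows that two coupled solutions almost surely become arbitrarily close in $L^1_0(\T)$, and uniqueness follows without any smoothing property of the transition semigroup.

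Your proposed strong Feller route has a concrete obstacle here: the noise is additive but \emph{degenerate}, since Assumption~\ref{ass:Ag} forces $Q$ to be trace class on $H^2_0(\T)$. The Bismut--Elworthy--Li formula requires $Q^{-1/2}$ applied to the linearised flow to be square-integrable, and with $Q$ compact this operator is unbounded; your phrase ``spatially non-degenerate forcing'' does not match the hypotheses. Recovering uniqueness along these lines would need an asymptotic strong Feller or hypocoercivity argument, which is considerably more involved and may require further structural assumptions on the $g^k$. The $L^1$ contraction sidesteps all of this and is the structural feature of scalar conservation laws that you are missing.
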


Let us specify that for any $t\geq0$, the notation $u(t)$ shall always refer to an element of the space $H^2_0(\T)$. The scalar values taken by this function are denoted by $u(t,x)$, for $x\in\T$.

\subsection{Space discretisation}

To discretise~\eqref{SSCL} with respect to the space variable, we first fix $N \geq 1$, denote by $\T_N = \Z/N\Z$ the discrete torus, and define the regular mesh $\mathcal T_N$ on $\T$ by
\[ \mathcal T_N := \left\{ \left( x_{i-1} , x_i \right] , i \in \T_N \right\}, \qquad x_i := \frac{i}{N} , \]
where we identify $\T$ with $(0,1]$ and $\T_N$ with $\{1, \ldots, N\}$. Next, we introduce the finite dimensional space
\[ \R^N_0:=\{\mathbf v=(v_1,\dots,v_N)\in\R^N:v_1+\cdots+v_N=0\}, \]
on which we define, for any $p \in [1,+\infty]$, the normalised $\ell^p$ norm 
\begin{equation*}
  \|\mathbf{v}\|_{\ell^p_0(\T_N)} := \left(\frac{1}{N}\sum_{i \in \T_N} |v_i|^p\right)^{1/p} \quad \text{if $p<+\infty$}, \qquad \|\mathbf{v}\|_{\ell^\infty_0(\T_N)} := \max_{i \in \T_N} |v_i|.
\end{equation*}
The \emph{projection operator} $\Pi_N : L^1_0(\T) \to \R^N_0$ is defined by
\begin{equation*}
  \forall i \in \T_N, \qquad (\Pi_Nv)_i = N \int_{x_{i-1}}^{x_i} v(x)\dd x.
\end{equation*}
Notice that by Jensen's inequality, it satisfies the inequality
\begin{equation}\label{eq:isoPiN}
  \|\Pi_N v\|_{\ell^p_0(\T_N)} \leq \|v\|_{L^p_0(\T)},
\end{equation}
for any $p \in [1,+\infty]$.

Applying this operator to both sides of~\eqref{SSCL}, we get, for any $i \in \T_N$, 
\begin{equation*}\label{ACL}
  \dd \left(\Pi_N u(t)\right)_i =-N \left( A \left( u\left(t,x_i\right) \right) - A \left( u\left(t,x_{i-1}\right) \right) \right) \dd t + \nu N \left( \partial_x u \left(t,x_i\right) - \partial_x u\left(t,x_{i-1}\right) \right) \dd t + (\Pi_N W^Q(t))_i.
\end{equation*}
Let us denote by $\mathbf{U}^N(t) = (U^N_i(t))_{i \in \T_N}$ a vector whose purpose is to approximate the vector $\Pi_N u(t)$. The basic idea of finite-volume schemes consists in introducing a \emph{numerical flux} function $\overline A:\R^2\to\R$ such that the value $\overline A(U^N_i(t),U^N_{i+1}(t))$ aims to approximate the flux $A(u(t,x_i))$ of the conserved quantity between the two adjacent cells $(x_{i-1},x_i]$ and $(x_i,x_{i+1}]$. The function $\overline{A}$ satisfies certain properties which are stated in Assumption~\ref{ass:Abar} below. Given such a function, for any $\mathbf{v} \in \R^N_0$ we denote by $\overline{\mathbf{A}}^N(\mathbf{v})$ the vector with coordinates $\overline{A}(v_i,v_{i+1})$, $i \in \T_N$.

We then introduce the first-order forward and backward discrete derivative operators $\mathbf{D}_N^{(1,+)}$ and $\mathbf{D}_N^{(1,-)}$, defined by
\begin{equation*}
  \forall i \in \T_N, \qquad (\mathbf{D}_N^{(1,+)}\mathbf{v})_i := N(v_{i+1}-v_i), \quad (\mathbf{D}_N^{(1,-)}\mathbf{v})_i := N(v_i-v_{i-1}),
\end{equation*}
and the second-order centered discrete derivative operator $\mathbf{D}_N^{(2)}$, defined by
\begin{equation*}
  \mathbf{D}_N^{(2)}\mathbf{v} = \mathbf{D}_N^{(1,-)}\mathbf{D}_N^{(1,+)}\mathbf{v} = \mathbf{D}_N^{(1,+)}\mathbf{D}_N^{(1,-)}\mathbf{v}.
\end{equation*}
In the sequel, we shall sometimes call the quantities $\|\mathbf{D}^{(1,+)}\mathbf{v}\|_{\ell^2_0(\T)}$ and $\|\mathbf{D}^{(2)}\mathbf{v}\|_{\ell^2_0(\T)}$ the $h^1_0(\T_N)$ and $h^2_0(\T_N)$ norms of $\mathbf{v}$.

For all $k \geq 1$, we finally define the vector $\mathbf{g}^k := \Pi_N g^k \in \R^N_0$ and denote by $\mathbf{W}^{Q,N}$ the process $\Pi_N W^Q = \sum_k \mathbf{g}^k W^k$. This is a Wiener process in $\R^N_0$ with covariance
\begin{equation}\label{eq:covWQN}
  \E \left[ W^{Q,N}_i(t) W^{Q,N}_j(t) \right] = t \sum_{k\geq1} g^k_ig^k_j ,
\end{equation}
which is easily seen to be finite under Assumption~\ref{ass:Ag} (see~\eqref{eq:sigma-glob} below). 

These notations allow us to write a semi-discrete finite-volume approximation of~\eqref{SSCL} as the stochastic differential equation (SDE)
\begin{equation}\label{FVS}
  \dd \mathbf{U}^N(t) = - \mathbf{D}_N^{(1,-)} \overline{\mathbf{A}}^N(\mathbf{U}^N(t))\dd t + \nu \mathbf{D}_N^{(2)} \mathbf{U}^N(t) \dd t  + \dd \mathbf{W}^{Q,N}(t).
\end{equation}
We shall sometimes use the notation
\begin{equation*}
  \mathbf{b}(\mathbf{v}) := - \mathbf{D}_N^{(1,-)} \overline{\mathbf{A}}^N(\mathbf{v}) + \nu \mathbf{D}_N^{(2)} \mathbf{v}
\end{equation*}
for the drift of this SDE. Since both this vector field and the noise $\mathbf{W}^{Q,N}$ take their values in $\R^N_0$, we deduce that~\eqref{FVS} is conservative in the sense that if $\mathbf U^N(0)\in\R^N_0$, then for all $t\geq0$, $\mathbf U^N(t)\in\R^N_0$. 

We may now state our assumptions on the numerical flux.
\begin{assumption}[On $\overline{A}$]\label{ass:Abar}
 The function $\overline{A}$ belongs to $C^1(\R^2,\R)$, its first derivatives $\partial_1\bar A$ and $\partial_2\bar A$ are locally Lipschitz continuous on $\R^2$, and it satisfies the following properties.
\begin{itemize}
\item[(i)] Consistency:
\begin{equation}\label{eq:consistency}
\forall v\in\R,\qquad \overline{A}(v,v) = A(v).
\end{equation}
\item[(ii)] Monotonicity:
\begin{equation}\label{E1}
\forall v,w\in\R,\qquad \partial_1 \bar A (v,w) \geq 0 , \quad \partial_2 \bar A (v,w) \leq 0.
\end{equation}
\item[(iii)] Polynomial growth:
\begin{equation}\label{PG}
\exists \mathsf{C}_{\bar A}>0,\quad \exists \mathsf{p}_{\bar A} \in \N^*,\quad 
\forall v,w \in \R,\quad | \partial_1 \overline{A} (v,w) | \leq \mathsf{C}_{\bar A} ( 1 + |v|^{\mathsf{p}_{\bar A}} ) ,\quad | \partial_2 \overline{A} (v,w) | \leq \mathsf{C}_{\bar A} ( 1 + |w|^{\mathsf{p}_{\bar A}} ) .
\end{equation}
\end{itemize}
\end{assumption}

Note in particular that the numerical flux function, and therefore the drift of the SDE~\eqref{FVS},
is not globally Lipschitz continuous. Nevertheless, we prove in Proposition~\ref{SDEwellposed} that~\eqref{FVS} is well-posed under Assumption~\ref{ass:Abar}. The polynomial growth assumption is used in Proposition~\ref{prop:unifestim} to obtain uniform $h^2_0(\T_N)$ estimates on the invariant measure of $(\mathbf{U}^N(t))_{t \geq 0}$. This task will require uniform $\ell^p_0(\T_N)$ moment estimates, for large values of $p$, which will be established in Lemma~\ref{lem:recursivebound} and Proposition~\ref{prop:unifestim}. The fact that the polynomial growth assumption on $\partial_1 \overline{A} (v,w)$ (resp. $\partial_2 \overline{A} (v,w)$) is required to hold uniformly in $w$ (resp. $v$) may seem demanding, it is however well suited to flux-splitting schemes as the Engquist--Osher numerical flux described below.

\begin{remark}[Engquist--Osher numerical flux]
A notable class of numerical fluxes satisfying the monotonicity and polynomial growth conditions (under Assumption~\ref{ass:Ag}) are the flux-splitting schemes~\cite[Example~5.2]{EGH00}, among which a commonly employed example is the \textit{Engquist--Osher numerical flux} \cite{EO80} defined (for $A(0)=0$) by
\[ \forall v,w \in \R,\qquad \bar A_{\mathrm{EO}}(v,w) := \int_0^v [A'(v')]_+\dd v' - \int_0^w [A'(w')]_-\dd w'. \]
\end{remark}

\subsection{Space and time discretisation}

The second stage in constructing a numerical scheme for~\eqref{SSCL} is the time discretisation of the SDE~\eqref{FVS}. Considering a time step $\Delta t>0$ and a positive integer $n$, we introduce the notation 
\begin{equation}\label{eq:DeltaWQN}
  \Delta \mathbf W^{Q,N}_n := \mathbf W^{Q,N}(n\Delta t) - \mathbf W^{Q,N}((n-1)\Delta t).
\end{equation} 

As already noticed in \cite{MSH02}, explicit numerical schemes for SDEs with non-globally Lipschitz continuous coefficients do not preserve in general the long time stability, whereas implicit schemes are more robust. Therefore, since our main focus in this paper is to approximate invariant measures, we follow \cite{MSH02} and propose the following \textit{split-step stochastic backward Euler method}
\begin{equation}\label{SSBE}
\begin{dcases}
\mathbf U^{N,\Delta t}_{n+\frac12} =\mathbf U^{N,\Delta t}_n + \Delta t \mathbf b\left(\mathbf U^{N,\Delta t}_{n+\frac12}\right) , \\
\mathbf U^{N,\Delta t}_{n+1} =\mathbf U^{N,\Delta t}_{n+\frac 1 2} + \Delta\mathbf W^{Q,N}_{n+1} .
\end{dcases}
\end{equation}

The well-posedness of the scheme, \textit{i.e.} the existence and uniqueness of the value $\mathbf U^{N,\Delta t}_{n+\frac12}$ in the first line of~\eqref{SSBE}, is ensured by Proposition~\ref{SSBEwellposed}.

\subsection{Main results}\label{sec:mainresults}

Our first focus is on the long time behaviour of the processes $(\mathbf U^N(t))_{t\geq0}$ and $(\mathbf U^{N,\Delta t}_n)_{n\in\N}$. In this perspective, we state our first result.
\begin{theorem}[Existence and uniqueness of invariant measures for both schemes]\label{IM}
Under Assumptions~\ref{ass:Ag} and~\ref{ass:Abar}, the following two statements hold:
\begin{itemize}
 \item[(i)] for any $N\geq1$, the process $(\mathbf U^N(t))_{t\geq0}$ solution of the SDE~\eqref{FVS} admits a unique invariant measure $\vartheta_N \in \mathcal P(\R^N_0)$;
 \item[(ii)] for any $N\geq1$ and $\Delta t>0$, the sequence $(\mathbf U^{N,\Delta t}_n )_{n\in\N}$ defined by~\eqref{SSBE} admits a unique invariant measure $\vartheta_{N,\Delta t}\in\mathcal P(\R^N_0)$.
\end{itemize}
Moreover, for any $N\geq1$ and $\Delta t>0$, the measures $\vartheta_N$ and $\vartheta_{N,\Delta t}$ belong to $\mathcal P_2(\R^N_0)$.
\end{theorem}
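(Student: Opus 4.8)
The plan is to treat the two schemes as finite-dimensional stochastic dynamical systems on $\R^N_0$ and to establish existence and uniqueness of an invariant measure for each via the standard route: a Lyapunov/drift condition giving tightness (hence existence) together with an irreducibility-and-strong-Feller or a coupling argument giving uniqueness. The key structural feature I would exploit is that the deterministic drift $\mathbf b$ is \emph{dissipative} in a suitable inner product, so that trajectories are pulled back towards the origin, while the additive noise $\mathbf W^{Q,N}$ is nondegenerate enough to mix the system.

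\textbf{Step 1: dissipativity of the drift.}
First I would compute $\langle\mathbf b(\mathbf v)-\mathbf b(\mathbf w),\mathbf v-\mathbf w\rangle_{\ell^2_0(\T_N)}$ and show it is bounded above by $-c\|\mathbf v-\mathbf w\|^2$ for some $c>0$, or at least is $\leq 0$. The viscous term $\nu\mathbf D_N^{(2)}$ contributes $-\nu\|\mathbf D_N^{(1,+)}(\mathbf v-\mathbf w)\|^2_{\ell^2_0(\T_N)}$, which is strictly negative on $\R^N_0$ by a discrete Poincaré inequality (the only kernel of $\mathbf D_N^{(1,+)}$ on $\R^N_0$ is $\{0\}$), giving $-\nu\lambda_N\|\mathbf v-\mathbf w\|^2$ with $\lambda_N>0$ the smallest nonzero eigenvalue of $-\mathbf D_N^{(2)}$. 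The nonlinear flux term, after summation by parts, is $-\langle\mathbf D_N^{(1,-)}(\overline{\mathbf A}^N(\mathbf v)-\overline{\mathbf A}^N(\mathbf w)),\mathbf v-\mathbf w\rangle=\langle\overline{\mathbf A}^N(\mathbf v)-\overline{\mathbf A}^N(\mathbf w),\mathbf D_N^{(1,+)}(\mathbf v-\mathbf w)\rangle$; here I would use the monotonicity~\eqref{E1} of $\overline A$ to argue this contribution is controlled, ideally sign-definite or dominated by the viscous dissipation. The main obstacle is precisely this: because $\overline A$ is only locally Lipschitz with polynomial growth~\eqref{PG}, the nonlinear term is not globally bounded by the linear dissipation, so global one-sided Lipschitz (monotonicity) of $\mathbf b$ in the strong sense may fail, and I would likely need to combine dissipativity with a priori moment bounds rather than obtain a clean contraction.

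\textbf{Step 2: existence and the moment bound.}
Using the dissipativity of $\mathbf b$ together with the finite trace~\eqref{eq:covWQN} of the noise, I would derive a Lyapunov estimate for $V(\mathbf v)=\|\mathbf v\|^2_{\ell^2_0(\T_N)}$: applying Itô's formula to the SDE~\eqref{FVS} and taking expectations, the drift gives $\langle\mathbf b(\mathbf U^N),\mathbf U^N\rangle$ (use $\overline A(v,v)=A(v)$ from~\eqref{eq:consistency} and monotonicity to bound the flux term), and the noise contributes the constant $\sum_k\|\mathbf g^k\|^2$, yielding $\frac{\dd}{\dd t}\E[V]\leq -2\nu\lambda_N\E[V]+C_N$, hence a uniform-in-time second moment bound. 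By Prokhorov this gives tightness of the time-averaged laws and, via the Krylov--Bogolyubov theorem, existence of an invariant measure $\vartheta_N\in\mathcal P_2(\R^N_0)$; the same computation applied to one step of~\eqref{SSBE}, using that the implicit step $\mathbf U_{n+\frac12}=\mathbf U_n+\Delta t\,\mathbf b(\mathbf U_{n+\frac12})$ is a contraction towards the origin in the dissipative inner product (so $\|\mathbf U_{n+\frac12}\|\leq\|\mathbf U_n\|+C\Delta t$ with a contraction factor), yields the discrete-time Lyapunov bound and existence of $\vartheta_{N,\Delta t}\in\mathcal P_2(\R^N_0)$.

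\textbf{Step 3: uniqueness.}
For uniqueness I would invoke nondegeneracy of the noise: assuming the covariance~\eqref{eq:covWQN} is positive definite on $\R^N_0$ (which holds when the $\mathbf g^k$ span $\R^N_0$; if not, hypoellipticity via the bracket structure of $\mathbf b$ must be checked), the diffusion~\eqref{FVS} is strong Feller and irreducible, so by Doob's theorem the invariant measure is unique and the transition semigroup converges to it. For the split-step chain~\eqref{SSBE}, the map $\mathbf U_n\mapsto\mathbf U_{n+\frac12}$ is a deterministic diffeomorphism (well-posed by Proposition~\ref{SSBEwellposed}), and composing with the Gaussian increment $\Delta\mathbf W^{Q,N}_{n+1}$ gives a Markov kernel with a density that is everywhere positive on $\R^N_0$; this minorisation yields uniqueness of $\vartheta_{N,\Delta t}$ directly. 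Alternatively, and more robustly given the possibly degenerate covariance, I expect the cleanest uniqueness argument to be a synchronous coupling: run two copies of either scheme driven by the same noise, and use the dissipativity from Step~1 to show $\E\|\mathbf U^N(t)-\widetilde{\mathbf U}^N(t)\|^2\to0$ (respectively geometric contraction per step for the scheme), which forces uniqueness of the invariant law. The delicate point, and likely the main technical obstacle throughout, is ensuring the dissipativity survives the non-global-Lipschitz nonlinearity uniformly enough to close the coupling estimate; this is where the monotonicity~\eqref{E1} of $\overline A$ is essential, as it guarantees the flux contribution to the pairwise energy has the correct sign.
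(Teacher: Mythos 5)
Your existence argument (Step~2) is essentially the paper's: the self-dissipativity $\langle\mathbf v,\mathbf b(\mathbf v)\rangle_{\ell^2_0(\T_N)}\leq-\nu\|\mathbf D^{(1,+)}_N\mathbf v\|^2_{\ell^2_0(\T_N)}$ does hold (it follows from consistency and monotonicity of $\overline A$, cf.\ Lemma~\ref{L1} with $q=2$), it yields a Lyapunov function for both schemes, and Krylov--Bogoliubov applies once the Feller property is secured --- which the paper obtains from the $\ell^1_0(\T_N)$ contraction. The second-moment bound on $\vartheta_N$, $\vartheta_{N,\Delta t}$ comes out of the same computation. Up to these details, this half of your plan is sound.

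The genuine gap is in uniqueness, and it is exactly at the point you flagged in Step~1 and then did not resolve. The pairwise estimate $\langle\mathbf b(\mathbf v)-\mathbf b(\mathbf w),\mathbf v-\mathbf w\rangle_{\ell^2_0(\T_N)}\leq0$ that your synchronous coupling needs is \emph{not} a consequence of the monotonicity~\eqref{E1}: monotone numerical fluxes give an $\ell^1$-type contraction, $\langle\bm\sgn(\mathbf v-\mathbf w),\mathbf b(\mathbf v)-\mathbf b(\mathbf w)\rangle_{\ell^2(\T_N)}\leq0$ (Lemma~\ref{L1Cdrift}.(i)), not an $\ell^2$ one, and for a non-monotone flux such as Burgers the $\ell^2$ version genuinely fails, so the coupling estimate $\E\|\mathbf U^N(t)-\widetilde{\mathbf U}^N(t)\|^2\to0$ does not close. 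Your fallback via strong Feller/irreducibility and Doob's theorem is also unavailable: Assumption~\ref{ass:Ag} permits a covariance of rank one (the paper's numerical section uses a single mode $g^1$), so the noise is badly degenerate and no Hörmander-type condition is assumed. Finally, even granting the $\ell^1_0(\T_N)$ contraction, it only makes $\|\mathbf U^N(t)-\mathbf V^N(t)\|_{\ell^1_0(\T_N)}$ non-increasing; one must still force it to zero. The paper's mechanism, absent from your proposal, is a two-stage coupling argument: (a) the coupled pair returns almost surely to a fixed $\ell^2_0(\T_N)$ ball (Lemmas~\ref{entranceLB},~\ref{entranceSSBE}, a stopping-time consequence of the Lyapunov bound), and (b) from any point of that ball there is a uniformly positive probability, realised on a small-noise event, that \emph{both} trajectories are within $\varepsilon$ of the origin in $\ell^1_0(\T_N)$ after a fixed time (Lemmas~\ref{entrance},~\ref{proba}); iterating via the strong Markov property and a Borel--Cantelli-type bound shows the $\ell^1$ distance tends to $0$ almost surely, which identifies the invariant measures. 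This combination of the $\ell^1$ contraction with the return-time/small-noise argument is the missing idea.
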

The proofs for these two statements are given separately in Section~\ref{IMschemes}. The structure of the proof is the same as for~\cite[Theorem 2]{MR19} where we derived the existence and uniqueness of an invariant measure for the solution of~\eqref{SSCL} from two important properties: respectively the dissipativity of the $L^2_0(\T)$ norm of the solution, and an $L^1_0(\T)$ contraction property. In Lemma~\ref{L1Cdrift} below, we show that both of these properties are preserved at the discrete level. Therefore, we then prove the existence of an invariant measure with a tightness argument (uniform energy estimates and the Krylov--Bogoliubov theorem) and the uniqueness with a coupling argument. While the proof of existence is rather standard, our analysis of uniqueness depends on arguments which are more specific to the processes $(\mathbf{U}^N(t))_{t \geq 0}$ and $(\mathbf U^{N,\Delta t}_n )_{n\in\N}$. We insist on the fact that both proofs of existence and uniqueness crucially rely on the positivity of the viscosity coefficient $\nu$.

We now address the $\Delta t \to 0$ limit of $\vartheta_{N, \Delta t}$ and the $N \to +\infty$ limit of $\vartheta_N$. Since most of our results follow from $\ell^2_0(\T_N)$ or $L^2_0(\T)$ estimates, it is natural in our setting to work with the following distance on $\mathcal{P}_2(\ell^2_0(\T_N))$ or $\mathcal{P}_2(L^2_0(\T))$.

\begin{definition}[Wasserstein distance]\label{WD}
 Let $(E,\|\cdot\|_E)$ be a normed vector space and let $\alpha,\beta\in\mathcal P_2(E)$. The \emph{quadratic Wasserstein distance} between $\alpha$ and $\beta$ is defined by
 \[ W_2(\alpha,\beta):=\inf_{\pi\in\Pi(\alpha,\beta)} \left( \int_{E\times E} \left\|u-v\right\|_E^2 \dd\pi(u,v) \right)^{1/2} , \]
where $\Pi(\alpha,\beta)$ is the set of probability measures on $E\times E$ with marginals $\alpha$ and $\beta$:
\[ \Pi(\alpha,\beta) := \left\{ \pi\in\mathcal P_2\left(E\times E\right):\forall B\in\mathcal B\left( E \right),\pi(B\times E)=\alpha(B) \text{ and } \pi(E\times B)=\beta(B) \right\}.  \]
\end{definition}

The reader is referred to~\cite[Chapter 6]{Vil08} for further details on the Wasserstein distance, and in particular for the proof that it actually defines a distance on $\mathcal P_2(E)$. From now on, the spaces $\mathcal{P}_2(\ell^2_0(\T_N))$ and $\mathcal{P}_2(L^2_0(\T))$ are systematically endowed with the topology induced by the corresponding distance $W_2$.

As a first step to approximate numerically the measure $\mu$, we 
embed the measures $\vartheta_N$ and $\vartheta_{N,\Delta t}$ into $\mathcal P (L^2_0(\T))$. Let us denote by $\Psi_N:\R^N_0\to L^\infty_0(\T)$ the 
piecewise constant reconstruction operator defined by, for all $\mathbf{v} \in \R^N_0$,
\begin{equation*}
  \forall i \in \T_n, \quad \forall x \in (x_{i-1},x_i], \qquad  \Psi_N\mathbf{v}(x) := v_i.
\end{equation*}
Notice that for any $p \in [1,+\infty]$, 
\begin{equation}\label{eq:isoPsiN}
  \|\Psi_N\mathbf{v}\|_{L^p_0(\T)} = \|\mathbf{v}\|_{\ell^p_0(\T_N)},
\end{equation}
so that Theorem~\ref{IM} implies that the pushforward measures 
\begin{equation}\label{eq:muN}
  \mu_N := \vartheta_N \circ (\Psi_N)^{-1}, \qquad \mu_{N,\Delta t} := \vartheta_{N,\Delta t} \circ (\Psi_N)^{-1},
\end{equation}
belong to $\mathcal P_2(L^2_0(\T))$. Sections~\ref{section:convergence} and~\ref{section:convergence2} are devoted to the proof of our main result.

\begin{theorem}[Convergence of the invariant measures]\label{TCV}
Under Assumptions~\ref{ass:Ag} and~\ref{ass:Abar}, we have 
\begin{equation}\label{cv1}
\lim_{N\to\infty} \mu_N = \mu \qquad\text{in $\mathcal P_2(L^2_0(\T))$,}
\end{equation}
and moreover, for any $N\geq1$,
\begin{equation}\label{cv2}
\lim_{\Delta t\to0}\vartheta_{N,\Delta t}=\vartheta_N\qquad\text{in $\mathcal P_2(\R^N_0)$.}
\end{equation}
In short, we have the following approximation result:
\[ \lim_{N\to\infty} \lim_{\Delta t\to0} \mu_{N,\Delta t}=\mu\qquad\text{in $\mathcal P_2(L^2_0(\T))$.} \]
\end{theorem}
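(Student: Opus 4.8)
The plan is to establish Theorem~\ref{TCV} by proving the two limits \eqref{cv1} and \eqref{cv2} separately, and then combining them with the triangle inequality for $W_2$. Both convergences should follow from a common strategy: couple the SPDE/SDE with its discretisation on a single probability space, control the pathwise discrepancy between the coupled solutions in the relevant $L^2_0(\T)$ or $\ell^2_0(\T_N)$ norm uniformly in time (or over long time horizons), and then transfer this control to the invariant measures using the characterisation of $W_2$ as an infimum over couplings. The key point is that a uniform-in-time pathwise estimate, evaluated at stationarity, directly bounds $W_2$ between the corresponding invariant measures.

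\textbf{Convergence in time $\Delta t \to 0$.} For fixed $N$, I would compare the continuous-time SDE \eqref{FVS} with its split-step scheme \eqref{SSBE}, both driven by the same $Q$-Wiener process $\mathbf W^{Q,N}$ and started from the same initial datum. The drift $\mathbf b$ is not globally Lipschitz, but the dissipativity of the $\ell^2_0(\T_N)$ norm together with the $\ell^1_0(\T_N)$-contraction property recorded in Lemma~\ref{L1Cdrift} should furnish a one-sided Lipschitz (monotonicity) bound on $\mathbf b$. For split-step backward Euler, such one-sided Lipschitz dissipativity is exactly what gives a stable, uniform-in-$n$ strong error estimate of order $\Delta t$ (in the mean-square sense), after first securing uniform-in-$\Delta t$ moment bounds from the energy estimates underlying Theorem~\ref{IM}. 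Taking the initial law to be the stationary measure and letting $n\to\infty$ then yields $W_2(\vartheta_{N,\Delta t},\vartheta_N)\to 0$ as $\Delta t\to 0$; in finite dimension the required moment and tightness ingredients are already in hand from Theorem~\ref{IM}.

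\textbf{Convergence in space $N \to \infty$.} This is the substantive half. I would couple the SPDE solution $u(t)$ from Proposition~\ref{SPDEresults} with the semi-discrete process $\mathbf U^N(t)$, comparing $\Pi_N u(t)$ (or equivalently $u$ via the reconstruction $\Psi_N$) with $\mathbf U^N(t)$ in the $\ell^2_0(\T_N)$ norm. The drift difference splits into a flux-consistency part, controlled via the consistency \eqref{eq:consistency} and polynomial-growth \eqref{PG} properties of $\overline A$ together with the uniform $h^2_0(\T_N)$ estimates from Proposition~\ref{prop:unifestim}, and a viscous part, where the discrete Laplacian $\mathbf D_N^{(2)}$ provides the crucial dissipation; the positivity of $\nu$ is what closes the Gr\"onwall-type inequality uniformly in time. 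The uniform moment and regularity bounds on $\mu$ (finite $\E[\|v\|_{H^2_0(\T)}^2]$) and on $\vartheta_N$ provide the tightness needed to extract limits and to pass the coupling estimate to the invariant measures, giving $W_2(\mu_N,\mu)\to 0$.

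\textbf{Main obstacle.} The hard part will be the $N\to\infty$ analysis, specifically obtaining a discretisation-error bound that is \emph{uniform in time} despite the non-globally-Lipschitz, merely polynomially growing flux. The consistency error of the finite-volume flux must be controlled in a norm strong enough to survive the nonlinearity, which is why the uniform $h^2_0(\T_N)$ estimates of Proposition~\ref{prop:unifestim} are indispensable; dovetailing these spatial-regularity bounds with the dissipation from $\nu\mathbf D_N^{(2)}$ so that the long-time error does not accumulate is the delicate step. Once a uniform-in-time $W_2$ estimate between the time-marginals is secured, passing to the stationary regime and concluding \eqref{cv1} is comparatively routine, and the final composite limit follows by writing $W_2(\mu_{N,\Delta t},\mu)\leq W_2(\mu_{N,\Delta t},\mu_N)+W_2(\mu_N,\mu)$ and sending $\Delta t\to0$ then $N\to\infty$.
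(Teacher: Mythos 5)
There is a genuine gap, and it lies in the central mechanism of your plan. You propose to derive a \emph{uniform-in-time} strong error estimate from a ``one-sided Lipschitz (monotonicity) bound on $\mathbf b$'' obtained by combining the $\ell^2_0(\T_N)$ dissipativity with the $\ell^1_0(\T_N)$ contraction of Lemma~\ref{L1Cdrift}. No such bound exists in the $\ell^2_0(\T_N)$ (or $L^2_0(\T)$) geometry: the contraction property $\langle \bm\sgn(\mathbf v-\mathbf w),\mathbf b(\mathbf v)-\mathbf b(\mathbf w)\rangle\leq 0$ is a one-sided Lipschitz condition only in the Banach space $\ell^1_0(\T_N)$, which is incompatible with the Hilbertian framework needed for $W_2$; and the dissipativity estimate $\langle\mathbf v,\mathbf b(\mathbf v)\rangle\leq -\nu\|\mathbf D^{(1,+)}_N\mathbf v\|^2$ concerns a single solution, not differences of solutions. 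The paper states explicitly that the nonlinearity is neither globally nor one-sidedly Lipschitz in $L^2_0(\T)$ (this is precisely what makes the Burgers case nontrivial), and Subsection~\ref{ss:rate} shows that the Gr\"onwall exponent $\gamma=-2\nu+\delta_1\nu+(\delta_2+3/\delta_2)\mathsf L$ in the coupling estimate can only be made negative when the flux is \emph{globally} Lipschitz with a \emph{small} constant; for a general locally Lipschitz flux the localisation by the stopping time $\tau^N_{(M)}$ destroys any long-time control. So the step you yourself identify as the ``hard part'' is not merely hard: under Assumptions~\ref{ass:Ag} and~\ref{ass:Abar} it is not achievable by this route, and the same objection applies to your $\Delta t\to 0$ argument.

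The paper's proof circumvents the need for long-time error control entirely, and this is the idea your proposal is missing. One first establishes uniform (in $N$, resp.\ in $\Delta t$) $\ell^p_0$, $h^1_0$ and $h^2_0$ moment bounds \emph{under the invariant measures} (Proposition~\ref{prop:unifestim}, Proposition~\ref{prop:unifestimSS}), which give relative compactness of $(\mu_N)_N$ in $\mathcal P_2(L^2_0(\T))$ (Corollary~\ref{cor:relcom}). Then, for a subsequential limit $\mu^*$, a Skorohod coupling of the initial data and a \emph{finite-time} convergence result (Propositions~\ref{CFT} and~\ref{CFTSS}) --- whose constants are allowed to grow in $t$, since localisation is handled by Lemmas~\ref{lem:tauM} and~\ref{lem:rhoM} --- show that $u^N(t)\to u^*(t)$ in mean square for each fixed $t$. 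Because the discrete process is stationary, the law of $u^N(t)$ is $\mu_N$ for every $t$, hence the law of $u^*(t)$ is $\mu^*$ for every $t$; that is, $\mu^*$ is invariant for~\eqref{SSCL}, and uniqueness of the invariant measure (Proposition~\ref{SPDEresults}, resp.\ Theorem~\ref{IM}) forces $\mu^*=\mu$ (resp.\ $\vartheta_N^*=\vartheta_N$). If you want to salvage your proposal, replace the uniform-in-time coupling estimate by this compactness--identification--uniqueness scheme; your uniform-in-time strategy is viable only in the perturbative globally Lipschitz regime discussed in Subsection~\ref{ss:rate}, where it does yield the quantitative rate $W_2(\mu_N,\mu)\leq C/N$.
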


\begin{remark}
In Theorem~\ref{TCV}, $\mu$ is seen as a probability measure of $\mathcal P(L^2_0(\T))$ giving full weight to $H^2_0(\T)$, as opposed to Proposition~\ref{SPDEresults} where $\mu$ was seen as a probability measure of $\mathcal P(H^2_0(\T))$. The fact that both objects coincide follows from~\cite[Lemma~6]{MR19}.
\end{remark}

Let us briefly sketch the lines of our proof of~\eqref{cv1}. First, the positivity of the viscosity coefficient $\nu$ makes~\eqref{SSCL} parabolic, so that energy estimates in $L^2_0(\T)$ and $H^1_0(\T)$ are a natural tool to study this equation. 
In this perspective, we derive uniform (in $N$) discrete $\ell^p_0(\T_N)$, $h^1_0(\T_N)$ and $h^2_0(\T_N)$ bounds on $\vartheta_N$. They imply that the sequence $(\mu_N)_{N \geq 1}$ is relatively compact in $\mathcal{P}_2(L^2_0(\T))$. 
Using the finite-time convergence of the finite-volume scheme $\Psi_N\mathbf{U}^N(t)$ to the solution $u(t)$ of the SPDE~\eqref{SSCL}, we then show that any limit $\mu^*$ of a weakly converging subsequence of $(\mu_N)_{N \geq 1}$ is invariant for~\eqref{SSCL}, which allows to identify all these limits and leads to~\eqref{cv1}. This finite-time convergence result, which is an important step in our argument, is stated in Proposition~\ref{CFT}. It relies in particular on $H^2_0(\T)$ estimates on $u$, so that the framework of \emph{strong} solutions for~\eqref{SSCL} is well-suited to our approach.

The proof of~\eqref{cv2} follows the same approach, and the main finite-time convergence result is stated in Proposition~\ref{CFTSS}.

\begin{remark}[Ergodicity]\label{rm:ergo}
 As the invariant measure $\mu$ of the process $(u(t))_{t\geq0}$ is unique from Proposition~\ref{SPDEresults}, it is ergodic. In particular, a consequence of Birkhoff's ergodic theorem (see for instance~\cite[Theorem~1.2.3]{DZ96}) is that for any $\varphi\in L^1(\mu)$ and for $\mu$-almost every initial condition $u_0\in H^2_0(\T)$, almost surely,
 \[ \lim_{t\to\infty} \frac1t\int_0^t\varphi(u(s))\dd s = \E\left[\varphi(v)\right], \qquad\text{where $v\sim\mu$.} \]
 By virtue of Theorem~\ref{IM}, this property also holds at the discrete level: the sequence $(\mathbf U^{N, \Delta t}_n)_{n\in\N}$ satisfies for any $\varphi\in L^1(\vartheta_{N,\Delta t})$ and for $\vartheta_{N,\Delta t}$-almost every initial condition $\mathbf U^{N, \Delta t}_0\in\R^N_0$, almost surely,
 \[ \lim_{n\to\infty}\frac1n\sum_{l=0}^{n-1}\varphi(\mathbf U^{N, \Delta t}_l) = \E\left[\varphi(\mathbf V^{N, \Delta t})\right],\qquad\text{where $\mathbf V^{N, \Delta t}\sim\vartheta_{N,\Delta t}$.} \]
Thanks to this property, it is possible to approximate numerically expectations of functionals under the invariant measure by averaging in time the simulated process. We used this method to perform the numerical experiments presented in Section~\ref{s:newnum}.
\end{remark}

Complementing the convergence results of Theorem~\ref{TCV} with a quantitative rate in $N$ and $\Delta t$ is a natural question. As far as the convergence of $\mu_N$ to $\mu$ when $N \to +\infty$ is concerned, we sketch in Subsection~\ref{ss:rate} how our arguments may be adapted to yield a strong $L^2_0(\T)$ error estimate of order $1/N$ between $u^N(t)$ and $u(t)$, valid in the long time limit, in the case where the flux function $A$ is globally Lipschitz continuous with a small Lipschitz norm. This implies that $W_2(\mu_N,\mu) \leq C/N$. We show in Section~\ref{s:newnum} that this result is sharp in the case where $A=0$. In this section, we also study numerically the rate of convergence of $\vartheta_{N,\Delta t}$ to $\vartheta_N$ and observe a weak error of order $\Delta t$, even when the flux function is not small and not Lipschitz continuous, which is consistent with theoretical results by Kopec on a related class of split-step schemes~\cite{Kop14}.

\subsection{Review of literature}

Many results are found concerning the numerical approximation \emph{in finite time} of stochastic conservation laws. A particular case of interest is the stochastic Burgers equation which corresponds to the case of the flux function $A(v)=v^2/2$. Finite difference schemes are presented in~\cite{AG06,Hai11} to approximate its solution. When the viscosity coefficient is equal to zero, the SPDE falls into a different framework. Convergence of finite-volume schemes in this \emph{hyperbolic} case have been established both under the kinetic \cite{DV18,DV19,Dot17} and the entropic formulations \cite{BCG16,BCG16b}.

As regards the numerical approximation of the invariant measure of an SPDE, we may start by mentioning~\cite{CHW17} concerning the damped stochastic non-linear Schr\"odinger equation, where a spectral Galerkin method is used for the space discretisation and a modified implicit Euler scheme for the temporal discretisation. Several works by Br\'ehier and coauthors are devoted to the numerical approximation of the invariant measures of semi-linear SPDEs in Hilbert spaces perturbed with white noise~\cite{Bre14,BK16,BG16}, where spectral Galerkin and semi-implicit Euler methods are used. Those results hold under a global Lipschitz assumption on the nonlinearity. In the more recent works~\cite{CGW18,CHS18,Bre20}, non-Lipschitz nonlinearities are considered, but they still need to satisfy a one-sided Lipschitz condition.

In the present work, our assumptions on the flux function do not imply that the non-linear term is globally Lipschitz continuous in $L^2_0(\T)$ nor even one-sided Lipschitz continuous. In particular, the case of the Burgers equation is covered. However, Equation~\eqref{SSCL} satisfies an $L^1_0(\T)$ contraction property \cite[Proposition~5]{MR19} which may be viewed as a one-sided Lipschitz condition in the Banach space $L^1_0(\T)$.

\subsection{Outline of the paper and comments on the presentation}

Throughout the article, we always work under Assumptions~\ref{ass:Ag} and~\ref{ass:Abar}. We will not repeat these assumptions in the statements of our results.

Section~\ref{IMschemes} is dedicated to the proof of the well-posedness and of the existence and uniqueness of an invariant measure for the semi-discrete scheme~\eqref{FVS} and the split-step scheme~\eqref{SSBE}. The proof of Theorem~\ref{TCV} is then detailed in two separate sections. The convergence in space~\eqref{cv1} is proved in Section~\ref{section:convergence} and then, in Section~\ref{section:convergence2}, we prove the convergence with respect to the time step, \textit{i.e.} Equation~\eqref{cv2}. Numerical experiments investigating the rates of convergence in Theorem~\ref{TCV} are presented in Section~\ref{s:newnum}. The proofs of certain results which are not essential to the exposition of our arguments are gathered in Appendix.

In order to emphasise our original contributions, throughout the paper some arguments which are standard to either stochastic calculus or numerical analysis are omitted or merely sketched. A preliminary version of this work, with all proofs detailed, is available as Chapter~3 of~\cite{Mar19}, and references to this document are given whenever necessary (see also the first arXiv version of this paper). More numerical experiments, in particular regarding the turbulent behaviour of the process in its stationary regime, are also reported in Chapter~4 of~\cite{Mar19}.

\section{Semi-discrete and split-step schemes: well-posedness and invariant measure}\label{IMschemes}

Preliminary results are given in Subsection~\ref{sec:notations}. In Subsection~\ref{IMschemes:FVS}, we prove the well-posedness of the semi-discrete scheme $(\mathbf{U}^N(t))_{t \geq 0}$, and after establishing some properties of this process, we prove the existence and uniqueness of an invariant measure $\vartheta_N$ as well as the fact that necessarily, $\vartheta_N\in\mathcal P_2(\R^N_0)$. Similar results for the split-step scheme $(\mathbf{U}^{N,\Delta t}_n)_{n \in \N}$ are obtained in Subsection~\ref{IMschemes:SSBE}, which completes the proof of Theorem~\ref{IM}.

\subsection{Preliminary results}\label{sec:notations}

In this subsection, we state a few preliminary results which will be used throughout the paper.

\subsubsection{Algebraic identities} We define the normalised scalar product on $\R^N$ by
\begin{equation*}
  \langle \mathbf{v},\mathbf{w}\rangle_{\ell^2(\T_N)} = \frac{1}{N}\sum_{i\in \T_N} v_iw_i.
\end{equation*}
When both $\mathbf{v}$ and $\mathbf{w}$ belong to $\R^N_0$, we shall rather denote $\langle \mathbf{v},\mathbf{w}\rangle_{\ell^2_0(\T_N)}$.

The discrete derivative operators $\mathbf{D}^{(1,+)}_N$ and $\mathbf{D}^{(1,-)}_N$ satisfy the summation by parts identity
\begin{equation}\label{eq:sbp}
  \langle \mathbf{D}^{(1,+)}_N\mathbf{v}, \mathbf{w}\rangle_{\ell^2(\T_N)} = - \langle \mathbf{v}, \mathbf{D}^{(1,-)}_N\mathbf{w}\rangle_{\ell^2(\T_N)}.
\end{equation}
We shall also use the variant
\begin{equation}\label{eq:sbp2}
  \langle \mathbf{D}^{(1,+)}_N\mathbf{v}, \mathbf{D}^{(1,+)}_N \mathbf{w}\rangle_{\ell^2_0(\T_N)} = \langle \mathbf{D}^{(1,-)}_N\mathbf{v}, \mathbf{D}^{(1,-)}_N\mathbf{w}\rangle_{\ell^2_0(\T_N)} = -\langle \mathbf{D}^{(2)}_N\mathbf{v}, \mathbf{w}\rangle_{\ell^2(\T_N)}.
\end{equation}

\subsubsection{Discrete inequalities} The discrete Jensen inequality writes
\begin{equation}\label{eq:normorder}
  \forall 1\leq p\leq q \leq +\infty, \qquad \|\mathbf{v}\|_{\ell^p_0(\T_N)}\leq\|\mathbf{v}\|_{\ell^q_0(\T_N)},
\end{equation}
and we shall use the following version of the discrete Poincar\'e inequality:
\begin{equation}\label{eq:poinca-discr}
  \|\mathbf{v}\|_{\ell^2_0(\T_N)} \leq \|\mathbf{D}^{(1,+)}_N\mathbf{v}\|_{\ell^2_0(\T_N)} = \|\mathbf{D}^{(1,-)}_N\mathbf{v}\|_{\ell^2_0(\T_N)},
\end{equation}
which follows from~\eqref{eq:normorder} and 
\begin{equation}\label{eq:gradient-estim-discr}
  \|\mathbf{v}\|_{\ell^\infty_0(\T_N)} \leq \|\mathbf{D}^{(1,+)}_N\mathbf{v}\|_{\ell^1_0(\T_N)},
\end{equation}
which is the discrete version of the gradient estimate~\eqref{eq:strongerpoincare}.

\subsubsection{Properties of $\mathbf{b}$}

For any $z\in\R$, we write $\sgn(z):=\ind{z\geq0}-\ind{z<0}$. By extension, for $\mathbf v\in\R^N_0$, $\bm\sgn(\mathbf v)$ denotes the vector of $\{-1,+1\}^N$ defined by $(\bm\sgn(\mathbf v))_i=\sgn(v_i)$. The discretised drift $\mathbf b$ preserves two important features of Equation~\eqref{SSCL} that we will use repeatedly throughout this paper:
\begin{lemma}[Discrete contraction and dissipativity]\label{L1Cdrift}
  For all $\mathbf v,\mathbf w \in \R^N_0$, the function $\mathbf b$ satisfies
 \begin{itemize}
 \item[(i)] $\langle \bm\sgn(\mathbf v-\mathbf w) , \mathbf b(\mathbf v)-\mathbf b(\mathbf w) \rangle_{\ell^2(\T_N)} \leq 0$ ($\ell^1_0(\T_N)$ contraction);
 \item[(ii)] $\langle \mathbf v,\mathbf b(\mathbf v)\rangle_{\ell^2_0(\T_N)} \leq -\nu\|\mathbf D^{(1,+)}_N\mathbf v\|_{\ell^2_0(\T_N)}^2$ ($\ell^2_0(\T_N)$ dissipativity).
 \end{itemize}
\end{lemma}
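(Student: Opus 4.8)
The plan is to treat both statements by the same two-step strategy: first use the summation-by-parts identities~\eqref{eq:sbp}--\eqref{eq:sbp2} to rewrite each inner product as a single cyclic sum over $i \in \T_N$, and then establish the required sign by a termwise argument exploiting the monotonicity~\eqref{E1} and consistency~\eqref{eq:consistency} of $\overline A$. In both cases the viscous and the flux contributions can be handled independently, the viscous ones being essentially classical and the flux ones carrying the real content.

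For~(ii), I would split $\langle \mathbf v, \mathbf b(\mathbf v)\rangle_{\ell^2_0(\T_N)}$ into the flux part $-\langle \mathbf v, \mathbf D^{(1,-)}_N\overline{\mathbf A}^N(\mathbf v)\rangle_{\ell^2(\T_N)}$ and the viscous part $\nu\langle\mathbf v, \mathbf D^{(2)}_N\mathbf v\rangle_{\ell^2(\T_N)}$. By~\eqref{eq:sbp2} the latter equals exactly $-\nu\|\mathbf D^{(1,+)}_N\mathbf v\|^2_{\ell^2_0(\T_N)}$, which is precisely the right-hand side, so it suffices to show that the flux part is nonpositive. Using~\eqref{eq:sbp}, the flux part becomes the cyclic sum $\sum_{i\in\T_N}(v_{i+1}-v_i)\overline A(v_i, v_{i+1})$. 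The key idea is to subtract a discrete entropy flux: since $\sum_i \int_{v_i}^{v_{i+1}} A(s)\,\dd s = 0$ by telescoping on the torus, this sum equals $\sum_i r(v_i, v_{i+1})$ with $r(a,b) := (b-a)\overline A(a,b) - \int_a^b A(s)\,\dd s$. Writing $A(s) = \overline A(s,s)$ via~\eqref{eq:consistency} so that $r(a,b) = \int_a^b[\overline A(a,b) - \overline A(s,s)]\,\dd s$, I would decompose $\overline A(a,b)-\overline A(s,s)$ along the two edges of the rectangle, as $\int_s^a\partial_1\overline A(t,b)\,\dd t + \int_s^b\partial_2\overline A(s,t)\,\dd t$, and bound each piece using $\partial_1\overline A\ge0$ and $\partial_2\overline A\le0$; this yields $\overline A(a,b) - \overline A(s,s)\le0$ for $s$ between $a$ and $b$ when $a<b$ (and the reverse sign when $a>b$), so that $r(a,b)\le0$ in every case.

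For~(i), set $\mathbf z := \mathbf v - \mathbf w$ and again split $\mathbf b(\mathbf v)-\mathbf b(\mathbf w)$ into its viscous and flux parts. For the viscous part, summation by parts turns $\nu\langle\bm\sgn(\mathbf z), \mathbf D^{(2)}_N\mathbf z\rangle_{\ell^2(\T_N)}$ into a positive multiple of $\sum_i[\sgn(z_i)-\sgn(z_{i+1})](z_{i+1}-z_i)$, each term of which is $\le0$ because $z\mapsto\sgn(z)$ is nondecreasing. For the flux part, the same manipulation produces $\sum_i[\sgn(z_{i+1})-\sgn(z_i)]F_i$ with $F_i := \overline A(v_i,v_{i+1})-\overline A(w_i,w_{i+1})$. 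The crucial step is to linearise $F_i$ by the fundamental theorem of calculus as $F_i = \alpha_i z_i + \beta_i z_{i+1}$, where $\alpha_i = \int_0^1\partial_1\overline A(w_i+\theta z_i, v_{i+1})\,\dd\theta\ge0$ and $\beta_i = \int_0^1\partial_2\overline A(w_i, w_{i+1}+\theta z_{i+1})\,\dd\theta\le0$ by~\eqref{E1}; then, using the elementary bounds $\sgn(z_j)z_j = |z_j|$ and $\sgn(z_j)z_k\le|z_k|$, each of the two resulting sums is seen to be nonpositive.

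The routine parts are the summation-by-parts bookkeeping and the sign of the viscous contributions, which follow the classical pattern. The hard part, carrying the genuine content of the lemma, is the nonpositivity of the two flux contributions: in~(ii) this is the discrete entropy inequality $r(a,b)\le0$, whose proof hinges on choosing the primitive of $A$ as the numerical entropy flux and then extracting the correct signs from monotonicity; in~(i) it is the representation $F_i=\alpha_iz_i+\beta_iz_{i+1}$ with signed coefficients, which is exactly the discrete reflection of the one-sided $L^1_0(\T)$ contraction of~\eqref{SSCL}. Throughout, I would check that the convention $\sgn(0)=+1$ is harmless, since $z\mapsto\sgn(z)$ stays nondecreasing and $\sgn(z)z=|z|$ holds for every $z\in\R$.
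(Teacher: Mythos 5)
Your proof is correct and follows essentially the same route as the paper: summation by parts followed by termwise sign arguments, with the flux part of (ii) reduced to the nonpositivity of $(b-a)\overline A(a,b)-\int_a^b A(s)\,\dd s$, which is exactly the entropy-flux argument of the paper's auxiliary stability lemma (Lemma~\ref{L1} with $q=2$). The only cosmetic difference is in part (i), where you linearise $\overline A(v_i,v_{i+1})-\overline A(w_i,w_{i+1})=\alpha_i z_i+\beta_i z_{i+1}$ with signed coefficients and conclude from $\sgn(z_j)z_j=|z_j|$ and $\sgn(z_j)z_k\le|z_k|$, whereas the paper performs a direct case analysis on the signs of $z_i$ and $z_{i+1}$ and writes the flux difference as two monotone integrals; both hinge on the same monotonicity property~\eqref{E1}.
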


The proof of Lemma~\ref{L1Cdrift} relies on the following result, the proof of which is postponed to Appendix~\ref{app}.

\begin{lemma}[Stability]\label{L1}
For any $\mathbf v \in \R^N_0$ and any $q \in 2\N^*$, we have
\[ \langle \mathbf{v}^{q-1}, \mathbf{D}^{(1,-)}\overline{\mathbf{A}}^N(\mathbf{v})\rangle_{\ell^2(\T_N)} \geq 0, \]
where the notation $\mathbf{v}^{q-1}$ refers to the vector with coordinates $(v_1^{q-1}, \ldots, v_N^{q-1})$.
\end{lemma}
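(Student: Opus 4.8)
The plan is to reduce the claim to a two-point (edge-wise) inequality that telescopes around the periodic torus. Setting $\phi(s):=s^{q-1}$ and unfolding $\mathbf D^{(1,-)}$, the scalar product equals $\sum_{i\in\T_N}v_i^{q-1}\bigl(\overline A(v_i,v_{i+1})-\overline A(v_{i-1},v_i)\bigr)$. A discrete Abel transform (summation by parts, exploiting the cyclic indexing) rewrites this as $-\sum_{i\in\T_N}\bigl(\phi(v_{i+1})-\phi(v_i)\bigr)\overline A(v_i,v_{i+1})$, so the lemma is equivalent to $\Sigma:=\sum_{i\in\T_N}\bigl(\phi(v_{i+1})-\phi(v_i)\bigr)\overline A(v_i,v_{i+1})\le0$. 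Conceptually this is the statement that the scheme dissipates the convex entropy $v\mapsto v^q/q$, which is what makes the lemma a stability estimate.

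First I would seek a single-variable potential $g:\R\to\R$ for which the pointwise bound $\bigl(\phi(b)-\phi(a)\bigr)\overline A(a,b)\le g(b)-g(a)$ holds for all $a,b\in\R$. Granting this, summing over the cyclic family of edges makes the right-hand side collapse to $\sum_{i\in\T_N}\bigl(g(v_{i+1})-g(v_i)\bigr)=0$, yielding $\Sigma\le0$ at once. The correct $g$ is essentially forced: since equality must hold on the diagonal $a=b$, differentiating the bound there imposes $g'=\phi'A$, so I would take $g(s):=\int_0^s\phi'(r)A(r)\,\dd r$. This is well-defined because $q$ is even, whence $\phi'\ge0$ and $\phi$ is strictly increasing on $\R$, and $A$ is continuous.

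The hard part will be the pointwise bound itself, i.e. proving $h(a,b):=\bigl(g(b)-g(a)\bigr)-\bigl(\phi(b)-\phi(a)\bigr)\overline A(a,b)\ge0$. I would show that $h(a,a)=0$ and that $\partial_b h(a,b)$ carries the sign of $b-a$, so that $b\mapsto h(a,b)$ is minimal at $b=a$. A direct computation gives $\partial_b h(a,b)=\phi'(b)\bigl(A(b)-\overline A(a,b)\bigr)-\bigl(\phi(b)-\phi(a)\bigr)\partial_2\overline A(a,b)$; using consistency $A(b)=\overline A(b,b)$ and the fundamental theorem of calculus, $A(b)-\overline A(a,b)=\int_a^b\partial_1\overline A(s,b)\,\dd s$. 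The monotonicity $\partial_1\overline A\ge0$ forces this integral to share the sign of $b-a$, and since $\phi'\ge0$ the first term does too; likewise $\partial_2\overline A\le0$ together with $\phi$ increasing makes $-\bigl(\phi(b)-\phi(a)\bigr)\partial_2\overline A(a,b)$ share the sign of $b-a$. Hence $\partial_b h$ has the sign of $b-a$, which is precisely where both monotonicity conditions of Assumption~\ref{ass:Abar} are used.

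I expect the real difficulty to lie in uncovering this telescoping structure, not in the verification. The tempting shortcut of writing $\overline A(a,b)=A(a)+\int_a^b\partial_2\overline A(a,s)\,\dd s$ and peeling off the exact flux fails, because the leftover diagonal term $\sum_i\bigl(\phi(v_{i+1})-\phi(v_i)\bigr)A(v_i)$ is not sign-definite (it can be strictly positive). The point is that the potential $g$ must entangle the entropy weight $\phi'$ with the true flux $A$ so that the offending positive contributions are absorbed into an exact discrete difference; identifying that combination, rather than any computation, is the crux.
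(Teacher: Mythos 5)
Your proof is correct and follows essentially the same route as the paper's: after summation by parts, both arguments establish the edge-wise inequality $\bigl(\phi(b)-\phi(a)\bigr)\overline A(a,b)\le g(b)-g(a)$ with the same potential (your $g(s)=\int_0^s\phi'(r)A(r)\,\dd r$ is the paper's $\mathcal A_q(s^{q-1})$ after the change of variables $z=r^{q-1}$) and conclude by telescoping over the periodic mesh. The only difference is cosmetic: the paper verifies the edge inequality by comparing $\overline A(v_i,v_{i+1})$ with $\overline A\bigl(z^{1/(q-1)},z^{1/(q-1)}\bigr)$ under the integral sign, while you check that $\partial_b h(a,b)$ carries the sign of $b-a$; the invocations of consistency and monotonicity from Assumption~\ref{ass:Abar} are identical.
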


We now detail the proof of Lemma~\ref{L1Cdrift}.

\begin{proof}[Proof of Lemma~\ref{L1Cdrift}.] (i) Let $\mathbf v,\mathbf w\in\R^N_0$. From the definition of $\mathbf b$ and~(\ref{eq:sbp}--\ref{eq:sbp2}), we write
  \begin{align*}
    &\langle\bm\sgn(\mathbf v-\mathbf w),\mathbf b(\mathbf v)-\mathbf b(\mathbf w)\rangle_{\ell^2(\T_N)}\\
    &=-\langle\bm\sgn(\mathbf v-\mathbf w),\mathbf{D}^{(1,-)}_N(\overline{\mathbf{A}}^N(\mathbf v)-\overline{\mathbf{A}}^N(\mathbf w))\rangle_{\ell^2(\T_N)} + \nu \langle\bm\sgn(\mathbf v-\mathbf w),\mathbf{D}^{(2)}_N(\mathbf v - \mathbf w)\rangle_{\ell^2(\T_N)}\\
    &= \langle\mathbf{D}^{(1,+)}_N\bm\sgn(\mathbf v-\mathbf w),\overline{\mathbf{A}}^N(\mathbf v)-\overline{\mathbf{A}}^N(\mathbf w)\rangle_{\ell^2(\T_N)} - \nu \langle\mathbf{D}^{(1,+)}_N\bm\sgn(\mathbf v-\mathbf w),\mathbf{D}^{(1,+)}_N(\mathbf v - \mathbf w)\rangle_{\ell^2_0(\T_N)}.
  \end{align*}
Observe that since the function $\sgn:\R\to\R$ is non-decreasing, 
\begin{equation*}
  \langle\mathbf{D}^{(1,+)}_N\bm\sgn(\mathbf v-\mathbf w),\mathbf{D}^{(1,+)}_N(\mathbf v - \mathbf w)\rangle_{\ell^2_0(\T_N)} \geq 0.
\end{equation*}
As for the other term, it follows from the monotonicity property of $\bar A$ that for any $i \in \T_N$,
\begin{equation*}
  \left(\sgn(v_{i+1}-w_{i+1})-\sgn(v_i-w_i)\right)(\overline{A}(v_i,v_{i+1})-\overline{A}(w_i,w_{i+1})) \leq 0.
\end{equation*}
Indeed, let us address for instance the case where $v_{i+1}\geq w_{i+1}$ and $v_i\leq w_i$. Then, on the one hand, we have $\sgn(v_{i+1}-w_{i+1})-\sgn(v_i-w_i)=2$. On the other hand, we have
\begin{align*}
\bar A(v_i,v_{i+1})-\bar A(w_i,w_{i+1})&=\left(\bar A(v_i,v_{i+1})-\bar A(v_i,w_{i+1})\right)+\left(\bar A(v_i,w_{i+1})-\bar A(w_i,w_{i+1})\right)\\
&=\int_{w_{i+1}}^{v_{i+1}}\partial_2\bar A(v_i,z)\dd z-\int_{v_i}^{w_i}\partial_1\bar A(z,w_{i+1})\dd z\leq 0.
\end{align*}
 The case where $v_{i+1}\leq w_{i+1}$ and $v_i\geq w_i$ is treated symmetrically.
  
(ii) Let $\mathbf v\in\R^N_0$. We have
\[ \langle\mathbf v,\mathbf b(\mathbf v)\rangle_{\ell^2_0(\T_N)}=-\langle\mathbf v,\mathbf{D}^{(1,-)}_N\overline{\mathbf{A}}^N(\mathbf v)\rangle_{\ell^2_0(\T_N)} + \nu\langle\mathbf v,\mathbf{D}^{(2)}_N\mathbf v\rangle_{\ell^2_0(\T_N)} . \]
Lemma~\ref{L1} with $q=2$ shows that the first term of the above decomposition is non-positive, and applying~\eqref{eq:sbp2} in the second term yields the result.
\end{proof}

\begin{remark}
The $\ell^2_0(\T_N)$ dissipativity property actually holds for the family of E-fluxes~\cite{MP03}, a larger family than the class of monotone numerical fluxes. The monotonicity assumption~\eqref{E1} seems however necessary as regards the $\ell^1_0(\T_N)$ contraction property.
\end{remark}

\subsubsection{Finiteness of the covariance of $\mathbf{W}^{Q,N}$}

At several places we shall need the estimates
\begin{equation}\label{eq:sigma-glob}
  \max_{i \in \T_N} \sum_{k \geq 1} (g_i^k)^2 \leq \mathsf{D}, \qquad \sum_{k \geq 1} \|\mathbf{g}^k\|_{\ell^2_0(\T_N)}^2 \leq \mathsf{D}, \qquad \sum_{k \geq 1} \|\mathbf{D}^{(1,+)}_N\mathbf{g}^k\|_{\ell^2_0(\T_N)}^2 \leq \mathsf{D}.
\end{equation}
The third estimate in~\eqref{eq:sigma-glob} follows from~\eqref{boundgk} and the fact that for any $k \geq 1$,
\begin{equation*}
  \|\mathbf{D}^{(1,+)}_N\mathbf{g}^k\|_{\ell^2_0(\T_N)}^2 \leq \|g^k\|_{H^1_0(\T)}^2 \leq \|g^k\|_{H^2_0(\T)}^2,
\end{equation*}
where the first inequality can be checked by a direct computation using Jensen's inequality and the second inequality follows from~\eqref{eq:holder} and~\eqref{eq:strongerpoincare}. The first and second estimates in~\eqref{eq:sigma-glob} are then consequences of the third estimate thanks to~\eqref{eq:gradient-estim-discr} and~\eqref{eq:poinca-discr}, respectively. These estimates prove for instance the finiteness of the sum in the right-hand side of~\eqref{eq:covWQN}.

\subsection{The semi-discrete scheme}\label{IMschemes:FVS}

In this subsection, we first show that the SDE~\eqref{FVS} has a unique global solution $(\mathbf{U}^N(t))_{t \geq 0}$. We then give uniform $\ell^p_0(\T_N)$ estimates on this process, which will be used at several places in the sequel of the paper. We finally prove the existence and the uniqueness of an invariant measure $\vartheta_N$ for $(\mathbf{U}^N(t))_{t \geq 0}$.

\subsubsection{Well-posedness of~\eqref{FVS}}

Since the function $\mathbf b$ is locally Lipschitz continuous, it is a standard result that there exists a unique strong solution $(\mathbf U^N(t))_{t\in[0,T^*)}$ to Equation~\eqref{FVS} defined up to a random explosion time $T^*$. That this solution is actually global in time usually follows from a Lyapunov-type condition. In our context, the presence of a viscous term in the SPDE~\eqref{SSCL} allows the use of energy methods based on the dissipation of the squared $L^2_0(\T)$ norm, see~\cite{MR19}. At the level of the SDE~\eqref{FVS}, denoting by $\mathcal{L}_N$ the associated infinitesimal generator, this fact is observed on the simple estimate
\begin{equation}\label{eq:L2H1}
  \mathcal{L}_N \|\mathbf{v}\|_{\ell^2_0(\T_N)}^2 = 2 \langle \mathbf{v}, \mathbf{b}(\mathbf{v})\rangle_{\ell^2_0(\T_N)} + \sum_{k \geq 1} \|\mathbf{g}^k\|_{\ell^2_0(\T_N)}^2 \leq -2\nu\|\mathbf{D}^{(1,+)}_N\mathbf{v}\|_{\ell^2_0(\T_N)}^2 + \mathsf{D} \leq -2\nu\|\mathbf{v}\|_{\ell^2_0(\T_N)}^2 + \mathsf{D},
\end{equation}
which follows from Lemma~\ref{L1Cdrift}, \eqref{eq:sigma-glob} and~\eqref{eq:poinca-discr}. This shows that the squared $\ell^2_0(\T_N)$ norm is a \emph{Lyapunov function} for $\mathcal{L}_N$, and implies the following statement.

\begin{prop}[Well-posedness of~\eqref{FVS}]\label{SDEwellposed}
Let $\mathbf U^N_0$ be an $\R^N_0$-valued, $\mathcal F_0$-measurable random variable. The stochastic differential equation~\eqref{FVS} admits a unique strong solution $(\mathbf U^N(t))_{t\geq0}$ taking values in $\R^N_0$ and with initial condition $\mathbf U^N_0$.
\end{prop}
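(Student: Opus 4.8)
The plan is to combine the standard local well-posedness theory for SDEs with locally Lipschitz coefficients with the Lyapunov estimate \eqref{eq:L2H1} to rule out explosion, and to exploit the conservative structure already observed below \eqref{FVS} to conclude that the solution remains in $\R^N_0$.

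First I would invoke the classical existence and uniqueness theorem for SDEs whose coefficients are locally Lipschitz continuous: since $\mathbf b$ is locally Lipschitz on $\R^N_0$ and the diffusion coefficient is constant, there is a unique maximal strong solution $(\mathbf U^N(t))_{t\in[0,T^*)}$ with initial condition $\mathbf U^N_0$, where $T^*$ denotes the explosion time, characterised as $T^*=\lim_{k\to\infty}\tau_k$ for the stopping times $\tau_k:=\inf\{t\geq0:\|\mathbf U^N(t)\|_{\ell^2_0(\T_N)}\geq k\}$. Because both the drift $\mathbf b$ and the noise $\mathbf W^{Q,N}$ are $\R^N_0$-valued, the quantity $\sum_i U^N_i(t)$ is constant in time, so starting from $\mathbf U^N_0\in\R^N_0$ the solution stays in $\R^N_0$ on $[0,T^*)$, as already noted.

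The core of the argument is non-explosion, that is $T^*=+\infty$ almost surely, which follows directly from \eqref{eq:L2H1}. Applying It\^o's formula to $V(\mathbf v):=\|\mathbf v\|_{\ell^2_0(\T_N)}^2$ on $[0,t\wedge\tau_k]$ and taking expectations, the stochastic integral term has zero expectation (it is a true martingale on $[0,t\wedge\tau_k]$, where $\mathbf U^N$ is bounded by $k$), and the generator bound $\mathcal L_N V\leq \mathsf D$ provided by \eqref{eq:L2H1} yields
\[ \E\!\left[V\big(\mathbf U^N(t\wedge\tau_k)\big)\right]\leq \E\!\left[V(\mathbf U^N_0)\right]+\mathsf D\,t. \]
On the event $\{\tau_k\leq t\}$ one has $V(\mathbf U^N(\tau_k))=k^2$ by continuity of the trajectories, so the left-hand side is at least $k^2\,\P(\tau_k\leq t)$; hence $\P(\tau_k\leq t)\leq k^{-2}\big(\E[V(\mathbf U^N_0)]+\mathsf D\,t\big)$. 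Letting $k\to\infty$ gives $\P(T^*\leq t)=0$ for every $t\geq0$, whence $T^*=+\infty$ almost surely, and the solution is global.

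The only technical point to address is that $\mathbf U^N_0$ is merely assumed $\mathcal F_0$-measurable, not integrable, so $\E[V(\mathbf U^N_0)]$ could be infinite. This is handled by a routine localisation on the initial datum: decomposing $\Omega$ along the $\mathcal F_0$-measurable events $\{m-1\leq\|\mathbf U^N_0\|_{\ell^2_0(\T_N)}<m\}$, $m\in\N^*$, one applies the estimate above on each event, where the initial condition is bounded, and concludes by countable additivity. I do not expect a genuine obstacle here: the essential work, namely identifying the squared $\ell^2_0(\T_N)$ norm as a Lyapunov function through Lemma~\ref{L1Cdrift} and the noise bound \eqref{eq:sigma-glob}, has already been carried out in \eqref{eq:L2H1}, and what remains is the standard non-explosion scheme.
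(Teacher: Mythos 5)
Your proposal is correct and follows exactly the route the paper indicates: local well-posedness from the locally Lipschitz drift and additive noise, conservation of the zero-mean constraint because both $\mathbf b$ and $\mathbf W^{Q,N}$ take values in $\R^N_0$, and non-explosion via the Lyapunov estimate~\eqref{eq:L2H1} applied along the stopping times $\tau_k$ (the paper omits these details and defers them to~\cite[Proposition~3.15]{Mar19}). Your additional localisation on the $\mathcal F_0$-measurable events where $\|\mathbf U^N_0\|_{\ell^2_0(\T_N)}$ is bounded correctly handles the fact that the initial condition is not assumed square-integrable.
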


The proof of Proposition~\ref{SDEwellposed} is omitted, we refer to~\cite[Proposition~3.15]{Mar19} for details.

\subsubsection{Moment estimates} In this paragraph, we prove the following uniform (in $N$) $\ell^p_0(\T_N)$ estimates on the process $\mathbf{U}^N$. For any $\mathbf{v} \in \R^N_0$, we recall the notation $\mathbf{v}^p = (v_1^p, \ldots, v_N^p)$ and take the convention that $\|\mathbf{v}\|^0_{\ell^0_0(\T_N)} = 1$.

\begin{lemma}[Moment estimates on the semi-discrete scheme]\label{lem:recursivebound}
Let $p\in2\N^*$ and let $\mathbf U^N_0$ be an $\mathcal F_0$-measurable random variable such that $\E[\|\mathbf U^N_0\|_{\ell^p_0(\T_N)}^p]<+\infty$. The solution $(\mathbf U^N(t))_{t\geq0}$ of~\eqref{FVS} with initial condition $\mathbf U^N_0$ satisfies the following estimates.
\begin{itemize}
\item[(i)] For all $t\geq0$,
 \begin{equation}\label{eq:rb}
   \begin{split}
     &\E\left[\left\|\mathbf U^N(t)\right\|_{\ell^p_0(\T_N)}^p\right]+\nu p \E\left[\int_0^t\left\langle\mathbf D^{(1,+)}_N\left((\mathbf U^N(s))^{p-1}\right),\mathbf D^{(1,+)}_N\mathbf U^N(s)\right\rangle_{\ell^2_0(\T_N)}\dd s\right]\\
     &\leq\E\left[\left\|\mathbf U^N_0\right\|_{\ell^p_0(\T_N)}^p\right]+\mathsf{D}\frac{p(p-1)}2\E\left[\int_0^t\left\|\mathbf U^N(s)\right\|_{{\ell^{p-2}_0(\T_N)}}^{p-2}\dd s\right].
   \end{split}
 \end{equation}
 \item[(ii)] There exist six positive constants $\mathsf{c}_0^{(p)}$, $\mathsf{c}_1^{(p)}$, $\mathsf{c}_2^{(p)}$, $\mathsf{d}_0^{(p)}$, $\mathsf{d}_1^{(p)}$ and $\mathsf{d}_2^{(p)}$, depending only on $\mathsf{D}$, $\nu$ and $p$ such that we have
 \begin{equation}\label{eq:integralinduction}
\forall t>0 ,\qquad \E \left[ \int_0^t\left\| \mathbf U^N(s)\right\|_{\ell^p_0(\T_N)}^p\dd s\right] \leq \mathsf{c}_0^{(p)} + \mathsf{c}_1^{(p)}\E\left[\left\|\mathbf U^N_0\right\|_{\ell^p_0(\T_N)}^p\right]+\mathsf{c}_2^{(p)}t , 
 \end{equation}
  and
  \begin{equation}\label{eq:supinduction}
 \forall t>0,\qquad \sup_{s\in[0,t]}\E\left[\left\|\mathbf U^N(s)\right\|_{\ell^p_0(\T_N)}^p\right] \leq \mathsf{d}_0^{(p)} + \mathsf{d}^{(p)}_1\E\left[\left\|\mathbf U^N_0\right\|_{\ell^p_0(\T_N)}^p\right]+\mathsf{d}^{(p)}_2t.
 \end{equation}
 \end{itemize}
\end{lemma}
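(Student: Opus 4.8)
The plan is to apply It\^o's formula to the functional $\mathbf{v}\mapsto\|\mathbf{v}\|_{\ell^p_0(\T_N)}^p=\frac1N\sum_{i\in\T_N}v_i^p$ along the solution $(\mathbf U^N(t))_{t\ge0}$, and then carefully control the drift and quadratic-variation terms. Writing the SDE~\eqref{FVS} coordinatewise and noting that $\partial_{v_i}\|\mathbf{v}\|_{\ell^p_0(\T_N)}^p=\frac pN v_i^{p-1}$ and $\partial_{v_iv_j}\|\mathbf{v}\|_{\ell^p_0(\T_N)}^p=\frac{p(p-1)}N v_i^{p-2}\delta_{ij}$, It\^o's formula gives, after taking expectations to kill the martingale part,
\begin{equation*}
  \E\left[\|\mathbf U^N(t)\|_{\ell^p_0(\T_N)}^p\right]=\E\left[\|\mathbf U^N_0\|_{\ell^p_0(\T_N)}^p\right]+p\,\E\left[\int_0^t\langle(\mathbf U^N(s))^{p-1},\mathbf b(\mathbf U^N(s))\rangle_{\ell^2(\T_N)}\dd s\right]+\frac{p(p-1)}2\,\E\left[\int_0^t\frac1N\sum_{i\in\T_N}(U^N_i(s))^{p-2}\sum_{k\ge1}(g^k_i)^2\dd s\right].
\end{equation*}
For the drift term I would split $\mathbf b$ into its flux and viscous parts. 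Lemma~\ref{L1} (with the even integer $q=p$) shows $\langle(\mathbf U^N(s))^{p-1},\mathbf D^{(1,-)}\overline{\mathbf A}^N(\mathbf U^N(s))\rangle_{\ell^2(\T_N)}\ge0$, so the flux contribution to the drift is non-positive and can be discarded. The viscous part, via the summation-by-parts identity~\eqref{eq:sbp2}, becomes exactly the gradient term $-\nu\langle\mathbf D^{(1,+)}((\mathbf U^N)^{p-1}),\mathbf D^{(1,+)}\mathbf U^N\rangle_{\ell^2_0(\T_N)}$ appearing on the left of~\eqref{eq:rb}. For the It\^o correction term, I bound $\sum_{k\ge1}(g^k_i)^2\le\mathsf D$ uniformly in $i$ using the first estimate in~\eqref{eq:sigma-glob}, which produces $\mathsf D\frac{p(p-1)}2\E[\int_0^t\|\mathbf U^N(s)\|_{\ell^{p-2}_0(\T_N)}^{p-2}\dd s]$. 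This yields~(i) directly. The only subtlety here is justifying the vanishing of the martingale term, which follows from the moment assumption on $\mathbf U^N_0$ together with the estimate~\eqref{eq:L2H1}; I would invoke a standard localisation/stopping-time argument (referring to~\cite{Mar19} as elsewhere in the paper) rather than detailing it.

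To derive~(ii) from~(i), I would first observe that the gradient term on the left of~\eqref{eq:rb} is non-negative, since $z\mapsto z^{p-1}$ is non-decreasing for even $p$, hence $(\mathbf D^{(1,+)}((\mathbf U^N)^{p-1}))_i$ and $(\mathbf D^{(1,+)}\mathbf U^N)_i$ have the same sign. Discarding it and controlling the right-hand side with Young's inequality to absorb the lower-order $\ell^{p-2}$ norm against an $\ell^p$ norm plus a constant (using the discrete Jensen inequality~\eqref{eq:normorder} to pass from $\|\cdot\|_{\ell^{p-2}_0}$ to a power of $\|\cdot\|_{\ell^p_0}$) gives an integral inequality of the form $\E[\|\mathbf U^N(t)\|_{\ell^p_0}^p]\le\E[\|\mathbf U^N_0\|_{\ell^p_0}^p]+C_1 t-C_2\E[\int_0^t\|\mathbf U^N(s)\|_{\ell^p_0}^p\dd s]$ for constants depending only on $\mathsf D,\nu,p$. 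The crucial point is that the coercive $-2\nu\|\mathbf v\|_{\ell^2_0}^2$ mechanism of~\eqref{eq:L2H1} must be promoted to the $\ell^p$ level: after discarding the flux term, the surviving gradient term is $-\nu p\,\langle\mathbf D^{(1,+)}((\mathbf U^N)^{p-1}),\mathbf D^{(1,+)}\mathbf U^N\rangle_{\ell^2_0}$, and I would bound this below (in absolute value) by a multiple of $\|\mathbf U^N\|_{\ell^p_0}^p$ using a discrete product-rule estimate together with the discrete Poincar\'e inequality~\eqref{eq:poinca-discr}, thereby recovering a genuine dissipation of the $\ell^p$ energy. Integrating the resulting differential inequality $\frac{\dd}{\dd t}\E[\|\mathbf U^N(t)\|_{\ell^p_0}^p]\le -\kappa\,\E[\|\mathbf U^N(t)\|_{\ell^p_0}^p]+C$ by Gr\"onwall's lemma yields~\eqref{eq:supinduction}, and integrating the cruder inequality in time yields~\eqref{eq:integralinduction}.

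The main obstacle I anticipate is exactly this promotion of dissipativity from $\ell^2$ to $\ell^p$: establishing a \emph{discrete chain-rule lower bound} of the form $\langle\mathbf D^{(1,+)}(\mathbf v^{p-1}),\mathbf D^{(1,+)}\mathbf v\rangle_{\ell^2_0(\T_N)}\ge c_p\|\mathbf D^{(1,+)}(\mathbf v^{p/2})\|_{\ell^2_0(\T_N)}^2$, which is the discrete analogue of the continuous identity $\int(\partial_x v)\,\partial_x(v^{p-1})\dd x=\frac{4(p-1)}{p^2}\int(\partial_x v^{p/2})^2\dd x$. At the discrete level this inequality is not an identity but requires a convexity/telescoping estimate on each pair $(v_i,v_{i+1})$, of the form $(v_{i+1}^{p-1}-v_i^{p-1})(v_{i+1}-v_i)\ge c_p(v_{i+1}^{p/2}-v_i^{p/2})^2$, whose constant $c_p$ must be tracked so that the final constants in~\eqref{eq:integralinduction} and~\eqref{eq:supinduction} depend only on $\mathsf D,\nu,p$ and \emph{not} on $N$. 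Once this discrete Poincar\'e-type coercivity is in hand, combined with~\eqref{eq:poinca-discr} applied to $\mathbf v^{p/2}$, the remaining Gr\"onwall and Young-inequality manipulations are routine; I would therefore concentrate the written proof on this coercivity step and relegate the It\^o and localisation technicalities to a reference.
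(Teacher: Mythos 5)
Your proof of part (i) matches the paper's: It\^o's formula for $\|\cdot\|^p_{\ell^p_0(\T_N)}$, Lemma~\ref{L1} to discard the flux contribution, the summation-by-parts identity~\eqref{eq:sbp2} for the viscous part, the uniform bound $\sum_{k\geq1}(g^k_i)^2\leq\mathsf{D}$ from~\eqref{eq:sigma-glob} for the It\^o correction, and a localisation argument for the martingale. The coercivity estimate you single out as the crux of (ii) is exactly the paper's Lemma~\ref{comp}, proved in the Appendix by the pointwise convexity/Jensen estimate $(v_{i+1}^{p-1}-v_i^{p-1})(v_{i+1}-v_i)\geq\frac{4(p-1)}{p^2}(w_{i+1}-w_i)^2$ with $w_i=\sgn(v_i)|v_i|^{p/2}$ that you anticipate, and the constant $4(p-1)/p^2$ is indeed independent of $N$. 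One caveat: you cannot literally invoke~\eqref{eq:poinca-discr} for $\mathbf v^{p/2}$, because $\sgn(\mathbf v)|\mathbf v|^{p/2}$ need not have zero mean; the paper instead exploits that $\mathbf v\in\R^N_0$ forces $\mathbf w$ to change sign and runs a direct telescoping argument to obtain $\|\mathbf w\|^2_{\ell^2_0(\T_N)}\leq\|\mathbf D^{(1,+)}_N\mathbf w\|^2_{\ell^2_0(\T_N)}$. The only genuine divergence is how you close (ii): you absorb the lower-order term via $\|\mathbf v\|^{p-2}_{\ell^{p-2}_0(\T_N)}\leq\|\mathbf v\|^{p-2}_{\ell^p_0(\T_N)}\leq\epsilon\|\mathbf v\|^p_{\ell^p_0(\T_N)}+C_\epsilon$ (Jensen plus Young) into the $\ell^p$ dissipation, which closes the estimate at level $p$ in one stroke, whereas the paper bounds $\|\mathbf v\|^{p-2}_{\ell^{p-2}_0(\T_N)}\leq 1+\|\mathbf v\|^p_{\ell^p_0(\T_N)}$ and runs an induction on even $p$, using the integral bound~\eqref{eq:integralinduction} at level $p-2$ to control the right-hand side of~\eqref{eq:rb} at level $p$. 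Both routes yield constants depending only on $\mathsf{D}$, $\nu$ and $p$; yours is marginally more self-contained, the paper's makes the $p=2$ base case and the recursive structure explicit.
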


The proof of Lemma~\ref{lem:recursivebound} relies on the following $\ell^p_0(\T_N)$ extension of the Poincar\'e inequality~\eqref{eq:poinca-discr}, the proof of which is postponed to Appendix~\ref{app}.

\begin{lemma}[$\ell^p_0(\T_N)$ Poincar\'e inequality]\label{comp}
 For any $\mathbf v\in\R^N_0$ and $p\in2\N^*$, we have
 \[ \left\langle\mathbf D^{(1,+)}_N\left(\mathbf v^{p-1}\right),\mathbf D^{(1,+)}_N\mathbf v\right\rangle_{\ell^2_0(\T_N)}\geq\frac{4(p-1)}{p^2}\|\mathbf v\|_{\ell^p_0(\T_N)}^p. \]
\end{lemma}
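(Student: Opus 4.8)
The plan is to reduce the inequality to a scalar, per-edge estimate and then to a Poincar\'e-type bound applied to a suitable nonlinear transform of $\mathbf v$. Expanding the inner product, the left-hand side equals $N\sum_{i\in\T_N}(v_{i+1}^{p-1}-v_i^{p-1})(v_{i+1}-v_i)$, so it suffices to control each summand from below by the corresponding squared increment of the vector $\mathbf w\in\R^N$ with entries $w_i:=\sgn(v_i)|v_i|^{p/2}$. Since $p$ is even, this vector satisfies $w_i^2=|v_i|^p=v_i^p$, hence $\frac1N\sum_i w_i^2=\|\mathbf v\|_{\ell^p_0(\T_N)}^p$, and it has the same sign pattern as $\mathbf v$.

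First I would establish the scalar inequality, for all $a,b\in\R$,
\[
(a^{p-1}-b^{p-1})(a-b)\geq\frac{4(p-1)}{p^2}\bigl(\sgn(a)|a|^{p/2}-\sgn(b)|b|^{p/2}\bigr)^2.
\]
Because $p$ is even, the maps $t\mapsto t^{p-1}$ and $t\mapsto\sgn(t)|t|^{p/2}$ are $C^1$ on $\R$ with derivatives $(p-1)|t|^{p-2}$ and $\tfrac{p}{2}|t|^{p/2-1}$. Assuming $a>b$ (the case $a=b$ being trivial and $a<b$ symmetric, both sides being invariant under swapping $a,b$), I would write both differences as integrals over $[b,a]$; the claimed bound is then precisely the Cauchy--Schwarz inequality $\bigl(\int_b^a|t|^{(p-2)/2}\,\dd t\bigr)^2\leq(a-b)\int_b^a|t|^{p-2}\,\dd t$ once the constants $p-1$ and $\tfrac{p}{2}$ are collected. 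Summing this estimate over $i\in\T_N$ and multiplying by $N$ gives
\[
\left\langle\mathbf D^{(1,+)}_N(\mathbf v^{p-1}),\mathbf D^{(1,+)}_N\mathbf v\right\rangle_{\ell^2_0(\T_N)}\geq\frac{4(p-1)}{p^2}\,\|\mathbf D^{(1,+)}_N\mathbf w\|_{\ell^2_0(\T_N)}^2,
\]
where throughout $\|\mathbf w\|_{\ell^2_0(\T_N)}$ and the like denote the normalised $\ell^2$ quantities given by the usual formulas, even though $\mathbf w$ need not be mean-zero.

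It then remains to prove the Poincar\'e bound $\frac1N\sum_i w_i^2\leq\|\mathbf D^{(1,+)}_N\mathbf w\|_{\ell^2_0(\T_N)}^2$. The main obstacle is exactly that $\mathbf w$ is \emph{not} an element of $\R^N_0$, so the discrete Poincar\'e inequality \eqref{eq:poinca-discr} cannot be invoked for it directly. The remedy is to exploit the sign structure rather than the vanishing mean: if $\mathbf v\neq0$ (the zero case being trivial), then $\mathbf v\in\R^N_0$ forces entries of both signs, hence $\min_i w_i\leq0\leq\max_i w_i$ and therefore $\max_i|w_i|\leq\max_i w_i-\min_i w_i$. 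Bounding this range by a telescoping sum of forward increments along an arc of the discrete circle yields the gradient estimate $\|\mathbf w\|_{\ell^\infty_0(\T_N)}\leq\|\mathbf D^{(1,+)}_N\mathbf w\|_{\ell^1_0(\T_N)}$, which is the analogue of \eqref{eq:gradient-estim-discr} but valid here because $\mathbf w$ changes sign rather than because it has zero mean. Sandwiching with the elementary norm orderings $\|\mathbf w\|_{\ell^2_0(\T_N)}\leq\|\mathbf w\|_{\ell^\infty_0(\T_N)}$ and $\|\mathbf D^{(1,+)}_N\mathbf w\|_{\ell^1_0(\T_N)}\leq\|\mathbf D^{(1,+)}_N\mathbf w\|_{\ell^2_0(\T_N)}$ (both instances of Jensen's inequality, which hold for arbitrary vectors) gives the desired Poincar\'e bound. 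Combining it with the displayed inequality and the identity $\frac1N\sum_i w_i^2=\|\mathbf v\|_{\ell^p_0(\T_N)}^p$ then completes the proof.
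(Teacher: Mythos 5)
Your proof is correct and follows essentially the same route as the paper's: the per-edge Cauchy--Schwarz estimate yielding $\frac{4(p-1)}{p^2}\|\mathbf D^{(1,+)}_N\mathbf w\|_{\ell^2_0(\T_N)}^2$ with $w_i=\sgn(v_i)|v_i|^{p/2}$, followed by a Poincar\'e-type bound for $\mathbf w$ justified by the sign change forced by $\mathbf v\in\R^N_0$. The only (harmless) difference is in that last step, where you pass through the $\ell^\infty$--$\ell^1$ gradient estimate while the paper applies Cauchy--Schwarz directly to the telescoping sum anchored at the sign-changing indices.
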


We sketch the lines of the proof of Lemma~\ref{lem:recursivebound} and refer to~\cite[Lemma~3.16]{Mar19} for details.

\begin{proof}[Sketch of the proof of Lemma~\ref{lem:recursivebound}]
  Let us fix $p\in2\N^*$ and apply the It\^o formula to $\|\mathbf{U}^N(t)\|^p_{\ell^p_0(\T_N)}$. We get
  \begin{align*}
    \dd \|\mathbf{U}^N(t)\|^p_{\ell^p_0(\T_N)} &= p\left\langle \mathbf{U}^N(t)^{p-1}, \left(-\mathbf{D}^{(1,-)}_N \overline{\mathbf{A}}^N(\mathbf{U}^N(t)) + \nu \mathbf{D}^{(2)}_N\mathbf{U}^N(t)\right)\dd t + \dd\mathbf{W}^{Q,N}(t)\right\rangle_{\ell^2(\T_N)}\\
    &\quad  + \frac{p(p-1)}{2}\left\langle \mathbf{U}^N(t)^{p-2}, \sum_{k \geq 1}(\mathbf{g}^k)^2\right\rangle_{\ell^2(\T_N)}\dd t.
  \end{align*}
  By Lemma~\ref{L1},
  \begin{equation*}
    \left\langle \mathbf{U}^N(t)^{p-1}, -\mathbf{D}^{(1,-)}_N \overline{\mathbf{A}}^N(\mathbf{U}^N(t))\right\rangle_{\ell^2(\T_N)} \leq 0;
  \end{equation*}
  by~\eqref{eq:sbp2},
  \begin{equation*}
    \left\langle \mathbf{U}^N(t)^{p-1}, \nu \mathbf{D}^{(2)}_N\mathbf{U}^N(t)\right\rangle_{\ell^2(\T_N)} = -\nu \left\langle \mathbf{D}^{(1,+)}_N\left(\mathbf{U}^N(t)^{p-1}\right), \mathbf{D}^{(1,+)}_N\mathbf{U}^N(t)\right\rangle_{\ell^2_0(\T_N)};
  \end{equation*}
  and by~\eqref{eq:sigma-glob},
  \begin{equation*}
    \left\langle \mathbf{U}^N(t)^{p-2}, \sum_{k \geq 1}(\mathbf{g}^k)^2\right\rangle_{\ell^2(\T_N)} \leq \mathsf{D} \|\mathbf{U}^N(t)\|^{p-2}_{\ell^{p-2}_0(\T_N)}.
  \end{equation*}
  Using a localisation argument to remove the stochastic integral when taking the expectation, we get~\eqref{eq:rb}.
  
  We deduce from~\eqref{eq:rb} and Lemma~\ref{comp} that for any $p \in 2\N^*$ and $t \geq 0$,
  \begin{equation*}
    \E\left[\left\|\mathbf U^N(t)\right\|_{\ell^p_0(\T_N)}^p\right] \leq\E\left[\left\|\mathbf U^N_0\right\|_{\ell^p_0(\T_N)}^p\right]+\mathsf{D}\frac{p(p-1)}2\E\left[\int_0^t\left\|\mathbf U^N(s)\right\|_{{\ell^{p-2}_0(\T_N)}}^{p-2}\dd s\right],
  \end{equation*}
  and
  \begin{equation*}
    \E\left[\int_0^t\left\|\mathbf D^{(1,+)}_N\mathbf U^N(s)\right\|^p_{\ell^p_0(\T_N)}\dd s\right]\leq \frac{p}{4\nu(p-1)}\E\left[\left\|\mathbf U^N_0\right\|_{\ell^p_0(\T_N)}^p\right]+\mathsf{D}\frac{p^2}{8\nu}\E\left[\int_0^t\left\|\mathbf U^N(s)\right\|_{{\ell^{p-2}_0(\T_N)}}^{p-2}\dd s\right].
  \end{equation*}
  This readily yields~\eqref{eq:integralinduction} and~\eqref{eq:supinduction} for $p=2$ and, along with the elementary inequality
  \begin{equation*}
    \|\mathbf U^N_0\|_{\ell^{p-2}_0(\T_N)}^{p-2} \leq 1 + \|\mathbf U^N_0\|_{\ell^p_0(\T_N)}^p,
  \end{equation*}
  then serves as the basis for an inductive argument on even values of $p$ to obtain the general form of~\eqref{eq:integralinduction} and~\eqref{eq:supinduction}.
\end{proof}

\subsubsection{Feller property and existence of an invariant measure}

In order to deduce from the Lyapunov condition~\eqref{eq:L2H1} the existence of an invariant measure for $(\mathbf{U}^N(t))_{t \geq 0}$, it is necessary to check that it is a Feller process~\cite[Theorem~4.21]{Hai06}, which in our case is a consequence of the following $\ell^1_0(\T_N)$ contraction property.

\begin{prop}[$\ell^1_0(\T_N)$ contraction for $\mathbf{U}^N$]\label{L1C}
 Two solutions $(\mathbf U^N(t))_{t\geq0}$ and $(\mathbf V^N(t))_{t\geq0}$ of~\eqref{FVS}, driven by the same Wiener process $\mathbf W^{Q,N}$, with possibly different initial conditions, satisfy almost surely
 \[ \forall  0\leq s\leq t, \qquad \left\|\mathbf U^N(t)-\mathbf V^N(t) \right\|_{\ell^1_0(\T_N)} \leq \left\|\mathbf U^N(s) -\mathbf V^N(s) \right\|_{\ell^1_0(\T_N)}.\]
\end{prop}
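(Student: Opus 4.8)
The plan is to exploit that both processes are driven by the \emph{same} noise: the increments of $\mathbf{W}^{Q,N}$ then cancel in the difference, so that the distance between the two solutions evolves according to a pathwise random ordinary differential equation to which the contraction estimate of Lemma~\ref{L1Cdrift}(i) directly applies. Concretely, I would set $\mathbf{Z}(t) := \mathbf{U}^N(t) - \mathbf{V}^N(t) \in \R^N_0$ and subtract the two integral forms of~\eqref{FVS}; the stochastic terms cancel and one is left with
\[ \mathbf{Z}(t) = \mathbf{Z}(s) + \int_s^t \left( \mathbf{b}(\mathbf{U}^N(r)) - \mathbf{b}(\mathbf{V}^N(r)) \right)\dd r. \]
As $\mathbf{U}^N$, $\mathbf{V}^N$ and $\mathbf{b}$ are continuous, the integrand is continuous, so almost surely $t \mapsto \mathbf{Z}(t)$ is $C^1$ with $\dot{\mathbf{Z}}(t) = \mathbf{b}(\mathbf{U}^N(t)) - \mathbf{b}(\mathbf{V}^N(t))$.

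Next I would differentiate $\|\mathbf{Z}(t)\|_{\ell^1_0(\T_N)}$. For each $i \in \T_N$, the scalar function $t \mapsto Z_i(t)$ is $C^1$, hence $t \mapsto |Z_i(t)|$ is locally Lipschitz, thus absolutely continuous, and for almost every $t$ one has $\frac{\dd}{\dd t}|Z_i(t)| = \sgn(Z_i(t))\dot Z_i(t)$. Summing over $i$ and normalising gives, for almost every $t \geq 0$,
\[ \frac{\dd}{\dd t}\left\|\mathbf{Z}(t)\right\|_{\ell^1_0(\T_N)} = \left\langle \bm\sgn(\mathbf{U}^N(t) - \mathbf{V}^N(t)),\, \mathbf{b}(\mathbf{U}^N(t)) - \mathbf{b}(\mathbf{V}^N(t)) \right\rangle_{\ell^2(\T_N)}. \]
By Lemma~\ref{L1Cdrift}(i), the right-hand side is non-positive for every $t$. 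Since $t \mapsto \|\mathbf{Z}(t)\|_{\ell^1_0(\T_N)}$ is absolutely continuous with almost everywhere non-positive derivative, it is non-increasing, which is precisely the asserted inequality for all $0 \leq s \leq t$.

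The only genuinely delicate point is the a.e. chain rule for $|Z_i|$ at times where $Z_i$ vanishes, since $z \mapsto |z|$ is not differentiable at $0$. This is settled by the classical fact that a $C^1$ (indeed absolutely continuous) real function has a.e. vanishing derivative on each of its level sets: on $\{t : Z_i(t)=0\}$ one has $\dot Z_i(t)=0$ for a.e. such $t$, so both sides of the identity vanish there irrespective of the convention for $\sgn(0)$. Alternatively, one may bypass this by approximating $z \mapsto |z|$ by smooth convex functions $\phi_\varepsilon$ with non-decreasing derivative $\phi_\varepsilon' \to \sgn$, rerunning the computation of Lemma~\ref{L1Cdrift}(i) with $\phi_\varepsilon'$ in place of $\bm\sgn$ — the monotonicity of $\phi_\varepsilon'$ being exactly the property used there — and letting $\varepsilon \to 0$.
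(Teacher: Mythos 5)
Your proposal is correct and follows essentially the same route as the paper: cancel the common noise to obtain an absolutely continuous difference process, differentiate its $\ell^1_0(\T_N)$ norm, and apply Lemma~\ref{L1Cdrift}(i) to conclude that this norm is non-increasing. The only difference is that you spell out the a.e.\ chain rule for $z\mapsto|z|$ at zeros of $Z_i$ (or the smooth convex approximation of the absolute value), a point the paper's proof passes over silently; this is a welcome clarification rather than a deviation.
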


\begin{proof}
 Since $(\mathbf U^N(t))_{t\geq0}$ and $(\mathbf V^N(t))_{t\geq0}$ are driven by the same Wiener process, then $(\mathbf U^N(t)-\mathbf V^N(t))_{t\geq0}$ is an absolutely continuous process, and 
 \[ \dd(\mathbf U^N(t)-\mathbf V^N(t))=\left(\mathbf b(\mathbf U^N(t))-\mathbf b(\mathbf V^N(t))\right)\dd t . \]
 In particular, we can write for all $t\geq0$,
 \[ \frac\dd{\dd t}\left\|\mathbf U^N(t)-\mathbf V^N(t)\right\|_{\ell^1_0(\T_N)}=\left\langle\bm\sgn\left(\mathbf U^N(t)-\mathbf V^N(t)\right),\mathbf b(\mathbf U^N(t))-\mathbf b(\mathbf V^N(t))\right\rangle_{\ell^2(\T_N)}\leq 0 , \]
 where the inequality comes from Lemma~\ref{L1Cdrift}.(i), and the result follows by integrating in time.
\end{proof}

\begin{corollary}[Feller property for $\mathbf{U}^N$]\label{feller}
The solution $(\mathbf U^N(t))_{t\geq0}$ of Equation~\eqref{FVS} satisfies the Feller property, \emph{i.e.} for any continuous and bounded function $\varphi:\R^N_0\to\R$ and any $t\geq0$, the mapping
\[\mathbf u_0\in\R^N_0\longmapsto\E_{\mathbf u_0}\left[\varphi(\mathbf U^N(t))\right]\in\R \]
is continuous and bounded, where the notation $\E_{\mathbf u_0}$ indicates that $\mathbf U^N(0)=\mathbf u_0$.
\end{corollary}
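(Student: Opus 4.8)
The plan is to deduce the Feller property directly from the $\ell^1_0(\T_N)$ contraction of Proposition~\ref{L1C} via a synchronous coupling argument. Boundedness is immediate since $|\E_{\mathbf u_0}[\varphi(\mathbf U^N(t))]|\leq\|\varphi\|_\infty$ for every $\mathbf u_0$. For continuity, I would fix $t\geq0$ and a sequence $(\mathbf u_0^{(n)})_{n}$ converging to $\mathbf u_0$ in $\R^N_0$, then realise on a common probability space the solutions $(\mathbf U^{N,(n)}(t))_{t\geq0}$ and $(\mathbf U^N(t))_{t\geq0}$ of~\eqref{FVS} with respective initial conditions $\mathbf u_0^{(n)}$ and $\mathbf u_0$, all driven by the \emph{same} $Q$-Wiener process $\mathbf W^{Q,N}$. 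By the strong well-posedness of Proposition~\ref{SDEwellposed}, the law of each solution depends only on its initial condition, so that $\E[\varphi(\mathbf U^{N,(n)}(t))]=\E_{\mathbf u_0^{(n)}}[\varphi(\mathbf U^N(t))]$ and likewise for $\mathbf u_0$; this legitimates the use of a common driving noise.

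Proposition~\ref{L1C} applied to this pair of solutions then gives, almost surely,
\[ \left\|\mathbf U^{N,(n)}(t)-\mathbf U^N(t)\right\|_{\ell^1_0(\T_N)}\leq\left\|\mathbf u_0^{(n)}-\mathbf u_0\right\|_{\ell^1_0(\T_N)}, \]
and the right-hand side tends to $0$ as $n\to\infty$. Since $\R^N_0$ is finite-dimensional, all norms on it are equivalent, so $\mathbf U^{N,(n)}(t)\to\mathbf U^N(t)$ almost surely. The continuity of $\varphi$ yields $\varphi(\mathbf U^{N,(n)}(t))\to\varphi(\mathbf U^N(t))$ almost surely, and as $\varphi$ is bounded, the dominated convergence theorem gives $\E_{\mathbf u_0^{(n)}}[\varphi(\mathbf U^N(t))]\to\E_{\mathbf u_0}[\varphi(\mathbf U^N(t))]$, which is the desired continuity.

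I do not expect any genuine obstacle here: the whole argument is carried by the already-established contraction property, so the proof is essentially a one-line coupling estimate followed by dominated convergence. The only two points deserving a word of care are, first, the remark that the transition operator $\mathbf u_0\mapsto\E_{\mathbf u_0}[\varphi(\mathbf U^N(t))]$ depends solely on the initial datum (so that the synchronously coupled solutions indeed compute the quantities we wish to compare), and second, the passage from $\ell^1_0(\T_N)$ convergence to convergence in the Euclidean norm of $\R^N_0$ before invoking the continuity of $\varphi$, which is harmless by finite-dimensionality.
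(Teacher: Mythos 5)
Your argument is correct and is exactly the route the paper intends: Corollary~\ref{feller} is presented as an immediate consequence of the $\ell^1_0(\T_N)$ contraction of Proposition~\ref{L1C}, via synchronous coupling, equivalence of norms on $\R^N_0$, and dominated convergence. No discrepancy with the paper's approach.
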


The Lyapunov condition~\eqref{eq:L2H1} implies that for any initial condition $\mathbf{u}_0 \in \R^N_0$, the family of probability measures $(\frac{1}{t}\int_0^t \mathbb{P}_{\mathbf{u}_0}(\mathbf{U}^N(s) \in \cdot)\dd s)_{t \geq 0}$ is tight (see~\cite[Section~3.2.2]{Mar19} for details), which by the Krylov--Bogoliubov criterion~\cite[Corollary~3.1.2]{DZ96} implies the following result.

\begin{prop}[Existence of an invariant measure for $\mathbf{U}^N$]\label{existenceIMSDE}
The solution $(\mathbf U^N(t))_{t\geq0}$ of~\eqref{FVS} admits an invariant measure $\vartheta_N\in\mathcal P_2(\R^N_0)$. 
\end{prop}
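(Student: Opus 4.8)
The plan is to invoke the Krylov--Bogoliubov criterion, whose two hypotheses are the Feller property---already furnished by Corollary~\ref{feller}---and the tightness of a family of time-averaged occupation measures. All the analytical work reduces to extracting this tightness from the Lyapunov estimate~\eqref{eq:L2H1}, after which the finite-dimensionality of $\R^N_0$ makes the remaining steps routine.

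First I would fix an arbitrary initial condition $\mathbf u_0\in\R^N_0$ and turn~\eqref{eq:L2H1} into a uniform-in-time moment bound. Applying the It\^o formula to $\|\mathbf U^N(t)\|^2_{\ell^2_0(\T_N)}$ and using a localisation argument to discard the martingale part (exactly as in the sketch of the proof of Lemma~\ref{lem:recursivebound}), the inequality $\mathcal L_N\|\mathbf v\|^2_{\ell^2_0(\T_N)}\leq-2\nu\|\mathbf v\|^2_{\ell^2_0(\T_N)}+\mathsf D$ yields
\[ \frac{\dd}{\dd t}\E_{\mathbf u_0}\!\left[\|\mathbf U^N(t)\|^2_{\ell^2_0(\T_N)}\right]\leq-2\nu\,\E_{\mathbf u_0}\!\left[\|\mathbf U^N(t)\|^2_{\ell^2_0(\T_N)}\right]+\mathsf D. \]
By Gr\"onwall's lemma this gives $\E_{\mathbf u_0}[\|\mathbf U^N(t)\|^2_{\ell^2_0(\T_N)}]\leq\|\mathbf u_0\|^2_{\ell^2_0(\T_N)}+\mathsf D/(2\nu)$ for all $t\geq0$, a bound independent of $t$.

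Next I would introduce the occupation measures $\eta_t:=\frac1t\int_0^t\P_{\mathbf u_0}(\mathbf U^N(s)\in\cdot)\,\dd s$. Integrating the previous bound gives $\int_{\R^N_0}\|\mathbf v\|^2_{\ell^2_0(\T_N)}\,\dd\eta_t(\mathbf v)\leq\|\mathbf u_0\|^2_{\ell^2_0(\T_N)}+\mathsf D/(2\nu)$ uniformly in $t$. Since $\R^N_0$ is finite-dimensional, closed balls are compact, so Markov's inequality turns this uniform second-moment bound into tightness of $(\eta_t)_{t\geq0}$. Prokhorov's theorem then produces a sequence $t_n\to\infty$ along which $\eta_{t_n}$ converges weakly to some $\vartheta_N\in\mathcal P(\R^N_0)$.

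It remains to identify $\vartheta_N$ as invariant and to check its second moment. Invariance is the content of the Krylov--Bogoliubov argument: the Feller property of Corollary~\ref{feller} ensures that for every bounded continuous $\varphi$ the map $\mathbf u\mapsto\E_{\mathbf u}[\varphi(\mathbf U^N(t))]$ is bounded continuous, so passing to the weak limit in $\int\E_{\mathbf v}[\varphi(\mathbf U^N(t))]\,\dd\eta_{t_n}(\mathbf v)$ and using the Chapman--Kolmogorov relation (together with the telescoping bound $|\int\E_{\mathbf v}[\varphi(\mathbf U^N(t))]\dd\eta_{t_n}-\int\varphi\,\dd\eta_{t_n}|\leq 2t\|\varphi\|_\infty/t_n$) shows $\int\E_{\mathbf v}[\varphi(\mathbf U^N(t))]\dd\vartheta_N(\mathbf v)=\int\varphi\,\dd\vartheta_N$, i.e.\ $\vartheta_N$ is invariant. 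Finally, $\mathbf v\mapsto\|\mathbf v\|^2_{\ell^2_0(\T_N)}$ is nonnegative and lower semicontinuous, so the portmanteau theorem gives $\int\|\mathbf v\|^2_{\ell^2_0(\T_N)}\dd\vartheta_N\leq\liminf_n\int\|\mathbf v\|^2_{\ell^2_0(\T_N)}\dd\eta_{t_n}<\infty$, whence $\vartheta_N\in\mathcal P_2(\R^N_0)$. The only genuinely delicate point is the localisation needed to justify the Dynkin-type estimate for the unbounded function $\|\cdot\|^2_{\ell^2_0(\T_N)}$; everything else is standard once the Lyapunov structure and the Feller property are in hand.
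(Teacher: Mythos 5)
Your proposal is correct and follows exactly the route the paper takes: the Lyapunov estimate~\eqref{eq:L2H1} gives a uniform-in-time second-moment bound, hence tightness of the time-averaged occupation measures, and the Krylov--Bogoliubov criterion together with the Feller property of Corollary~\ref{feller} yields the invariant measure; the paper merely defers these details to~\cite[Section~3.2.2]{Mar19}. Your additional lower-semicontinuity argument for $\vartheta_N\in\mathcal P_2(\R^N_0)$ is the standard way to conclude and is consistent with the paper's claim.
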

\subsubsection{Uniqueness of the invariant measure}\label{sec:uniqueness}

The proof of uniqueness of the invariant measure $\vartheta_N$ relies on the intermediary Lemmas~\ref{entrance} and~\ref{entranceLB} which are stated below. They follow from standard but a bit lengthy computation, similar instances of which can be found in~\cite{DV15,MR19}. For the sake of legibility of the whole argument, we therefore postpone their proofs to Appendix~\ref{app:uniq}.

\begin{lemma}[Hitting any neighbourhood of $0$ with positive probability]\label{entrance}
 Let $(\mathbf U^N(t))_{t\geq0}$ and $(\mathbf V^N(t))_{t\geq0}$ be two solutions of~\eqref{FVS} driven by the same Wiener process. Then, for all $M>0$ and all $\varepsilon>0$, there exists $t_{\varepsilon,M}>0$ such that
 \[ p_{\varepsilon,M} := \inf \P_{(\mathbf u_0,\mathbf v_0)} \left( \left\|\mathbf U^N(t_{\varepsilon,M}) \right\|_{\ell^1_0(\T_N)} + \left\|\mathbf V^N(t_{\varepsilon,M}) \right\|_{\ell^1_0(\T_N)} \leq \varepsilon \right) > 0, \]
 where the notation $\P_{(\mathbf u_0,\mathbf v_0)}$ indicates that $\mathbf{U}^N(0)=\mathbf{u}_0$ and $\mathbf{V}^N(0)=\mathbf{v}_0$ and the infimum is taken over pairs of initial conditions $(\mathbf u_0,\mathbf v_0)$ such that $\|\mathbf u_0\|_{\ell^2_0(\T_N)}\vee\|\mathbf v_0\|_{\ell^2_0(\T_N)}\leq M$.
\end{lemma}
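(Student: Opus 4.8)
The plan is to exploit the fact that both solutions are driven by the \emph{same} noise, together with the dissipativity of the drift: on the event that the common driving noise stays small on $[0,t_{\varepsilon,M}]$, the viscous dissipation drags each solution towards $0$ irrespective of its (bounded) initial condition, and this event has a probability that is positive and does not depend on $(\mathbf u_0,\mathbf v_0)$. Since $N$ is fixed throughout, all norms on $\R^N_0$ are equivalent and I may freely use $N$-dependent constants; in particular it suffices to control the $\ell^2_0(\T_N)$ norms, because $\|\cdot\|_{\ell^1_0(\T_N)}\leq\|\cdot\|_{\ell^2_0(\T_N)}$ by~\eqref{eq:normorder}.

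Following the computation used in the proof of Proposition~\ref{L1C}, I would first remove the noise by setting $\mathbf Y(t):=\mathbf U^N(t)-\mathbf W^{Q,N}(t)$, which is absolutely continuous and satisfies $\dot{\mathbf Y}(t)=\mathbf b(\mathbf U^N(t))$ with $\mathbf Y(0)=\mathbf u_0$. Writing $\mathbf U^N=\mathbf Y+\mathbf W^{Q,N}$ and combining Lemma~\ref{L1Cdrift}.(ii), the Poincar\'e inequality~\eqref{eq:poinca-discr} and the Cauchy--Schwarz inequality applied to the cross term, I obtain the pathwise energy inequality
\[
\frac12\frac{\dd}{\dd t}\|\mathbf Y(t)\|_{\ell^2_0(\T_N)}^2 \leq -\nu\|\mathbf U^N(t)\|_{\ell^2_0(\T_N)}^2 + \|\mathbf W^{Q,N}(t)\|_{\ell^2_0(\T_N)}\,\|\mathbf b(\mathbf U^N(t))\|_{\ell^2_0(\T_N)}.
\]
The only positive contribution is the last term, in which the polynomial growth of $\mathbf b$ (inherited from~\eqref{PG} and~\eqref{eq:consistency} through $\mathbf b(\mathbf v)=-\mathbf D^{(1,-)}_N\overline{\mathbf A}^N(\mathbf v)+\nu\mathbf D^{(2)}_N\mathbf v$) gives $\|\mathbf b(\mathbf v)\|_{\ell^2_0(\T_N)}\leq \mathsf C_N\bigl(1+\|\mathbf v\|_{\ell^2_0(\T_N)}^{q}\bigr)$ for some $q\in\N^*$ and $\mathsf C_N>0$. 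This is exactly the main obstacle, since a priori the polynomial term could dominate the dissipation.

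It is controlled by working on the small-noise event
\[
\Omega_\delta:=\Bigl\{\sup_{t\in[0,t_{\varepsilon,M}]}\|\mathbf W^{Q,N}(t)\|_{\ell^2_0(\T_N)}\leq\delta\Bigr\},
\]
whose probability $\P(\Omega_\delta)$ is positive and independent of $(\mathbf u_0,\mathbf v_0)$, because the zero path belongs to the topological support of the centred Gaussian law of $\mathbf W^{Q,N}$ on $C([0,t_{\varepsilon,M}];\R^N_0)$, so every sup-norm ball around it has positive measure. On $\Omega_\delta$, as long as $\|\mathbf Y(t)\|_{\ell^2_0(\T_N)}\leq M$ one has $\|\mathbf U^N(t)\|_{\ell^2_0(\T_N)}\leq M+\delta$, hence $\|\mathbf b(\mathbf U^N(t))\|_{\ell^2_0(\T_N)}\leq \mathsf C_N(1+(M+\delta)^q)=:c_{M,N}$, and the inequality above becomes
\[
\frac12\frac{\dd}{\dd t}\|\mathbf Y(t)\|_{\ell^2_0(\T_N)}^2 \leq -\nu\bigl(\|\mathbf Y(t)\|_{\ell^2_0(\T_N)}-\delta\bigr)^2 + \delta\,c_{M,N}.
\]
For $\delta$ small enough the right-hand side is strictly negative whenever $\|\mathbf Y(t)\|_{\ell^2_0(\T_N)}\in[M/2,M]$; a stopping-time argument on the first exit from the ball of radius $M$ then shows $\|\mathbf Y(t)\|_{\ell^2_0(\T_N)}\leq M$ for all $t\in[0,t_{\varepsilon,M}]$, which removes any circularity, and a Gr\"onwall-type estimate on the region where $\|\mathbf Y(t)\|_{\ell^2_0(\T_N)}\geq 2\delta$ gives exponential decay at rate $\nu/2$ down to an equilibrium level of order $\sqrt\delta$.

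I would conclude by choosing first $\delta$ small and then $t_{\varepsilon,M}$ large, both depending only on $\varepsilon,M,\nu,N$, so that on $\Omega_\delta$ one has $\|\mathbf Y(t_{\varepsilon,M})\|_{\ell^2_0(\T_N)}\leq\varepsilon/4$ and therefore $\|\mathbf U^N(t_{\varepsilon,M})\|_{\ell^1_0(\T_N)}\leq\|\mathbf U^N(t_{\varepsilon,M})\|_{\ell^2_0(\T_N)}\leq\varepsilon/4+\delta\leq\varepsilon/2$. The very same estimate applies verbatim to $\mathbf V^N$, which is driven by the same $\mathbf W^{Q,N}$ and starts from $\|\mathbf v_0\|_{\ell^2_0(\T_N)}\leq M$, with the event $\Omega_\delta$ and the constants common to both. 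Hence on $\Omega_\delta$,
\[
\|\mathbf U^N(t_{\varepsilon,M})\|_{\ell^1_0(\T_N)}+\|\mathbf V^N(t_{\varepsilon,M})\|_{\ell^1_0(\T_N)}\leq\varepsilon
\]
uniformly over $\|\mathbf u_0\|_{\ell^2_0(\T_N)}\vee\|\mathbf v_0\|_{\ell^2_0(\T_N)}\leq M$, so that $p_{\varepsilon,M}\geq\P(\Omega_\delta)>0$. The two delicate points to verify carefully are the positivity of the small-ball probability $\P(\Omega_\delta)$ and the uniformity of the choice of $(\delta,t_{\varepsilon,M})$ over the initial ball, both of which follow from the argument above.
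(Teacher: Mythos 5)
Your argument is correct and reaches the stated conclusion, but it follows a genuinely different route from the paper's. The paper's proof introduces the noiseless flows $\mathbf u^N,\mathbf v^N$ started from the \emph{same} initial conditions, uses the dissipativity of Lemma~\ref{L1Cdrift}.(ii) together with~\eqref{eq:poinca-discr} only on these deterministic trajectories (yielding decay to level $\varepsilon/2$ by the explicit time $t_{\varepsilon,M}=-\frac1{2\nu}\log\frac{\varepsilon^2}{16M^2}$), and then controls the differences $\mathbf U^N-\mathbf u^N$ and $\mathbf V^N-\mathbf v^N$ on the small-noise event via the local Lipschitz constant $\mathsf L_{M+\varepsilon}$ of $\mathbf b$ and Gr\"onwall's lemma, with exit times to keep all four trajectories in the ball where that constant is valid. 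You instead apply the dissipativity directly to the shifted process $\mathbf Y=\mathbf U^N-\mathbf W^{Q,N}$ and need only local boundedness of $\mathbf b$, not its Lipschitz constant. The trade-offs are real but minor: the paper's Gr\"onwall step forces the noise threshold $\delta_\varepsilon=\frac\varepsilon4\e^{-\mathsf L_{M+\varepsilon}t_{\varepsilon,M}}$ to be exponentially small in $t_{\varepsilon,M}$, whereas your $\delta$ only needs to be polynomially small in $\varepsilon$; in exchange you must carry out the bootstrap argument keeping $\|\mathbf Y(t)\|_{\ell^2_0(\T_N)}\leq M$ so that the local bound $c_{M,N}$ on $\mathbf b$ remains valid, and you must treat separately the regime $\|\mathbf Y(t)\|_{\ell^2_0(\T_N)}<\delta$, where the inequality $\|\mathbf U^N(t)\|_{\ell^2_0(\T_N)}\geq\|\mathbf Y(t)\|_{\ell^2_0(\T_N)}-\delta$ cannot be squared as written (there one drops the dissipative term and checks that a ball of radius of order $\sqrt\delta$ is forward-invariant). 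Both proofs ultimately rest on the same two pillars, namely the positivity of the small-ball probability of the finite-dimensional Wiener process $\mathbf W^{Q,N}$, uniform in $(\mathbf u_0,\mathbf v_0)$, and the $\ell^2_0(\T_N)$ dissipativity of $\mathbf b$; the details you flag as needing care (support of the Gaussian law, uniformity of $\delta$ and $t_{\varepsilon,M}$ over the initial ball) are indeed the right ones and go through.
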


In the setting of Lemma~\ref{entrance}, we let, for any $M \geq 0$,
\begin{equation}\label{eq:entrancetimedef}
\tau_M := \inf \left\{ t\geq0 : \left\|\mathbf U^N(t) \right\|_{\ell^2_0(\T_N)} \vee \left\|\mathbf V^N(t) \right\|_{\ell^2_0(\T_N)} \leq M \right\} .
\end{equation}

\begin{lemma}[Almost sure entrance in some ball]\label{entranceLB}
 There exists $M>0$ such that for any deterministic initial conditions $\mathbf u_0$, $\mathbf v_0 \in \R^N_0$, $\tau_M < +\infty$ almost surely.
\end{lemma}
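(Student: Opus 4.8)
The plan is to exploit the dissipativity of the squared $\ell^2_0(\T_N)$ norm, already encapsulated in the Lyapunov estimate~\eqref{eq:L2H1}, applied simultaneously to both copies of the process. Set
\[ \Phi(t) := \left\|\mathbf U^N(t)\right\|_{\ell^2_0(\T_N)}^2 + \left\|\mathbf V^N(t)\right\|_{\ell^2_0(\T_N)}^2. \]
Since $\mathbf U^N$ and $\mathbf V^N$ both solve~\eqref{FVS} (driven by the same noise), Itô's formula applied to each summand, combined with the dissipativity bound of Lemma~\ref{L1Cdrift}.(ii), the Poincaré inequality~\eqref{eq:poinca-discr} and the noise estimate~\eqref{eq:sigma-glob}, gives
\[ \dd \Phi(t) \leq \left( -2\nu \Phi(t) + 2\mathsf{D} \right)\dd t + \dd M(t), \]
where $M(t) = 2\int_0^t \langle \mathbf U^N(s) + \mathbf V^N(s), \dd\mathbf W^{Q,N}(s)\rangle_{\ell^2(\T_N)}$ is a local martingale. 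The diffusive correction contributes $2\mathsf{D}$ rather than $\mathsf{D}$ precisely because both processes are forced by the \emph{same} $\mathbf W^{Q,N}$.

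Next I would fix the threshold. Choosing any $M>0$ with $M^2 > \mathsf{D}/\nu$, set $\delta := 2\nu M^2 - 2\mathsf{D} > 0$; then on the event $\{\Phi > M^2\}$ the drift of $\Phi$ is bounded above by $-\delta$. Recalling the definition~\eqref{eq:entrancetimedef} of $\tau_M$, observe that $\tau_M > t$ forces $\left\|\mathbf U^N(s)\right\|_{\ell^2_0(\T_N)} \vee \left\|\mathbf V^N(s)\right\|_{\ell^2_0(\T_N)} > M$, hence $\Phi(s) > M^2$, for every $s \in [0,t]$. Consequently $t \mapsto \Phi(t \wedge \tau_M) + \delta\,(t \wedge \tau_M)$ has nonpositive drift. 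To turn this into a rigorous inequality I would localise further by $\sigma_n := \inf\{t \geq 0 : \Phi(t) \geq n\}$, which tends to $+\infty$ almost surely because~\eqref{FVS} has a global solution (Proposition~\ref{SDEwellposed}); up to $\sigma_n$ the integrand defining $M$ is bounded, so $M(\cdot \wedge \tau_M \wedge \sigma_n)$ is a genuine martingale and Dynkin's formula yields
\[ \E\left[ \Phi(t \wedge \tau_M \wedge \sigma_n) \right] + \delta\, \E\left[ t \wedge \tau_M \wedge \sigma_n \right] \leq \Phi(0). \]
Since $\Phi \geq 0$, this gives $\delta\, \E[ t \wedge \tau_M \wedge \sigma_n ] \leq \Phi(0)$.

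Letting $n \to \infty$ and then $t \to \infty$ by monotone convergence gives $\delta\,\E[\tau_M] \leq \Phi(0) = \|\mathbf u_0\|_{\ell^2_0(\T_N)}^2 + \|\mathbf v_0\|_{\ell^2_0(\T_N)}^2 < +\infty$, whence $\tau_M < +\infty$ almost surely. Crucially, $M$ depends only on $\mathsf{D}$ and $\nu$, not on the initial data, as required by the statement. The only delicate point is the treatment of the local martingale: the process may be arbitrarily large before $\tau_M$, so one cannot take expectations naively, and it is the double localisation by $\sigma_n$, together with the non-explosion guaranteed by Proposition~\ref{SDEwellposed}, that makes the optional-stopping step legitimate.
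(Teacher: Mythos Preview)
Your proof is correct and follows essentially the same Lyapunov argument as the paper: both exploit the dissipativity estimate to show $\E[\tau_M]<+\infty$ for any $M>\sqrt{\mathsf{D}/\nu}$. The only technical difference is that the paper handles the stochastic integral by showing directly, via the moment bound~\eqref{eq:integralinduction} from Lemma~\ref{lem:recursivebound}, that it is a genuine martingale up to $\tau_M\wedge t$, whereas you introduce an extra localisation by $\sigma_n$ and pass to the limit; both treatments are standard and lead to the same bound $\E[\tau_M]\leq \Phi(0)/\delta$.
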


We now detail how Lemmas~\ref{entrance} and~\ref{entranceLB} allow to complete the proof of uniqueness of an invariant measure for $\mathbf{U}^N$.

\begin{proof}[Proof of the uniqueness of an invariant measure for $\mathbf{U}^N$.] We start by fixing $\varepsilon>0$, to which we associate the quantities $t_{\varepsilon,M}$ and $p_{\varepsilon,M}$ defined at Lemma~\ref{entrance}, where $M$ has been defined at Lemma~\ref{entranceLB}. Let $(\mathbf U^N(t))_{t\geq0}$ and $(\mathbf V^N(t))_{t\geq0}$ start respectively from arbitrary deterministic initial conditions $\mathbf u_0$ and $\mathbf v_0$ and be driven by the same Wiener process. 
We define the increasing stopping time sequence
 \[ T_1 := \tau_M \]
 \[ T_2 := \inf \left\{ t \geq T_1 + t_{\varepsilon,M} : \left\|\mathbf U^N(t) \right\|_{\ell^2_0(\T_N)} \vee \left\|\mathbf V^N(t) \right\|_{\ell^2_0(\T_N)} \leq M \right\} \]
 \[ T_3 := \inf \left\{ t \geq T_2 + t_{\varepsilon,M} : \left\|\mathbf U^N(t) \right\|_{\ell^2_0(\T_N)} \vee \left\|\mathbf V^N(t) \right\|_{\ell^2_0(\T_N)} \leq M \right\} \]
 \[ \vdots \]
 
By the strong Markov property and Lemma~\ref{entranceLB}, each term of this sequence is finite almost surely. We introduce the event
\begin{equation*}
  E_j = \left\{\left\|\mathbf U^N(T_j+t_{\varepsilon,M}) \right\|_{\ell^1_0(\T_N)} + \left\|\mathbf V^N(T_j+t_{\varepsilon,M}) \right\|_{\ell^1_0(\T_N)} > \varepsilon\right\}
\end{equation*}
and claim that
 \begin{equation*}
 \forall J \in \N^* , \qquad \P \left(\cap_{j=1}^J E_j\right) \leq (1-p_{\varepsilon,M})^J .
 \end{equation*}
 Indeed, it is true for $J=1$ thanks to the strong Markov property and Lemma~\ref{entrance}:
\begin{equation*}
  \P \left( E_1 \right) = \E \left[ \P \left( \left\|\mathbf U^N(\tau_M+t_{\varepsilon,M}) \right\|_{\ell^1_0(\T_N)} + \left\|\mathbf V^N(\tau_M+t_{\varepsilon,M}) \right\|_{\ell^1_0(\T_N)} > \varepsilon   | \mathcal F_{\tau_M} \right) \right] \leq 1-p_{\varepsilon,M} ,
\end{equation*}
and the general case follows by induction. 
Letting $J\to+\infty$, we get
\begin{equation*}
 \P \left( \cap_{j=1}^\infty E_j \right) = \lim_{J\to\infty} \P \left( \cap_{j=1}^J E_j \right)  \leq \lim_{J\to\infty} (1-p_{\varepsilon,M})^J = 0 ,
\end{equation*}
and consequently, almost surely there exists some $t=T_j + t_{\varepsilon,M}$ such that 
\begin{equation*}
  \|\mathbf U^N(t) -\mathbf V^N(t) \|_{\ell^1_0(\T_N)} \leq \|\mathbf U^N(t) \|_{\ell^1_0(\T_N)} + \|\mathbf V^N(t) \|_{\ell^1_0(\T_N)} \leq \varepsilon.
\end{equation*}

Now recall that thanks to Proposition~\ref{L1C}, $\|\mathbf U^N(t)-\mathbf V^N(t)\|_{\ell^1_0(\T_N)}$ is non-increasing in time almost surely. Since $\varepsilon$ has been chosen arbitrarily, we deduce that $\|\mathbf U^N(t)-\mathbf V^N(t)\|_{\ell^1_0(\T_N)}$ converges almost surely to $0$ as $t\to+\infty$ when the initial conditions are deterministic, and since this assertion is true for any pair of initial conditions, it also holds for random and $\mathcal F_0$-measurable initial conditions. Let $\phi:\ell^1_0(\T_N)\to\R$ be a Lipschitz continuous and bounded test function, with Lipschitz constant $\mathsf{L}_\phi$. We have in particular, almost surely,
\begin{equation}\label{testfunction}
\lim_{t\to\infty}\left|\phi(\mathbf U^N(t))-\phi(\mathbf V^N(t))\right|\leq \mathsf{L}_\phi\lim_{t\to\infty}\left\|\mathbf U^N(t)-\mathbf V^N(t)\right\|_{\ell^1_0(\T_N)}=0.
\end{equation}

To conclude the proof, assume that there exist two invariant measures $\nu^{(1)}_N$ and $\nu^{(2)}_N$ for the solution of~\eqref{FVS}, and take random initial conditions $\mathbf U^N_0$ and $\mathbf V^N_0$ with distributions $\nu^{(1)}_N$ and $\nu^{(2)}_N$ respectively. We have for all $t\geq0$,
\[ \left|\E\left[\phi\left(\mathbf U^N_0\right)\right]-\E\left[\phi\left(\mathbf V^N_0\right)\right]\right|=\left|\E\left[\phi\left(\mathbf U^N(t)\right)\right]-\E\left[\phi\left(\mathbf V^N(t)\right)\right]\right|\leq\E\left[\left|\phi\left(\mathbf U^N(t)\right)-\phi\left(\mathbf V^N(t)\right)\right|\right] . \]
Letting $t$ go to $+\infty$, by~\eqref{testfunction} and the dominated convergence theorem, we have
\[ \left|\E\left[\phi\left(\mathbf U^N_0\right)\right]-\E\left[\phi\left(\mathbf V^N_0\right)\right]\right|\leq\lim_{t\to\infty}\E\left[\left|\phi\left(\mathbf U^N(t)\right)-\phi\left(\mathbf V^N(t)\right)\right|\right] =0. \]
As a consequence, $\mathbf U^N_0$ and $\mathbf V^N_0$ have the same distribution, meaning that $\nu^{(1)}_N=\nu^{(2)}_N$.
\end{proof}

\begin{remark}
  This proof shows in addition that for any initial distribution, the law of $\mathbf{U}^N(t)$ converges, when $t \to +\infty$, to the invariant measure $\vartheta_N$.
\end{remark}

\subsection{The split-step scheme}\label{IMschemes:SSBE}

In this subsection, we first show that the implicit equation in~\eqref{SSBE} has a unique solution, which ensures the well-posedness of the sequence $(\mathbf{U}^{N,\Delta t}_n)_{n \in \N}$. We then prove the existence and the uniqueness of an invariant measure $\vartheta_{N,\Delta t}$ for this Markov chain. The general organisation of the arguments is rather close to Subsection~\ref{IMschemes:FVS}, and we only emphasise which points have to be adapted. 

\subsubsection{Well-posedness} The following preliminary result ensures that the scheme~\eqref{SSBE} is well-posed.

\begin{prop}[Well-posedness of~\eqref{SSBE}]\label{SSBEwellposed}
For any $\Delta t>0$ and $\mathbf v\in \R^N_0$, there exists a unique $\mathbf w \in \R^N_0$ such that $\mathbf w =\mathbf v + \Delta t\mathbf b(\mathbf w)$.
\end{prop}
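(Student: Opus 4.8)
The plan is to recast the implicit equation as a zero-finding problem for the continuous map $F:\R^N_0\to\R^N_0$ defined by $F(\mathbf w):=\mathbf w-\Delta t\,\mathbf b(\mathbf w)$, since $\mathbf w=\mathbf v+\Delta t\,\mathbf b(\mathbf w)$ is equivalent to $F(\mathbf w)=\mathbf v$. As both $\mathbf b$ and $F$ take values in $\R^N_0$, the whole problem lives in the finite-dimensional Euclidean space $(\R^N_0,\langle\cdot,\cdot\rangle_{\ell^2_0(\T_N)})$. I would deduce uniqueness from the $\ell^1_0(\T_N)$ contraction property of $\mathbf b$, and existence from the $\ell^2_0(\T_N)$ dissipativity (both parts of Lemma~\ref{L1Cdrift}) combined with a standard topological argument.

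For uniqueness, suppose $\mathbf w_1$ and $\mathbf w_2$ both solve $F(\mathbf w_i)=\mathbf v$, so that $\mathbf w_1-\mathbf w_2=\Delta t\,(\mathbf b(\mathbf w_1)-\mathbf b(\mathbf w_2))$. Taking the $\langle\cdot,\cdot\rangle_{\ell^2(\T_N)}$ scalar product with $\bm\sgn(\mathbf w_1-\mathbf w_2)$, the left-hand side equals $\|\mathbf w_1-\mathbf w_2\|_{\ell^1_0(\T_N)}$ because $\sgn(z)z=|z|$ for every $z\in\R$, while the right-hand side is $\Delta t\,\langle\bm\sgn(\mathbf w_1-\mathbf w_2),\mathbf b(\mathbf w_1)-\mathbf b(\mathbf w_2)\rangle_{\ell^2(\T_N)}\leq0$ by Lemma~\ref{L1Cdrift}.(i). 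Hence $\|\mathbf w_1-\mathbf w_2\|_{\ell^1_0(\T_N)}\leq0$, which forces $\mathbf w_1=\mathbf w_2$. Note that this requires no smallness assumption on $\Delta t$.

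For existence, I would first use the dissipativity $\langle\mathbf w,\mathbf b(\mathbf w)\rangle_{\ell^2_0(\T_N)}\leq-\nu\|\mathbf D^{(1,+)}_N\mathbf w\|_{\ell^2_0(\T_N)}^2\leq0$ from Lemma~\ref{L1Cdrift}.(ii) to obtain the coercivity estimate
\[
\langle\mathbf w,F(\mathbf w)\rangle_{\ell^2_0(\T_N)}=\|\mathbf w\|_{\ell^2_0(\T_N)}^2-\Delta t\,\langle\mathbf w,\mathbf b(\mathbf w)\rangle_{\ell^2_0(\T_N)}\geq\|\mathbf w\|_{\ell^2_0(\T_N)}^2,
\]
valid for all $\mathbf w\in\R^N_0$ and all $\Delta t>0$. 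Fixing $\mathbf v\in\R^N_0$ and setting $G(\mathbf w):=F(\mathbf w)-\mathbf v$, the Cauchy--Schwarz inequality gives $\langle\mathbf w,G(\mathbf w)\rangle_{\ell^2_0(\T_N)}\geq\|\mathbf w\|_{\ell^2_0(\T_N)}\big(\|\mathbf w\|_{\ell^2_0(\T_N)}-\|\mathbf v\|_{\ell^2_0(\T_N)}\big)$, which is strictly positive on the sphere $\{\|\mathbf w\|_{\ell^2_0(\T_N)}=R\}$ as soon as $R>\|\mathbf v\|_{\ell^2_0(\T_N)}$. I would then invoke the classical consequence of Brouwer's fixed-point theorem: a continuous map $G$ on a closed ball satisfying $\langle\mathbf w,G(\mathbf w)\rangle\geq0$ on the bounding sphere must vanish somewhere in the ball (otherwise $\mathbf w\mapsto-R\,G(\mathbf w)/\|G(\mathbf w)\|$ would be a continuous self-map of the ball whose Brouwer fixed point would violate the sign condition). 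This produces a $\mathbf w$ with $G(\mathbf w)=0$, that is $F(\mathbf w)=\mathbf v$.

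The step requiring the most care is existence: since $\mathbf b$ is neither globally Lipschitz nor $\ell^2_0(\T_N)$-monotone --- the flux contribution $\langle\mathbf D^{(1,+)}_N(\mathbf w_1-\mathbf w_2),\overline{\mathbf A}^N(\mathbf w_1)-\overline{\mathbf A}^N(\mathbf w_2)\rangle_{\ell^2(\T_N)}$ has no definite sign --- neither a contraction-mapping nor a monotone-operator argument applies directly, and one genuinely relies on coercivity together with the topological argument above. The continuity of $F$ needed for this step follows from the $C^1$ regularity of $\overline A$ assumed in Assumption~\ref{ass:Abar}.
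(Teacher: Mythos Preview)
Your proof is correct and follows essentially the same route as the paper: uniqueness via the $\ell^1_0(\T_N)$ contraction of Lemma~\ref{L1Cdrift}.(i), and existence via the $\ell^2_0(\T_N)$ dissipativity of Lemma~\ref{L1Cdrift}.(ii) combined with a Brouwer-type argument. The only cosmetic differences are that the paper packages the topological step as a citation to~\cite[Theorem~3.3]{Dei85} rather than spelling out the Brouwer contradiction, and it sharpens the coercivity lower bound to $(1+\nu\Delta t)\|\mathbf w\|_{\ell^2_0(\T_N)}^2$ using the discrete Poincar\'e inequality~\eqref{eq:poinca-discr}, though your weaker bound $\|\mathbf w\|_{\ell^2_0(\T_N)}^2$ is already sufficient.
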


\begin{proof}
\emph{Uniqueness.} It is a straightforward consequence of Lemma~\ref{L1Cdrift}.(i): if $\mathbf w_1$ and $\mathbf w_2$ are two solutions, then
\begin{equation*}
\|\mathbf w_1-\mathbf w_2 \|_{\ell^1_0(\T_N)} = \left\langle\bm\sgn(\mathbf w_1-\mathbf w_2),\mathbf w_1-\mathbf w_2\right\rangle_{\ell^2(\T_N)} = \Delta t \langle \bm\sgn (\mathbf w_1-\mathbf w_2) ,\mathbf b(\mathbf w_1)-\mathbf b(\mathbf w_2) \rangle_{\ell^2(\T_N)} \leq 0 .
\end{equation*}

\emph{Existence.} The mapping $\bm{\mathrm{Id}}-\Delta t\mathbf b:\R^N_0\to\R^N_0$ is continuous. Furthermore, by Lemmas~\ref{L1Cdrift}.(ii) and~\eqref{eq:poinca-discr}, we have for all $\mathbf w\in\R^N_0$,
\begin{equation*}
\frac{\langle (\bm{\mathrm{Id}}-\Delta t\mathbf b)(\mathbf w),\mathbf w\rangle_{\ell^2_0(\T_N)}}{\|\mathbf w\|_{\ell^2_0(\T_N)}} = \|\mathbf w\|_{\ell^2_0(\T_N)}-\Delta t\frac{\langle\mathbf b(\mathbf w),\mathbf w\rangle_{\ell^2_0(\T_N)}}{\|\mathbf w\|_{\ell^2_0(\T_N)}} \geq\|\mathbf w\|_{\ell^2_0(\T_N)}+\nu\Delta t\frac{\|\mathbf D^{(1,+)}_N\mathbf w\|_{\ell^2_0(\T_N)}^2}{\|\mathbf w\|_{\ell^2_0(\T_N)}}\geq(1+\nu\Delta t)\|\mathbf w\|_{\ell^2_0(\T_N)} .
\end{equation*}
Thus, as a consequence of~\cite[Theorem~3.3]{Dei85}, $\bm{\mathrm{Id}}-\Delta t\mathbf b$ is surjective in $\R^N_0$ and, for any $\mathbf v\in\R^N_0$, there exists $\mathbf w\in\R^N_0$ such that $\mathbf w=\mathbf v+\Delta t\mathbf b(\mathbf w)$.
\end{proof}

\subsubsection{Existence of an invariant measure}

We first prove an $\ell^1_0(\T_N)$ contraction property in order to deduce the Feller property for $(\mathbf U^{N, \Delta t}_n)_{n\in\N}$. 

\begin{lemma}[$\ell^1_0(\T_N)$ contraction for $\mathbf{U}^{N,\Delta t}$]\label{L1CSSBE}
 Let $(\mathbf U^{N, \Delta t}_n )_{n\in\N}$ and $(\mathbf V^{N, \Delta t}_n )_{n\in\N}$ be two solutions of~\eqref{SSBE}, constructed with the same sequence of noise increments $(\Delta \mathbf W^{Q,N})_{n \in \N^*}$. Then, almost surely and for any $n\in\N$,
 \[ \left\|\mathbf U^{N, \Delta t}_{n+1} -\mathbf V^{N, \Delta t}_{n+1} \right\|_{\ell^1_0(\T_N)} \leq \left\|\mathbf U^{N, \Delta t}_n -\mathbf V^{N, \Delta t}_n \right\|_{\ell^1_0(\T_N)} . \]
\end{lemma}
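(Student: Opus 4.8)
The plan is to mimic the proof of Proposition~\ref{L1C} but at the discrete-in-time level, extracting the contraction from the implicit half-step of the split-step scheme and then checking that the explicit noise addition does not affect the $\ell^1_0(\T_N)$ distance. The key observation is that the two chains are built with the \emph{same} noise increments, so the randomness cancels in the second line of~\eqref{SSBE}.

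\begin{proof}
Fix $n \in \N$ and let $\mathbf U^{N,\Delta t}_{n+\frac12}$ and $\mathbf V^{N,\Delta t}_{n+\frac12}$ denote the respective intermediate values produced by the implicit step. Subtracting the first lines of~\eqref{SSBE} for the two chains gives
\[ \mathbf U^{N,\Delta t}_{n+\frac12} - \mathbf V^{N,\Delta t}_{n+\frac12} = \left(\mathbf U^{N,\Delta t}_n - \mathbf V^{N,\Delta t}_n\right) + \Delta t\left(\mathbf b\left(\mathbf U^{N,\Delta t}_{n+\frac12}\right) - \mathbf b\left(\mathbf V^{N,\Delta t}_{n+\frac12}\right)\right). \]
Taking the $\ell^2(\T_N)$ scalar product of both sides with $\bm\sgn(\mathbf U^{N,\Delta t}_{n+\frac12} - \mathbf V^{N,\Delta t}_{n+\frac12})$, and using that for any $\mathbf z \in \R^N_0$ one has $\langle \bm\sgn(\mathbf z), \mathbf z\rangle_{\ell^2(\T_N)} = \|\mathbf z\|_{\ell^1_0(\T_N)}$, the left-hand side becomes $\|\mathbf U^{N,\Delta t}_{n+\frac12} - \mathbf V^{N,\Delta t}_{n+\frac12}\|_{\ell^1_0(\T_N)}$. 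The Lemma~\ref{L1Cdrift}.(i) contraction property makes the $\Delta t$-term non-positive, and bounding the first term on the right by the Cauchy--Schwarz-type estimate $\langle \bm\sgn(\mathbf z), \mathbf w\rangle_{\ell^2(\T_N)} \leq \|\bm\sgn(\mathbf z)\|_{\ell^\infty_0(\T_N)} \|\mathbf w\|_{\ell^1_0(\T_N)} = \|\mathbf w\|_{\ell^1_0(\T_N)}$, we obtain the half-step contraction
\[ \left\|\mathbf U^{N,\Delta t}_{n+\frac12} - \mathbf V^{N,\Delta t}_{n+\frac12}\right\|_{\ell^1_0(\T_N)} \leq \left\|\mathbf U^{N,\Delta t}_n - \mathbf V^{N,\Delta t}_n\right\|_{\ell^1_0(\T_N)}. \]
This is exactly the discrete-time analogue of the uniqueness argument already used in the proof of Proposition~\ref{SSBEwellposed}.

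Finally, the second line of~\eqref{SSBE} adds the \emph{same} increment $\Delta\mathbf W^{Q,N}_{n+1}$ to both chains, so
\[ \mathbf U^{N,\Delta t}_{n+1} - \mathbf V^{N,\Delta t}_{n+1} = \mathbf U^{N,\Delta t}_{n+\frac12} - \mathbf V^{N,\Delta t}_{n+\frac12}, \]
and the noise cancels identically. Combining the two displays yields the claimed inequality, which holds almost surely for every $n \in \N$.
\end{proof}

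I expect no real obstacle here: the only subtle point is the algebraic identity $\langle \bm\sgn(\mathbf z), \mathbf z\rangle_{\ell^2(\T_N)} = \|\mathbf z\|_{\ell^1_0(\T_N)}$ and the bound controlling the data term by the $\ell^1_0$ norm (both elementary, the latter being the same manipulation as in the uniqueness part of Proposition~\ref{SSBEwellposed}), together with the exact cancellation of the common noise. The implicit nature of the drift step is precisely what makes the contraction work, just as in the continuous-time Proposition~\ref{L1C}, so the result transfers with essentially no new difficulty.
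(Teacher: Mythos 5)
Your proof is correct and follows essentially the same route as the paper's: expand the $\ell^1_0(\T_N)$ norm of the half-step difference as the scalar product with $\bm\sgn$, use the implicit relation together with Lemma~\ref{L1Cdrift}.(i) to discard the $\Delta t$ term, bound the remaining pairing by the $\ell^1_0(\T_N)$ norm, and observe that the common noise increment cancels. The only cosmetic difference is the order of the two steps (the paper handles the noise cancellation first), and you correctly invoke part (i) of Lemma~\ref{L1Cdrift} where the paper's text has a typo citing (ii).
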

\begin{proof}
From Equations~\eqref{SSBE} and Lemma~\ref{L1Cdrift}.(ii), we write
\begin{align*}
&\left\|\mathbf U^{N, \Delta t}_{n+1} -\mathbf V^{N, \Delta t}_{n+1} \right\|_{\ell^1_0(\T_N)}\\
&= \left\|\mathbf U^{N, \Delta t}_{n+\frac 1 2} -\mathbf V^{N, \Delta t}_{n+\frac 1 2} \right\|_{\ell^1_0(\T_N)} \\
&= \left\langle \bm\sgn \left(\mathbf U^{N, \Delta t}_{n+\frac 1 2} -\mathbf V^{N, \Delta t}_{n+\frac 1 2} \right) ,\mathbf U^{N, \Delta t}_{n+\frac 1 2} -\mathbf V^{N, \Delta t}_{n+\frac 1 2} \right\rangle_{\ell^2(\T_N)} \\
&= \left\langle \bm\sgn \left(\mathbf U^{N, \Delta t}_{n+\frac 1 2} -\mathbf V^{N, \Delta t}_{n+\frac 1 2} \right) ,\mathbf U^{N, \Delta t}_n -\mathbf V^{N, \Delta t}_n \right\rangle_{\ell^2(\T_N)} + \Delta t \left\langle \bm\sgn \left(\mathbf U^{N, \Delta t}_{n+\frac 1 2} -\mathbf V^{N, \Delta t}_{n+\frac 1 2} \right) ,\mathbf b \left(\mathbf U^{N, \Delta t}_{n+\frac 1 2} \right) -\mathbf b\left(\mathbf V^{N, \Delta t}_{n+\frac 1 2} \right) \right\rangle_{\ell^2(\T_N)} \\
&\leq \left\langle \bm\sgn \left(\mathbf U^{N, \Delta t}_{n+\frac 1 2} -\mathbf V^{N, \Delta t}_{n+\frac 1 2} \right) ,\mathbf U^{N, \Delta t}_n -\mathbf V^{N, \Delta t}_n \right\rangle_{\ell^2(\T_N)} \\
&\leq \left\|\mathbf U^{N, \Delta t}_n-\mathbf V^{N, \Delta t}_n \right\|_{\ell^1_0(\T_N)} .\qedhere
\end{align*}
\end{proof}
\begin{remark}
     The choice of the split-step backward Euler scheme is essential for the $\ell^1_0(\T_N)$ contraction property to hold. Indeed, consider for instance two sequences $(\mathbf{\tilde U}^{N, \Delta t}_n)_{n\in\N}$ and $(\mathbf{\tilde V}^{N, \Delta t}_n)_{n\in\N}$ built via an explicit Euler method, that is,
     \[ \mathbf{\tilde U}^{N, \Delta t}_{n+1} = \mathbf{\tilde U}^{N, \Delta t}_n+\Delta t\mathbf b\left(\mathbf{\tilde U}^{N, \Delta t}_n\right)+\Delta \mathbf W^{Q,N}_{n+1} \]
     (and naturally, the same construction for $(\mathbf{\tilde V}^{N, \Delta t}_n)_{n\in\N}$), then the expansion of the $\ell^1_0(\T_N)$ distance gives
     \begin{align*}
       \left\| \mathbf{\tilde U}^{N, \Delta t}_{n+1} -\mathbf{\tilde V}^{N, \Delta t}_{n+1} \right\|_{\ell^1_0(\T_N)} &= \bm\sgn \left\langle \left(\mathbf{\tilde U}^{N, \Delta t}_{n+1} -\mathbf{\tilde V}^{N, \Delta t}_{n+1} \right) ,\mathbf{\tilde U}^{N, \Delta t}_n -\mathbf{\tilde V}^{N, \Delta t}_n \right\rangle_{\ell^2(\T_N)}\\
       &\quad + \Delta t\left\langle \bm\sgn \left(\mathbf{\tilde U}^{N, \Delta t}_{n+1} -\mathbf{\tilde V}^{N, \Delta t}_{n+1} \right) ,\mathbf b \left(\mathbf{\tilde U}^{N, \Delta t}_n \right) -\mathbf b\left(\mathbf{\tilde V}^{N, \Delta t}_n \right) \right\rangle_{\ell^2(\T_N)} .
     \end{align*}
     Thus, we would need to control the second term of the right-hand side in the above equation, which is delicate given that $\mathbf b$ is not globally Lipschitz.
    \end{remark}

As for the semi-discrete scheme, Lemma~\ref{L1CSSBE} induces the following property.
\begin{corollary}[Feller property for $\mathbf{U}^{N,\Delta t}$]\label{cor:SSBEfeller}
The solution $(\mathbf U^{N, \Delta t}_n)_{n\in\N}$ of~\eqref{SSBE} has the Feller property.
\end{corollary}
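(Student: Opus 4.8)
The plan is to mimic the continuous-time argument given in Corollary~\ref{feller}: I want to show that for any bounded continuous $\varphi:\R^N_0\to\R$ and any $n\in\N$, the map $\mathbf u_0\mapsto\E_{\mathbf u_0}[\varphi(\mathbf U^{N,\Delta t}_n)]$ is continuous (boundedness being immediate since $\varphi$ is bounded). The key structural fact I would exploit is that the randomness in the scheme~\eqref{SSBE} enters only additively through the increments $\Delta\mathbf W^{Q,N}_{n+1}$, so that the dependence on the initial condition is entirely deterministic. Concretely, denote by $\mathbf S:\R^N_0\to\R^N_0$ the \emph{resolvent map} $\mathbf S=(\bm{\mathrm{Id}}-\Delta t\,\mathbf b)^{-1}$, which is well-defined by Proposition~\ref{SSBEwellposed}; then one step of~\eqref{SSBE} reads $\mathbf U^{N,\Delta t}_{n+1}=\mathbf S(\mathbf U^{N,\Delta t}_n)+\Delta\mathbf W^{Q,N}_{n+1}$, and iterating,
\[
\mathbf U^{N,\Delta t}_n=F_n\bigl(\mathbf u_0,\Delta\mathbf W^{Q,N}_1,\dots,\Delta\mathbf W^{Q,N}_n\bigr),
\]
where $F_n$ is built by composing $\mathbf S$ and adding the noise increments. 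Since the law of $(\Delta\mathbf W^{Q,N}_1,\dots,\Delta\mathbf W^{Q,N}_n)$ does not depend on $\mathbf u_0$, the Feller property will follow if $\mathbf u_0\mapsto F_n(\mathbf u_0,\cdot)$ is continuous for each fixed realisation of the increments, together with dominated convergence.

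The crucial ingredient is therefore the continuity — indeed Lipschitz continuity — of the resolvent map $\mathbf S$. This is exactly what the $\ell^1_0(\T_N)$ contraction Lemma~\ref{L1CSSBE} delivers, and in fact its proof already contains the estimate I need: if $\mathbf w_1=\mathbf v_1+\Delta t\,\mathbf b(\mathbf w_1)$ and $\mathbf w_2=\mathbf v_2+\Delta t\,\mathbf b(\mathbf w_2)$, then the same computation as in the uniqueness part of Proposition~\ref{SSBEwellposed} gives
\[
\|\mathbf w_1-\mathbf w_2\|_{\ell^1_0(\T_N)}
=\langle\bm\sgn(\mathbf w_1-\mathbf w_2),\mathbf v_1-\mathbf v_2\rangle_{\ell^2(\T_N)}
+\Delta t\,\langle\bm\sgn(\mathbf w_1-\mathbf w_2),\mathbf b(\mathbf w_1)-\mathbf b(\mathbf w_2)\rangle_{\ell^2(\T_N)}
\leq\|\mathbf v_1-\mathbf v_2\|_{\ell^1_0(\T_N)},
\]
using Lemma~\ref{L1Cdrift}.(i) for the second term and Cauchy--Schwarz (or simply $|\sgn|\leq1$) for the first. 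Hence $\mathbf S$ is $1$-Lipschitz for $\|\cdot\|_{\ell^1_0(\T_N)}$, and since all norms on the finite-dimensional space $\R^N_0$ are equivalent, $\mathbf S$ is Lipschitz continuous, so each $F_n(\cdot,\omega)$ is continuous.

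To assemble the argument I would proceed as follows. First, establish the Lipschitz continuity of $\mathbf S$ as above. Second, fix a sequence $\mathbf u_0^{(m)}\to\mathbf u_0$ in $\R^N_0$ and a realisation of the increments; by continuity of $F_n(\cdot,\omega)$ one gets $\mathbf U^{N,\Delta t,(m)}_n\to\mathbf U^{N,\Delta t}_n$ almost surely, hence $\varphi(\mathbf U^{N,\Delta t,(m)}_n)\to\varphi(\mathbf U^{N,\Delta t}_n)$ by continuity of $\varphi$. Third, since $\varphi$ is bounded, the dominated convergence theorem yields $\E_{\mathbf u_0^{(m)}}[\varphi(\mathbf U^{N,\Delta t}_n)]\to\E_{\mathbf u_0}[\varphi(\mathbf U^{N,\Delta t}_n)]$, which is precisely the desired continuity. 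I do not expect any serious obstacle here: the only point requiring a little care is the justification that the noise-dependent composition $F_n$ inherits continuity in $\mathbf u_0$ uniformly enough to pass to the limit, but this is automatic because for each fixed $\omega$ the map is a finite composition of the continuous maps $\mathbf S$ and translations, and the convergence is pointwise in $\omega$ with a bounded integrand. In short, the whole statement reduces to the Lipschitz continuity of the backward-Euler resolvent, which is a direct byproduct of the $\ell^1_0(\T_N)$ contraction already proved.
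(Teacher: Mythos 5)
Your proposal is correct and follows essentially the same route as the paper: the paper derives Corollary~\ref{cor:SSBEfeller} directly from the $\ell^1_0(\T_N)$ contraction of Lemma~\ref{L1CSSBE}, which is exactly the $1$-Lipschitz property of the resolvent $(\bm{\mathrm{Id}}-\Delta t\,\mathbf b)^{-1}$ that you isolate, iterated over the steps of the scheme and combined with dominated convergence. Your one-step computation is the same as the one in the proof of Lemma~\ref{L1CSSBE}, so there is no gap.
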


The existence of an invariant measure follows again from the application of the Krylov--Bogoliubov theorem to a Lyapunov condition similar to~\eqref{eq:L2H1}.

\begin{prop}[Existence of an invariant measure for $\mathbf{U}^{N,\Delta t}$]\label{existence}
For all $n \geq 1$, we have
\begin{equation}\label{eq:kb}
\frac 1 n \sum_{l=0}^{n-1} \E \left[ \left\|\mathbf D^{(1,+)}_N\mathbf U^{N,\Delta t}_{l+1} \right\|_{\ell^2_0(\T_N)}^2 \right] \leq \frac{1}{2n\nu\Delta t} \left\| \mathbf u_0 \right\|_{\ell^2_0(\T_N)}^2  + \frac{\mathsf{D}}{2\nu} + \Delta t \mathsf{D} .
\end{equation}
As a consequence, the sequence $(\mathbf U^{N,\Delta t}_n)_{n\in\N}$ admits an invariant measure $\vartheta_{N,\Delta t}\in\mathcal P_2(\R^N_0)$. 
\end{prop}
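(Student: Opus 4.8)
The plan is to establish the energy estimate~\eqref{eq:kb} first, and then invoke the Krylov--Bogoliubov theorem to deduce existence of an invariant measure, exactly mirroring the structure used for the semi-discrete scheme. The key is to derive a discrete-in-time analogue of the Lyapunov estimate~\eqref{eq:L2H1}, working directly on the squared $\ell^2_0(\T_N)$ norm of the scheme~\eqref{SSBE}. First I would expand $\|\mathbf U^{N,\Delta t}_{n+1}\|_{\ell^2_0(\T_N)}^2$ in two stages following the two lines of~\eqref{SSBE}. In the first (implicit drift) step, writing $\mathbf U^{N,\Delta t}_{n+\frac12}=\mathbf U^{N,\Delta t}_n+\Delta t\,\mathbf b(\mathbf U^{N,\Delta t}_{n+\frac12})$, I take the squared norm of $\mathbf U^{N,\Delta t}_n=\mathbf U^{N,\Delta t}_{n+\frac12}-\Delta t\,\mathbf b(\mathbf U^{N,\Delta t}_{n+\frac12})$ and use the elementary identity $\|a\|^2=\|b\|^2-2\Delta t\langle\mathbf b(b),b\rangle+\Delta t^2\|\mathbf b(b)\|^2$. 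This gives the crucial inequality
\begin{equation*}
\|\mathbf U^{N,\Delta t}_{n+\frac12}\|_{\ell^2_0(\T_N)}^2 \leq \|\mathbf U^{N,\Delta t}_n\|_{\ell^2_0(\T_N)}^2 + 2\Delta t\,\langle \mathbf U^{N,\Delta t}_{n+\frac12},\mathbf b(\mathbf U^{N,\Delta t}_{n+\frac12})\rangle_{\ell^2_0(\T_N)},
\end{equation*}
where the unfavourable $+\Delta t^2\|\mathbf b\|^2$ term is discarded because it appears with the \emph{correct} sign after rearrangement (this is precisely the stabilising effect of implicitness). Applying the dissipativity bound Lemma~\ref{L1Cdrift}.(ii) and the Poincar\'e inequality~\eqref{eq:poinca-discr} then yields
\begin{equation*}
\|\mathbf U^{N,\Delta t}_{n+\frac12}\|_{\ell^2_0(\T_N)}^2 \leq \|\mathbf U^{N,\Delta t}_n\|_{\ell^2_0(\T_N)}^2 - 2\nu\Delta t\,\|\mathbf D^{(1,+)}_N\mathbf U^{N,\Delta t}_{n+\frac12}\|_{\ell^2_0(\T_N)}^2.
\end{equation*}

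Next I would handle the second (noise) step $\mathbf U^{N,\Delta t}_{n+1}=\mathbf U^{N,\Delta t}_{n+\frac12}+\Delta\mathbf W^{Q,N}_{n+1}$. Expanding the square and taking expectations, the cross term $\E[\langle\mathbf U^{N,\Delta t}_{n+\frac12},\Delta\mathbf W^{Q,N}_{n+1}\rangle]$ requires care: since $\mathbf U^{N,\Delta t}_{n+\frac12}$ is $\mathcal F_{n\Delta t}$-measurable (it is a deterministic function of $\mathbf U^{N,\Delta t}_n$ through the well-posed implicit solve of Proposition~\ref{SSBEwellposed}) and the increment $\Delta\mathbf W^{Q,N}_{n+1}$ is independent of $\mathcal F_{n\Delta t}$ with zero mean, this cross term vanishes. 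The remaining second-moment term contributes $\E[\|\Delta\mathbf W^{Q,N}_{n+1}\|_{\ell^2_0(\T_N)}^2]=\Delta t\sum_{k\geq1}\|\mathbf g^k\|_{\ell^2_0(\T_N)}^2\leq \Delta t\,\mathsf D$ by the covariance formula~\eqref{eq:covWQN} and the second estimate in~\eqref{eq:sigma-glob}. Combining both steps produces the one-step recursion
\begin{equation*}
\E[\|\mathbf U^{N,\Delta t}_{n+1}\|_{\ell^2_0(\T_N)}^2] \leq \E[\|\mathbf U^{N,\Delta t}_n\|_{\ell^2_0(\T_N)}^2] - 2\nu\Delta t\,\E[\|\mathbf D^{(1,+)}_N\mathbf U^{N,\Delta t}_{n+\frac12}\|_{\ell^2_0(\T_N)}^2] + \Delta t\,\mathsf D.
\end{equation*}

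To reach the stated form~\eqref{eq:kb} I note that $\mathbf D^{(1,+)}_N\mathbf U^{N,\Delta t}_{n+\frac12}$ should be replaced by $\mathbf D^{(1,+)}_N\mathbf U^{N,\Delta t}_{n+1}$; this is where the extra $\Delta t\,\mathsf D$ term in~\eqref{eq:kb} originates, since $\mathbf U^{N,\Delta t}_{n+1}$ and $\mathbf U^{N,\Delta t}_{n+\frac12}$ differ only by the noise increment, and controlling the gradient of that increment costs another factor bounded by $\mathsf D$ via the third estimate in~\eqref{eq:sigma-glob}. I would then telescope the recursion from $l=0$ to $n-1$, divide by $2n\nu\Delta t$, and use that the squared norm is non-negative to discard the terminal term, which yields~\eqref{eq:kb} directly with the initial condition $\mathbf u_0$ appearing in the $\frac{1}{2n\nu\Delta t}\|\mathbf u_0\|_{\ell^2_0(\T_N)}^2$ prefactor. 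Finally, the left-hand side of~\eqref{eq:kb} being bounded uniformly in $n$, together with the discrete Poincar\'e inequality~\eqref{eq:poinca-discr}, shows that the Ces\`aro averages $\frac1n\sum_{l=0}^{n-1}\P_{\mathbf u_0}(\mathbf U^{N,\Delta t}_l\in\cdot)$ have uniformly bounded second moment and are therefore tight on $\R^N_0$; the Krylov--Bogoliubov theorem for Markov chains, combined with the Feller property from Corollary~\ref{cor:SSBEfeller}, then delivers an invariant measure $\vartheta_{N,\Delta t}\in\mathcal P_2(\R^N_0)$. The main delicacy I anticipate is the bookkeeping in passing from $\mathbf U^{N,\Delta t}_{n+\frac12}$ to $\mathbf U^{N,\Delta t}_{n+1}$ inside the gradient term and correctly tracking how the discarded implicit quadratic term and the noise contributions conspire to give exactly the two $\mathsf D$-dependent constants appearing in~\eqref{eq:kb}.
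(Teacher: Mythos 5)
Your proposal is correct and follows essentially the same route as the paper: the energy identity for the implicit step with the $\Delta t^2\|\mathbf b\|^2$ term discarded by sign, Lemma~\ref{L1Cdrift}.(ii), the independence of $\mathbf U^{N,\Delta t}_{n+\frac12}$ and $\Delta\mathbf W^{Q,N}_{n+1}$, the passage from the half-step to the full-step gradient via the third estimate in~\eqref{eq:sigma-glob} (which indeed produces the extra $\Delta t\,\mathsf D$), telescoping, and Krylov--Bogoliubov combined with the Feller property. The only superfluous detail is the invocation of the Poincar\'e inequality~\eqref{eq:poinca-discr} in the first step, which is not needed to obtain the dissipation of the $h^1_0(\T_N)$ seminorm there.
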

\begin{proof}
Starting from the first equation in~\eqref{SSBE}, we have
\[ \left\| \mathbf U^{N,\Delta t}_{n+\frac 1 2} - \Delta t \mathbf b\left(\mathbf U^{N,\Delta t}_{n+\frac12}\right) \right\|_{\ell^2_0(\T_N)}^2 = \left\|\mathbf U^{N,\Delta t}_n \right\|_{\ell^2_0(\T_N)}^2 , \]
by expanding the left-hand side, we derive the inequality
\[ \left\|\mathbf U^{N,\Delta t}_{n+\frac 1 2} \right\|_{\ell^2_0(\T_N)}^2 \leq \left\|\mathbf U^{N,\Delta t}_n \right\|_{\ell^2_0(\T_N)}^2 +2\Delta t \left\langle\mathbf b\left(\mathbf U^{N,\Delta t}_{n+\frac 1 2}\right) ,\mathbf U^{N,\Delta t}_{n+\frac 1 2} \right\rangle_{\ell^2_0(\T_N)} . \]
Using Lemma~\ref{L1Cdrift}.(ii), we get
\begin{equation}\label{FB0}
\left\|\mathbf U^{N,\Delta t}_{n+\frac 1 2} \right\|_{\ell^2_0(\T_N)}^2 \leq \left\|\mathbf U^{N,\Delta t}_n \right\|_{\ell^2_0(\T_N)}^2 -2\nu \Delta t \left\|\mathbf D^{(1,+)}_N\mathbf U^{N,\Delta t}_{n+\frac 1 2} \right\|_{\ell^2_0(\T_N)}^2 .
\end{equation}
Now, from the second equation in~\eqref{SSBE}, we have
\begin{equation}\label{SEN}
\left\|\mathbf U^{N,\Delta t}_{n+1} \right\|_{\ell^2_0(\T_N)}^2 = \left\|\mathbf U^{N,\Delta t}_{n+\frac 1 2} \right\|_{\ell^2_0(\T_N)}^2 + 2 \left\langle\mathbf U^{N,\Delta t}_{n+\frac 1 2} , \Delta\mathbf W^{Q,N}_{n+1} \right\rangle_{\ell^2_0(\T_N)} + \left\| \Delta\mathbf W^{Q,N}_{n+1} \right\|_{\ell^2_0(\T_N)}^2 .
\end{equation}
Injecting Inequality~\eqref{FB0} into Equation~\eqref{SEN}, we get
\begin{equation}\label{eq:discretederivative}
\left\|\mathbf U^{N,\Delta t}_{n+1}\right\|_{\ell^2_0(\T_N)}^2-\left\|\mathbf U^{N,\Delta t}_n\right\|_{\ell^2_0(\T_N)}^2\leq-2\nu\Delta t\left\|\mathbf D^{(1,+)}_N\mathbf U^{N,\Delta t}_{n+\frac12}\right\|_{\ell^2_0(\T_N)}^2+2\left\langle\mathbf U^{N,\Delta t}_{n+\frac12},\Delta\mathbf W^{Q,N}_{n+1}\right\rangle_{\ell^2_0(\T_N)}+\left\|\Delta\mathbf W^{Q,N}_{n+1}\right\|_{\ell^2_0(\T_N)}^2.
\end{equation}
By definition of $\mathbf W^{Q,N}$ and from~\eqref{eq:sigma-glob}, we have
\begin{equation}\label{NB}
\E \left[ \left\| \Delta\mathbf W^{Q,N}_{n+1} \right\|_{\ell^2_0(\T_N)}^2 \right] = \Delta t\sum_{k\geq1}\|\mathbf{g}^k\|^2_{\ell^2_0(\T_N)} \leq \mathsf{D}\Delta t .
\end{equation}
On the other hand, the variables $\mathbf U^{N,\Delta t}_{n+\frac 1 2}$ and $\Delta\mathbf W^{Q,N}_{n+1}$ are independent, so that taking the expectation in~\eqref{eq:discretederivative} yields
\[ \E \left[ \left\|\mathbf U^{N,\Delta t}_{n+1} \right\|_{\ell^2_0(\T_N)}^2 \right] - \E \left[ \left\|\mathbf U^{N,\Delta t}_n \right\|_{\ell^2_0(\T_N)}^2 \right] \leq -2\nu \Delta t \E \left[ \left\|\mathbf D^{(1,+)}_N\mathbf U^{N,\Delta t}_{n+\frac 1 2} \right\|_{\ell^2_0(\T_N)}^2 \right] +\mathsf{D}\Delta t , \]
which is valid for any $n\in\N$, so that we get a telescopic sum:
\begin{equation*}
  \begin{aligned}
  \E \left[ \left\|\mathbf U^{N,\Delta t}_n \right\|_{\ell^2_0(\T_N)}^2 \right] - \left\| \mathbf u_0 \right\|_{\ell^2_0(\T_N)}^2 &= \sum_{l=0}^{n-1} \left( \E \left[ \left\|\mathbf U^{N,\Delta t}_{l+1} \right\|_{\ell^2_0(\T_N)}^2 \right] - \E \left[ \left\|\mathbf U^{N,\Delta t}_l \right\|_{\ell^2_0(\T_N)}^2 \right] \right)\\
  & \leq -2\nu \Delta t \sum_{l=0}^{n-1} \E \left[ \left\|\mathbf D^{(1,+)}_N\mathbf U^{N,\Delta t}_{l+\frac 1 2} \right\|_{\ell^2_0(\T_N)}^2 \right] + n \Delta t \mathsf{D} . 
  \end{aligned}
\end{equation*}
Hence,
\begin{equation}\label{almostKB}
2\nu \Delta t \sum_{l=0}^{n-1} \E \left[ \left\|\mathbf D^{(1,+)}_N\mathbf U^{N,\Delta t}_{l+\frac 1 2} \right\|_{\ell^2_0(\T_N)}^2 \right] \leq \left\| \mathbf u_0 \right\|_{\ell^2_0(\T_N)}^2 +n\Delta t \mathsf{D} .
\end{equation}
Besides, since the random variables $\mathbf U^{N,\Delta t}_{l+\frac 1 2}$ and $\Delta\mathbf W^{Q,N}_{l+1}$ are independent,
\begin{equation}\label{a2}
\E \left[ \left\|\mathbf D^{(1,+)}_N\mathbf U^{N,\Delta t}_{l+1} \right\|_{\ell^2_0(\T_N)}^2 \right] = \E \left[ \left\|\mathbf D^{(1,+)}_N\mathbf U^{N,\Delta t}_{l+\frac 1 2} \right\|_{\ell^2_0(\T_N)}^2 \right] + \E \left[ \left\|\mathbf D^{(1,+)}_N \Delta\mathbf W^{Q,N}_{l+1} \right\|_{\ell^2_0(\T_N)}^2 \right] ,
\end{equation}
and by~\eqref{eq:sigma-glob},
\begin{equation*}
 \E \left[ \left\|\mathbf D^{(1,+)}_N \Delta\mathbf W^{Q,N}_{l+1} \right\|_{\ell^2_0(\T_N)}^2 \right] =\Delta t\sum_{k\geq1}\left\|\mathbf D^{(1,+)}_N\mathbf{g}^k\right\|_{\ell^2_0(\T_N)}^2 \leq \Delta t\mathsf{D}.
 \end{equation*}
Injecting this bound into~\eqref{a2}, and~\eqref{a2} into~\eqref{almostKB}, we get~\eqref{eq:kb}.

Since $\|\mathbf D^{(1,+)}_N\cdot\|_{\ell^2_0(\T_N)}$ defines a norm on $\R^N_0$ and since from Corollary~\ref{cor:SSBEfeller}, the sequence $(\mathbf U^{N,\Delta t}_n)_{n\in\N}$ has the Feller property, the existence of an invariant measure $\vartheta_{N,\Delta t}\in\mathcal P_2(\R^N_0)$ now follows from the Krylov--Bogoliubov theorem.
\end{proof}

\subsubsection{Uniqueness of the invariant measure}

Similarly to Subsection~\ref{IMschemes:FVS}, we first state the intermediary Lemmas~\ref{proba} and~\ref{entranceSSBE}.

\begin{lemma}[Hitting any neighbourhood of $0$ with positive probability]\label{proba}
Let $(\mathbf U^{N,\Delta t}_n)_{n\in\N}$ and $(\mathbf V^{N,\Delta t}_n)_{n\in\N}$ be two solutions of~\eqref{SSBE} constructed with the same sequence of noise increments $(\Delta \mathbf W^{Q,N}_n)_{n \in \N^*}$. For any $\varepsilon>0$ and any $M>0$, there exists $n_{\varepsilon,M}\in\N$ such that
\[ p_{\varepsilon,M} := \inf \P_{(\mathbf u_0,\mathbf v_0)} \left( \left\|\mathbf U^{N,\Delta t}_{n_{\varepsilon,M}} \right\|_{\ell^1_0(\T_N)} + \left\|\mathbf V^{N,\Delta t}_{n_{\varepsilon,M}} \right\|_{\ell^1_0(\T_N)} \leq \varepsilon \right) > 0 , \]
where the notation $\P_{(\mathbf u_0,\mathbf v_0)}$ indicates that $\mathbf{U}^{N,\Delta t}_0=\mathbf{u}_0$ and $\mathbf{V}^{N,\Delta t}_0=\mathbf{v}_0$ and the infimum is taken over pairs of initial conditions $(\mathbf u_0,\mathbf v_0)$ such that $\|\mathbf u_0\|_{\ell^2_0(\T_N)}\vee\|\mathbf v_0\|_{\ell^2_0(\T_N)}\leq M$.
\end{lemma}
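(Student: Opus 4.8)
The plan is to adapt to the split-step scheme the strategy behind Lemma~\ref{entrance}, exploiting the fact that in~\eqref{SSBE} the randomness enters only through the discrete increments $\Delta\mathbf W^{Q,N}_l$. The guiding idea is that on the event where all these increments are small over a sufficiently long time window, the $\ell^2_0(\T_N)$ dissipativity of the half-step update drives both $\mathbf U^{N,\Delta t}_n$ and $\mathbf V^{N,\Delta t}_n$ close to $0$; since this favourable event involves only the noise, it has positive probability \emph{uniformly} in the initial conditions and, because the two chains share the same increments, it controls both of them at once.

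First I would establish a one-step contraction of the $\ell^2_0(\T_N)$ norm. Combining the half-step estimate~\eqref{FB0}, valid for any trajectory of~\eqref{SSBE}, with the discrete Poincar\'e inequality~\eqref{eq:poinca-discr}, one obtains $(1+2\nu\Delta t)\|\mathbf U^{N,\Delta t}_{n+\frac12}\|_{\ell^2_0(\T_N)}^2\le\|\mathbf U^{N,\Delta t}_{n}\|_{\ell^2_0(\T_N)}^2$, that is $\|\mathbf U^{N,\Delta t}_{n+\frac12}\|_{\ell^2_0(\T_N)}\le\rho\|\mathbf U^{N,\Delta t}_{n}\|_{\ell^2_0(\T_N)}$ with $\rho:=(1+2\nu\Delta t)^{-1/2}<1$. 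Applying the triangle inequality to the second line of~\eqref{SSBE} then gives $\|\mathbf U^{N,\Delta t}_{n+1}\|_{\ell^2_0(\T_N)}\le\rho\|\mathbf U^{N,\Delta t}_{n}\|_{\ell^2_0(\T_N)}+\|\Delta\mathbf W^{Q,N}_{n+1}\|_{\ell^2_0(\T_N)}$, and iterating from $\mathbf u_0$ yields, for every $n$,
\[ \|\mathbf U^{N,\Delta t}_{n}\|_{\ell^2_0(\T_N)}\le\rho^n\|\mathbf u_0\|_{\ell^2_0(\T_N)}+\frac{1}{1-\rho}\max_{1\le l\le n}\|\Delta\mathbf W^{Q,N}_{l}\|_{\ell^2_0(\T_N)}, \]
with the identical estimate for $\mathbf V^{N,\Delta t}_n$ since it is driven by the same increments.

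Next I would fix the parameters. Given $\varepsilon,M>0$, choose $n_{\varepsilon,M}\in\N$ large enough that $\rho^{n_{\varepsilon,M}}M\le\varepsilon/4$, and then $\delta>0$ small enough that $\delta/(1-\rho)\le\varepsilon/4$. On the event $G:=\{\|\Delta\mathbf W^{Q,N}_l\|_{\ell^2_0(\T_N)}\le\delta \text{ for all } 1\le l\le n_{\varepsilon,M}\}$, the previous display gives $\|\mathbf U^{N,\Delta t}_{n_{\varepsilon,M}}\|_{\ell^2_0(\T_N)}\le\varepsilon/2$, and likewise for $\mathbf V^{N,\Delta t}_{n_{\varepsilon,M}}$, whenever $\|\mathbf u_0\|_{\ell^2_0(\T_N)}\vee\|\mathbf v_0\|_{\ell^2_0(\T_N)}\le M$. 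Since $\|\cdot\|_{\ell^1_0(\T_N)}\le\|\cdot\|_{\ell^2_0(\T_N)}$ by~\eqref{eq:normorder}, on $G$ we conclude that $\|\mathbf U^{N,\Delta t}_{n_{\varepsilon,M}}\|_{\ell^1_0(\T_N)}+\|\mathbf V^{N,\Delta t}_{n_{\varepsilon,M}}\|_{\ell^1_0(\T_N)}\le\varepsilon$.

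Finally, since the increments $(\Delta\mathbf W^{Q,N}_l)_l$ are independent and identically distributed centred Gaussian vectors in $\R^N_0$, and $0$ belongs to the support of their common law, each factor $\P(\|\Delta\mathbf W^{Q,N}_l\|_{\ell^2_0(\T_N)}\le\delta)$ is strictly positive, so that $\P(G)>0$ by independence. As $G$ depends only on the noise and not on $(\mathbf u_0,\mathbf v_0)$, the bound $p_{\varepsilon,M}\ge\P(G)>0$ holds uniformly over the ball of radius $M$, which is the claim. I expect the only delicate point to be the positivity and uniformity of $\P(G)$: it follows from the elementary small-ball property of centred Gaussians (possibly degenerate) together with the independence of successive increments, and crucially from the observation that a single favourable event $G$ simultaneously controls both trajectories because they are built from the same noise.
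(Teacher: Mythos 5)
Your proof is correct, but it takes a genuinely different route from the paper's. The paper introduces the noiseless counterparts $\mathbf u^{N,\Delta t}_n$, $\mathbf v^{N,\Delta t}_n$ of the two chains, shows that their $\ell^2_0(\T_N)$ energies decay geometrically by the same dissipativity-plus-Poincar\'e computation you use, and then controls the deviation of the noisy trajectories from the noiseless ones via the $\ell^1_0(\T_N)$ non-expansiveness of the implicit half-step (the mechanism of Lemma~\ref{L1CSSBE}); since that comparison only yields non-expansiveness, the error accumulates additively, $2\delta_\varepsilon$ per step, and the noise threshold must be taken as $\delta_\varepsilon=\varepsilon/(4n_{\varepsilon,M})$, shrinking with the time horizon. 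You instead work directly on the noisy trajectory: the pathwise inequality~\eqref{FB0} combined with~\eqref{eq:poinca-discr} makes the half-step a \emph{strict} $\ell^2_0(\T_N)$ contraction with factor $\rho=(1+2\nu\Delta t)^{-1/2}$, and the triangle inequality then turns the noise contribution into a geometric series, so your threshold $\delta$ depends only on $\rho$ and $\varepsilon$, not on $n_{\varepsilon,M}$. Your argument is shorter, dispenses with the $\ell^1_0(\T_N)$ contraction and the noiseless comparison entirely, and would give a better quantitative lower bound on $p_{\varepsilon,M}$ (a product of $n_{\varepsilon,M}$ Gaussian small-ball probabilities at a fixed radius rather than at a radius shrinking like $1/n_{\varepsilon,M}$); what the paper's route buys is structural parallelism with the continuous-time Lemma~\ref{entrance}, where the noise cannot be peeled off step by step. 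All the ingredients you invoke — the pathwise validity of~\eqref{FB0}, the positivity of the small-ball probability for a possibly degenerate centred Gaussian on $\R^N_0$, and the independence of the increments from the initial data — are sound.
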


In the setting of Lemma~\ref{proba}, we now define
\[ \eta_M := \inf \left\{ n \in\N : \left\|\mathbf U^{N,\Delta t}_{n+1} \right\|_{\ell^2_0(\T_N)} \vee \left\| \mathbf V^{N,\Delta t}_{n+1} \right\|_{\ell^2_0(\T_N)} \leq M \right\}. \]
The following lemma is the time-discrete version of Lemma~\ref{entranceLB}. The proof is omitted as it is very similar to its time-continuous counterpart.

\begin{lemma}[Almost sure entrance in some ball]\label{entranceSSBE}
There exists $M>0$ such that for any initial conditions $\mathbf u_0$, $\mathbf v_0\in\R^N_0$ for the sequences $(\mathbf U^{N,\Delta t}_n)_{n\in\N}$ and $(\mathbf V^{N,\Delta t}_n)_{n\in\N}$, $\eta_M<+\infty$ almost surely.
\end{lemma}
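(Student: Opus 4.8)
The plan is to establish a geometric Foster--Lyapunov drift condition for the squared $\ell^2_0(\T_N)$ norm and then invoke a standard supermartingale recurrence argument, exactly paralleling the continuous-time Lemma~\ref{entranceLB}. The starting point is the estimate already obtained in the proof of Proposition~\ref{existence}. Combining~\eqref{FB0} with the discrete Poincar\'e inequality~\eqref{eq:poinca-discr}, which gives $\|\mathbf D^{(1,+)}_N\mathbf U^{N,\Delta t}_{n+\frac12}\|_{\ell^2_0(\T_N)}^2 \geq \|\mathbf U^{N,\Delta t}_{n+\frac12}\|_{\ell^2_0(\T_N)}^2$, I would first derive the deterministic contraction of the half-step
\[
(1+2\nu\Delta t)\left\|\mathbf U^{N,\Delta t}_{n+\frac12}\right\|_{\ell^2_0(\T_N)}^2 \leq \left\|\mathbf U^{N,\Delta t}_n\right\|_{\ell^2_0(\T_N)}^2.
\]
Then, expanding the noise step as in~\eqref{SEN} and taking the conditional expectation given $\mathcal F_{n\Delta t}$, the cross term vanishes since $\mathbf U^{N,\Delta t}_{n+\frac12}$ is $\mathcal F_{n\Delta t}$-measurable (a deterministic function of $\mathbf U^{N,\Delta t}_n$ by Proposition~\ref{SSBEwellposed}) while $\Delta\mathbf W^{Q,N}_{n+1}$ is independent of $\mathcal F_{n\Delta t}$ with mean zero, and its second moment is controlled by~\eqref{NB}. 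This yields the drift inequality
\[
\E\left[\left\|\mathbf U^{N,\Delta t}_{n+1}\right\|_{\ell^2_0(\T_N)}^2 \,\middle|\, \mathcal F_{n\Delta t}\right] \leq \lambda \left\|\mathbf U^{N,\Delta t}_n\right\|_{\ell^2_0(\T_N)}^2 + \mathsf D\Delta t, \qquad \lambda := \frac{1}{1+2\nu\Delta t} < 1,
\]
together with the identical inequality for $\mathbf V^{N,\Delta t}$.

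Next I would work with the combined quantity $S_n := \|\mathbf U^{N,\Delta t}_n\|_{\ell^2_0(\T_N)}^2 + \|\mathbf V^{N,\Delta t}_n\|_{\ell^2_0(\T_N)}^2$, which inherits the drift $\E[S_{n+1}\mid\mathcal F_{n\Delta t}] \leq \lambda S_n + 2\mathsf D\Delta t$. Because $\lambda<1$, I would fix any $M>0$ with $M^2 > 2\mathsf D\Delta t/(1-\lambda)$, so that $\delta := (1-\lambda)M^2 - 2\mathsf D\Delta t > 0$; note that such an $M$ exists and may legitimately depend on $N$, $\Delta t$, $\nu$ and $\mathsf D$. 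Setting $\sigma := \inf\{n\in\N : S_n \leq M^2\}$, the stopped process $(S_{n\wedge\sigma})_{n\in\N}$ satisfies $\E[S_{(n+1)\wedge\sigma} - S_{n\wedge\sigma}\mid\mathcal F_{n\Delta t}] \leq -\delta\,\ind{n<\sigma}$, since on $\{n<\sigma\}$ one has $S_n > M^2$ and hence $\lambda S_n + 2\mathsf D\Delta t - S_n \leq -\delta$.

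Taking expectations, summing the telescoping inequality and using $S_{n\wedge\sigma}\geq0$ gives $\delta\,\E[\sigma] = \delta\sum_{k\geq0}\P(\sigma>k) \leq S_0 < +\infty$, so that $\sigma<+\infty$ almost surely for any deterministic initial data $\mathbf u_0,\mathbf v_0\in\R^N_0$. It then remains to translate this into the statement for $\eta_M$. Since $\{S_m \leq M^2\}\subseteq\{\|\mathbf U^{N,\Delta t}_m\|_{\ell^2_0(\T_N)}\vee\|\mathbf V^{N,\Delta t}_m\|_{\ell^2_0(\T_N)}\leq M\}$, almost sure finiteness of $\sigma$ already shows the pair visits the ball of radius $M$. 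To match the shift by one in the definition of $\eta_M$, I would apply the strong Markov property at $\sigma$: restarting from a state inside the ball (which has finite second moment, as $\E[S_{\sigma+1}\mid\mathcal F_\sigma]\leq \lambda M^2 + 2\mathsf D\Delta t<+\infty$), the same drift argument yields a further almost surely finite entrance time, hence almost surely some index $m\geq1$ with $S_m\leq M^2$, that is $\eta_M\leq m-1<+\infty$. The only genuinely delicate points are the $\mathcal F_{n\Delta t}$-measurability of the half-step and the observation that the discrete Poincar\'e inequality is precisely what converts the dissipation term in~\eqref{FB0} into a strictly contractive, noise-independent rate $\lambda<1$; the remainder is the routine supermartingale recurrence argument.
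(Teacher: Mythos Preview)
Your proof is correct and follows essentially the same approach the paper indicates: the discrete-time supermartingale/optional-stopping argument you run on $S_n=\|\mathbf U^{N,\Delta t}_n\|_{\ell^2_0(\T_N)}^2+\|\mathbf V^{N,\Delta t}_n\|_{\ell^2_0(\T_N)}^2$ is precisely the discrete analogue of the proof of Lemma~\ref{entranceLB}, with the threshold $M^2>\mathsf D(1+2\nu\Delta t)/\nu$ playing the role of $M^2>\mathsf D/\nu$. The only extra wrinkle is the index shift in the definition of $\eta_M$, which you handle correctly (if a bit circuitously); a slightly cleaner route is to define $\sigma=\inf\{n\geq1:S_n\leq M^2\}$ from the outset and note that $\E[S_1]\leq\lambda S_0+2\mathsf D\Delta t<+\infty$.
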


The proof of Lemma~\ref{proba} is quite different from the proof of Lemma~\ref{entrance}, therefore it is detailed in Appendix~\ref{app:uniq}. On the contrary, Lemma~\ref{entranceSSBE} essentially follows from the same arguments as Lemma~\ref{entranceLB} and therefore we omit its proof. Finally, given Lemmas~\ref{L1CSSBE},~\ref{proba} and~\ref{entranceSSBE}, the proof of the uniqueness of the invariant measure $\vartheta_{N,\Delta t}$ of the split-step scheme is an obvious adaptation of the proof for the semi-discrete scheme.

\subsubsection{Conclusion}

In order to conclude Section~\ref{IMschemes}, let us summarise the main arguments forming the proof of Theorem~\ref{IM}. Well-posedness of both approximations $(\mathbf U^N(t))_{t\geq0}$ and $(\mathbf U^{N,\Delta t}_n)_{n\geq0}$ are stated respectively in Propositions~\ref{SDEwellposed} and~\ref{SSBEwellposed}. Then, from the $\ell^1_0(\T_N)$-contraction property (Propositions~\ref{L1C} and~\ref{L1CSSBE}), it is inferred that both processes are Feller (Corollaries~\ref{feller} and~\ref{cor:SSBEfeller}). The Feller property combined with the Lyapunov-like estimates~\eqref{eq:L2H1} and~\eqref{eq:kb} then lead to the existence of an invariant measure (Propositions~\ref{existenceIMSDE} and~\ref{existence}).

The proof of uniqueness consists in a coupling argument that relies on two preliminary steps. The first one is showing that two coupled solutions hit any neighbourhood of $0$ with positive probability (Lemmas~\ref{entrance} and~\ref{proba}). The second one is showing that there exists a ball which both coupled processes attain almost surely, in finite time, whatever their initial conditions (Lemmas~\ref{entranceLB} and~\ref{entranceSSBE}). The final argument establishing uniqueness of the invariant measure from those two steps is displayed only for the continuous time case at the end of Section~\ref{sec:uniqueness}.

Observe that the moment estimates for $p>2$ from Lemma~\ref{lem:recursivebound} have not been used yet. However, their role is crucial in the sequel. They will allow us to establish the regularity estimates uniformly in $N$ which will be at the core of our convergence argument.

\section{Convergence of invariant measures: semi-discrete scheme towards SPDE}\label{section:convergence}

This section is dedicated to the proof of the first statement in Theorem~\ref{TCV}, namely the convergence of $\mu_N$ to $\mu$. The general sketch of the proof is detailed in Subsection~\ref{ss:sketchofproof}. Subsections~\ref{ss:pftight} and~\ref{ss:CFT} contain the proofs of the main arguments. A discussion on the rate of convergence associated with Theorem~\ref{TCV} is then provided in Subsection~\ref{ss:rate}.

\subsection{General sketch of the proof}\label{ss:sketchofproof}

The first ingredient of the proof is the following series of uniform estimates on $\vartheta_N$.

\begin{prop}[Uniform $\ell^p_0(\T_N)$, $h^1_0(\T_N)$ and $h^2_0(\T_N)$ estimates on $\vartheta_N$]\label{prop:unifestim}
  For any $N\geq1$, let $\mathbf{V}^N$ be a random variable in $\R^N_0$ with distribution $\vartheta_N$. For all $p \in [1,+\infty)$, there exists a constant $\mathsf{C}^{0,p} \in [0,+\infty)$ such that
  \begin{equation*}
   \sup_{N\geq1} \E\left[\|\mathbf{V}^N\|^p_{\ell^p_0(\T_N)}\right] \leq \mathsf{C}^{0,p}.
  \end{equation*}
  Besides, there exist constants $\mathsf{C}^{1,2}, \mathsf{C}^{2,2} \in [0,+\infty)$ such that
  \begin{equation*}
    \sup_{N\geq1}\E\left[\|\mathbf{D}^{(1,+)}_N\mathbf{V}^N\|^2_{\ell^2_0(\T_N)}\right] \leq \mathsf{C}^{1,2}, \qquad \sup_{N\geq1}\E\left[\|\mathbf{D}^{(2)}_N\mathbf{V}^N\|^2_{\ell^2_0(\T_N)}\right] \leq \mathsf{C}^{2,2}.
  \end{equation*}
\end{prop}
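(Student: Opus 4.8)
The plan is to transfer the finite-time energy estimates already available to the stationary regime by a Cesàro-averaging argument. Fix a deterministic initial condition $\mathbf{u}_0 \in \R^N_0$ and set $\pi_t^N := \frac{1}{t}\int_0^t \P_{\mathbf{u}_0}(\mathbf{U}^N(s) \in \cdot)\,\dd s$. By the existence-and-uniqueness analysis of Section~\ref{IMschemes} (Krylov--Bogoliubov together with uniqueness of $\vartheta_N$), $\pi_t^N$ converges weakly to $\vartheta_N$ as $t \to \infty$ at fixed $N$. Since $\mathbf{v} \mapsto \|\mathbf{v}\|_{\ell^p_0(\T_N)}^p$, $\mathbf{v} \mapsto \|\mathbf{D}^{(1,+)}_N\mathbf{v}\|_{\ell^2_0(\T_N)}^2$ and $\mathbf{v} \mapsto \|\mathbf{D}^{(2)}_N\mathbf{v}\|_{\ell^2_0(\T_N)}^2$ are nonnegative and continuous, lower semicontinuity under weak convergence gives $\int \Phi\,\dd\vartheta_N \leq \liminf_{t\to\infty}\int \Phi\,\dd\pi_t^N$ for each of them, and $\int \Phi\,\dd\pi_t^N = \frac{1}{t}\E[\int_0^t \Phi(\mathbf{U}^N(s))\,\dd s]$. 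Thus any time-averaged bound that is uniform in $N$ descends to $\vartheta_N$, the key point being that all constants produced below depend only on $\nu$, $\mathsf{D}$ and the exponents, never on $N$.

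For the $\ell^p_0(\T_N)$ moments I would first reduce to $p \in 2\N^*$: for general $p \in [1,+\infty)$, taking the least even integer $q \geq p$ and combining the monotonicity of norms~\eqref{eq:normorder} with Jensen's inequality gives $\E[\|\mathbf{V}^N\|_{\ell^p_0(\T_N)}^p] \leq (\E[\|\mathbf{V}^N\|_{\ell^q_0(\T_N)}^q])^{p/q}$. For even $q$, estimate~\eqref{eq:integralinduction} divided by $t$ reads $\int \|\mathbf{v}\|_{\ell^q_0(\T_N)}^q\,\dd\pi_t^N \leq t^{-1}(\mathsf{c}_0^{(q)} + \mathsf{c}_1^{(q)}\|\mathbf{u}_0\|_{\ell^q_0(\T_N)}^q) + \mathsf{c}_2^{(q)}$, and letting $t \to \infty$ with the semicontinuity principle yields the $N$-free bound $\E[\|\mathbf{V}^N\|_{\ell^q_0(\T_N)}^q] \leq \mathsf{c}_2^{(q)}$. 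The $h^1_0(\T_N)$ estimate is the case $p=2$ of~\eqref{eq:rb}, namely $\E[\|\mathbf{U}^N(t)\|_{\ell^2_0(\T_N)}^2] + 2\nu\,\E[\int_0^t \|\mathbf{D}^{(1,+)}_N\mathbf{U}^N(s)\|_{\ell^2_0(\T_N)}^2\,\dd s] \leq \E[\|\mathbf{U}^N_0\|_{\ell^2_0(\T_N)}^2] + \mathsf{D}t$; dropping the nonnegative first term, dividing by $t$, and sending $t \to \infty$ along the same route gives $\E[\|\mathbf{D}^{(1,+)}_N\mathbf{V}^N\|_{\ell^2_0(\T_N)}^2] \leq \mathsf{D}/(2\nu)$.

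The $h^2_0(\T_N)$ bound is where the real work is. I would apply It\^o's formula to the $h^1_0(\T_N)$ energy $\|\mathbf{D}^{(1,+)}_N\mathbf{U}^N(t)\|_{\ell^2_0(\T_N)}^2 = -\langle \mathbf{D}^{(2)}_N\mathbf{U}^N(t), \mathbf{U}^N(t)\rangle_{\ell^2(\T_N)}$, a smooth quadratic functional. By~\eqref{eq:sbp2} the drift produces the dissipative term $-2\nu\|\mathbf{D}^{(2)}_N\mathbf{U}^N\|_{\ell^2_0(\T_N)}^2$ and the nonlinear flux term $2\langle \mathbf{D}^{(2)}_N\mathbf{U}^N, \mathbf{D}^{(1,-)}_N\overline{\mathbf{A}}^N(\mathbf{U}^N)\rangle_{\ell^2(\T_N)}$, while the It\^o correction is $\sum_{k\geq1}\|\mathbf{D}^{(1,+)}_N\mathbf{g}^k\|_{\ell^2_0(\T_N)}^2 \leq \mathsf{D}$ by~\eqref{eq:sigma-glob}. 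After a localisation removing the martingale in expectation, the Cesàro argument reduces everything to proving an inequality of the form $2\langle \mathbf{D}^{(2)}_N\mathbf{U}^N, \mathbf{D}^{(1,-)}_N\overline{\mathbf{A}}^N(\mathbf{U}^N)\rangle_{\ell^2(\T_N)} \leq \nu\|\mathbf{D}^{(2)}_N\mathbf{U}^N\|_{\ell^2_0(\T_N)}^2 + \Phi(\mathbf{U}^N)$ with $\int \Phi\,\dd\pi_t^N$ bounded uniformly in $N$ and $t$. To build $\Phi$, I would rewrite each component of $\mathbf{D}^{(1,-)}_N\overline{\mathbf{A}}^N(\mathbf{U}^N)$, using consistency~\eqref{eq:consistency} and the fundamental theorem of calculus, as integrals of $\partial_1\overline{A}$ and $\partial_2\overline{A}$ against the increments of $\mathbf{U}^N$, that is against $\mathbf{D}^{(1,+)}_N\mathbf{U}^N$; the polynomial growth~\eqref{PG} then bounds these derivatives pointwise by $\mathsf{C}_{\bar A}(1 + |\mathbf{U}^N|^{\mathsf{p}_{\bar A}})$, and Cauchy--Schwarz followed by Young's inequality absorbs a fraction of $\nu\|\mathbf{D}^{(2)}_N\mathbf{U}^N\|_{\ell^2_0(\T_N)}^2$, leaving a term of the schematic form $\|(1 + |\mathbf{U}^N|^{\mathsf{p}_{\bar A}})\,\mathbf{D}^{(1,+)}_N\mathbf{U}^N\|_{\ell^2_0(\T_N)}^2$.

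The main obstacle is to estimate this last quantity by one whose $\pi_t^N$-average is uniformly bounded in $N$. A discrete H\"older inequality splits it into a high $\ell^r_0(\T_N)$ moment of $\mathbf{U}^N$ --- controlled uniformly in $N$ by the first part of the proposition, the required order $r$ growing with $\mathsf{p}_{\bar A}$, which is exactly why moments of all orders are needed --- times an $\ell^s_0(\T_N)$ norm of $\mathbf{D}^{(1,+)}_N\mathbf{U}^N$ with $s > 2$. The latter I would handle by a discrete Gagliardo--Nirenberg interpolation inequality, controlling $\|\mathbf{D}^{(1,+)}_N\mathbf{U}^N\|_{\ell^s_0(\T_N)}$ by $\|\mathbf{D}^{(2)}_N\mathbf{U}^N\|_{\ell^2_0(\T_N)}^{\theta}\|\mathbf{D}^{(1,+)}_N\mathbf{U}^N\|_{\ell^2_0(\T_N)}^{1-\theta}$ with some $\theta < 1$, so that a second Young inequality (with exponent $1/\theta$) reabsorbs the remaining fraction of $\|\mathbf{D}^{(2)}_N\mathbf{U}^N\|_{\ell^2_0(\T_N)}^2$ and leaves $\Phi$ a polynomial in the $\ell^p_0(\T_N)$ and $h^1_0(\T_N)$ norms --- precisely the quantities bounded in the first two parts. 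Consequently $\int \Phi\,\dd\pi_t^N$ is bounded uniformly in $N$ and $t$, and the Cesàro limit closes the estimate with an $N$-independent $\mathsf{C}^{2,2}$. I expect the genuinely delicate part to be the bookkeeping: choosing the H\"older exponents and verifying $\theta < 1$ while matching the available moment orders to $\mathsf{p}_{\bar A}$, and checking that the discrete interpolation inequality holds with constants independent of $N$.
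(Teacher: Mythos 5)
Your treatment of the $\ell^p_0(\T_N)$ and $h^1_0(\T_N)$ estimates is essentially the paper's argument: both transfer the finite-time bounds of Lemma~\ref{lem:recursivebound} to $\vartheta_N$ by an ergodic-averaging/stationarity argument (the paper truncates by $M$ and integrates directly against $\vartheta_N$ rather than passing through $\pi^N_t$ and Portmanteau, but this is a cosmetic difference). Your setup for the $h^2_0(\T_N)$ bound --- It\^o's formula on the $h^1_0(\T_N)$ energy, absorption of $2\langle \mathbf{D}^{(2)}_N\mathbf{U}^N,\mathbf{D}^{(1,-)}_N\overline{\mathbf{A}}^N(\mathbf{U}^N)\rangle$ by Young's inequality, and the rewriting of the flux increments via the fundamental theorem of calculus and the polynomial growth~\eqref{PG} --- also matches the paper up to the point where you are left with a term of the schematic form $\|(1+|\mathbf{U}^N|^{\mathsf{p}_{\bar A}})\,\mathbf{D}^{(1,+)}_N\mathbf{U}^N\|^2_{\ell^2_0(\T_N)}$.

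The final absorption step is where there is a genuine gap. Your H\"older--Gagliardo--Nirenberg route leaves, after reabsorbing the $\|\mathbf{D}^{(2)}_N\mathbf{U}^N\|^2_{\ell^2_0(\T_N)}$ fraction by Young with exponents $1/\theta$ and $1/(1-\theta)$, a remainder of the form $C\,A^{1/(1-\theta)}\,\|\mathbf{D}^{(1,+)}_N\mathbf{U}^N\|^2_{\ell^2_0(\T_N)}$, where $A$ is a high $\ell^r_0(\T_N)$ moment of $\mathbf{U}^N$; a simple exponent count shows you cannot split the product $A\cdot X^{2\theta}\cdot Y^{2(1-\theta)}$ three ways without raising the power of $Y=\|\mathbf{D}^{(1,+)}_N\mathbf{U}^N\|_{\ell^2_0(\T_N)}$ above $2$. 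Either way you need $\E$ of a product of an unbounded random prefactor with the squared (or higher) $h^1_0(\T_N)$ norm, and the first two parts of the proposition only control $\E[\|\mathbf{D}^{(1,+)}_N\mathbf{V}^N\|^2_{\ell^2_0(\T_N)}]$, not higher moments of it, so $\E[\Phi]$ is not actually bounded by what you have established. The paper avoids this entirely: after Jensen's inequality the problematic polynomial term is \emph{exactly} $\frac{1}{2\mathsf{p}_{\bar A}+1}\langle\mathbf{D}^{(1,+)}_N((\mathbf{V}^N)^{2\mathsf{p}_{\bar A}+1}),\mathbf{D}^{(1,+)}_N\mathbf{V}^N\rangle_{\ell^2_0(\T_N)}$, i.e.\ the dissipation functional appearing in the $\ell^p_0(\T_N)$ It\^o identity with $p=2\mathsf{p}_{\bar A}+2$, whose stationary expectation is bounded directly by~\eqref{eq:statrec} --- no interpolation and no product of random norms. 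Your argument could be repaired by inserting $\|\mathbf{D}^{(1,+)}_N\mathbf{U}^N\|^2_{\ell^2_0(\T_N)}\leq\|\mathbf{D}^{(2)}_N\mathbf{U}^N\|_{\ell^2_0(\T_N)}\|\mathbf{U}^N\|_{\ell^2_0(\T_N)}$ (summation by parts and Cauchy--Schwarz) and one more Young inequality, so that the remainder becomes a polynomial in $\ell^p_0(\T_N)$ norms alone, but as written the step does not close.
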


\begin{remark}[Notation for constants]
  Throughout this section and the next one, we use the superscript indices $m,p$ in the notation for constants in order to refer to bounds over Sobolev $W^{m,p}$ norms.
\end{remark}

For all $N \geq 1$, we recall the definition~\eqref{eq:muN} of the measure $\mu_N \in \mathcal{P}_2(L^2_0(\T))$. The discrete regularity estimates from Proposition~\ref{prop:unifestim} actually ensure the continuous spatial regularity of random variables drawn from any limiting distribution of $(\mu_N)_{N\geq1}$, as stated in the following result.

\begin{corollary}[Relative compactness and $L^p_0(\T)$, $H^1_0(\T)$ and $H^2_0(\T)$ estimates on $\mu_N$]\label{cor:relcom}
  The sequence $(\mu_N)_{N \geq 1}$ is relatively compact in $\mathcal{P}_2(L^2_0(\T))$, and any subsequential limit $\mu^*$ has the property that if $v^*$ is a random variable in $L^2_0(\T)$ with distribution $\mu^*$, then for all $p \in [1,+\infty)$,
  \begin{equation*}
    \E\left[\|v^*\|^p_{L^p_0(\T)}\right] \leq \mathsf{C}^{0,p},
  \end{equation*}
  and
  \begin{equation*}
    \E\left[\|v^*\|^2_{H^1_0(\T)}\right] \leq \mathsf{C}^{1,2}, \qquad \E\left[\|v^*\|^2_{H^2_0(\T)}\right] \leq \mathsf{C}^{2,2}.
  \end{equation*}
\end{corollary}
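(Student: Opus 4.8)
The plan is to transfer the discrete estimates of Proposition~\ref{prop:unifestim} from $\vartheta_N$ to $\mu_N$ through the reconstruction operator, to establish relative compactness in $\mathcal P_2(L^2_0(\T))$ by combining tightness with uniform integrability of second moments, and finally to pass the bounds to any $W_2$-limit by lower semicontinuity. Throughout I write $\mathbf V^N\sim\vartheta_N$ and $X_N:=\Psi_N\mathbf V^N\sim\mu_N$. Since $\|\Psi_N\mathbf v\|_{L^p_0(\T)}=\|\mathbf v\|_{\ell^p_0(\T_N)}$ by~\eqref{eq:isoPsiN}, the $\ell^p_0(\T_N)$ moment bound of Proposition~\ref{prop:unifestim} immediately gives $\sup_{N\ge1}\E[\|X_N\|_{L^p_0(\T)}^p]\le\mathsf C^{0,p}$ for every $p\in[1,+\infty)$. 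I recall that a subset of $\mathcal P_2(E)$, with $E$ Polish, is relatively compact for $W_2$ if and only if it is tight and its second moments are uniformly integrable, see~\cite[Chapter~6]{Vil08}. Uniform integrability is a direct consequence of these bounds: fixing any $p>2$ and using $\|w\|_{L^2_0(\T)}\le\|w\|_{L^p_0(\T)}$ from~\eqref{eq:holder}, one has $\int_{\{\|w\|_{L^2_0(\T)}\ge R\}}\|w\|_{L^2_0(\T)}^2\,\dd\mu_N(w)\le R^{2-p}\E[\|X_N\|_{L^p_0(\T)}^p]\le R^{2-p}\mathsf C^{0,p}$, which tends to $0$ as $R\to+\infty$, uniformly in $N$.

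For tightness I would introduce the continuous, piecewise-linear interpolant $\widetilde\Psi_N\mathbf v\in H^1_0(\T)$ obtained from the values $\mathbf v$ placed at the cell centres; an elementary computation (detailed in~\cite{Mar19}) gives $\|\widetilde\Psi_N\mathbf v\|_{H^1_0(\T)}=\|\mathbf D^{(1,+)}_N\mathbf v\|_{\ell^2_0(\T_N)}$ together with $\|\widetilde\Psi_N\mathbf v-\Psi_N\mathbf v\|_{L^2_0(\T)}\le C N^{-1}\|\mathbf D^{(1,+)}_N\mathbf v\|_{\ell^2_0(\T_N)}$. Writing $Y_N:=\widetilde\Psi_N\mathbf V^N$, the uniform $h^1_0(\T_N)$ bound yields $\sup_N\E[\|Y_N\|_{H^1_0(\T)}^2]\le\mathsf C^{1,2}$, so that by Markov's inequality and the compact Rellich embedding $H^1_0(\T)\hookrightarrow\hookrightarrow L^2_0(\T)$, the laws of $(Y_N)$ are tight in $L^2_0(\T)$. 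Since $\E[\|X_N-Y_N\|_{L^2_0(\T)}^2]\le C N^{-2}\mathsf C^{1,2}\to0$, the family $(X_N-Y_N)$ converges to $0$ in distribution, hence is tight, and tightness of $(X_N)=(Y_N+(X_N-Y_N))$ follows because the sum of two tight families is tight (the sum of two compact subsets of $L^2_0(\T)$ being compact). This proves that $(\mu_N)_{N\ge1}$ is relatively compact in $\mathcal P_2(L^2_0(\T))$.

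It then remains to pass the estimates to a limit $\mu^*$ of a $W_2$-convergent subsequence $(\mu_{N_k})$, with $v^*\sim\mu^*$. For the $L^p$ bounds, the functional $w\mapsto\|w\|_{L^p_0(\T)}^p$ is nonnegative and lower semicontinuous on $L^2_0(\T)$ (by Fatou along $L^2$-convergent, hence almost-everywhere-convergent, subsequences), so the portmanteau theorem for nonnegative lower semicontinuous functionals gives $\E[\|v^*\|_{L^p_0(\T)}^p]\le\liminf_k\E[\|X_{N_k}\|_{L^p_0(\T)}^p]\le\mathsf C^{0,p}$. For the $H^1_0(\T)$ bound I apply the same argument to the functional $\|\cdot\|_{H^1_0(\T)}^2$, which is lower semicontinuous on $L^2_0(\T)$ since it coincides with the supremum over $\phi\in C^\infty(\T)$ of the affine, $L^2_0(\T)$-continuous maps $v\mapsto -2\int_\T v\,\partial_x\phi\,\dd x-\int_\T\phi^2\,\dd x$ (and equals $+\infty$ off $H^1_0(\T)$); evaluating it along the interpolants $Y_{N_k}$, whose laws also converge to $\mu^*$ and which satisfy $\E[\|Y_{N_k}\|_{H^1_0(\T)}^2]\le\mathsf C^{1,2}$, yields $\E[\|v^*\|_{H^1_0(\T)}^2]\le\mathsf C^{1,2}$ with the sharp constant. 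The $H^2_0(\T)$ bound cannot be read off the interpolants, which are not in $H^2_0(\T)$; instead I would use the second-difference characterisation of $H^2_0(\T)$, controlling the rescaled second difference $h^{-2}\bigl(X_{N_k}(\cdot+h)-2X_{N_k}+X_{N_k}(\cdot-h)\bigr)$ in $L^2_0(\T)$ by $\|\mathbf D^{(2)}_{N_k}\mathbf V^{N_k}\|_{\ell^2_0(\T_N)}$ via summation by parts for $h$ a multiple of $1/N_k$. Passing to the limit at fixed $h$ (using that $W_2$-convergence entails convergence of expectations of continuous functionals of quadratic growth), then letting $h\to0$ and invoking Fatou and the difference-quotient characterisation of $H^2_0(\T)$, I conclude that $v^*\in H^2_0(\T)$ almost surely with $\E[\|v^*\|_{H^2_0(\T)}^2]\le\mathsf C^{2,2}$.

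The main obstacle, and the reason the statement is not a one-line consequence of Proposition~\ref{prop:unifestim}, is that the reconstructions $\Psi_N\mathbf V^N$ are piecewise constant and therefore do not belong to $H^1_0(\T)$, let alone to $H^2_0(\T)$: the uniform discrete regularity does not transfer to pointwise regularity of the approximations themselves, but only manifests itself in the $N\to\infty$ limit. Extracting this hidden regularity, and doing so with the sharp constants $\mathsf C^{1,2}$ and $\mathsf C^{2,2}$, is the crux of the argument and is precisely what forces the use of an auxiliary $H^1_0(\T)$ interpolant for the tightness step and of the translation and second-difference characterisations of $H^1_0(\T)$ and $H^2_0(\T)$ for the limiting bounds.
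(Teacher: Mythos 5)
Your proof is correct and, for most of the statement, follows the same route as the paper: transfer of the $\ell^p_0(\T_N)$ bounds through the isometry~\eqref{eq:isoPsiN}, tightness via a piecewise-linear $H^1_0(\T)$ interpolant that is $L^2$-close to $\Psi_N\mathbf V^N$ (the paper's $\Psi^{(1)}_N$, with~\eqref{eq:Psi12} and~\eqref{eq:Psi10}), upgrade to $\mathcal P_2$ by uniform integrability of second moments from the $p>2$ bounds, and Portmanteau with lower semicontinuity for the $L^p$ and $H^1$ limits (you use a duality characterisation of the $H^1$ seminorm where the paper uses the Fourier one; both are fine). The one place where you genuinely diverge is the $H^2_0(\T)$ estimate: the paper introduces a second, piecewise-quadratic interpolant $\Psi^{(2)}_N$ whose $H^2_0(\T)$ norm is \emph{exactly} $\|\mathbf D^{(2)}_N\mathbf v\|_{\ell^2_0(\T_N)}$ and whose law along the subsequence also converges to $\mu^*$ (by~\eqref{eq:Psi20}), so the bound again falls out of lower semicontinuity of $\|\cdot\|^2_{H^2_0(\T)}$ on $L^2_0(\T)$; you instead work directly with the piecewise-constant reconstructions and the second-difference characterisation of $H^2_0(\T)$. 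Your route is viable and yields the sharp constant (on the Fourier side $h^{-2}\,4\sin^2(\pi m h)\nearrow(2\pi m)^2$), and it has the merit of avoiding the construction of a higher-order interpolant; its cost is the two-parameter limit $N\to\infty$ then $h\to0$ and a piece of bookkeeping you elide, namely that a fixed $h$ is not a multiple of $1/N_k$ along the subsequence, so you must replace $h$ by mesh-aligned $h_k=m_k/N_k\to h$ and check continuity of the second-difference functional jointly in $(h,v)$ before invoking $W_2$-convergence of expectations. This is routine but should be said; as written, the $H^2$ step is a sound plan rather than a complete argument, whereas the paper's interpolant makes that step a one-line application of the same lower-semicontinuity lemma already used for $H^1$.
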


As a consequence of Corollary~\ref{cor:relcom}, in order to prove that $\mu_N$ converges in $\mathcal{P}_2(L^2_0(\T))$ to the unique invariant measure $\mu$ of~\eqref{SSCL}, it suffices to show that any subsequential limit $\mu^*$ of this sequence coincides with $\mu$. To this aim, we let $\mu^*$ be such a limit, and for convenience we still denote by $(\mu_N)_{N \geq 1}$ the extracted subsequence which converges to $\mu^*$. We then apply the Skorohod representation theorem to the subsequence $(\mu_N)_{N \geq 1}$ to construct, on the same probability space, a sequence of $\mathcal{F}_0$-measurable random variables $\mathbf{U}^N_0 \in \R_0^N$ and a random variable $u^*_0 \in L^2_0(\T)$ such that:
\begin{itemize}
  \item for all $N \geq 1$, $\mathbf{U}^N_0 \sim \vartheta_N$ (or equivalently, $\Psi_N\mathbf{U}^N_0 \sim \mu_N$),
  \item $u^*_0 \sim \mu^*$,
  \item the sequence $\Psi_N \mathbf{U}^N_0$ converges almost surely to $u^*_0$ in $L^2_0(\T)$. 
\end{itemize}
Notice that by Corollary~\ref{cor:relcom}, $u_0^* \in H^2_0(\T)$, almost surely, which thus allows to take this random variable as an initial condition for~\eqref{SSCL}. Up to taking the product of this probability space with another probability space on which a $Q$-Wiener process $(W^Q(t))_{t \geq 0}$ is defined, which will thus be independent from $\mathcal{F}_0$, one may then consider the solution $(u^*(t))_{t\geq0}$ to~\eqref{SSCL}, with initial condition $u^*_0$ and driven by $(W^Q(t))_{t \geq 0}$. We will still use $\P$ and $\E$ to denote respectively the probability and the expectation on this product probability space. For all $N \geq 1$, we let $(\mathbf{U}^N(t))_{t \geq 0}$ be the solution to~\eqref{FVS} with initial condition $\mathbf{U}^N_0 \sim \mathcal{\nu}_N$ and driven by the Wiener process $\mathbf{W}^{Q,N} = \Pi_N W^Q$. We finally denote by $u^N(t) = \Psi_N \mathbf{U}^N(t)$ the piecewise constant reconstruction of $\mathbf{U}^N(t)$ on $\T$.

\begin{prop}[Finite-time convergence of $u^N(t)$]\label{CFT}
  In the setting introduced above, for all $t \geq 0$,
  \begin{equation*}
    \lim_{N \to +\infty} \E\left[\|u^N(t)-u^*(t)\|^2_{L^2_0(\T)}\right] = 0.
  \end{equation*}
\end{prop}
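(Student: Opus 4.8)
The plan is to compare the semi-discrete solution with the projection of the SPDE solution in the discrete space, exploiting that the additive, projected noise cancels in the difference. Set $\mathbf{w}^N(t) := \mathbf{U}^N(t) - \Pi_N u^*(t)$. Since $\mathbf{W}^{Q,N} = \Pi_N W^Q$ drives both $\mathbf{U}^N$ and (after applying the linear operator $\Pi_N$ to \eqref{strongsolution}) the process $\Pi_N u^*$, the noise disappears from $\mathbf{w}^N$, which is therefore absolutely continuous in time. This is the crucial structural simplification: it lets me run a \emph{pathwise} energy estimate on $\|\mathbf{w}^N(t)\|^2_{\ell^2_0(\T_N)}$ with no It\^o correction. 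Using \eqref{eq:isoPsiN} and the fact that $\Psi_N\Pi_N u^*(t)$ converges to $u^*(t)$ in $L^2_0(\T)$ (the standard approximation of an $L^2_0(\T)$ function by its cell averages, whose squared $L^2_0(\T)$ error is dominated by $4\|u^*(t)\|_{L^2_0(\T)}^2$ via \eqref{eq:isoPiN}, so that its expectation vanishes by dominated convergence and the moment bounds of Corollary~\ref{cor:relcom}), the claim reduces to showing $\E[\|\mathbf{w}^N(t)\|^2_{\ell^2_0(\T_N)}] \to 0$.

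Next I would carry out the energy estimate. Applying $\Pi_N$ to \eqref{strongsolution} and writing the projected derivatives in interface-flux form, both drifts are expressed with $\mathbf{D}^{(1,-)}_N$ acting on a flux, respectively a gradient, vector: the exact interface gradient is $\partial_x u^*(t,x_i)$, and the exact interface flux is $A(u^*(t,x_i)) = \overline{A}(u^*(t,x_i),u^*(t,x_i))$ by consistency \eqref{eq:consistency}. Pairing $\frac{\dd}{\dd t}\mathbf{w}^N$ with $\mathbf{w}^N$ and moving $\mathbf{D}^{(1,-)}_N$ onto $\mathbf{D}^{(1,+)}_N\mathbf{w}^N$ via \eqref{eq:sbp}--\eqref{eq:sbp2}, the diffusion contributes the dissipation $-\nu\|\mathbf{D}^{(1,+)}_N\mathbf{w}^N\|^2_{\ell^2_0(\T_N)}$ plus a gradient-consistency term $\langle \mathbf{D}^{(1,+)}_N\mathbf{w}^N, \mathbf{q}^N\rangle_{\ell^2_0(\T_N)}$, where $\mathbf{q}^N$ compares discrete and exact interface gradients and satisfies $\|\mathbf{q}^N\|_{\ell^2_0(\T_N)} = O(1/N)\,\|u^*(t)\|_{H^2_0(\T)}$ --- this is precisely where the $H^2_0(\T)$ regularity of $u^*$ enters. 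The advection contributes a \emph{stability} term $\langle \mathbf{D}^{(1,+)}_N\mathbf{w}^N, \overline{\mathbf{A}}^N(\mathbf{U}^N) - \overline{\mathbf{A}}^N(\Pi_N u^*)\rangle_{\ell^2(\T_N)}$ plus a flux-consistency term that vanishes as $N\to\infty$ by continuity of $u^*(t)$ and \eqref{eq:consistency}. The stability term is bounded using the $C^1$ character and the polynomial growth \eqref{PG} of $\partial_1\overline{A},\partial_2\overline{A}$: it is dominated by a moment factor $G^N(t)$, polynomial in the $\ell^p_0(\T_N)$ norms of $\mathbf{U}^N(t)$ and $\Pi_N u^*(t)$, times $\|\mathbf{D}^{(1,+)}_N\mathbf{w}^N\|_{\ell^2_0(\T_N)}$ and a higher $\ell^q_0(\T_N)$ norm of $\mathbf{w}^N$. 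Combining Young's inequality with the discrete interpolation $\|\mathbf{w}^N\|_{\ell^\infty_0(\T_N)} \le \|\mathbf{D}^{(1,+)}_N\mathbf{w}^N\|_{\ell^2_0(\T_N)}$ coming from \eqref{eq:gradient-estim-discr} and \eqref{eq:normorder}, I would absorb all gradient factors into the dissipation, arriving at a pathwise inequality of the form
\[ \frac{\dd}{\dd t}\|\mathbf{w}^N(t)\|^2_{\ell^2_0(\T_N)} \le C\,G^N(t)\,\|\mathbf{w}^N(t)\|^2_{\ell^2_0(\T_N)} + \rho^N(t), \]
with $\rho^N$ collecting the consistency errors.

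The main obstacle is that $G^N$ is random and unbounded (it is polynomial in the solution, reflecting that the flux is only locally Lipschitz, as for Burgers), so a naive Gr\"onwall argument produces $\exp(C\int_0^t G^N)$, which need not be integrable. I would resolve this by localisation. Introducing a stopping time $\tau^N_R$ at which the relevant $\ell^p_0(\T_N)$ norms of $\mathbf{U}^N$ and $\Pi_N u^*$ first exceed $R$, the coefficient is bounded by a deterministic constant $C_R$ on $[0,t\wedge\tau^N_R]$, and deterministic Gr\"onwall gives $\E[\|\mathbf{w}^N(t\wedge\tau^N_R)\|^2_{\ell^2_0(\T_N)}] \le \mathrm{e}^{C_R t}\bigl(\E[\|\mathbf{w}^N(0)\|^2_{\ell^2_0(\T_N)}] + \E\int_0^t\rho^N\bigr)$. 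For fixed $R$ the right-hand side tends to $0$ as $N\to\infty$, since $\E[\|\mathbf{w}^N(0)\|^2_{\ell^2_0(\T_N)}]\to0$ (from the almost sure $L^2_0(\T)$ convergence of $\Psi_N\mathbf{U}^N_0$ together with the uniform integrability provided by the uniform high moments of Proposition~\ref{prop:unifestim}) and the consistency errors vanish. The complementary contribution is handled by Cauchy--Schwarz, bounding $\E[\|\mathbf{w}^N(t)\|^2_{\ell^2_0(\T_N)};\,\tau^N_R\le t]$ by $(\E\|\mathbf{w}^N(t)\|^4_{\ell^2_0(\T_N)})^{1/2}\,\P(\tau^N_R\le t)^{1/2}$: the fourth moment is bounded uniformly in $N$ by Lemma~\ref{lem:recursivebound} and Proposition~\ref{prop:unifestim} (together with the corresponding SPDE moment bounds for $u^*$ on $[0,t]$, propagated from the finite initial moments of Corollary~\ref{cor:relcom}), while $\P(\tau^N_R\le t)$ is made uniformly small in $N$ for large $R$ via a maximal inequality and the same uniform moment bounds. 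Letting first $N\to\infty$ and then $R\to\infty$ yields $\E[\|\mathbf{w}^N(t)\|^2_{\ell^2_0(\T_N)}]\to0$, which combined with the first paragraph proves the proposition. An equivalent route that cleanly isolates the invariant-measure subtlety would be to use the $\ell^1_0(\T_N)$ contraction of Proposition~\ref{L1C} to absorb the initial-data mismatch $\mathbf{U}^N_0 - \Pi_N u^*_0$ for free, and to run the energy estimate above only for matched initial data.
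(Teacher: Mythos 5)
Your proposal is correct and follows essentially the same route as the paper: the same splitting into a projection error and a discrete error $\mathbf{e}^N=\mathbf{U}^N-\Pi_N u^*$, the same pathwise energy estimate exploiting the cancellation of the projected noise, localisation by a stopping time to handle the merely local Lipschitz continuity of $A$ and $\overline{A}$, Gr\"onwall on the localised event, Cauchy--Schwarz plus uniform fourth moments on the complement, and uniform integrability for the initial mismatch. The one place where you compress genuinely nontrivial work is the claim that $\P(\tau^N_R\le t)$ is small uniformly in $N$ for large $R$: this requires a supremum-in-time estimate such as the paper's uniform finite-time $h^1_0(\T_N)$ bound (Lemma~\ref{lem:FTBUN}), not just the fixed-time stationary moments, although your mention of a maximal inequality points in the right direction.
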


Proposition~\ref{CFT} implies in particular that the law of $u^N(t)$ converges in $\mathcal{P}_2(L^2_0(\T))$ to the law of $u^*(t)$. But since $u^N(t) = \Psi_N \mathbf{U}^N(t)$, and the process $(\mathbf{U}^N(t))_{t \geq 0}$ is stationary, its law does not depend on $t$. Therefore, the law of $u^*(t)$ does not depend on $t$ either; in other words, $\mu^*$ is an invariant measure for~\eqref{SSCL}. By the uniqueness result of Proposition~\ref{SPDEresults}, we deduce that $\mu^*=\mu$ and the proof of~\eqref{cv1} is completed.

\begin{remark}
This argument shows that any subsequential limit $\mu^*$ of $(\mu_N)_{N \geq 1}$ is an invariant measure for~\eqref{SSCL}, therefore it provides an alternative proof for the existence part in \cite[Theorem 2]{MR19}. The uniqueness part remains crucial to identify all subsequential limits and obtain the convergence of the sequence $(\mu_N)_{N \geq 1}$.
\end{remark}

\subsection{Proofs of Proposition~\ref{prop:unifestim} and Corollary~\ref{cor:relcom}}\label{ss:pftight}

We first detail the proof of Proposition~\ref{prop:unifestim}.

\begin{proof}[Proof of Proposition~\ref{prop:unifestim}]
  The proof is divided in 4 steps. The $\ell^p_0(\T_N)$ estimate is derived in Step~1. An intermediary result is stated in Step~2, and the $h^1_0(\T_N)$ and $h^2_0(\T_N)$ estimates are respectively derived in Steps~3 and~4.
  
  \emph{Step~1: $\ell^p_0(\T_N)$ estimate.} The argument is inspired from~\cite[Proposition~4.24]{Hai06} and relies on the finite-time uniform estimates from Lemma~\ref{lem:recursivebound}. For any $M>0$, $p\in2\N^*$ and $t>0$, we first write
  \begin{align*}
    \E\left[\left\|\mathbf V^N\right\|_{\ell^p_0(\T_N)}^p\wedge M\right]&=\frac1t\int_0^t\int_{\R^N_0}\E_{\mathbf u_0}\left[\left\|\mathbf U^N(s)\right\|_{\ell^p_0(\T_N)}^p\wedge M\right]\dd\vartheta_N(\mathbf u_0)\dd s\\
    &=\int_{\R^N_0}\frac1t\int_0^t\E_{\mathbf u_0}\left[\left\|\mathbf U^N(s)\right\|_{\ell^p_0(\T_N)}^p\wedge M\right]\dd s\dd\vartheta_N(\mathbf u_0)\\
    &\leq\int_{\R^N_0}\left(\frac1t\int_0^t\E_{\mathbf u_0}\left[\left\|\mathbf U^N(s)\right\|_{\ell^p_0(\T_N)}^p\right]\dd s\right)\wedge M\dd\vartheta_N(\mathbf u_0)\\
    &\leq\int_{\R^N_0}\left(\frac1t\mathsf{c}_0^{(p)}+\mathsf{c}_1^{(p)}\frac{\|\mathbf u_0\|_{\ell^p_0(\T_N)}^p}t+\mathsf{c}_2^{(p)}\right)\wedge M\dd\vartheta_N(\mathbf u_0),
  \end{align*}
  where we have used~\eqref{eq:integralinduction} in Lemma~\ref{lem:recursivebound} at the last line. Letting $t\to+\infty$, we get from the dominated convergence theorem,
  \begin{equation*}
    \E\left[\left\|\mathbf V^N\right\|_{\ell^p_0(\T_N)}^p\wedge M\right]\leq\int_{\R^N_0}\lim_{t\to\infty}\left(\frac1t\mathsf{c}_0^{(p)}+\mathsf{c}_1^{(p)}\frac{\|\mathbf u_0\|_{\ell^p_0(\T_N)}^p}t+\mathsf{c}_2^{(p)}\right)\wedge M\dd\vartheta_N(\mathbf u_0)=\mathsf{c}_2^{(p)}\wedge M\leq \mathsf{c}_2^{(p)} =: \mathsf{C}^{0,p},
  \end{equation*}
  and the result for $p\in2\N^*$ follows by letting $M\to+\infty$ and using the monotone convergence theorem. This result extends readily to the general case $p\in[1,+\infty)$ by using the Jensen inequality.
  
  \emph{Step~2: intermediary estimate.} Let $p \in 2\N^*$. Taking $\mathbf{U}^N_0 = \mathbf{V}^N$ in~\eqref{eq:rb}, and using the result of Step~1, we obtain by stationarity
  \begin{equation}\label{eq:statrec}
    \nu p \E\left[\left\langle\mathbf D^{(1,+)}_N\left((\mathbf V^N)^{p-1}\right),\mathbf D^{(1,+)}_N\mathbf V^N\right\rangle_{\ell^2_0(\T_N)}\right] \leq \mathsf{D}\frac{p(p-1)}2\E\left[\left\|\mathbf V^N\right\|_{{\ell^{p-2}_0(\T_N)}}^{p-2}\right] \leq \mathsf{D}\frac{p(p-1)}2 \mathsf{C}^{0,p-2},
  \end{equation}
  with the convention that $\mathsf{C}^{0,0}=1$.
  
  \emph{Step~3: $h^1_0(\T_N)$ estimate.} Take $p=2$ in~\eqref{eq:statrec} to get
  \begin{equation*}
    \E\left[\|\mathbf D^{(1,+)}_N\mathbf V^N\|_{\ell^2_0(\T_N)}^2\right] \leq \frac{2\mathsf{D}}{\nu} =: \mathsf{C}^{1,2}.
  \end{equation*}
  
  \emph{Step~4: $h^2_0(\T_N)$ estimate.} Let $(\mathbf U^N(t))_{t\geq0}$ be the solution of~\eqref{FVS} with initial distribution $\vartheta_N$. By It\^o's formula, for all $t\geq0$,
\begin{equation}\label{itoH1}
  \begin{aligned}
    \left\|\mathbf D^{(1,+)}_N\mathbf U^N(t)\right\|_{\ell^2_0(\T_N)}^2 &=\left\|\mathbf D^{(1,+)}_N\mathbf U^N_0\right\|_{\ell^2_0(\T_N)}^2 +2\int_0^t\left\langle\mathbf D^{(1,+)}_N\mathbf U^N(s),\mathbf D^{(1,+)}_N \mathbf b(\mathbf U^N(s))\right\rangle_{\ell^2_0(\T_N)}\dd s\\
    &\quad +2\int_0^t\left\langle\mathbf D^{(1,+)}_N\mathbf U^N(s),\dd \left(\mathbf D^{(1,+)}_N\mathbf W^{Q,N}\right)(s)\right\rangle_{\ell^2_0(\T_N)} +t\sum_{k\geq1}\left\|\mathbf D^{(1,+)}_N\mathbf{g}^k\right\|_{\ell^2_0(\T_N)}^2.
  \end{aligned}
\end{equation}
The third term of the right-hand side is a martingale since
\begin{equation}\label{martingale}
 \sum_{k\geq1}\E\left[\int_0^t\left\langle\mathbf D^{(1,+)}_N\mathbf U^N(s), \mathbf D^{(1,+)}_N\mathbf{g}^k\right\rangle_{\ell^2_0(\T_N)}^2\dd s\right] \leq t\left(\sum_{k\geq1}\left\|\mathbf D^{(1,+)}_N\mathbf{g}^k\right\|_{\ell^2_0(\T_N)}^2\right)\E\left[\left\|\mathbf D^{(1,+)}_N\mathbf V^N\right\|_{\ell^2_0(\T_N)}^2\right] \leq t\mathsf{D}\mathsf{C}^{1,2}<+\infty,
\end{equation}
where we have used the stationarity of $(\mathbf U^N(t))_{t\geq0}$, Inequality~\eqref{eq:sigma-glob}, and the $h^1_0(\T_N)$ estimate from Step~3. Thus, taking the expectation in~\eqref{itoH1} and using the stationarity and Inequality~\eqref{eq:sigma-glob} again, we get
\begin{equation*}
  -2\E\left[ \left\langle\mathbf D^{(1,+)}_N\mathbf V^N,\mathbf D^{(1,+)}_N\mathbf b(\mathbf V^N)\right\rangle_{\ell^2_0(\T_N)} \right] = \sum_{k\geq1}\left\| \mathbf D^{(1,+)}_N\mathbf{g}^k\right\|_{\ell^2_0(\T_N)}^2 \leq \mathsf{D}.\]

For any $\mathbf{v} \in \R^N_0$, we have, by~\eqref{eq:sbp2},
\begin{align*}
  \left\langle\mathbf D^{(1,+)}_N\mathbf v,\mathbf D^{(1,+)}_N\mathbf b(\mathbf v)\right\rangle_{\ell^2_0(\T_N)} &= -\left\langle\mathbf D^{(2)}_N\mathbf v,\mathbf b(\mathbf v)\right\rangle_{\ell^2_0(\T_N)}\\
  &= -\left\langle\mathbf D^{(2)}_N\mathbf v,\left(-\mathbf{D}^{(1,-)}_N \overline{\mathbf{A}}^N(\mathbf{v}) + \nu \mathbf D^{(2)}_N\mathbf v\right)\right\rangle_{\ell^2_0(\T_N)}\\
  &= \left\langle\mathbf D^{(2)}_N\mathbf v,\mathbf{D}^{(1,-)}_N \overline{\mathbf{A}}^N(\mathbf{v})\right\rangle_{\ell^2_0(\T_N)} - \nu\|\mathbf D^{(2)}_N\mathbf v\|^2_{\ell^2_0(\T_N)},
\end{align*}
so that 
\begin{align*}
  2\nu \E\left[\|\mathbf D^{(2)}_N\mathbf V^N\|^2_{\ell^2_0(\T_N)}\right] &\leq 2 \E\left[\left\langle\mathbf D^{(2)}_N\mathbf V^N,\mathbf{D}^{(1,-)}_N \overline{\mathbf{A}}^N(\mathbf{V}^N)\right\rangle_{\ell^2_0(\T_N)}\right] + \mathsf{D}\\
  &\leq 2 \sqrt{\E\left[\|\mathbf D^{(2)}_N\mathbf V^N\|_{\ell^2_0(\T_N)}^2\right]}\sqrt{\E\left[\|\mathbf{D}^{(1,-)}_N \overline{\mathbf{A}}^N(\mathbf{V}^N)\|_{\ell^2_0(\T_N)}^2\right]} + \mathsf{D},
\end{align*}
thanks to the Cauchy--Schwarz inequality. We now write 
\begin{equation}\label{fluxbound}
  \begin{aligned}
    &\E\left[\|\mathbf{D}^{(1,-)}_N \overline{\mathbf{A}}^N(\mathbf{V}^N)\|_{\ell^2_0(\T_N)}^2\right]\\
    &=\E \left[ N \sum_{i \in \T_N} \left( \bar A(V^N_i,V^N_{i+1}) - \bar A(V^N_{i-1},V^N_i) \right)^2 \right] \\
    &= \E \left[ N \sum_{i \in \T_N} \left( \int_{V^N_i}^{V^N_{i+1}} \partial_2 \bar A(V^N_i,z) \dd z + \int_{V^N_{i-1}}^{V^N_i} \partial_1 \bar A(z,V^N_i) \dd z \right)^2 \right]  \\
    &\leq 2 \E \left[ N \sum_{i \in \T_N} (V^N_{i+1}-V^N_i) \int_{V^N_i}^{V^N_{i+1}} \partial_2 \bar A(V^N_i,z)^2 \dd z \right]+ 2 \E \left[ N \sum_{i \in \T_N}(V^N_i-V^N_{i-1}) \int_{V^N_{i-1}}^{V^N_i} \partial_1 \bar A(z,V^N_i)^2 \dd z \right]\\
    &\leq 4 \mathsf{C}_{\bar A}^2 \E \left[ N \sum_{i \in \T_N} (V^N_i-V^N_{i-1}) \int_{V^N_{i-1}}^{V^N_i} \left( 1+|z|^{\mathsf{p}_{\bar A}} \right)^2 \dd z \right] \\
    &\leq 8 \mathsf{C}_{\bar A}^2 \left( \E \left[ N \sum_{i \in \T_N} (V^N_i-V^N_{i-1})^2 \right] + \E \left[ N \sum_{i \in \T_N} (V^N_i-V^N_{i-1}) \int_{V^N_{i-1}}^{V^N_i} |z|^{2\mathsf{p}_{\bar A}} \dd z \right] \right)  \\
    &=8 \mathsf{C}_{\bar A}^2 \left( \E\left[\left\|\mathbf D^{(1,+)}_N\mathbf V^N\right\|_{\ell^2_0(\T_N)}^2\right]+\frac1{2\mathsf{p}_{\bar A}+1}\E\left[\left\langle\mathbf D^{(1,+)}_N\left((\mathbf V^N)^{2\mathsf{p}_{\bar A}+1}\right),\mathbf D^{(1,+)}_N\mathbf V^N\right\rangle_{\ell^2_0(\T_N)}\right]\right) \\
    &\leq8 \mathsf{C}_{\bar A}^2 \left(\mathsf{C}^{1,2}+\frac{\mathsf{D}}{2\nu}\mathsf{C}^{0,2\mathsf{p}_{\bar A}} \right), 
  \end{aligned}
\end{equation}
where we have used~\eqref{E1} at the third line, the Jensen inequality at the fourth line, \eqref{PG} at the fifth line and Step~3 and the intermediary estimate~\eqref{eq:statrec} with $p=2\mathsf{p}_{\bar A}+2$ at the last line.

We therefore get
\begin{equation*}
2\nu \E\left[\|\mathbf D^{(2)}_N\mathbf V^N\|^2_{\ell^2_0(\T_N)}\right] \leq 2\sqrt{\E \left[ \left\|\mathbf D^{(2)}_N\mathbf V^N\right\|_{\ell^2_0(\T_N)}^2 \right]}\sqrt{4\mathsf{C}_{\bar A}^2\frac{\mathsf{D}}\nu\left(1+\mathsf{C}^{0,2\mathsf{p}_{\bar A}}\right)}+\mathsf{D}.
\end{equation*}
Applying Young's inequality on the right-hand side, we get
\begin{equation*}
2\nu \E\left[\|\mathbf D^{(2)}_N\mathbf V^N\|^2_{\ell^2_0(\T_N)}\right] \leq\nu\E\left[\left\|\mathbf D^{(2)}_N\mathbf V^N\right\|_{\ell^2_0(\T_N)}^2\right]+4 \mathsf{C}_{\bar A}^2\frac{\mathsf{D}}{\nu^2}\left(1+\mathsf{C}^{0,2\mathsf{p}_{\bar A}}\right)+\mathsf{D},
\end{equation*}
which rewrites
\[\E\left[\left\|\mathbf D^{(2)}_N\mathbf V^N\right\|_{\ell^2_0(\T_N)}^2\right]\leq4 \mathsf{C}_{\bar A}^2\frac{\mathsf{D}}{\nu^3}\left(1+\mathsf{C}^{0,2\mathsf{p}_{\bar A}}\right)+\frac{\mathsf{D}}\nu =: \mathsf{C}^{2,2},\]
and yields the claimed estimate.
\end{proof}

In order to prepare the proof of Corollary~\ref{cor:relcom}, we first introduce the interpolation operators $\Psi^{(1)}_N : \R^N_0 \to H^1_0(\T)$ and $\Psi^{(2)}_N : \R^N_0 \to H^2_0(\T)$ defined by the condition that for any $\mathbf{v} \in \R^N_0$, the function $\Psi^{(1)}\mathbf{v}$ (resp. $\Psi^{(2)}\mathbf{v}$) is linear (resp. quadratic) on each cell of the mesh $\mathcal{T}_N$, and takes the value $v_i$ at the interface $x_i$. It follows from elementary computations that, for any $\mathbf v\in\R^N_0$,
\begin{equation}\label{eq:Psi12}
  \|\Psi^{(1)}_N\mathbf{v}\|^2_{L^2_0(\T)} = \|\mathbf{D}^{(1,+)}_N\mathbf{v}\|^2_{\ell^2_0(\T_N)}, \qquad \|\Psi^{(2)}_N\mathbf{v}\|^2_{L^2_0(\T)} = \|\mathbf{D}^{(2)}_N\mathbf{v}\|^2_{\ell^2_0(\T_N)},
\end{equation}
and
\begin{align}
  &\left\|\Psi^{(1)}_N\mathbf v-\Psi_N\mathbf v\right\|_{L^2_0(\T)}^2=\frac1{3N^2}\left\|\mathbf D^{(1,+)}_N\mathbf v\right\|_{\ell^2_0(\T_N)}^2,\label{eq:Psi10}\\
  &\left\|\Psi^{(2)}_N\mathbf v-\Psi_N\mathbf v\right\|_{L^2_0(\T)}^2\leq\frac3{20N^4}\left\|\mathbf D^{(2)}_N\mathbf v\right\|_{\ell^2_0(\T_N)}^2+\frac1{2N^2}\left\|\mathbf D^{(1,+)}_N\mathbf v\right\|_{\ell^2_0(\T_N)}^2,\label{eq:Psi20}
\end{align}
see~\cite[Lemma~3.30]{Mar19} for details.

\begin{proof}[Proof of Corollary~\ref{cor:relcom}]
  For $m=1,2$, we denote by $\mu^{(m)}_N = \vartheta_N \circ (\Psi^{(m)}_N)^{-1}$ the pushforward measure of $\vartheta_N$ by the interpolation operator $\Psi^{(m)}_N$.  For all $N \geq 1$, we shall also denote by $\mathbf{V}^N$ a random variable in $\R^N_0$ with distribution $\vartheta_N$.
  
  \emph{Step~1: tightness on $L^2_0(\T)$.} By~\eqref{eq:Psi12}, the uniform $h^1_0(\T_N)$ estimate of Proposition~\ref{prop:unifestim} rewrites under the form
  \begin{equation*}
    \sup_{N \geq 1}\E\left[\|\Psi^{(1)}_N\mathbf{V}^N\|^2_{H^1_0(\T)}\right] \leq \mathsf{C}^{1,2}.
  \end{equation*}
  Since bounded sets of $H^1_0(\T)$ are compact in $L^2_0(\T)$, this uniform moment estimate shows that the sequence $(\mu^{(1)}_N)_{N \geq 1}$ is tight on $L^2_0(\T)$. Therefore, by the Prokhorov theorem~\cite[Theorem~5.1]{Bil99}, it is relatively compact in $\mathcal{P}(L^2_0(\T))$. 
  
  It is then easy to deduce from~\eqref{eq:Psi10} and~\eqref{eq:Psi20}, combined with the uniform $h^1_0(\T_N)$ and $h^2_0(\T_N)$ estimates from Proposition~\ref{prop:unifestim}, that for any increasing sequence of integers $(N_j)_{j \geq 1}$ and probability measure $\mu^*$ on $L^2_0(\T)$, the following three statements are equivalent:
  \begin{itemize}
    \item $\mu_{N_j}$ converges weakly to $\mu^*$;
    \item $\mu^{(1)}_{N_j}$ converges weakly to $\mu^*$;
    \item $\mu^{(2)}_{N_j}$ converges weakly to $\mu^*$;
  \end{itemize}
  so that all three sequences $(\mu_N)_{N \geq 1}$, $(\mu^{(1)}_N)_{N \geq 1}$ and $(\mu^{(2)}_N)_{N \geq 1}$ are relatively compact on $L^2_0(\T)$, with the same converging subsequences.

  \emph{Step~2: moment estimates.} Let $(N_j)_{j \geq 1}$ and $\mu^*$ be such that $\mu_{N_j}$ converges weakly to $\mu^*$. For any $p \in [1,+\infty)$, the function $v \mapsto \|v\|^p_{L^p_0(\T)}$ is lower semicontinuous on $L^2_0(\T)$. As a consequence, by the Portmanteau theorem, if $v^*$ is a random variable in $L^2_0(\T)$ with distribution $\mu^*$, then
  \begin{equation*}
    \E\left[\|v^*\|^p_{L^p_0(\T)}\right] \leq \liminf_{j \to +\infty} \E\left[\|\Psi_N \mathbf{V}^N\|^p_{L^p_0(\T)}\right] \leq \mathsf{C}^{0,p},
  \end{equation*}
  where we have used the isometry property~\eqref{eq:isoPsiN} and the uniform $\ell^p_0(\T_N)$ estimate from Proposition~\ref{prop:unifestim}. By Step~1, the sequences $\mu^{(1)}_{N_j}$ and $\mu^{(2)}_{N_j}$ also converge weakly to $\mu^*$. Besides, we recall that for any $s \geq 0$, the Sobolev norm $\|\cdot\|_{H^s_0(\T)}$ writes
  \begin{equation*}
    \|v\|^2_{H^s_0(\T)} = \sum_{m \geq 1} (-\lambda_m)^s \langle v, e_m\rangle^2_{L^2_0(\T)}
  \end{equation*}
  for nonpositive numbers $\lambda_m$ and $L^2_0(\T)$ orthonormal Fourier modes $e_m$, see for instance~\cite[Section~2.1.1]{MR19}. As a consequence, it follows from the Fatou lemma that the function $v \mapsto \|v\|^2_{H^s_0(\T)}$ is lower semicontinuous on $L^2_0(\T)$. Therefore, using now the isometry property~\eqref{eq:Psi12} and the uniform $h^1_0(\T_N)$ and $h^2_0(\T_N)$ estimates from Proposition~\ref{prop:unifestim}, we get
  \begin{equation*}
    \E\left[\|v^*\|^2_{H^1_0(\T)}\right] \leq \mathsf{C}^{1,2}, \qquad \E\left[\|v^*\|^2_{H^2_0(\T)}\right] \leq \mathsf{C}^{2,2}.
  \end{equation*}
  
 \emph{Step~3: relative compactness in $\mathcal{P}_2(L^2_0(\T))$.} It remains to prove that the weak convergence of $\mu_{N_j}$ to $\mu^*$ can be strengthened to the Wasserstein topology. To this aim it is sufficient to check that the sequence $\|\mathbf{V}^N\|^2_{\ell^2_0(\T_N)}$ is uniformly integrable~\cite[Definition 6.8.(iii)]{Vil08}. For any $\epsilon > 0$, let us set $p = 2(1+\epsilon)$ and use~\eqref{eq:normorder} together with Proposition~\ref{prop:unifestim} to get, for any $N \geq 1$,
  \begin{equation*}
    \E\left[\left(\|\mathbf{V}^N\|^2_{\ell^2_0(\T_N)}\right)^{1+\epsilon}\right] \leq \E\left[\|\mathbf{V}^N\|^p_{\ell^p_0(\T_N)}\right] \leq \mathsf{C}^{0,p}.
  \end{equation*}
  By~\cite[Eq.~(3.18), p.~31]{Bil99}, this estimate ensures the required uniform integrability.
\end{proof}

\subsection{Proof of Proposition~\ref{CFT}}\label{ss:CFT} 

The first step in the proof of Proposition~\ref{CFT} is the decomposition of the error
\begin{equation}\label{eq:decomperror}
  \|u^N(t)-u^*(t)\|^2_{L^2_0(\T)} \leq 2\left(\|\Psi_N\Pi_N u^*(t)-u^*(t)\|^2_{L^2_0(\T)} + \|u^N(t)-\Psi_N\Pi_N u^*(t)\|^2_{L^2_0(\T)}\right).
\end{equation}
On the one hand, for all $t \geq 0$, an elementary computation yields the estimate
\begin{equation*}
  \| \Psi_N\Pi_N u^*(t)-u^*(t)\|^2_{L^2_0(\T)} \leq \frac{1}{N^2}\|u^*(t)\|^2_{H^1_0(\T)},
\end{equation*}
while on the other hand, by~\eqref{eq:isoPsiN} we have
\begin{equation*}
  \|u^N(t)-\Psi_N\Pi_N u^*(t)\|^2_{L^2_0(\T)} = \|\Psi_N\mathbf{U}^N(t)-\Psi_N\Pi_N u^*(t)\|^2_{L^2_0(\T)} = \|\mathbf{U}^N(t)-\Pi_N u^*(t)\|^2_{\ell^2_0(\T_N)}.
\end{equation*}

In order to estimate both terms, we rely on the following finite-time estimates on $u^*$.

\begin{lemma}[Finite-time $L^p_0(\T)$, $H^1_0(\T)$ and $H^2_0(\T)$ bounds on $u^*$]\label{lem:FTBus}
  Under the assumptions of Proposition~\ref{CFT}, for all $t > 0$ there exist constants $\mathsf{C}^{*,0,p}_t, \mathsf{C}^{*,1,2}_t, \mathsf{C}^{*,2,2}_t \in [0,+\infty)$, such that
  \begin{equation*}
    \sup_{s \in [0,t]} \E\left[\|u^*(s)\|^p_{L^p_0(\T)}\right] \leq \mathsf{C}^{*,0,p}_t, \qquad \sup_{s \in [0,t]} \E\left[\|u^*(s)\|^2_{H^1_0(\T)}\right] \leq \mathsf{C}^{*,1,2}_t, \qquad \E\left[\int_0^t \|u^*(s)\|^2_{H^2_0(\T)}\dd s\right] \leq \mathsf{C}^{*,2,2}_t.
  \end{equation*}
\end{lemma}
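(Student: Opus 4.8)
The plan is to derive these a priori bounds by the energy method for the parabolic SPDE~\eqref{SSCL}, applying the It\^o formula to the $L^p_0(\T)$, $H^1_0(\T)$ and $H^2_0(\T)$ norms of $u^*$ and exploiting the dissipativity of the viscous term together with the polynomial growth~\eqref{PGA} of $A'$. These are the continuous-in-space counterparts of the discrete estimates of Lemma~\ref{lem:recursivebound} and Proposition~\ref{prop:unifestim}, and the computations are of the same nature as the a priori bounds underlying the well-posedness theory of~\cite{MR19}; the one new ingredient is that the initial condition $u^*_0\sim\mu^*$ has finite moments of every order, which is exactly the content of Corollary~\ref{cor:relcom}. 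All It\^o computations below are formal and would be made rigorous, as in~\cite{MR19}, through a Galerkin approximation of~\eqref{SSCL} followed by a passage to the limit; I would omit this routine step.

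For the $L^p_0(\T)$ bound I would apply the It\^o formula to $\|u^*(t)\|^p_{L^p_0(\T)}$ for $p\in2\N^*$ and argue exactly as in the proof of Lemma~\ref{lem:recursivebound}: the flux contribution $-p\langle(u^*)^{p-1},\partial_xA(u^*)\rangle_{L^2_0(\T)}$ vanishes after integration by parts by periodicity, the viscous term is nonpositive, and the It\^o correction is controlled using $\sum_{k\geq1}\|g^k\|^2_{L^\infty_0(\T)}\leq\mathsf{D}$, which follows from~\eqref{eq:strongerpoincare}, \eqref{eq:holder} and~\eqref{boundgk}. This gives the continuous analogue of~\eqref{eq:rb}, from which I would obtain $\sup_{s\in[0,t]}\E[\|u^*(s)\|^p_{L^p_0(\T)}]\leq\mathsf{C}^{*,0,p}_t$ by induction on even $p$, the integral term at each step being controlled by the finite-time $L^{p-2}_0(\T)$ bound and the base case $p=2$ reducing to $\E[\|u^*_0\|^2_{L^2_0(\T)}]+\mathsf{D}t$. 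The initial moments $\E[\|u^*_0\|^p_{L^p_0(\T)}]\leq\mathsf{C}^{0,p}$ from Corollary~\ref{cor:relcom} close the induction, and the case of general $p\in[1,+\infty)$ then follows from~\eqref{eq:holder}.

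For the $H^1_0(\T)$ and $H^2_0(\T)$ bounds, which I would obtain simultaneously, I would apply the It\^o formula to $\|u^*(t)\|^2_{H^1_0(\T)}=\|\partial_xu^*(t)\|^2_{L^2_0(\T)}$. The viscous term produces the favourable dissipation $-2\nu\|u^*\|^2_{H^2_0(\T)}$, the It\^o correction is bounded by $t\mathsf{D}$, and the flux term, after integration by parts, equals $2\langle\partial_{xx}u^*,A'(u^*)\partial_xu^*\rangle_{L^2_0(\T)}$, which I would split with Young's inequality so as to absorb a factor $\nu\|u^*\|^2_{H^2_0(\T)}$ into the dissipation. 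Taking expectations and keeping both nonnegative left-hand terms, this leads to
\begin{equation*}
  \E\left[\|u^*(t)\|^2_{H^1_0(\T)}\right]+\nu\E\left[\int_0^t\|u^*(s)\|^2_{H^2_0(\T)}\dd s\right]\leq\E\left[\|u^*_0\|^2_{H^1_0(\T)}\right]+\frac1\nu\E\left[\int_0^t\|A'(u^*(s))\partial_xu^*(s)\|^2_{L^2_0(\T)}\dd s\right]+t\mathsf{D}.
\end{equation*}
Since the right-hand side is nondecreasing in $t$, both the pointwise $H^1_0(\T)$ bound and the integrated $H^2_0(\T)$ bound will follow at once, provided the last integral is controlled.

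The main obstacle is thus to bound $\E[\int_0^t\|A'(u^*)\partial_xu^*\|^2_{L^2_0(\T)}\dd s]$, and this is where I would mirror the computation~\eqref{fluxbound}: using~\eqref{PGA} together with the identity $\int_\T|u^*|^{2\mathsf{p}_A}(\partial_xu^*)^2\dd x=\frac1{2\mathsf{p}_A+1}\langle\partial_x((u^*)^{2\mathsf{p}_A+1}),\partial_xu^*\rangle_{L^2_0(\T)}$, I would bound $\|A'(u^*)\partial_xu^*\|^2_{L^2_0(\T)}$ by a constant times $\|u^*\|^2_{H^1_0(\T)}+\langle\partial_x((u^*)^{2\mathsf{p}_A+1}),\partial_xu^*\rangle_{L^2_0(\T)}$. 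The time integral of the first term is controlled by the $L^2_0(\T)$ energy identity, while the time integral of the second is precisely the dissipation term appearing in the continuous analogue of~\eqref{eq:rb} for $p=2\mathsf{p}_A+2$, hence bounded in terms of $\E[\|u^*_0\|^{2\mathsf{p}_A+2}_{L^{2\mathsf{p}_A+2}_0(\T)}]$ and $\E[\int_0^t\|u^*(s)\|^{2\mathsf{p}_A}_{L^{2\mathsf{p}_A}_0(\T)}\dd s]$, both finite by the $L^p_0(\T)$ bound and Corollary~\ref{cor:relcom}. Beyond this bookkeeping, the only genuine difficulty is the justification of the It\^o formula for functionals of the $H^2_0(\T)$ norm of an $H^2_0(\T)$-valued process, which I would handle by the regularisation mentioned above.
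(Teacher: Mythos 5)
Your proposal is correct and follows essentially the same route as the paper: the paper obtains exactly these bounds by invoking the finite-time a priori estimates of~\cite{MR19} (Lemma~3 for the $L^p_0(\T)$ induction with the vanishing flux term, Lemma~4 for the $H^1_0(\T)$/$H^2_0(\T)$ energy identity with the flux controlled via the polynomial growth of $A'$), extended to a random initial condition and closed with the moment bounds of Corollary~\ref{cor:relcom}. The only difference is that you re-derive those estimates rather than citing them, and you defer the rigorous justification of the It\^o formula to a Galerkin argument where the paper uses the stopping times $T_r$ of~\cite{MR19} together with Fatou's lemma; both are standard and neither constitutes a gap.
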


The proof of Lemma~\ref{lem:FTBus} is postponed to Appendix~\ref{app:estim}. From the $H^1_0(\T)$ bound, we already get the control
\begin{equation}\label{eq:PiNus}
  \E\left[\| \Psi_N \Pi_N u^*(t)-u^*(t)\|^2_{L^2_0(\T)}\right] \leq \frac{\mathsf{C}^{*,1,2}_t}{N^2},
\end{equation}
on the expectation of the first term in the right-hand side of~\eqref{eq:decomperror}. 

In order to estimate the second term, we use the fact that both functions $A$ and $\overline{A}$ are locally Lipschitz continuous, respectively on $\R$ and $\R^2$. In this purpose, we fix $M \geq 0$ and introduce the stopping time
\begin{equation}\label{eq:taunm}
  \tau^N_{(M)} := \inf\left\{t \geq 0: \|\mathbf{U}^N(t)\|_{\ell^\infty_0(\T_N)} \geq M \text{ or } \|u^*(t)\|_{L^\infty_0(\T)} \geq M\right\}.
\end{equation}
We also denote by $\mathsf{L}_{(M)}$ a Lipschitz constant of $A$ on $[-M,M]$ and of $\overline{A}$ on $[-M,M]^2$. We now write
\begin{equation*}
  \E\left[\|\mathbf{U}^N(t)-\Pi_N u^*(t)\|^2_{\ell^2_0(\T_N)}\right] = \E\left[\|\mathbf{U}^N(t)-\Pi_N u^*(t)\|^2_{\ell^2_0(\T_N)}\ind{t \leq \tau^N_{(M)}}\right] + \E\left[\|\mathbf{U}^N(t)-\Pi_N u^*(t)\|^2_{\ell^2_0(\T_N)}\ind{t > \tau^N_{(M)}}\right].
\end{equation*}

The terms of the right-hand side are respectively estimated in Lemmas~\ref{lem:ordre1} and~\ref{lem:tauM}.

\begin{lemma}[Finite-time convergence in the Lipschitz case]\label{lem:ordre1}
  Under the assumptions of Proposition~\ref{CFT}, for all $t>0$ and $\delta_1, \delta_2 > 0$ such that
  \begin{equation}\label{eq:conddelta}
    \delta_1\nu + \delta_2\mathsf{L}_{(M)} \leq 2 \nu,
  \end{equation}
  there exists a constant $\mathfrak{C}(t,\mathsf{L}_{(M)},\delta_1,\delta_2)$ such that for any $N \geq 1$,
  \begin{equation*}
    \E\left[\|\mathbf{U}^N(t)-\Pi_N u^*(t)\|^2_{\ell^2_0(\T_N)}\ind{t \leq \tau^N_{(M)}}\right] \leq \e^{\gamma_{(M)} t}\E\left[\|\mathbf{U}^N_0-\Pi_N u^*_0\|^2_{\ell^2_0(\T_N)}\right] + \frac{\mathfrak{C}(t,\mathsf{L}_{(M)},\delta_1,\delta_2)}{N^2},
  \end{equation*}
  with
  \begin{equation*}
    \gamma_{(M)} = -2\nu + \delta_1\nu + \left(\delta_2 + \frac{3}{\delta_2}\right)\mathsf{L}_{(M)}.
  \end{equation*}
\end{lemma}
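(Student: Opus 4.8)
The plan is to run a pathwise energy estimate on the error vector $\mathbf{e}^N(t) := \mathbf{U}^N(t)-\Pi_N u^*(t)$, exploiting the fact that, since $\mathbf{U}^N$ and $u^*$ are driven by the \emph{same} $Q$-Wiener process (recall $\mathbf{W}^{Q,N}=\Pi_N W^Q$), the stochastic integrals cancel in the difference. Thus $(\mathbf{e}^N(t))_{t\geq0}$ is almost surely absolutely continuous with $\frac{\dd}{\dd t}\mathbf{e}^N=\mathbf{b}(\mathbf{U}^N)-\Pi_N(-\partial_xA(u^*)+\nu\partial_{xx}u^*)$, and no It\^o correction is needed, exactly as in the proof of Proposition~\ref{L1C}. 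Using the exact discrete identities $\Pi_N\partial_xA(u^*)=\mathbf{D}^{(1,-)}_N\mathbf{F}^N$ with $F^N_i=A(u^*(x_i))$, and $\Pi_N\partial_{xx}u^*=\mathbf{D}^{(1,-)}_N\mathbf{r}$ with $r_i=\partial_xu^*(x_i)$, both contributions to the drift of $\mathbf{e}^N$ are in conservative form $\mathbf{D}^{(1,-)}_N(\cdots)$, so a single summation by parts~\eqref{eq:sbp} lands the whole right-hand side on $\mathbf{D}^{(1,+)}_N\mathbf{e}^N$:
\begin{equation*}
\frac{\dd}{\dd t}\|\mathbf{e}^N\|_{\ell^2_0(\T_N)}^2 = 2\langle \mathbf{D}^{(1,+)}_N\mathbf{e}^N, \overline{\mathbf{A}}^N(\mathbf{U}^N)-\mathbf{F}^N\rangle_{\ell^2(\T_N)} - 2\nu\langle \mathbf{D}^{(1,+)}_N\mathbf{e}^N, \mathbf{D}^{(1,+)}_N\mathbf{U}^N-\mathbf{r}\rangle_{\ell^2(\T_N)}.
\end{equation*}

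Next I would isolate the dissipation. Writing $\mathbf{D}^{(1,+)}_N\mathbf{U}^N=\mathbf{D}^{(1,+)}_N\mathbf{e}^N+\mathbf{q}$ with $q_i=(\mathbf{D}^{(1,+)}_N\Pi_Nu^*)_i$, the viscous term splits into the genuine dissipation $-2\nu\|\mathbf{D}^{(1,+)}_N\mathbf{e}^N\|^2_{\ell^2_0(\T_N)}$ and a consistency term $-2\nu\langle\mathbf{D}^{(1,+)}_N\mathbf{e}^N,\mathbf{q}-\mathbf{r}\rangle_{\ell^2(\T_N)}$. The crucial point is that $\mathbf{q}-\mathbf{r}$ measures the gap between the centred difference of the cell-averages of $u^*$ and the nodal values of $\partial_xu^*$; being a \emph{first-order} consistency error it satisfies $\|\mathbf{q}-\mathbf{r}\|^2_{\ell^2_0(\T_N)}\leq CN^{-2}\|u^*\|^2_{H^2_0(\T)}$, controllable by the integrated $H^2_0(\T)$ bound of Lemma~\ref{lem:FTBus}. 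I would then bound this term by Cauchy--Schwarz and Young's inequality with parameter $\delta_1$, producing $\delta_1\nu\|\mathbf{D}^{(1,+)}_N\mathbf{e}^N\|^2_{\ell^2_0(\T_N)}+\frac{\nu}{\delta_1}\|\mathbf{q}-\mathbf{r}\|^2_{\ell^2_0(\T_N)}$.

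For the flux term, on the event $\{t\leq\tau^N_{(M)}\}$ all arguments $U^N_i$ and $u^*(x_i)$ lie in $[-M,M]$, so the local Lipschitz continuity of $\overline{A}$ and $A$ applies. Writing $A(u^*(x_i))=\overline{A}(u^*(x_i),u^*(x_i))$ and inserting the cell-averages, the $i$-th component of $\overline{\mathbf{A}}^N(\mathbf{U}^N)-\mathbf{F}^N$ is bounded by $\mathsf{L}_{(M)}$ times a sum of the error components $e^N_i,e^N_{i+1}$ and of cell-average-versus-nodal remainders of order $N^{-1}$ controlled by $\|u^*\|_{H^1_0(\T)}$. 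Cauchy--Schwarz and Young's inequality with parameter $\delta_2$ then yield $\delta_2\mathsf{L}_{(M)}\|\mathbf{D}^{(1,+)}_N\mathbf{e}^N\|^2_{\ell^2_0(\T_N)}$, a term of the form $\frac{3\mathsf{L}_{(M)}}{\delta_2}\|\mathbf{e}^N\|^2_{\ell^2_0(\T_N)}$, and an $O(N^{-2})$ flux-consistency remainder controlled by the time-uniform $H^1_0(\T)$ bound of Lemma~\ref{lem:FTBus}. Collecting all contributions, the coefficient of $\|\mathbf{D}^{(1,+)}_N\mathbf{e}^N\|^2_{\ell^2_0(\T_N)}$ is $-2\nu+\delta_1\nu+\delta_2\mathsf{L}_{(M)}$, which is $\leq0$ exactly under condition~\eqref{eq:conddelta}; I discard this term after replacing it, via the discrete Poincar\'e inequality~\eqref{eq:poinca-discr}, by $(-2\nu+\delta_1\nu+\delta_2\mathsf{L}_{(M)})\|\mathbf{e}^N\|^2_{\ell^2_0(\T_N)}$. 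This gives the pathwise inequality $\frac{\dd}{\dd t}\|\mathbf{e}^N\|^2_{\ell^2_0(\T_N)}\leq\gamma_{(M)}\|\mathbf{e}^N\|^2_{\ell^2_0(\T_N)}+R^N(t)$ on $\{t\leq\tau^N_{(M)}\}$, with $\gamma_{(M)}$ as stated and $R^N(t)=O(N^{-2})(\|u^*(t)\|^2_{H^1_0(\T)}+\|u^*(t)\|^2_{H^2_0(\T)})$.

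To finish, I would apply this inequality to the stopped process $\mathbf{e}^N(t\wedge\tau^N_{(M)})$, take expectations, and run a Gr\"onwall argument; integrating the remainder with the weight $\e^{\gamma_{(M)}(t-s)}$ and invoking the finite-time bounds of Lemma~\ref{lem:FTBus} (the time-uniform $\sup_{s}\E[\|u^*(s)\|^2_{H^1_0(\T)}]$ and the integrated $\E[\int_0^t\|u^*(s)\|^2_{H^2_0(\T)}\dd s]$) gathers all consistency remainders into a single constant $\mathfrak{C}(t,\mathsf{L}_{(M)},\delta_1,\delta_2)/N^2$. Since $t\wedge\tau^N_{(M)}=t$ on $\{t\leq\tau^N_{(M)}\}$ and the integrand is nonnegative, one has $\E[\|\mathbf{e}^N(t)\|^2_{\ell^2_0(\T_N)}\ind{t\leq\tau^N_{(M)}}]\leq\E[\|\mathbf{e}^N(t\wedge\tau^N_{(M)})\|^2_{\ell^2_0(\T_N)}]$, which yields the claimed bound with initial term $\e^{\gamma_{(M)}t}\E[\|\mathbf{U}^N_0-\Pi_Nu^*_0\|^2_{\ell^2_0(\T_N)}]$. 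The main obstacle is the viscous consistency term: treated naively at the level of $\mathbf{e}^N$, by comparing $\mathbf{D}^{(2)}_N\Pi_Nu^*$ with $\Pi_N\partial_{xx}u^*$ directly, it is \emph{not} controllable by an $H^2_0(\T)$ bound at high frequencies; the summation-by-parts reorganisation that recasts it at the first-derivative level is precisely what makes the $N^{-2}$ rate available.
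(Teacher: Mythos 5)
Your proposal follows essentially the same route as the paper's proof: the same error vector $\mathbf{e}^N(t)=\mathbf{U}^N(t)-\Pi_Nu^*(t)$, the same observation that the noise cancels and that both the flux and viscous consistency errors are in conservative form, so that one summation by parts places everything against $\mathbf{D}^{(1,+)}_N\mathbf{e}^N$; the same Young splittings with $\delta_1$ (viscous consistency, of order $N^{-2}\|u^*\|^2_{H^2_0(\T)}$, integrated via Lemma~\ref{lem:FTBus}) and $\delta_2$ (flux), the discrete Poincar\'e inequality to absorb the nonpositive gradient coefficient under~\eqref{eq:conddelta}, and Gr\"onwall combined with the stopping time. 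The only substantive deviation is the flux decomposition. The paper writes $\overline{A}(U^N_i,U^N_{i+1})-A(u^*(t,x_i))$ as $\bigl(\overline{A}(U^N_i,U^N_{i+1})-\overline{A}(U^N_i,U^N_i)\bigr)+\bigl(A(U^N_i)-A(u^*(t,x_i))\bigr)$, using the consistency $\overline{A}(v,v)=A(v)$ at the point $U^N_i$; this produces exactly three terms $|U^N_{i+1}-U^N_i|$, $|e^N_i|$, $|r^{N,(0)}_i|$, of which only \emph{one} is $\|\mathbf{e}^N\|$-sized, the increment $|U^N_{i+1}-U^N_i|=N^{-1}|(\mathbf{D}^{(1,+)}_N\mathbf{U}^N)_i|$ being pushed into the $N^{-2}$ remainder via the stationary $h^1_0(\T_N)$ bound $\mathsf{C}^{1,2}$ of Proposition~\ref{prop:unifestim}. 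Your comparison with $\overline{A}(u^*(t,x_i),u^*(t,x_i))$ instead produces \emph{both} $|e^N_i|$ and $|e^N_{i+1}|$ (plus nodal-versus-cell-average remainders), so after Young's inequality the coefficient of $\|\mathbf{e}^N\|^2_{\ell^2_0(\T_N)}$ is at least $6\mathsf{L}_{(M)}/\delta_2$ rather than $3\mathsf{L}_{(M)}/\delta_2$, i.e.\ you prove the lemma with a larger $\gamma_{(M)}$ than the one stated. This is harmless for the finite-time convergence and still allows $\gamma<0$ for small $\mathsf{L}$ in the rate discussion of Subsection~\ref{ss:rate}, and it has the mild advantage of not invoking the stationarity of $\mathbf{U}^N$ at this step; but to land on the constant exactly as stated you should use the numerical-flux consistency at $U^N_i$ as the paper does. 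Everything else, including your closing remark that the summation-by-parts reorganisation is what makes the viscous consistency error controllable at the $H^2_0(\T)$ level, matches the paper's argument.
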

\begin{proof}
  For all $t \geq 0$, we define the vector $\mathbf{e}^N(t) \in \R^N_0$ by $\mathbf{e}^N(t) = \mathbf{U}^N(t)-\Pi_Nu^*(t)$. We have
  \begin{equation}\label{eq:dedt}
    \frac{\mathrm{d}}{\mathrm{d}t}\mathbf{e}^N(t) = \mathbf{D}_N^{(1,-)}\mathbf{f}^N(t),
  \end{equation}
  where, for all $i \in \T_N$,
  \begin{equation*}
    f^N_i(t) = -\left(\overline{A}(U^N_i(t),U^N_{i+1}(t))-A(u^*(t,x_i))\right) + \nu\left((\mathbf{D}^{(1,+)}_N\mathbf{U}^N(t))_i - \partial_x u^*(t,x_i)\right).   
  \end{equation*}
  Using~\eqref{eq:sbp}, we deduce that
  \begin{equation*}
    \frac{\mathrm{d}}{\mathrm{d}t}\|\mathbf{e}^N(t)\|^2_{\ell^2_0(\T_N)} = 2\langle \mathbf{e}^N(t),\mathbf{D}_N^{(1,-)}\mathbf{f}^N(t)\rangle_{\ell^2_0(\T_N)} = -2\langle \mathbf{D}_N^{(1,+)}\mathbf{e}^N(t),\mathbf{f}^N(t)\rangle_{\ell^2(\T_N)},
  \end{equation*}
  and now proceed to control the right-hand side of this identity. In this purpose we introduce the zeroth- and first-order errors $\mathbf{r}^{N,(0)}(t)$ and $\mathbf{r}^{N,(1)}(t)$ defined by
  \begin{equation*}
    r^{N,(0)}_i(t) = u^*(t,x_i) - (\Pi_Nu^*(t))_i, \qquad r^{N,(1)}_i(t) = \partial_x u^*(t,x_i) - (\mathbf{D}^{(1,+)}_N\Pi_Nu^*(t))_i.
  \end{equation*}
  
  On the one hand,
  \begin{equation*}
    (\mathbf{D}^{(1,+)}_N\mathbf{U}^N(t))_i - \partial_x u^*(t,x_i) = (\mathbf{D}^{(1,+)}_N\mathbf{e}^N(t))_i - r^{N,(1)}_i(t),
  \end{equation*}
  on the other hand, for all $t \leq \tau^N_{(M)}$, we get
  \begin{align*}
    \left|\overline{A}(U^N_i(t),U^N_{i+1}(t)) - A(u^*(t,x_i))\right| &\leq \left|\overline{A}(U^N_i(t),U^N_{i+1}(t)) - \overline{A}(U^N_i(t),U^N_i(t))\right| + \left|A(U^N_i(t))- A(u^*(t,x_i))\right|\\
  &\leq \mathsf{L}_{(M)}\left(\left|U^N_{i+1}(t)-U^N_i(t)\right| + \left|U^N_i(t)-u^*(t,x_i)\right|\right)\\
  &\leq \mathsf{L}_{(M)}\left(\left|U^N_{i+1}(t)-U^N_i(t)\right| + |e^N_i(t)| + \left|r^{N,(0)}_i(t)\right|\right).
  \end{align*}
  
  By the Young and Jensen inequalities, we thus deduce that, for any $\delta_1, \delta_2 > 0$,
  \begin{align*}
    -2\langle \mathbf{D}_N^{(1,+)}\mathbf{e}^N(t),\mathbf{f}^N(t)\rangle_{\ell^2(\T_N)} &\leq -2\nu\|\mathbf{D}^{(1,+)}_N\mathbf{e}^N(t)\|^2_{\ell^2_0(\T_N)} + 2\nu\langle \mathbf{D}^{(1,+)}_N\mathbf{e}^N(t), \mathbf{r}^{N,(1)}(t)\rangle_{\ell^2(\T_N)}\\
    &\quad + \frac{2\mathsf{L}_{(M)}}{N}\sum_{i \in \T_N} N|e^N_{i+1}(t)-e^N_i(t)|\left(\left|U^N_{i+1}(t)-U^N_i(t)\right| + |e^N_i(t)| + \left|r^{N,(0)}_i(t)\right|\right)\\
    &\leq \left(-2\nu + \delta_1\nu + \delta_2\mathsf{L}_{(M)}\right)\|\mathbf{D}^{(1,+)}_N\mathbf{e}^N(t)\|^2_{\ell^2_0(\T_N)} + \frac{\nu}{\delta_1} \|\mathbf{r}^{N,(1)}(t)\|_{\ell^2(\T_N)}^2\\
    &\quad + \frac{3\mathsf{L}_{(M)}}{\delta_2}\left(\frac{\|\mathbf{D}^{(1,+)}_N\mathbf{U}^N(t)\|_{\ell^2_0(\T_N)}^2}{N^2} + \|\mathbf{e}^N(t)\|_{\ell^2_0(\T_N)}^2 + \|\mathbf{r}^{N,(0)}(t)\|^2_{\ell^2(\T_N)}\right),
  \end{align*}
  and if $\delta_1$ and $\delta_2$ are small enough for the inequality~\eqref{eq:conddelta} to hold, then the discrete Poincar\'e inequality~\eqref{eq:poinca-discr} finally yields
  \begin{equation*}
    \frac{\mathrm{d}}{\mathrm{d}t}\|\mathbf{e}^N(t)\|^2_{\ell^2_0(\T_N)} \leq \gamma_{(M)} \|\mathbf{e}^N(t)\|^2_{\ell^2_0(\T_N)} + \mathfrak{c}^N(t),
  \end{equation*}
  with
  \begin{equation*}
    \mathfrak{c}^N(t) := \frac{\nu}{\delta_1} \|\mathbf{r}^{N,(1)}(t)\|_{\ell^2(\T_N)}^2 + \frac{3\mathsf{L}_{(M)}}{\delta_2}\left(\frac{\|\mathbf{D}^{(1,+)}_N\mathbf{U}^N(t)\|_{\ell^2_0(\T_N)}^2}{N^2} + \|\mathbf{r}^{N,(0)}(t)\|^2_{\ell^2(\T_N)}\right).
  \end{equation*}
  
  We deduce from Gr\"onwall's lemma that, for all $t \leq \tau^N_{(M)}$, 
  \begin{equation*}
    \|\mathbf{e}^N(t)\|^2_{\ell^2_0(\T_N)} \leq \e^{\gamma_{(M)} t}\|\mathbf{e}^N(0)\|^2_{\ell^2_0(\T_N)} + \int_0^t \e^{\gamma_{(M)}(t-s)}\mathfrak{c}^N(s)\dd s,
  \end{equation*}
  and therefore, for all $t \geq 0$,
  \begin{equation*}
    \E\left[\|\mathbf{e}^N(t)\|^2_{\ell^2_0(\T_N)}\ind{t \leq \tau^N_{(M)}}\right] \leq \e^{\gamma_{(M)} t}\E\left[\|\mathbf{e}^N(0)\|^2_{\ell^2_0(\T_N)}\right] + \int_0^t \e^{\gamma_{(M)}(t-s)}\E\left[\mathfrak{c}^N(s)\right]\dd s.
  \end{equation*}
  For all $s \in [0,t]$, the expectation of $\mathfrak{c}^N(s)$ rewrites
  \begin{equation*}
    \E\left[\mathfrak{c}^N(s)\right] = \frac{\nu}{\delta_1} \E\left[\|\mathbf{r}^{N,(1)}(s)\|_{\ell^2(\T_N)}^2\right] + \frac{3\mathsf{L}_{(M)}}{\delta_2}\left(\frac{\E\left[\|\mathbf{D}^{(1,+)}_N\mathbf{U}^N(s)\|_{\ell^2_0(\T_N)}^2\right]}{N^2} + \E\left[\|\mathbf{r}^{N,(0)}(s)\|^2_{\ell^2(\T_N)}\right]\right).
  \end{equation*}
  On the one hand, the stationarity of $\mathbf{U}^N$ and Proposition~\ref{prop:unifestim} yield
  \begin{equation*}
    \E\left[\|\mathbf{D}^{(1,+)}_N\mathbf{U}^N(s)\|_{\ell^2_0(\T_N)}^2\right] \leq \mathsf{C}^{1,2}.
  \end{equation*}
  On the other hand, it easily follows from the Taylor formula that
  \begin{equation*}
    \E\left[\|\mathbf{r}^{N,(0)}(s)\|^2_{\ell^2(\T_N)}\right] \leq \frac{1}{N^2}\E\left[\|u^*(s)\|^2_{H^1_0(\T)}\right], \qquad \E\left[\|\mathbf{r}^{N,(1)}(s)\|^2_{\ell^2(\T_N)}\right] \leq \frac{4}{N^2}\E\left[\|u^*(s)\|^2_{H^2_0(\T)}\right],
  \end{equation*}
  so that we get
  \begin{equation*}
    \int_0^t \e^{\gamma_{(M)}(t-s)}\E\left[\mathfrak{c}^N(s)\right]\dd s \leq \frac{\mathfrak{C}(t,\mathsf{L}_{(M)},\delta_1,\delta_2)}{N^2},
  \end{equation*}
  with
  \begin{equation*}
    \mathfrak{C}(t,\mathsf{L}_{(M)},\delta_1,\delta_2) := \int_0^t \e^{\gamma_{(M)}(t-s)}\left(\frac{4\nu}{\delta_1}\E\left[\|u^*(s)\|^2_{H^2_0(\T)}\right] + \frac{3\mathsf{L}_{(M)}}{\delta_2}\left(\mathsf{C}^{1,2} + \E\left[\|u^*(s)\|^2_{H^1_0(\T)}\right]\right)\right)\dd s,
  \end{equation*}
  which is finite by Lemma~\ref{lem:FTBus}.
\end{proof}

\begin{lemma}[Uniform control over $\tau^N_{(M)}$]\label{lem:tauM}
  Under the assumptions of Proposition~\ref{CFT}, for all $t>0$ we have
  \begin{equation*}
    \lim_{M \to +\infty} \limsup_{N \to +\infty} \E\left[\|\mathbf{U}^N(t)-\Pi_N u^*(t)\|^2_{\ell^2_0(\T_N)}\ind{t > \tau^N_{(M)}}\right]=0,
  \end{equation*}
  where $\tau^N_{(M)}$ was defined at Equation~\eqref{eq:taunm}.
\end{lemma}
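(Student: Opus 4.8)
The plan is to decouple the squared error from the rare event $\{t>\tau^N_{(M)}\}$ by the Cauchy--Schwarz inequality,
\[ \E\left[\|\mathbf U^N(t)-\Pi_N u^*(t)\|^2_{\ell^2_0(\T_N)}\ind{t>\tau^N_{(M)}}\right]\leq\sqrt{\E\left[\|\mathbf U^N(t)-\Pi_N u^*(t)\|^4_{\ell^2_0(\T_N)}\right]}\,\sqrt{\P\left(t>\tau^N_{(M)}\right)}, \]
and then to show that the first factor is bounded uniformly in $N$ while the second tends to $0$ as $M\to+\infty$, uniformly in $N$. For the first factor I would use the triangle inequality, the contraction~\eqref{eq:isoPiN}, the stationarity of $(\mathbf U^N(s))_{s\geq0}$ (whose marginals are all $\vartheta_N$), the norm comparison~\eqref{eq:normorder} and the Jensen inequality~\eqref{eq:holder}, so that $\E[\|\mathbf U^N(t)\|^4_{\ell^2_0(\T_N)}]\leq\mathsf C^{0,4}$ by Proposition~\ref{prop:unifestim} and $\E[\|u^*(t)\|^4_{L^2_0(\T)}]\leq\mathsf C^{*,0,4}_t$ by Lemma~\ref{lem:FTBus}. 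This bounds the first factor by a constant $\mathsf K_t<+\infty$ depending on $t$ but not on $N$.

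For the probability factor, I would bound $\{t>\tau^N_{(M)}\}$ by the union of the two exit events defining $\tau^N_{(M)}$ in~\eqref{eq:taunm}, whence
\[ \P\left(t>\tau^N_{(M)}\right)\leq\P\left(\sup_{s\in[0,t]}\|\mathbf U^N(s)\|_{\ell^\infty_0(\T_N)}\geq M\right)+\P\left(\sup_{s\in[0,t]}\|u^*(s)\|_{L^\infty_0(\T)}\geq M\right). \]
The second term does not depend on $N$: since $u^*$ has almost surely continuous paths in $H^2_0(\T)$ by Proposition~\ref{SPDEresults} and $\|v\|_{L^\infty_0(\T)}\leq\|v\|_{H^1_0(\T)}$ by~\eqref{eq:strongerpoincare} and~\eqref{eq:holder}, the supremum $\sup_{s\in[0,t]}\|u^*(s)\|_{L^\infty_0(\T)}$ is almost surely finite, so this probability tends to $0$ as $M\to+\infty$ by continuity of measure from above. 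For the first term, I would use the discrete gradient estimate~\eqref{eq:gradient-estim-discr} together with~\eqref{eq:normorder} to write $\|\mathbf U^N(s)\|_{\ell^\infty_0(\T_N)}\leq\|\mathbf D^{(1,+)}_N\mathbf U^N(s)\|_{\ell^2_0(\T_N)}$, and then Markov's inequality, thereby reducing the task to the uniform-in-$N$ maximal bound $\mathsf B_t:=\sup_{N\geq1}\E[\sup_{s\in[0,t]}\|\mathbf D^{(1,+)}_N\mathbf U^N(s)\|^2_{\ell^2_0(\T_N)}]<+\infty$, which would give $\P(\sup_{s\in[0,t]}\|\mathbf U^N(s)\|_{\ell^\infty_0(\T_N)}\geq M)\leq\mathsf B_t/M^2$ for every $N$.

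Establishing this maximal bound is the main obstacle, and I would obtain it from the It\^o expansion~\eqref{itoH1} of $Y_N(s):=\|\mathbf D^{(1,+)}_N\mathbf U^N(s)\|^2_{\ell^2_0(\T_N)}$. Rewriting the drift via~\eqref{eq:sbp2} as $\langle\mathbf D^{(2)}_N\mathbf U^N,\mathbf D^{(1,-)}_N\overline{\mathbf A}^N(\mathbf U^N)\rangle_{\ell^2_0(\T_N)}-\nu\|\mathbf D^{(2)}_N\mathbf U^N\|^2_{\ell^2_0(\T_N)}$ and applying Young's inequality, the dissipative term $-\nu\|\mathbf D^{(2)}_N\mathbf U^N\|^2$ absorbs the flux cross-term, leaving a finite-variation part bounded by $\frac1{2\nu}\int_0^t\|\mathbf D^{(1,-)}_N\overline{\mathbf A}^N(\mathbf U^N(s))\|^2_{\ell^2_0(\T_N)}\dd s+t\mathsf D$; by stationarity its expectation equals $\frac{t}{2\nu}\E[\|\mathbf D^{(1,-)}_N\overline{\mathbf A}^N(\mathbf V^N)\|^2_{\ell^2_0(\T_N)}]+t\mathsf D$, which is bounded uniformly in $N$ by the computation~\eqref{fluxbound}. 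For the martingale part $M_N$ of~\eqref{itoH1}, the Burkholder--Davis--Gundy inequality yields $\E[\sup_{s\in[0,t]}|M_N(s)|]\leq C\,\E[\langle M_N\rangle_t^{1/2}]$ for a universal constant $C$, and since the Cauchy--Schwarz inequality and~\eqref{eq:sigma-glob} give $\langle M_N\rangle_t\leq4\mathsf D\int_0^t Y_N(s)\dd s$, one more Cauchy--Schwarz step and stationarity bound this by $2C\sqrt{\mathsf D}\,(t\,\mathsf C^{1,2})^{1/2}$, again uniformly in $N$. Collecting the three contributions produces $\mathsf B_t<+\infty$ independent of $N$. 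Finally, combining the two factors gives
\[ \E\left[\|\mathbf U^N(t)-\Pi_N u^*(t)\|^2_{\ell^2_0(\T_N)}\ind{t>\tau^N_{(M)}}\right]\leq\mathsf K_t\left(\frac{\mathsf B_t}{M^2}+\P\left(\sup_{s\in[0,t]}\|u^*(s)\|_{L^\infty_0(\T)}\geq M\right)\right)^{1/2}, \]
and taking $\limsup_{N\to+\infty}$ leaves the right-hand side unchanged because every bound is uniform in $N$; letting $M\to+\infty$ then sends both terms inside the parenthesis to $0$, which proves the claim.
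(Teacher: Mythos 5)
Your argument is correct and follows essentially the same route as the paper: Cauchy--Schwarz to decouple the squared error from the event $\{t>\tau^N_{(M)}\}$, fourth-moment bounds from stationarity together with Proposition~\ref{prop:unifestim} and Lemma~\ref{lem:FTBus}, a union bound with the discrete gradient estimate for the $\mathbf U^N$ exit event and path continuity of $u^*$ in $H^2_0(\T)$ for the other, all resting on the finite-time uniform maximal $h^1_0(\T_N)$ bound that the paper isolates as Lemma~\ref{lem:FTBUN} and proves exactly as you do (It\^o formula, Young absorption of the flux cross-term into the dissipative term, stationarity with~\eqref{fluxbound}). Your only deviations are cosmetic: Chebyshev in place of Markov for the exit probability, and Burkholder--Davis--Gundy in place of Jensen--Doob--It\^o isometry for the martingale maximal term.
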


The proof of Lemma~\ref{lem:tauM} relies on the following result, the proof of which is postponed to Appendix~\ref{app:estim}.

\begin{lemma}[Finite-time uniform $h^1_0(\T_N)$ bound on $\mathbf{U}^N$]\label{lem:FTBUN}
  Under the assumptions of Proposition~\ref{CFT}, for all $t > 0$ there exists a constant $\mathsf{S}^{1,2}_t \in [0,+\infty)$, such that
  \begin{equation*}
    \sup_{N\geq1}\E\left[\sup_{s \in [0,t]} \|\mathbf{D}^{(1,+)}_N\mathbf{U}^N(s)\|^2_{\ell^2_0(\T_N)}\right] \leq \mathsf{S}^{1,2}_t.
  \end{equation*}
\end{lemma}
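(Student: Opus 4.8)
The plan is to run an energy estimate directly on the $h^1_0(\T_N)$ norm, starting from the It\^o expansion~\eqref{itoH1} which was already established in Step~4 of the proof of Proposition~\ref{prop:unifestim}, and then to upgrade the fixed-time control used there into a control of the running supremum. First I would recall the algebraic identity obtained in that same Step~4, namely that for any $\mathbf v\in\R^N_0$,
\[
  \left\langle\mathbf D^{(1,+)}_N\mathbf v,\mathbf D^{(1,+)}_N\mathbf b(\mathbf v)\right\rangle_{\ell^2_0(\T_N)} = \left\langle\mathbf D^{(2)}_N\mathbf v,\mathbf{D}^{(1,-)}_N \overline{\mathbf{A}}^N(\mathbf{v})\right\rangle_{\ell^2_0(\T_N)} - \nu\|\mathbf D^{(2)}_N\mathbf v\|^2_{\ell^2_0(\T_N)}.
\]
Applying the Cauchy--Schwarz and Young inequalities to the cross term and discarding the (nonpositive) dissipation term, the drift appearing in~\eqref{itoH1} is bounded pathwise by $\tfrac{1}{2\nu}\|\mathbf{D}^{(1,-)}_N \overline{\mathbf{A}}^N(\mathbf{U}^N(s))\|^2_{\ell^2_0(\T_N)}$. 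Since all the remaining non-martingale terms in~\eqref{itoH1} are nondecreasing in the upper limit of integration, taking the supremum over $[0,t]$ yields, for each $N$,
\[
  \sup_{s\in[0,t]}\|\mathbf D^{(1,+)}_N\mathbf U^N(s)\|^2_{\ell^2_0(\T_N)} \leq \|\mathbf D^{(1,+)}_N\mathbf U^N_0\|^2_{\ell^2_0(\T_N)} + \frac{1}{\nu}\int_0^t\|\mathbf{D}^{(1,-)}_N \overline{\mathbf{A}}^N(\mathbf{U}^N(s))\|^2_{\ell^2_0(\T_N)}\dd s + t\mathsf{D} + 2\sup_{s\in[0,t]}M^N(s),
\]
where $M^N(s)=\int_0^s\langle\mathbf D^{(1,+)}_N\mathbf U^N(r),\dd(\mathbf D^{(1,+)}_N\mathbf W^{Q,N})(r)\rangle_{\ell^2_0(\T_N)}$ is the local martingale from~\eqref{itoH1} and the deterministic term is controlled by~\eqref{eq:sigma-glob}.

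Next I would take expectations and treat each term using the \emph{stationarity} of $(\mathbf U^N(t))_{t\geq0}$, which holds here since $\mathbf U^N_0\sim\vartheta_N$. The initial term is then exactly $\E[\|\mathbf D^{(1,+)}_N\mathbf V^N\|^2_{\ell^2_0(\T_N)}]\leq\mathsf{C}^{1,2}$ by Proposition~\ref{prop:unifestim}, uniformly in $N$. For the flux integral, stationarity reduces it to $t\,\E[\|\mathbf{D}^{(1,-)}_N \overline{\mathbf{A}}^N(\mathbf{V}^N)\|^2_{\ell^2_0(\T_N)}]$, and I would reuse the estimate~\eqref{fluxbound} established in the proof of Proposition~\ref{prop:unifestim}, which bounds this quantity by $8\mathsf{C}_{\bar A}^2(\mathsf{C}^{1,2}+\tfrac{\mathsf{D}}{2\nu}\mathsf{C}^{0,2\mathsf{p}_{\bar A}})$, again uniformly in $N$.

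The only genuinely new ingredient, and the step I expect to be the main obstacle, is the control of the martingale supremum $\E[\sup_{s\in[0,t]}M^N(s)]$ uniformly in $N$, since Step~4 of Proposition~\ref{prop:unifestim} only required a fixed-time expectation that killed the stochastic integral outright. Here I would invoke the Burkholder--Davis--Gundy inequality: bounding the quadratic variation by $\langle M^N\rangle_t\leq\mathsf{D}\int_0^t\|\mathbf D^{(1,+)}_N\mathbf U^N(s)\|^2_{\ell^2_0(\T_N)}\dd s$ via Cauchy--Schwarz and~\eqref{eq:sigma-glob}, then applying Jensen's inequality together with stationarity gives $\E[\sqrt{\langle M^N\rangle_t}]\leq\sqrt{\mathsf{D}}\,(\E[\int_0^t\|\mathbf D^{(1,+)}_N\mathbf U^N(s)\|^2_{\ell^2_0(\T_N)}\dd s])^{1/2}\leq\sqrt{\mathsf{D}\,t\,\mathsf{C}^{1,2}}$. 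This avoids any feedback of the supremum into the right-hand side, so there is no need for a delicate absorption argument; a routine localisation by the exit times of balls in the finite-dimensional space $\R^N_0$, followed by Fatou's lemma, makes the application of BDG rigorous. Collecting the four uniform bounds produces a constant $\mathsf{S}^{1,2}_t$ depending only on $t$, $\nu$, $\mathsf{D}$, $\mathsf{C}_{\bar A}$, $\mathsf{p}_{\bar A}$ and the constants of Proposition~\ref{prop:unifestim}, but not on $N$, which is exactly the claim.
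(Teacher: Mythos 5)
Your proposal is correct and follows essentially the same route as the paper's proof in Appendix~C: start from the It\^o expansion~\eqref{itoH1}, absorb the flux cross term via~\eqref{eq:sbp2} and Young's inequality into the $h^2_0(\T_N)$ dissipation, take the running supremum, and control the initial and flux terms by stationarity, Proposition~\ref{prop:unifestim} and~\eqref{fluxbound}. The only (immaterial) difference is that you bound the martingale supremum with Burkholder--Davis--Gundy plus Jensen on the quadratic variation, whereas the paper uses Jensen, Doob's $L^2$ maximal inequality and the It\^o isometry; both yield the same $\sqrt{t\mathsf{D}\mathsf{C}^{1,2}}$-type bound uniformly in $N$.
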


\begin{proof}[Proof of Lemma~\ref{lem:tauM}]
  By the Cauchy--Schwarz, triangle and Jensen inequalities, we first write
  \begin{align*}
    &\E\left[\|\mathbf{U}^N(t)-\Pi_N u^*(t)\|^2_{\ell^2_0(\T_N)}\ind{t > \tau^N_{(M)}}\right]\\
    & \leq \sqrt{\E\left[\|\mathbf{U}^N(t)-\Pi_N u^*(t)\|^4_{\ell^2_0(\T_N)}\right]}\sqrt{\P\left(t > \tau^N_{(M)}\right)}\\
    & \leq \sqrt{8\left(\E\left[\|\mathbf{U}^N(t)\|^4_{\ell^2_0(\T_N)}\right]+\E\left[\|\Pi_N u^*(t)\|^4_{\ell^2_0(\T_N)}\right]\right)}\sqrt{\P\left(\sup_{s \in [0,t]}\|\mathbf{U}^N(s)\|_{\ell^\infty_0(\T_N)} \geq M \text{ or } \sup_{s \in [0,t]}\|u^*(s)\|_{L^\infty_0(\T)} \geq M\right)}.
  \end{align*}
  On the one hand, we deduce from~\eqref{eq:normorder}, \eqref{eq:isoPiN}, Proposition~\ref{prop:unifestim} and Lemma~\ref{lem:FTBus} that
  \begin{equation*}
    \E\left[\|\mathbf{U}^N(t)\|^4_{\ell^2_0(\T_N)}\right] \leq \mathsf{C}^{0,4}, \qquad \E\left[\|\Pi_N u^*(t)\|^4_{\ell^2_0(\T_N)}\right] \leq \mathsf{C}^{*,0,4}_t.
  \end{equation*}
  On the other hand, we get from the union bound and the Markov inequality that
  \begin{align*}
    &\P\left(\sup_{s \in [0,t]}\|\mathbf{U}^N(s)\|_{\ell^\infty_0(\T_N)} \geq M \text{ or } \sup_{s \in [0,t]}\|u^*(s)\|_{L^\infty_0(\T)} \geq M\right)\\
    &\leq \P\left(\sup_{s \in [0,t]}\|\mathbf{U}^N(s)\|_{\ell^\infty_0(\T_N)} \geq M\right) + \P\left(\sup_{s \in [0,t]}\|u^*(s)\|_{L^\infty_0(\T)} \geq M\right)\\
    &\leq \frac{1}{M}\E\left[\sup_{s \in [0,t]}\|\mathbf{U}^N(s)\|_{\ell^\infty_0(\T_N)}\right] + \P\left(\sup_{s \in [0,t]}\|u^*(s)\|_{L^\infty_0(\T)} \geq M\right).
  \end{align*}
  Using~\eqref{eq:gradient-estim-discr}, \eqref{eq:normorder}, the Jensen inequality, and Lemma~\ref{lem:FTBUN}, we get
  \begin{equation*}
    \E\left[\sup_{s \in [0,t]}\|\mathbf{U}^N(s)\|_{\ell^\infty_0(\T_N)}\right] \leq \sqrt{\E\left[\sup_{s \in [0,t]}\|\mathbf{D}^{(1,+)}_N\mathbf{U}^N(s)\|^2_{\ell^2_0(\T_N)}\right]} \leq \sqrt{\mathsf{S}^{1,2}_t},
  \end{equation*} 
  while by~\eqref{eq:holder} and~\eqref{eq:strongerpoincare}, 
  \begin{equation*}
    \P\left(\sup_{s \in [0,t]}\|u^*(s)\|_{L^\infty_0(\T)} \geq M\right) \leq \P\left(\sup_{s \in [0,t]}\|u^*(s)\|_{H^2_0(\T)} \geq M\right),
  \end{equation*}
  and since $u^*$ is a continuous $H^2_0(\T)$-valued process, the right-hand side (which does not depend on $N$) converges to $0$ when $M \to +\infty$. This completes the proof.
\end{proof}

We are now ready to complete the proof of Proposition~\ref{CFT}.

\begin{proof}[Proof of Proposition~\ref{CFT}]
  Combining~\eqref{eq:decomperror} with~\eqref{eq:PiNus} and the results of Lemmas~\ref{lem:ordre1} and~\ref{lem:tauM}, we see that in order to complete the proof of Proposition~\ref{CFT}, it only remains to check that
  \begin{equation*}
    \lim_{N \to +\infty} \E\left[\|\mathbf{U}^N_0-\Pi_N u^*_0\|^2_{\ell^2_0(\T_N)}\right] = 0.
  \end{equation*}
  Using the obvious identity $\mathbf{v} = \Pi_N\Psi_N\mathbf{v}$ and~\eqref{eq:isoPiN}, we first write
  \begin{equation*}
    \|\mathbf{U}^N_0-\Pi_N u^*_0\|^2_{\ell^2_0(\T_N)} = \|\Pi_N\Psi_N\mathbf{U}^N_0-\Pi_N u^*_0\|^2_{\ell^2_0(\T_N)} \leq \|\Psi_N\mathbf{U}^N_0-u^*_0\|^2_{L^2_0(\T)}.
  \end{equation*}
  By the construction of $\mathbf{U}^N_0$ and $u^*_0$, we know that $\|\Psi_N\mathbf{U}^N_0-u^*_0\|^2_{L^2_0(\T)}$ converges to $0$ almost surely. It remains to check that the expectation of this random variable also converges to $0$, which easily follows from the moment estimates of Proposition~\ref{prop:unifestim} and Corollary~\ref{cor:relcom} by uniform integrability.
\end{proof}

\begin{remark}
  The finite-time convergence result of Proposition~\ref{CFT} shows in particular that the semi-discrete scheme $(u^N(t))_{t \geq 0}$ converges, in a certain sense, to a solution $(u^*(t))_{t \geq 0}$ to the SPDE~\eqref{SSCL}. The proof of this proposition could actually be adapted to make this statement more general, without the assumption that the process $(u^N(t))_{t \geq 0}$ be stationary for example. We will not elaborate in this direction as the purpose of the present work is to focus on invariant \emph{measures} rather than on finite-time \emph{trajectorial} approximations.
\end{remark}

\subsection{Discussion of the rate of convergence}\label{ss:rate}

The estimates~\eqref{eq:decomperror} and~\eqref{eq:PiNus}, together with the result of Lemma~\ref{lem:ordre1}, display error terms of order $1/N^2$ for the estimation of $\|u^N(t)-u^*(t)\|^2_{\ell^2_0(\T)}$, but the localisation procedure by the stopping time $\tau^N_{(M)}$, which is used to handle the fact that $A$ and $\overline{A}$ are not globally Lipschitz continuous, generally prevents this computation from providing a global rate of convergence.

However, under the assumption that both $A$ and $\overline{A}$ are globally $\mathsf{L}$-Lipschitz continuous, for some $\mathsf{L} \in [0,+\infty)$, a strong error estimate can be derived. Indeed, in this case, let us define on a same probability space:
\begin{itemize}
  \item $(\mathbf{U}^N_0)_{N \geq 1}$ a sequence of $\mathcal{F}_0$-measurable random vectors such that $\mathbf{U}^N_0 \sim \vartheta_N$;
  \item $u_0$ a $\mathcal{F}_0$-measurable random variable in $H^2_0(\T)$ with distribution $\mu$;
  \item $W^Q$ a $Q$-Wiener process;
\end{itemize}
and denote by $(\mathbf{U}^N(t))_{t \geq 0}$ and $(u(t))_{t \geq 0}$ the respective solutions to~\eqref{FVS} and~\eqref{SSCL}, with respective initial conditions $\mathbf{U}^N_0$ and $u_0$, and respectively driven by $W^Q$ and $\mathbf{W}^{Q,N} := \Pi_NW^Q$. Notice that, in contrast with the setting of the previous subsections, the sequence $\Psi_N\mathbf{U}^N_0$ is not assumed to converge almost surely to $u_0$; on the other hand, we now know that both processes $(\mathbf{U}^N(t))_{t \geq 0}$ and $(u(t))_{t \geq 0}$ are stationary.

As a consequence, \eqref{eq:decomperror}, \eqref{eq:PiNus} and Corollary~\ref{cor:relcom} already yield
\begin{equation*}
  \E\left[\|u^N(t)-u^*(t)\|^2_{\ell^2_0(\T_N)}\right] \leq 2\left(\frac{\mathsf{C}^{1,2}}{N^2} + \E\left[\|\mathbf{U}^N(t)-\Pi_N u^*(t)\|^2_{\ell^2_0(\T_N)}\right]\right).
\end{equation*}
One may then apply Lemma~\ref{lem:ordre1} with $\mathsf{L}_{(M)} = \mathsf{L}$ to obtain
\begin{equation*}
  \E\left[\|\mathbf{U}^N(t)-\Pi_N u^*(t)\|^2_{\ell^2_0(\T_N)}\ind{t \leq \tau^N_{(M)}}\right] \leq \e^{\gamma t}\E\left[\|\mathbf{U}^N_0-\Pi_N u^*_0\|^2_{\ell^2_0(\T_N)}\right] + \frac{\mathfrak{C}(t,\mathsf{L},\delta_1,\delta_2)}{N^2},
\end{equation*}
with $\gamma=-2\nu + \delta_1\nu + (\delta_2 + \frac{3}{\delta_2})\mathsf{L}$. Since the right-hand side does not depend on $M$, we may take the $M \to +\infty$ limit first and thus obtain
\begin{equation*}
  \E\left[\|\mathbf{U}^N(t)-\Pi_N u^*(t)\|^2_{\ell^2_0(\T_N)}\right] \leq \e^{\gamma t}\E\left[\|\mathbf{U}^N_0-\Pi_N u^*_0\|^2_{\ell^2_0(\T_N)}\right] + \frac{\mathfrak{C}(t,\mathsf{L},\delta_1,\delta_2)}{N^2}.
\end{equation*}
Besides, by stationarity of $(u(t))_{t \geq 0}$ and Corollary~\ref{cor:relcom}, the constant $\mathfrak{C}(t,\mathsf{L},\delta_1,\delta_2)$ from Lemma~\ref{lem:ordre1} is bounded from above by
\begin{equation*}
  \mathfrak{C}'(t,\mathsf{L},\delta_1,\delta_2) = \int_0^t \e^{\gamma(t-s)}\left(\frac{4\nu}{\delta_1}\mathsf{C}^{2,2} + \frac{6\mathsf{L}}{\delta_2}\mathsf{C}^{1,2}\right)\dd s = \left(\frac{4\nu}{\delta_1}\mathsf{C}^{2,2} + \frac{6\mathsf{L}}{\delta_2}\mathsf{C}^{1,2}\right)\int_0^t \e^{\gamma s}\dd s.
\end{equation*}
We therefore deduce the finite-time estimate
\begin{equation}\label{eq:rate1}
  \E\left[\|u^N(t)-u^*(t)\|^2_{L^2_0(\T_N)}\right] \leq 2\left(\e^{\gamma t}\E\left[\|\mathbf{U}^N_0-\Pi_N u^*_0\|^2_{\ell^2_0(\T_N)}\right] + \frac{\mathsf{C}^{1,2}+\mathfrak{C}'(t,\mathsf{L},\delta_1,\delta_2)}{N^2}\right),
\end{equation}
with a squared error term of order $1/N^2$. It is remarkable that this is the same order as for \emph{deterministic} conservation laws (see for instance~\cite[Theorem~17.1]{EGH00}), so that, in our setting, the noise is sufficiently smooth in space not to deteriorate the strong error. This is in contrast with classical results for SPDEs with space-time white noise~\cite{Gyo98}.

When the constant $\mathsf{L}$ is small enough, then it is possible to choose $\delta_1$ and $\delta_2$ which satisfy~\eqref{eq:conddelta} and such that $\gamma < 0$. In this `perturbative' case (which presents similar features to the one briefly discussed in the introduction of~\cite{BreDeb18}), one may take the $t \to +\infty$ limit in~\eqref{eq:rate1} and obtain the long time estimate
\begin{equation*}
  \limsup_{t \to +\infty} \E\left[\|u^N(t)-u^*(t)\|^2_{L^2_0(\T_N)}\right] \leq \frac{\mathfrak{C}''}{N^2}, \qquad \mathfrak{C}'' = 2\left(\mathsf{C}^{1,2} + \frac{1}{-\gamma}\left(\frac{4\nu}{\delta_1}\mathsf{C}^{2,2} + \frac{6\mathsf{L}}{\delta_2}\mathsf{C}^{1,2}\right)\right).
\end{equation*}
Since for any $t \geq 0$, $u^N(t) \sim \mu_N$ and $u(t) \sim \mu$, we have
\begin{equation*}
  W_2(\mu_N,\mu) \leq \sqrt{\E\left[\|u^N(t)-u^*(t)\|^2_{L^2_0(\T_N)}\right]},
\end{equation*}
therefore the estimate above yields the quantitative bound
\begin{equation*}
  W_2(\mu_N,\mu) \leq \frac{\sqrt{\mathfrak{C}''}}{N}.
\end{equation*}
We shall check in Section~\ref{s:newnum} that the order $1/N$ for the Wasserstein distance is sharp in the case where $A=0$.
  
\section{Convergence of invariant measures: split-step scheme towards semi-discrete scheme}\label{section:convergence2}

This section is dedicated to the proof of the second statement in Theorem~\ref{TCV}, namely the convergence of $\vartheta_{N, \Delta t}$ to $\vartheta_N$ when $\Delta t \to 0$. We follow the same outline as in Section~\ref{section:convergence}, and we only emphasise the differences in the arguments. The main results of tightness and finite-time convergence are stated in Subsection~\ref{ss:sketchofproofSS} and proved in Subsection~\ref{ss:pftightSS} and~\ref{ss:CFTSS}, respectively. Rates of convergence are then discussed in Subsection~\ref{ss:rateSS}.

Throughout this section, we arbitrarily fix a maximal time step $\Delta t_{\mathrm{max}}>0$ and we always take $\Delta t \in (0,\Delta t_{\mathrm{max}}]$.

\subsection{Statement of the main arguments}\label{ss:sketchofproofSS}

The next two results play the respective roles of Proposition~\ref{prop:unifestim} and Corollary~\ref{cor:relcom}.

\begin{prop}[Uniform $\ell^4_0(\T_N)$ and $h^1_0(\T_N)$ estimates on $\vartheta_{N, \Delta t}$]\label{prop:unifestimSS}
  For all $N\geq1$ and all $\Delta t\in(0,\Delta t_{\mathrm{max}}]$, let $\mathbf{V}^{N,\Delta t}$ be a random variable in $\R^N_0$ with distribution $\vartheta_{N,\Delta t}$, and let $\mathbf{V}_{\frac12}^{N,\Delta t}$ be defined by
  \begin{equation*}
    \mathbf{V}_{\frac12}^{N,\Delta t}=\mathbf{V}^{N,\Delta t}+\Delta t\mathbf b\left(\mathbf{V}_{\frac12}^{N,\Delta t}\right).
  \end{equation*}
  There exist constants $\mathsf{C}^{\Delta, 0, 4}, \mathsf{C}^{\Delta, 1, 2}, \mathsf{C}_{\frac12}^{\Delta, 1, 2} \in [0,+\infty)$, which only depend on $\nu$, $\mathsf{D}$ and $\Delta t_\mathrm{max}$, such that 
  \begin{equation*}
    \sup_{N\geq1}\sup_{\Delta t\in(0,\Delta t_{\mathrm{max}}]}\E\left[\|\mathbf{V}^{N,\Delta t}\|^4_{\ell^4_0(\T_N)}\right] \leq \mathsf{C}^{\Delta, 0, 4},
    \end{equation*}
    \begin{equation*}
     \sup_{N\geq1}\sup_{\Delta t\in(0,\Delta t_{\mathrm{max}}]}\E\left[\|\mathbf{D}^{(1,+)}_N\mathbf{V}^{N,\Delta t}\|^2_{\ell^2_0(\T_N)}\right] \leq \mathsf{C}^{\Delta, 1, 2}, \qquad \sup_{N\geq1}\sup_{\Delta t\in(0,\Delta t_{\mathrm{max}}]} \E\left[\|\mathbf{D}^{(1,+)}_N\mathbf{V}_{\frac12}^{N,\Delta t}\|^2_{\ell^2_0(\T_N)}\right] \leq \mathsf{C}_{\frac12}^{\Delta, 1, 2}.
  \end{equation*}
\end{prop}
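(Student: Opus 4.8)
The plan is to follow the structure of the proof of Proposition~\ref{prop:unifestim}, decomposing each step of \eqref{SSBE} into the implicit half-step $\mathbf U^{N,\Delta t}_{n+\frac12}=\mathbf U^{N,\Delta t}_n+\Delta t\,\mathbf b(\mathbf U^{N,\Delta t}_{n+\frac12})$ and the explicit Gaussian step $\mathbf U^{N,\Delta t}_{n+1}=\mathbf U^{N,\Delta t}_{n+\frac12}+\Delta\mathbf W^{Q,N}_{n+1}$, and then exploiting stationarity. The two workhorses are one-step contraction estimates for the implicit half-step. For the $\ell^2_0$ norm, combining \eqref{FB0} with the discrete Poincar\'e inequality \eqref{eq:poinca-discr} gives $(1+2\nu\Delta t)\|\mathbf U^{N,\Delta t}_{n+\frac12}\|^2_{\ell^2_0(\T_N)}\leq\|\mathbf U^{N,\Delta t}_n\|^2_{\ell^2_0(\T_N)}$. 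For the $\ell^4_0$ norm, writing $\mathbf v=\mathbf U^{N,\Delta t}_n$ and $\mathbf w=\mathbf U^{N,\Delta t}_{n+\frac12}$, so that $\mathbf v=\mathbf w-\Delta t\,\mathbf b(\mathbf w)$, I would expand $\langle\mathbf w^3,\mathbf v\rangle_{\ell^2(\T_N)}=\|\mathbf w\|^4_{\ell^4_0(\T_N)}-\Delta t\,\langle\mathbf w^3,\mathbf b(\mathbf w)\rangle_{\ell^2(\T_N)}$. Lemma~\ref{L1} with $q=4$, the summation by parts \eqref{eq:sbp2}, and the $\ell^p_0(\T_N)$ Poincar\'e inequality of Lemma~\ref{comp} with $p=4$ yield $\langle\mathbf w^3,\mathbf b(\mathbf w)\rangle_{\ell^2(\T_N)}\leq-\tfrac{3\nu}{4}\|\mathbf w\|^4_{\ell^4_0(\T_N)}$, while Young's inequality gives $\langle\mathbf w^3,\mathbf v\rangle_{\ell^2(\T_N)}\leq\tfrac34\|\mathbf w\|^4_{\ell^4_0(\T_N)}+\tfrac14\|\mathbf v\|^4_{\ell^4_0(\T_N)}$. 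Combining the two produces the half-step $\ell^4_0$ contraction $(1+3\nu\Delta t)\|\mathbf U^{N,\Delta t}_{n+\frac12}\|^4_{\ell^4_0(\T_N)}\leq\|\mathbf U^{N,\Delta t}_n\|^4_{\ell^4_0(\T_N)}$.

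The $h^1_0$ bounds then follow from stationarity alone. Taking $\mathbf U^{N,\Delta t}_0\sim\vartheta_{N,\Delta t}$ in \eqref{eq:discretederivative} and using that, by stationarity, the left-hand side telescopes to zero, together with \eqref{NB}, gives $2\nu\Delta t\,\E[\|\mathbf D^{(1,+)}_N\mathbf V^{N,\Delta t}_{\frac12}\|^2_{\ell^2_0(\T_N)}]\leq\mathsf D\Delta t$, that is $\mathsf C^{\Delta,1,2}_{\frac12}=\mathsf D/(2\nu)$. The bound on $\mathbf V^{N,\Delta t}$ itself follows by adding the increment: by \eqref{a2} and \eqref{eq:sigma-glob}, $\E[\|\mathbf D^{(1,+)}_N\mathbf V^{N,\Delta t}\|^2_{\ell^2_0(\T_N)}]=\E[\|\mathbf D^{(1,+)}_N\mathbf V^{N,\Delta t}_{\frac12}\|^2_{\ell^2_0(\T_N)}]+\Delta t\sum_{k\geq1}\|\mathbf D^{(1,+)}_N\mathbf g^k\|^2_{\ell^2_0(\T_N)}\leq\mathsf D/(2\nu)+\Delta t_{\mathrm{max}}\mathsf D=:\mathsf C^{\Delta,1,2}$, which is uniform in $N$ and $\Delta t$. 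All the quantities involved are finite for each fixed $N$ because $\vartheta_{N,\Delta t}\in\mathcal P_2(\R^N_0)$ and $\R^N_0$ is finite-dimensional.

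For the $\ell^4_0$ estimate I would take the expectation of the pointwise expansion of $((\mathbf U^{N,\Delta t}_{n+1})_i)^4$ and use that $\Delta\mathbf W^{Q,N}_{n+1}$ is a centered Gaussian vector independent of $\mathbf U^{N,\Delta t}_{n+\frac12}$, so that the odd moments vanish and the fourth moment of each increment equals three times the square of its variance, which is bounded by $\Delta t\mathsf D$ thanks to \eqref{eq:covWQN}--\eqref{eq:sigma-glob}. This yields $\E[\|\mathbf U^{N,\Delta t}_{n+1}\|^4_{\ell^4_0(\T_N)}]\leq\E[\|\mathbf U^{N,\Delta t}_{n+\frac12}\|^4_{\ell^4_0(\T_N)}]+6\Delta t\mathsf D\,\E[\|\mathbf U^{N,\Delta t}_{n+\frac12}\|^2_{\ell^2_0(\T_N)}]+3(\Delta t\mathsf D)^2$. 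Inserting the half-step $\ell^4_0$ contraction and bounding the middle term via the half-step $\ell^2_0$ contraction (which, combined with the Gaussian increment and started from a deterministic initial condition, gives a geometric recursion for $\E[\|\mathbf U^{N,\Delta t}_n\|^2_{\ell^2_0(\T_N)}]$, hence a uniform bound on $\sup_n\E[\|\mathbf U^{N,\Delta t}_{n+\frac12}\|^2_{\ell^2_0(\T_N)}]$, as in the proof of Proposition~\ref{existence}) produces a geometric recursion $\E[\|\mathbf U^{N,\Delta t}_{n+1}\|^4_{\ell^4_0(\T_N)}]\leq(1+3\nu\Delta t)^{-1}\E[\|\mathbf U^{N,\Delta t}_n\|^4_{\ell^4_0(\T_N)}]+\mathcal S\Delta t$. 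Its fixed point $\limsup_n\E[\|\mathbf U^{N,\Delta t}_n\|^4_{\ell^4_0(\T_N)}]\leq\mathsf C^{\Delta,0,4}$ is uniform in $N$ and $\Delta t$, because the factor $\Delta t$ in the source cancels the factor $\Delta t$ appearing in $1-(1+3\nu\Delta t)^{-1}$. Finally, since for fixed $N,\Delta t$ the law of $\mathbf U^{N,\Delta t}_n$ converges to $\vartheta_{N,\Delta t}$ as $n\to\infty$ and $\mathbf v\mapsto\|\mathbf v\|^4_{\ell^4_0(\T_N)}$ is nonnegative and continuous, the Portmanteau theorem gives $\E[\|\mathbf V^{N,\Delta t}\|^4_{\ell^4_0(\T_N)}]\leq\liminf_n\E[\|\mathbf U^{N,\Delta t}_n\|^4_{\ell^4_0(\T_N)}]\leq\mathsf C^{\Delta,0,4}$.

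The main obstacle is that, unlike for the $h^1_0$ estimates, finiteness of the fourth moment under $\vartheta_{N,\Delta t}$ cannot be taken for granted: $\mathcal P_2$-integrability controls only second moments, and on $\R^N_0$ the equivalence between the $\ell^2_0$ and $\ell^4_0$ norms degenerates as $N\to\infty$. This is precisely why I route the $\ell^4_0$ estimate through finite-time bounds issued from a deterministic initial condition and only pass to the invariant measure at the very end, by lower semicontinuity, rather than applying stationarity directly to the $\ell^4_0$ recursion. A secondary point requiring care is the Gaussian moment bookkeeping in the noise step and the verification that the resulting constants $\mathsf C^{\Delta,0,4}$, $\mathsf C^{\Delta,1,2}$ and $\mathsf C^{\Delta,1,2}_{\frac12}$ depend only on $\nu$, $\mathsf D$ and $\Delta t_{\mathrm{max}}$.
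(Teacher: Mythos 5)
Your proof is correct and follows essentially the same route as the paper: the same half-step contraction $(1+3\nu\Delta t)\|\mathbf U^{N,\Delta t}_{n+\frac12}\|^4_{\ell^4_0(\T_N)}\leq\|\mathbf U^{N,\Delta t}_n\|^4_{\ell^4_0(\T_N)}$ obtained from Lemma~\ref{L1}, \eqref{eq:sbp2} and Lemma~\ref{comp} (the paper uses the convexity inequality $(\alpha-\beta)^4\geq\alpha^4-4\alpha^3\beta$ where you use Young's inequality, with identical outcome), the same Gaussian fourth-moment computation for the noise step, and the same stationarity argument for the $h^1_0(\T_N)$ bounds with the same constants. The only minor variation is the final passage to the invariant measure: the paper runs the Ces\`aro-average and truncation argument of Proposition~\ref{prop:unifestim} on the telescoped sum to bound $\E[\|\mathbf V^{N,\Delta t}_{\frac12}\|^4_{\ell^4_0(\T_N)}]$ first, whereas you take the $n\to\infty$ limsup of a geometric recursion started from a deterministic initial condition and invoke weak convergence of the law of $\mathbf U^{N,\Delta t}_n$ to $\vartheta_{N,\Delta t}$ together with lower semicontinuity, which is equally valid.
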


\begin{remark}
Observe that unlike in Proposition~\ref{prop:unifestim}, no $h^2_0(\T_N)$ estimate is established here, and as a consequence, $\ell^p_0(\T_N)$ estimates for $p>4$, which were used in Proposition~\ref{prop:unifestim} to control the polynomial growth of the derivatives of the numerical flux, are not used. Indeed, the $h^2_0(\T_N)$ estimate was necessary to interpret any limit of the space-discretised approximations of the invariant measure $\mu$ as a starting distribution for an $H^2_0(\T)$-valued Markov process. Here, the aim is to approximate the invariant measure of an $\R^N_0$-valued Markov process, so that any probability measure on $\R^N_0$ can be regarded as an initial distribution for this process. The content of Proposition~\ref{prop:unifestimSS} will be sufficient to control the terms that will arise from the decomposition of the approximation error.
\end{remark}

\begin{corollary}[Relative compactness and $\ell^4_0(\T_N)$ and $h^1_0(\T_N)$ estimates on $\nu^*_N$]\label{cor:relcomSS}
  The family of probability measures $\{\vartheta_{N, \Delta t}, \Delta t\in(0,\Delta t_{\mathrm{max}}] \}$ is relatively compact in $\mathcal{P}_2(\R^N_0)$, and any subsequential limit $\vartheta_N^*$ (when $\Delta t \to 0$) has the property that if $\mathbf{V}^{N,*}$ is a random variable in $\R^N_0$ with distribution $\vartheta_N^*$, then 
  \begin{equation*}
    \E\left[\|\mathbf{V}^{N,*}\|^4_{\ell^4_0(\T_N)}\right] \leq \mathsf{C}^{\Delta, 0, 4}, \qquad \E\left[\|\mathbf{D}^{(1,+)}_N\mathbf{V}^{N,*}\|^2_{\ell^2_0(\T_N)}\right] \leq \mathsf{C}^{\Delta, 1, 2}.
  \end{equation*}
\end{corollary}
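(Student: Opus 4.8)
The plan is to mirror the proof of Corollary~\ref{cor:relcom}, with the key simplification that here the dimension $N$ is fixed, so that $\R^N_0$ is a finite-dimensional Euclidean space and no compact Sobolev embedding is needed to establish tightness. All the required input is already contained in Proposition~\ref{prop:unifestimSS}, so the argument reduces to three routine steps.

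First I would establish tightness of the family $\{\vartheta_{N,\Delta t}, \Delta t \in (0,\Delta t_\mathrm{max}]\}$ on $\R^N_0$. By the discrete Jensen inequality~\eqref{eq:normorder}, $\|\mathbf{v}\|_{\ell^2_0(\T_N)} \leq \|\mathbf{v}\|_{\ell^4_0(\T_N)}$, so the uniform $\ell^4_0(\T_N)$ bound of Proposition~\ref{prop:unifestimSS} yields $\sup_{\Delta t}\E[\|\mathbf{V}^{N,\Delta t}\|^2_{\ell^2_0(\T_N)}] \leq (\mathsf{C}^{\Delta,0,4})^{1/2} < +\infty$. Since a uniform bound on the second moment on a finite-dimensional space immediately gives tightness via the Markov inequality, the family is relatively compact in $\mathcal{P}(\R^N_0)$ by the Prokhorov theorem~\cite[Theorem~5.1]{Bil99}.

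Next I would strengthen this to relative compactness in $\mathcal{P}_2(\R^N_0)$. As in Step~3 of the proof of Corollary~\ref{cor:relcom}, it suffices to check that $\|\mathbf{V}^{N,\Delta t}\|^2_{\ell^2_0(\T_N)}$ is uniformly integrable. Using~\eqref{eq:normorder} once more, $\E[(\|\mathbf{V}^{N,\Delta t}\|^2_{\ell^2_0(\T_N)})^2] \leq \E[\|\mathbf{V}^{N,\Delta t}\|^4_{\ell^4_0(\T_N)}] \leq \mathsf{C}^{\Delta,0,4}$ uniformly in $\Delta t$; this bound on the second moment of the squared norm provides the required uniform integrability~\cite[Eq.~(3.18), p.~31]{Bil99}, so that weak convergence of any extracted subsequence upgrades to convergence in $W_2$.

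Finally, for the moment bounds on a subsequential limit $\vartheta_N^*$ obtained along some $\Delta t_j \to 0$, I would invoke lower semicontinuity together with the Portmanteau theorem, exactly as in Step~2 of the proof of Corollary~\ref{cor:relcom}. Both maps $\mathbf{v} \mapsto \|\mathbf{v}\|^4_{\ell^4_0(\T_N)}$ and $\mathbf{v} \mapsto \|\mathbf{D}^{(1,+)}_N\mathbf{v}\|^2_{\ell^2_0(\T_N)}$ are continuous (indeed polynomial) and nonnegative on $\R^N_0$, hence lower semicontinuous, so that $\E[\|\mathbf{V}^{N,*}\|^4_{\ell^4_0(\T_N)}] \leq \liminf_j \E[\|\mathbf{V}^{N,\Delta t_j}\|^4_{\ell^4_0(\T_N)}] \leq \mathsf{C}^{\Delta,0,4}$, and likewise for the $h^1_0(\T_N)$ bound with constant $\mathsf{C}^{\Delta,1,2}$. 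I do not expect any serious obstacle: the finite dimension removes both the need for the interpolation operators $\Psi^{(m)}_N$ and the compact embedding $H^1_0(\T)\hookrightarrow L^2_0(\T)$ that were the crux of Corollary~\ref{cor:relcom}. The only point requiring (routine) care is the uniform integrability step that upgrades weak convergence to the Wasserstein topology, which is precisely why a fourth-order moment, rather than merely the second, is controlled in Proposition~\ref{prop:unifestimSS}.
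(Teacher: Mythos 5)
Your proof is correct and follows exactly the route the paper intends: the paper states only that ``Corollary~\ref{cor:relcomSS} is straightforward,'' implicitly meaning the finite-dimensional simplification of the argument for Corollary~\ref{cor:relcom}, which is precisely what you carry out (uniform moments from Proposition~\ref{prop:unifestimSS} give tightness via Markov and Prokhorov, the uniform fourth moment gives the uniform integrability needed to upgrade to $W_2$, and lower semicontinuity plus Portmanteau transfers the moment bounds to any subsequential limit). No gaps.
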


The proof of Proposition~\ref{prop:unifestimSS} is detailed in Subsection~\ref{ss:pftightSS}. Corollary~\ref{cor:relcomSS} is straightforward.

We now let $(\Delta t_j)_{j \geq 1} \subset (0, \Delta t_\mathrm{max}]$ be a sequence of time steps decreasing to $0$ and such that $\vartheta_{N, \Delta t_j}$ converges to some probability measure $\vartheta_N^*$ in $\mathcal{P}_2(\R^N_0)$. We use the Skorohod representation theorem to construct, on the same probability space, a sequence of $\mathcal{F}_0$-measurable random vectors $(\mathbf{U}^{N,\Delta t_j}_0)_{j \geq 1}$ and a random vector $\mathbf{U}^{N,*}_0$ such that:
\begin{itemize}
  \item for all $j \geq 1$, $\mathbf{U}^{N,\Delta t_j}_0 \sim \vartheta_{N,\Delta t_j}$, 
  \item $\mathbf{U}^{N,*}_0 \sim \vartheta_N^*$,
  \item $\mathbf{U}^{N,\Delta t_j}_0$ converges almost surely to $\mathbf{U}^{N,*}_0$.
\end{itemize}
Up to enlarging this probability space with the same product procedure as in Section~\ref{ss:sketchofproof}, we let $\mathbf{W}^{Q,N}$ be a Wiener process with covariance given by~\eqref{eq:covWQN}, we define the sequence $(\Delta \mathbf{W}^{Q,N}_n)_{n \in \N^*}$ by~\eqref{eq:DeltaWQN}, and we denote by $(\mathbf{U}^{N, \Delta t_j}_n)_{n \in \N}$ and $(\mathbf{U}^{N,*}(t))_{t \geq 0}$ the respective solutions to~\eqref{SSBE} and~\eqref{FVS} with initial conditions $\mathbf{U}^{N,\Delta t_j}_0$ and $\mathbf{U}^{N,*}_0$, and noises $(\Delta \mathbf{W}^{Q,N}_n)_{n \in \N^*}$ and $\mathbf{W}^{Q,N}$. We finally define the continuous-time piecewise constant interpolation of the split-step scheme $(\overline{\mathbf{U}}^{N, \Delta t_j}(t))_{t \geq 0}$ by
\begin{equation*}
  \forall n \in \N, \quad \forall t \in [n\Delta t_j, (n+1)\Delta t_j), \qquad \overline{\mathbf{U}}^{N, \Delta t_j}(t) = \mathbf{U}^{N, \Delta t_j}_n.
\end{equation*}
Notice that for any $t \geq 0$, $\overline{\mathbf{U}}^{N, \Delta t_j}(t) \sim \vartheta_{N,\Delta t_j}$.

The finite-time convergence statement reads as follows. It is proved in Subsection~\ref{ss:CFTSS}.

\begin{prop}[Finite-time convergence of $\overline{\mathbf{U}}^{N, \Delta t_j}(t)$]\label{CFTSS}
  In the setting described above, for all $t \geq 0$,
  \begin{equation*}
    \lim_{j \to +\infty} \E\left[\|\overline{\mathbf{U}}^{N, \Delta t_j}(t)-\mathbf{U}^{N,*}(t)\|^2_{\ell^2_0(\T_N)}\right] = 0.
  \end{equation*}
\end{prop}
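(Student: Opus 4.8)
The plan is to mirror the proof of Proposition~\ref{CFT}, exploiting the crucial simplification that both $(\mathbf U^{N,\Delta t_j}_n)_{n\in\N}$ and $(\mathbf U^{N,*}(t))_{t\geq0}$ are driven by the \emph{same} noise $\mathbf W^{Q,N}$, so that the leading part of the error is an absolutely continuous process on which the dissipativity of $\mathbf b$ (Lemma~\ref{L1Cdrift}) can be brought to bear. Concretely, I would introduce the continuous-time interpolant of the scheme that matches it at the grid points,
\begin{equation*}
\mathbf Y^{N,\Delta t}(t) := \mathbf U^{N,\Delta t}_0 + \int_0^t \mathbf b\bigl(\hat{\mathbf U}^{N,\Delta t}(s)\bigr)\dd s + \mathbf W^{Q,N}(t), \qquad \hat{\mathbf U}^{N,\Delta t}(s):=\mathbf U^{N,\Delta t}_{n+\frac12}\ \text{for}\ s\in[n\Delta t,(n+1)\Delta t),
\end{equation*}
so that $\mathbf Y^{N,\Delta t}(n\Delta t)=\mathbf U^{N,\Delta t}_n$. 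Subtracting the integral form of~\eqref{FVS} for $\mathbf U^{N,*}$, the noise cancels and $\mathbf Z(t):=\mathbf Y^{N,\Delta t}(t)-\mathbf U^{N,*}(t)$ is absolutely continuous with $\dot{\mathbf Z}(t)=\mathbf b(\hat{\mathbf U}^{N,\Delta t}(t))-\mathbf b(\mathbf U^{N,*}(t))$. Since $\mathbf b$ is only locally Lipschitz, I would localise exactly as in Lemma~\ref{lem:ordre1}, using the stopping time
\begin{equation*}
\sigma^{\Delta t}_{(M)} := \inf\Bigl\{ t\geq0 : \|\overline{\mathbf U}^{N,\Delta t}(t)\|_{\ell^\infty_0(\T_N)} \vee \|\hat{\mathbf U}^{N,\Delta t}(t)\|_{\ell^\infty_0(\T_N)} \vee \|\mathbf U^{N,*}(t)\|_{\ell^\infty_0(\T_N)} \geq M \Bigr\},
\end{equation*}
and split the error at time $t$ into its $\ind{t\leq\sigma^{\Delta t}_{(M)}}$ and $\ind{t>\sigma^{\Delta t}_{(M)}}$ parts.

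For the localised part, on $\{t\leq\sigma^{\Delta t}_{(M)}\}$ the flux $\overline A$ and its derivatives are bounded on $[-M,M]^2$, so the computation of Lemma~\ref{L1Cdrift}, combined with the Cauchy--Schwarz and Young inequalities and~\eqref{eq:sbp2}--\eqref{eq:poinca-discr}, yields a one-sided Lipschitz bound $\langle \mathbf x-\mathbf y,\mathbf b(\mathbf x)-\mathbf b(\mathbf y)\rangle_{\ell^2_0(\T_N)}\leq \gamma_{(M)}\|\mathbf x-\mathbf y\|^2_{\ell^2_0(\T_N)}$ for $\mathbf x,\mathbf y$ in the ball of radius $M$, exactly as in the proof of Lemma~\ref{lem:ordre1}. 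Writing $\mathbf Z=(\hat{\mathbf U}^{N,\Delta t}-\mathbf U^{N,*})+(\mathbf Y^{N,\Delta t}-\hat{\mathbf U}^{N,\Delta t})$ and noting that the consistency increment $\mathbf Y^{N,\Delta t}(t)-\hat{\mathbf U}^{N,\Delta t}(t)$ equals, on $[n\Delta t,(n+1)\Delta t)$, a drift contribution bounded by $\Delta t\,\|\mathbf b(\mathbf U^{N,\Delta t}_{n+\frac12})\|$ plus the Brownian increment $\mathbf W^{Q,N}(t)-\mathbf W^{Q,N}(n\Delta t)$, a Gr\"onwall argument gives
\begin{equation*}
\E\bigl[\|\mathbf Z(t\wedge\sigma^{\Delta t}_{(M)})\|^2_{\ell^2_0(\T_N)}\bigr] \leq \e^{\gamma_{(M)}t}\,\E\bigl[\|\mathbf U^{N,\Delta t}_0-\mathbf U^{N,*}_0\|^2_{\ell^2_0(\T_N)}\bigr] + \mathfrak C(t,M)\,\Delta t^{1/2}.
\end{equation*}
The initial term tends to $0$ by the almost sure convergence of the initial conditions together with the uniform integrability furnished by the $\ell^4_0(\T_N)$ bound of Proposition~\ref{prop:unifestimSS} (as at the end of the proof of Proposition~\ref{CFT}); the $\Delta t^{1/2}$ error (whose rate I do not try to optimise) is controlled using the marginal $h^1_0(\T_N)$ bounds of Proposition~\ref{prop:unifestimSS}, both for $\mathbf V^{N,\Delta t}$ and for the half-step $\mathbf V^{N,\Delta t}_{\frac12}$, together with the finite-time moment propagation for $\mathbf U^{N,*}$ inherited from Lemma~\ref{lem:recursivebound} applied to the $\ell^4_0$-bounded law $\vartheta_N^*$. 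Replacing $\mathbf Z$ by $\overline{\mathbf U}^{N,\Delta t}-\mathbf U^{N,*}$ costs another $O(\sqrt{\Delta t})$ term of the same nature.

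For the tail part I would follow Lemma~\ref{lem:tauM}: by Cauchy--Schwarz and the triangle inequality,
\begin{equation*}
\E\bigl[\|\overline{\mathbf U}^{N,\Delta t}(t)-\mathbf U^{N,*}(t)\|^2_{\ell^2_0(\T_N)}\ind{t>\sigma^{\Delta t}_{(M)}}\bigr] \leq \sqrt{8\bigl(\E[\|\overline{\mathbf U}^{N,\Delta t}(t)\|^4_{\ell^2_0(\T_N)}]+\E[\|\mathbf U^{N,*}(t)\|^4_{\ell^2_0(\T_N)}]\bigr)}\,\sqrt{\P\bigl(t>\sigma^{\Delta t}_{(M)}\bigr)},
\end{equation*}
where the fourth moments are bounded uniformly in $\Delta t$ by Proposition~\ref{prop:unifestimSS} (stationary scheme) and by Lemma~\ref{lem:recursivebound} ($\mathbf U^{N,*}$). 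It then remains to show that $\limsup_{\Delta t\to0}\P(t>\sigma^{\Delta t}_{(M)})\to0$ as $M\to+\infty$. For $\mathbf U^{N,*}$ this is the finite-time supremum bound of an $\R^N_0$-valued diffusion, as in Lemma~\ref{lem:tauM}. For the scheme the only genuinely new ingredient is a finite-time supremum bound $\sup_{\Delta t}\E[\max_{0\leq n\leq t/\Delta t}\|\mathbf U^{N,\Delta t}_n\|^2_{\ell^2_0(\T_N)}]<+\infty$, which I would obtain by applying the discrete Doob $L^2$-inequality to the martingale part $\sum_l 2\langle\mathbf U^{N,\Delta t}_{l+\frac12},\Delta\mathbf W^{Q,N}_{l+1}\rangle$ of the energy identity derived in the proof of Proposition~\ref{existence}, the remaining contributions being nonpositive or of size $O(t)$ by~\eqref{eq:sigma-glob}. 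Combining the two parts and sending first $\Delta t\to0$ and then $M\to+\infty$ yields the claim.

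The hard part is twofold. First, the careful bookkeeping of the consistency increment $\mathbf Y^{N,\Delta t}-\hat{\mathbf U}^{N,\Delta t}$ inside the Gr\"onwall estimate: its Brownian component, paired against $\mathbf b(\hat{\mathbf U}^{N,\Delta t})-\mathbf b(\mathbf U^{N,*})$, vanishes in expectation only against the $\mathcal F_{n\Delta t}$-measurable factor and otherwise contributes an $O(\Delta t^{1/2})$ term, which is where the half-step $h^1_0(\T_N)$ bound of Proposition~\ref{prop:unifestimSS} is needed. Second, the uniform-in-$\Delta t$ finite-time supremum bound for the scheme, since the stationary estimates of Proposition~\ref{prop:unifestimSS} control only one-time marginals whereas the number of steps up to time $t$ diverges as $\Delta t\to0$; this is precisely where the dissipative structure and a discrete Doob inequality enter. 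In contrast with Proposition~\ref{CFT}, no spatial consistency error appears (the mesh is fixed), so neither $h^2_0(\T_N)$ regularity nor moments beyond $\ell^4_0(\T_N)$ are required, consistently with the remark following Proposition~\ref{prop:unifestimSS}.
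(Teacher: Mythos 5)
Your proposal is correct and follows essentially the same route as the paper: a localisation by a stopping time combined with the local Lipschitz bound on $\mathbf{b}$ and Gr\"onwall for the main term, a Cauchy--Schwarz/fourth-moment argument for the tail term whose exit probability is controlled by a discrete Doob inequality applied to the energy martingale (this is exactly the paper's Lemma~\ref{FTbound}), and uniform integrability from the $\ell^4_0(\T_N)$ bounds for the initial error. The only differences are cosmetic: the paper works with the discrete telescoping sum and an explicit remainder rather than a continuous interpolant, localises in $\ell^2_0(\T_N)$ rather than $\ell^\infty_0(\T_N)$, and obtains the sharper consistency rate $\Delta t_j^{1-\delta}$ by bounding the maximal Brownian oscillation over the subintervals, none of which affects the convergence statement.
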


We deduce that the measure $\vartheta_N^*$ is invariant for the SDE~\eqref{FVS}, which by Theorem~\ref{IM} completes the proof of~\eqref{cv2} in Theorem~\ref{TCV}.

\subsection{Proof of Proposition~\ref{prop:unifestimSS}}\label{ss:pftightSS}

  We first prove the $h^1_0(\T_N)$ estimates. Set $\mathbf{u}_0 = \mathbf{V}^{N, \Delta t}$ in the proof of Proposition~\ref{existence}. Since Theorem~\ref{IM} asserts that $\vartheta_{N, \Delta t} \in \mathcal{P}_2(\R^N_0)$, one may take the expectation and the $n \to +\infty$ limit in~\eqref{almostKB} to get
  \begin{equation*}
    \E\left[\|\mathbf{D}^{(1,+)}_N\mathbf{V}_{\frac12}^{N,\Delta t}\|^2_{\ell^2_0(\T_N)}\right] \leq \frac{\mathsf{D}}{2\nu} =: \mathsf{C}_{\frac12}^{\Delta, 1, 2}.
  \end{equation*}
  The same operations in~\eqref{eq:kb} yield
  \begin{equation*}
    \E\left[\|\mathbf{D}^{(1,+)}_N\mathbf{V}^{N,\Delta t}\|^2_{\ell^2_0(\T_N)}\right] \leq \mathsf{D}\left(\frac{1}{2\nu}+\Delta t_\mathrm{max}\right) =: \mathsf{C}^{\Delta, 1, 2}.
  \end{equation*}
  
  We now focus on the $\ell^4_0(\T)$ estimate. Let $(\mathbf U^{N,\Delta t}_n)_{n\in\N}=(U^{N,\Delta t}_{1,n},\dots,U^{N,\Delta t}_{N,n})_{n\in\N}$ be a solution of~\eqref{SSBE} with a deterministic initial condition $\mathbf u_0$. By convexity of the function $v\mapsto v^4$, for any $\alpha,\beta\in\R$, we have $(\alpha-\beta)^4\geq\alpha^4-4\alpha^3\beta$. In particular, for any $i \in \T_N$, taking $\alpha=U^{N,\Delta t}_{i,n+\frac12}$ and $\beta=\Delta tb_i(\mathbf U^{N,\Delta t}_{n+\frac12})$, we have
\[ (U^{N,\Delta t}_{i,n})^4 = \left( U^{N,\Delta t}_{i,n+\frac 12} - \Delta t b_i \left(\mathbf U^{N,\Delta t}_{n+\frac 1 2} \right) \right)^4 \geq (U^{N,\Delta t}_{i,n+\frac 12})^4 - 4 (U^{N,\Delta t}_{i,n+\frac 12})^3 \Delta t b_i \left(\mathbf U^{N,\Delta t}_{n+\frac 1 2} \right) . \]
Hence, expanding the function $\mathbf{b}$ and summing over $i$, we get
\begin{equation*}
\left\|\mathbf U^{N,\Delta t}_n \right\|_{\ell^4_0(\T_N)}^4 \geq \left\|\mathbf U^{N,\Delta t}_{n+\frac 12} \right\|_{\ell^4_0(\T_N)}^4 + 4\Delta t \left\langle (\mathbf{U}^{N,\Delta t}_{i,n+\frac 12})^3, \mathbf{D}^{(1,-)}_N \overline{\mathbf{A}}^N(\mathbf U^{N,\Delta t}_{n+\frac 1 2})\right\rangle_{\ell^2(\T_N)} - 4 \nu \Delta t \left\langle (\mathbf{U}^{N,\Delta t}_{i,n+\frac 12})^3, \mathbf{D}^{(2)}_N \mathbf U^{N,\Delta t}_{n+\frac 1 2}\right\rangle_{\ell^2(\T_N)}.
\end{equation*}
We know thanks to Lemma~\ref{L1} that the second term of the right-hand side is non-negative. Using~\eqref{eq:sbp2} in the third term, we get
\[ \left\|\mathbf U^{N,\Delta t}_n \right\|_{\ell^4_0(\T_N)}^4 \geq \left\|\mathbf U^{N,\Delta t}_{n+\frac 12} \right\|_{\ell^4_0(\T_N)}^4 + 4 \nu \Delta t \left\langle\mathbf D^{(1,+)}_N(\mathbf U^{N,\Delta t})^3_{n+\frac12},\mathbf D^{(1,+)}_N\mathbf U^{N,\Delta t}_{n+\frac12}\right\rangle_{\ell^2_0(\T_N)}.\]
From Lemma~\ref{comp}, we get
\begin{equation}\label{eq:firstside}
\left\|\mathbf U^{N,\Delta t}_n \right\|_{\ell^4_0(\T_N)}^4 \geq \left\|\mathbf U^{N,\Delta t}_{n+\frac 12} \right\|_{\ell^4_0(\T_N)}^4 + 3 \nu\Delta t \left\|\mathbf U^{N,\Delta t}_{n+\frac 12} \right\|_{\ell^4_0(\T_N)}^4.
\end{equation}

On the other hand, let us look at the second step of the scheme~\eqref{SSBE}. By the construction of the split-step scheme, the random variables $U^{N,\Delta t}_{i,n+\frac12}$ and $\Delta W^{Q,N}_{i,n+1}$ are independent. Since $\Delta W^{Q,N}_{i,n+1}\sim\mathcal N(0,\Delta t\sum_{k\geq1}(g^k_i)^2)$ and $\sum_{k\geq1}(g^k_i)^2\leq \mathsf{D}$ by~\eqref{eq:sigma-glob}, we write
\begin{align}
\E \left[ \left\|\mathbf U^{N,\Delta t}_{n+1} \right\|_{\ell^4_0(\T_N)}^4 \right] &= \E \left[ \left\|\mathbf U^{N,\Delta t}_{n+\frac 12} + \Delta\mathbf W^{Q,N}_{n+1} \right\|_{\ell^4_0(\T_N)}^4 \right] \nonumber \\
&= \E \left[ \left\|\mathbf U^{N,\Delta t}_{n+\frac 12} \right\|_{\ell^4_0(\T_N)}^4 \right] + \frac6N \E \left[ \sum_{i \in \T_N} (U^{N,\Delta t}_{i,n+\frac 12})^2 \left( \Delta W^{Q,N}_{i,n+1} \right)^2 \right] + \E \left[ \left\| \Delta\mathbf W^{Q,N}_{n+1} \right\|_{\ell^4_0(\T_N)}^4 \right] \nonumber \\
&\leq \E \left[ \left\|\mathbf U^{N,\Delta t}_{n+\frac 12} \right\|_{\ell^4_0(\T_N)}^4 \right] + 6 \mathsf{D} \Delta t \E \left[ \left\|\mathbf U^{N,\Delta t}_{n+\frac 12} \right\|_{\ell^2_0(\T_N)}^2 \right] + 3 \mathsf{D}^2 \Delta t^2. \label{otherside}
\end{align}
Combining Inequalities~\eqref{eq:firstside} and~\eqref{otherside}, we get
\[ \E\left[\left\|\mathbf U^{N,\Delta t}_n\right\|_{\ell^4_0(\T_N)}^4\right] \geq \E\left[\left\|\mathbf U^{N,\Delta t}_{n+1}\right\|_{\ell^4_0(\T_N)}^4\right]-6\mathsf{D}\Delta t\E\left[\left\|\mathbf U^{N,\Delta t}_{n+\frac12}\right\|_{\ell^2_0(\T_N)}^2\right]-3\mathsf{D}^2\Delta t^2 + 3\nu\Delta t\E\left[\left\|\mathbf U^{N,\Delta t}_{n+\frac12}\right\|_{\ell^4_0(\T_N)}^4\right] \]
from which we get the telescoping sum
\[ 3\nu\Delta t\sum_{l=0}^{n-1}\E\left[\left\|\mathbf U^{N,\Delta t}_{l+\frac12}\right\|_{\ell^4_0(\T_N)}^4\right]\leq\|\mathbf u_0\|_{\ell^4_0(\T_N)}^4-\E\left[\left\|\mathbf U^{N,\Delta t}_n\right\|_{\ell^4_0(\T_N)}^4\right]+6\mathsf{D}\Delta t\sum_{l=0}^{n-1}\E\left[\left\|\mathbf U^{N,\Delta t}_{l+\frac12}\right\|_{\ell^2_0(\T_N)}^2\right]+3n\mathsf{D}^2\Delta t^2 .\]
Thus,
\begin{equation}\label{eq:thus}
\frac1n\sum_{l=0}^{n-1}\E\left[\left\|\mathbf U^{N,\Delta t}_{l+\frac12}\right\|_{\ell^4_0(\T_N)}^4\right]\leq\frac1{3\nu\Delta tn}\|\mathbf u_0\|_{\ell^4_0(\T_N)}^4+\frac{2\mathsf{D}}{\nu n}\sum_{l=0}^{n-1}\E\left[\left\|\mathbf U^{N,\Delta t}_{l+\frac12}\right\|_{\ell^2_0(\T_N)}^2\right]+\frac{\mathsf{D}^2\Delta t}\nu.
\end{equation}
Recall that from~\eqref{eq:poinca-discr} and Equation~\eqref{almostKB}, we have
\begin{equation}\label{eq:ergodichalf}
\frac1n\sum_{l=0}^{n-1}\E\left[\left\|\mathbf U^{N,\Delta t}_{l+\frac12}\right\|_{\ell^2_0(\T_N)}^2\right]\leq\frac1n\sum_{l=0}^{n-1}\E\left[\left\|\mathbf D^{(1)}_N\mathbf U^{N,\Delta t}_{l+\frac12}\right\|_{\ell^2_0(\T_N)}^2\right]\leq\frac{\|\mathbf u_0\|_{\ell^2_0(\T_N)}^2}{2\nu n\Delta t}+\frac{\mathsf{D}}{2\nu} .
\end{equation}
Injecting~\eqref{eq:ergodichalf} into~\eqref{eq:thus}, we get
\[ \frac1n\sum_{l=0}^{n-1}\E\left[\left\|\mathbf U^{N,\Delta t}_{l+\frac12}\right\|_{\ell^4_0(\T_N)}^4\right]\leq\frac1{3\nu\Delta tn}\|\mathbf u_0\|_{\ell^4_0(\T_N)}^4+\frac{2\mathsf{D}}\nu\left(\frac{\|\mathbf u_0\|_{\ell^2_0(\T_N)}^2}{2\nu n\Delta t}+\frac{\mathsf{D}}{2\nu}\right)+\frac{\mathsf{D}^2\Delta t}\nu .\]

Using now the same arguments as for derivation of the $\ell^p_0(\T_N)$ estimates in Proposition~\ref{prop:unifestim}, we get
\[ \E\left[\left\| \mathbf V^{N,\Delta t}_{\frac12}\right\|_{\ell^4_0(\T_N)}^4\right]\leq\frac{\mathsf{D}^2}{\nu^2}+\frac{\mathsf{D}^2\Delta t}\nu=\frac{\mathsf{D}^2}\nu \left(\frac1\nu+\Delta t\right) .\]
To conclude, we use Inequality~\eqref{otherside} once again:
\begin{align*}
\E\left[\left\| \mathbf V^{N,\Delta t}\right\|_{\ell^4_0(\T_N)}^4\right]&\leq\E\left[\left\| \mathbf V^{N,\Delta t}_{\frac12}\right\|_{\ell^4_0(\T_N)}^4\right]+6\mathsf{D}\Delta t\E\left[\left\| \mathbf V^{N,\Delta t}_{\frac12}\right\|_{\ell^2_0(\T_N)}^2\right]+3\mathsf{D}^2\Delta t^2\\
&\leq \frac{\mathsf{D}^2}\nu \left(\frac1\nu+\Delta t\right)+\frac{3\mathsf{D}^2\Delta t}\nu+3\mathsf{D}^2\Delta t^2\\
&\leq \mathsf{D}^2 \left( \frac1\nu+3\Delta t_{\mathrm{max}}\right)\left( \frac1\nu+\Delta t_{\mathrm{max}}\right) =: \mathsf{C}^{\Delta, 0, 4}. 
\end{align*}

\subsection{Proof of Proposition~\ref{CFTSS}}\label{ss:CFTSS}

Similarly to the semi-discrete scheme, we use a localisation argument in order to use the local Lipschitz continuity of the function $\mathbf{b}$. For any $M \geq 0$ and $j \geq 1$, we therefore introduce the stopping time
\begin{equation*}
  \rho^j_{(M)} = \inf\left\{t \geq 0: \|\overline{\mathbf{U}}^{N,\Delta t_j}(t)\|_{\ell^2_0(\T_N)} \geq M \text{ or } \|\mathbf{U}^{N,*}(t)\|_{\ell^2_0(\T_N)} \geq M\right\},
\end{equation*}
and write, for all $t \geq 0$,
\begin{align*}
  \E\left[\|\overline{\mathbf{U}}^{N, \Delta t_j}(t)-\mathbf{U}^{N,*}(t)\|^2_{\ell^2_0(\T_N)}\right] &= \E\left[\|\overline{\mathbf{U}}^{N, \Delta t_j}(t)-\mathbf{U}^{N,*}(t)\|^2_{\ell^2_0(\T_N)}\ind{t \leq \rho^j_{(M)}}\right]\\
  &\quad + \E\left[\|\overline{\mathbf{U}}^{N, \Delta t_j}(t)-\mathbf{U}^{N,*}(t)\|^2_{\ell^2_0(\T_N)}\ind{t > \rho^j_{(M)}}\right].
\end{align*}

The terms in the right-hand side are respectively estimated in Lemmas~\ref{lem:cvLipSS} and~\ref{lem:rhoM}, from which the conclusion of the proof of Proposition~\ref{CFTSS} is straightforward.

In the next statement, we respectively denote by $\mathsf{C}_{(M)}$ and $\mathsf{L}_{(M)}$ a bound and a Lipschitz constant (with respect to the $\ell^2_0(\T_N)$ norm) of $\mathbf{b}$ on the ball $\{\|\cdot\|_{\ell^2_0(\T)} \leq M\}$.

\begin{lemma}[Finite-time convergence in the Lipschitz case]\label{lem:cvLipSS}
  Under the assumptions of Proposition~\ref{CFTSS}, for all $t>0$ and $\delta \in (0,1]$, there exists a constant $\mathfrak{C}^\Delta(t,\mathsf{L}_{(M)}, \mathsf{C}_{(M)}, \delta, \Delta t_\mathrm{max})$ such that for any $j \geq 1$,
  \begin{equation*}
    \E\left[\|\overline{\mathbf{U}}^{N, \Delta t_j}(t)-\mathbf{U}^{N,*}(t)\|^2_{\ell^2_0(\T_N)}\ind{t \leq \rho^j_{(M)}}\right] \leq 2\e^{2\mathsf{L}_{(M)}t}\left(\E\left[\left\|\mathbf{U}^{N, \Delta t_j}_0 - \mathbf{U}^{N,*}_0\right\|^2_{\ell^2_0(\T_N)}\right] + \Delta t_j^{1-\delta} \mathfrak{C}^\Delta(t,\mathsf{L}_{(M)}, \mathsf{C}_{(M)}, \delta, \Delta t_\mathrm{max})\right).
  \end{equation*}
\end{lemma}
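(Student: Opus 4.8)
The plan is to exploit that $(\overline{\mathbf{U}}^{N,\Delta t_j}(t))_{t\geq0}$ and $(\mathbf{U}^{N,*}(t))_{t\geq0}$ are driven by the \emph{same} noise, so that the Wiener increments cancel in the error and one is left with a pathwise recursion amenable to a discrete Gronwall argument. Write $t_n=n\Delta t_j$ and set $\mathbf{e}_n:=\mathbf{U}^{N,\Delta t_j}_n-\mathbf{U}^{N,*}(t_n)$, where $\mathbf{U}^{N,\Delta t_j}_n=\overline{\mathbf{U}}^{N,\Delta t_j}(t_n)$. Introducing the half-step $\mathbf{w}_n:=\mathbf{U}^{N,\Delta t_j}_{n+\frac12}$ and its continuous analogue $\mathbf{y}_n:=\mathbf{U}^{N,*}(t_n)+\int_{t_n}^{t_{n+1}}\mathbf{b}(\mathbf{U}^{N,*}(s))\dd s$, the second line of~\eqref{SSBE} and the integral form of~\eqref{FVS} give $\mathbf{U}^{N,\Delta t_j}_{n+1}=\mathbf{w}_n+\Delta\mathbf{W}^{Q,N}_{n+1}$ and $\mathbf{U}^{N,*}(t_{n+1})=\mathbf{y}_n+\Delta\mathbf{W}^{Q,N}_{n+1}$, so that the noise drops out and
\[ \mathbf{e}_{n+1}=\mathbf{w}_n-\mathbf{y}_n=\mathbf{e}_n+\int_{t_n}^{t_{n+1}}\bigl(\mathbf{b}(\mathbf{w}_n)-\mathbf{b}(\mathbf{U}^{N,*}(s))\bigr)\dd s. \]

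Localisation by $\rho^j_{(M)}$ serves to remain in the ball $\{\|\cdot\|_{\ell^2_0(\T_N)}\leq M\}$ on which $\mathbf b$ is $\mathsf L_{(M)}$-Lipschitz and bounded by $\mathsf C_{(M)}$. The key point I would use here is that the implicit step is $\ell^2_0(\T_N)$-nonexpanding: the inequality~\eqref{FB0} from the proof of Proposition~\ref{existence} yields $\|\mathbf{w}_n\|_{\ell^2_0(\T_N)}\leq\|\mathbf{U}^{N,\Delta t_j}_n\|_{\ell^2_0(\T_N)}$, whence on $\{t\leq\rho^j_{(M)}\}$ the half-steps $\mathbf{w}_l$ together with the trajectory $\mathbf{U}^{N,*}(s)$, $s\leq t$, all lie in that ball. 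Taking the $\ell^2_0(\T_N)$ inner product of the displayed identity with $\mathbf{e}_{n+1}$, estimating $\langle\mathbf{e}_{n+1},\mathbf{b}(\mathbf{w}_n)-\mathbf{b}(\mathbf{U}^{N,*}(s))\rangle_{\ell^2_0(\T_N)}\leq\mathsf L_{(M)}\|\mathbf{e}_{n+1}\|_{\ell^2_0(\T_N)}\|\mathbf{w}_n-\mathbf{U}^{N,*}(s)\|_{\ell^2_0(\T_N)}$, and splitting $\mathbf{w}_n-\mathbf{U}^{N,*}(s)=\Delta t_j\mathbf{b}(\mathbf{w}_n)+\mathbf{e}_n+(\mathbf{U}^{N,*}(t_n)-\mathbf{U}^{N,*}(s))$, I would divide by $\|\mathbf{e}_{n+1}\|_{\ell^2_0(\T_N)}$ to reach, on $\{t\leq\rho^j_{(M)}\}$,
\[ \|\mathbf{e}_{n+1}\|_{\ell^2_0(\T_N)}\leq(1+\mathsf L_{(M)}\Delta t_j)\|\mathbf{e}_n\|_{\ell^2_0(\T_N)}+\mathsf L_{(M)}\mathsf C_{(M)}\Delta t_j^2+\mathsf L_{(M)}\int_{t_n}^{t_{n+1}}\|\mathbf{U}^{N,*}(t_n)-\mathbf{U}^{N,*}(s)\|_{\ell^2_0(\T_N)}\dd s. \]
Iterating this discrete Gronwall inequality and bounding $(1+\mathsf L_{(M)}\Delta t_j)^n\leq\e^{\mathsf L_{(M)}t_n}\leq\e^{\mathsf L_{(M)}t}$ isolates the factor $\e^{\mathsf L_{(M)}t}$ in front of $\|\mathbf{e}_0\|_{\ell^2_0(\T_N)}$ and of the accumulated consistency error; squaring produces the $\e^{2\mathsf L_{(M)}t}$ of the statement. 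Passing from $t_n$ to a general $t\in[t_n,t_{n+1})$ is then done via $\overline{\mathbf{U}}^{N,\Delta t_j}(t)=\mathbf{U}^{N,\Delta t_j}_n$ and $\|\overline{\mathbf{U}}^{N,\Delta t_j}(t)-\mathbf{U}^{N,*}(t)\|_{\ell^2_0(\T_N)}\leq\|\mathbf{e}_n\|_{\ell^2_0(\T_N)}+\|\mathbf{U}^{N,*}(t_n)-\mathbf{U}^{N,*}(t)\|_{\ell^2_0(\T_N)}$.

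Everything then reduces to estimating the one-step modulus of continuity $\|\mathbf{U}^{N,*}(t_n)-\mathbf{U}^{N,*}(s)\|_{\ell^2_0(\T_N)}$ for $s-t_n\leq\Delta t_j$, which by~\eqref{FVS} is controlled, on $\{t\leq\rho^j_{(M)}\}$, by a drift part of size $\mathsf C_{(M)}\Delta t_j$ and the Brownian increment $\mathbf{W}^{Q,N}(s)-\mathbf{W}^{Q,N}(t_n)$ whose covariance is bounded through~\eqref{eq:sigma-glob}. To keep the localisation argument pathwise, and thereby avoid handling the indicator $\ind{t\leq\rho^j_{(M)}}$ term by term under the expectation, I would bound this increment by the Hölder modulus of continuity of the paths of the finite-dimensional Brownian motion $\mathbf{W}^{Q,N}$: for any exponent $\gamma<1/2$ these paths are almost surely $\gamma$-Hölder, so choosing $\gamma=(1-\delta)/2$ gives an increment of order $\Delta t_j^{(1-\delta)/2}$ with a random constant whose moments depend on $\delta$ and blow up as $\delta\to0$. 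Squaring, summing the $O(t/\Delta t_j)$ contributions, and taking expectations accounts for the factor $\Delta t_j^{1-\delta}$ and for the dependence of $\mathfrak C^\Delta$ on $\delta$ and $\Delta t_\mathrm{max}$. The main obstacle I anticipate lies precisely in this interplay between localisation and rate: one must verify that the half-steps and the continuous trajectory genuinely remain in the ball of radius $M$ on $\{t\leq\rho^j_{(M)}\}$ (for which the contractivity~\eqref{FB0} of the backward step is essential) and that the pathwise recursion can be combined with the indicator before integrating, while extracting a clean polynomial rate from the Brownian modulus of continuity rather than from the naive fixed-time $L^2$ increment bound.
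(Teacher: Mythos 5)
Your proposal is correct and follows the same overall strategy as the paper: telescoping the split-step recursion, using the non-expansiveness~\eqref{FB0} of the implicit half-step to keep $\mathbf U^{N,\Delta t_j}_{l+\frac12}$ in the ball of radius $M$, localising by $\rho^j_{(M)}$, and running a pathwise Gr\"onwall argument before taking expectations. Two technical choices diverge from the paper and are worth comparing. First, your error recursion is written at the grid points, so the Wiener increments cancel exactly and the noise re-enters only through the one-step modulus of continuity $\|\mathbf U^{N,*}(t_n)-\mathbf U^{N,*}(s)\|_{\ell^2_0(\T_N)}$ inside the Lipschitz term; the paper instead compares $\overline{\mathbf U}^{N,\Delta t_j}(s)$ with $\mathbf U^{N,*}(s)$ at all times $s$ and isolates a single remainder $\mathbf R^{N,\Delta t_j}(t)$ containing the Brownian increment over the last partial interval, which it then bounds by $\max_l\sup_s\|\mathbf W^{Q,N}(s)-\mathbf W^{Q,N}(l\Delta t_j)\|_{\ell^2_0(\T_N)}$ so that the Gr\"onwall input is monotone in $t$. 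Second, to extract the rate $\Delta t_j^{1-\delta}$ you invoke the almost sure $\frac{1-\delta}{2}$-H\"older modulus of the paths of $\mathbf W^{Q,N}$ (a random constant with finite second moment by Kolmogorov's criterion for this finite-dimensional Gaussian process, the moment depending on $\delta$ and $t$), whereas the paper bounds $\E[\max_l G_l]$, with $G_l=\sup_s\|\mathbf W^{Q,N}(s)-\mathbf W^{Q,N}(l\Delta t_j)\|^2_{\ell^2_0(\T_N)}$, by $\bigl(\sum_l\E[G_l^p]\bigr)^{1/p}$ with $p=1/\delta$. Both routes yield the stated estimate, and your localisation bookkeeping (half-steps in the ball via~\eqref{FB0}, exact trajectory in the ball up to $\rho^j_{(M)}$) is exactly what the paper needs as well. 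A small bonus of your decomposition that you did not exploit: since the Brownian oscillations enter through the sum $\sum_n\int_{t_n}^{t_{n+1}}\|\mathbf W^{Q,N}(s)-\mathbf W^{Q,N}(t_n)\|_{\ell^2_0(\T_N)}\dd s$ rather than through a maximum over subintervals, a plain Cauchy--Schwarz in time combined with $\E[\|\mathbf W^{Q,N}(s)-\mathbf W^{Q,N}(t_n)\|^2_{\ell^2_0(\T_N)}]\le\mathsf D\,(s-t_n)$ would give the clean order $\Delta t_j$ for the squared error, with no $\delta$-loss at all.
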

\begin{proof}
  Let $t \geq 0$ and $j \geq 1$. We introduce the notation $n^j_t = \lfloor \frac{t}{\Delta t_j}\rfloor$ and first use~\eqref{SSBE} to write
  \begin{equation*}
    \overline{\mathbf{U}}^{N, \Delta t_j}(t) = \mathbf{U}^{N, \Delta t_j}_{n^j_t} = \mathbf{U}^{N, \Delta t_j}_0 + \sum_{l=0}^{n_t^j-1} \left(\mathbf{U}^{N, \Delta t_j}_{l+1}-\mathbf{U}^{N, \Delta t_j}_l\right) = \mathbf{U}^{N, \Delta t_j}_0 + \Delta t_j \sum_{l=0}^{n_t^j-1} \mathbf{b}\left(\mathbf{U}^{N, \Delta t_j}_{l+\frac12}\right) + \mathbf{W}^{Q,N}(n_t^j \Delta t_j),
  \end{equation*}
  so that, by~\eqref{FVS},
  \begin{equation*}
    \overline{\mathbf{U}}^{N, \Delta t_j}(t) - \mathbf{U}^{N,*}(t) = \mathbf{U}^{N, \Delta t_j}_0 - \mathbf{U}^{N,*}_0 + \sum_{l=0}^{n_t^j-1} \int_{l\Delta t_j}^{(l+1)\Delta t_j} \left(\mathbf{b}\left(\mathbf{U}^{N, \Delta t_j}_{l+\frac12}\right) - \mathbf{b}\left(\mathbf{U}^{N,*}(s)\right)\right)\dd s - \mathbf{R}^{N,\Delta t_j}(t),
  \end{equation*}
  with 
  \begin{equation*}
    \mathbf{R}^{N,\Delta t_j}(t) = \int_{n^j_t \Delta t_j}^t \mathbf{b}\left(\mathbf{U}^{N,*}(s)\right)\dd s + \mathbf{W}^{Q,N}(t)-\mathbf{W}^{Q,N}(n^j_t\Delta t_j).
  \end{equation*}
  
  We now assume that $t \leq \rho^j_{(M)}$. Since, by~\eqref{FB0}, we have $\|\mathbf{U}^{N, \Delta t_j}_{l+\frac12}\|_{\ell^2_0(\T_N)} \leq \|\mathbf{U}^{N, \Delta t_j}_l\|_{\ell^2_0(\T_N)}$, then for any $l \leq n^j_t - 1$ and $s \in [l\Delta t_j, (l+1)\Delta t_j]$,
  \begin{align*}
    \left\|\mathbf{b}\left(\mathbf{U}^{N, \Delta t_j}_{l+\frac12}\right) - \mathbf{b}\left(\mathbf{U}^{N,*}(s)\right)\right\|_{\ell^2_0(\T_N)} &\leq \mathsf{L}_{(M)} \left\|\mathbf{U}^{N, \Delta t_j}_{l+\frac12} - \mathbf{U}^{N,*}(s)\right\|_{\ell^2_0(\T_N)}\\
    &\leq \mathsf{L}_{(M)} \left(\left\|\mathbf{U}^{N, \Delta t_j}_{l+\frac12} - \mathbf{U}^{N, \Delta t_j}_l\right\|_{\ell^2_0(\T_N)} + \left\|\overline{\mathbf{U}}^{N, \Delta t_j}(s) - \mathbf{U}^{N,*}(s)\right\|_{\ell^2_0(\T_N)}\right)\\
    &\leq \mathsf{L}_{(M)} \left(\Delta t_j \mathsf{C}_{(M)} + \left\|\overline{\mathbf{U}}^{N, \Delta t_j}(s) - \mathbf{U}^{N,*}(s)\right\|_{\ell^2_0(\T_N)}\right).
  \end{align*}
  Likewise, rewriting
  \begin{equation*}
    \mathbf{R}^{N,\Delta t_j}(t) = \int_{n^j_t \Delta t_j}^t \left(\mathbf{b}\left(\mathbf{U}^{N,*}(s)\right)-\mathbf{b}\left(\overline{\mathbf{U}}^{N, \Delta t_j}(s)\right)\right)\dd s + \left(t- n_t \Delta t_j\right)\mathbf{b}\left(\mathbf{U}^{N,\Delta t_j}_{n_t^j}\right) + \mathbf{W}^{Q,N}(t)-\mathbf{W}^{Q,N}(n^j_t\Delta t_j)
  \end{equation*}
  we get
  \begin{equation*}
    \left\|\mathbf{R}^{N,\Delta t_j}(t)\right\|_{\ell^2_0(\T_N)} \leq \mathsf{L}_{(M)}\int_{n^j_t \Delta t_j}^t \left\|\overline{\mathbf{U}}^{N, \Delta t_j}(s) - \mathbf{U}^{N,*}(s)\right\|_{\ell^2_0(\T_N)}\dd s + \Delta t_j \mathsf{C}_{(M)} + \left\|\mathbf{W}^{Q,N}(t)-\mathbf{W}^{Q,N}(n^j_t\Delta t_j)\right\|_{\ell^2_0(\T_N)}.
  \end{equation*} 
  
  We deduce that
  \begin{equation*}
    \left\|\overline{\mathbf{U}}^{N, \Delta t_j}(t) - \mathbf{U}^{N,*}(t)\right\|_{\ell^2_0(\T_N)} \leq \left\|\mathbf{U}^{N, \Delta t_j}_0 - \mathbf{U}^{N,*}_0\right\|_{\ell^2_0(\T_N)} + \mathsf{L}_{(M)}\int_0^t \left\|\overline{\mathbf{U}}^{N, \Delta t_j}(s) - \mathbf{U}^{N,*}(s)\right\|_{\ell^2_0(\T_N)} \dd s + \mathfrak{c}^{\Delta,j}(t),
  \end{equation*}
  with
  \begin{equation*}
    \mathfrak{c}^{\Delta,j}(t) = \Delta t_j \mathsf{C}_{(M)}\left(\mathsf{L}_{(M)} t + 1\right) + \max_{l=0, \dots, n_t^j} \sup_{s \in [l \Delta t_j, (l+1)\Delta t_j)} \left\|\mathbf{W}^{Q,N}(s)-\mathbf{W}^{Q,N}(l\Delta t_j)\right\|_{\ell^2_0(\T_N)}.
  \end{equation*}
  Therefore, by Gr\"onwall's lemma,
  \begin{equation*}
    \left\|\overline{\mathbf{U}}^{N, \Delta t_j}(t) - \mathbf{U}^{N,*}(t)\right\|_{\ell^2_0(\T_N)} \leq \e^{\mathsf{L}_{(M)}t}\left(\left\|\mathbf{U}^{N, \Delta t_j}_0 - \mathbf{U}^{N,*}_0\right\|_{\ell^2_0(\T_N)} + \mathfrak{c}^{\Delta,j}(t)\right),
  \end{equation*}
  and then by Jensen's inequality,
  \begin{equation*}
    \E\left[\left\|\overline{\mathbf{U}}^{N, \Delta t_j}(t) - \mathbf{U}^{N,*}(t)\right\|_{\ell^2_0(\T_N)}^2\ind{t \leq \rho^j_{(M)}}\right] \leq 2\e^{2\mathsf{L}_{(M)}t}\left(\E\left[\left\|\mathbf{U}^{N, \Delta t_j}_0 - \mathbf{U}^{N,*}_0\right\|^2_{\ell^2_0(\T_N)}\right] + \E\left[\mathfrak{c}^{\Delta,j}(t)^2\right]\right).
  \end{equation*}
  
  It remains to estimate 
  \begin{equation*}
    \E\left[\mathfrak{c}^{\Delta,j}(t)^2\right] \leq 2\left(\Delta t_j^2 \mathsf{C}_{(M)}^2\left(\mathsf{L}_{(M)} t + 1\right)^2 + \E\left[\max_{l=0, \dots, n_t^j} \sup_{s \in [l \Delta t_j, (l+1)\Delta t_j)} \left\|\mathbf{W}^{Q,N}(s)-\mathbf{W}^{Q,N}(l\Delta t_j)\right\|^2_{\ell^2_0(\T_N)}\right]\right).
  \end{equation*} 
  By the Markov property and scaling invariance for the Wiener process $\mathbf{W}^{Q,N}$, the random variables
  \begin{equation*}
    G_l := \sup_{s \in [l \Delta t_j, (l+1)\Delta t_j)} \left\|\mathbf{W}^{Q,N}(s)-\mathbf{W}^{Q,N}(l\Delta t_j)\right\|^2_{\ell^2_0(\T_N)}, \qquad l \geq 0, 
  \end{equation*}
  are independent and identically distributed, with, for any $p \in [1,+\infty)$,
  \begin{equation*}
    \E[G_l^p] = \Delta t_j^p \mathsf{g}_p,
  \end{equation*}
  for some constant $\mathsf{g}_p$ which depends on $p$ and the vectors $\mathbf{g}^k$ but not on $\Delta t_j$. As a consequence, we deduce from Jensen's inequality that
  \begin{equation*}
    \E\left[\max_{l=0, \dots, n_t^j} G_l\right] \leq \E\left[\max_{l=0, \dots, n_t^j} G_l^p\right]^{1/p} \leq \E\left[\sum_{l=0}^{n_t^j} G_l^p\right]^{1/p} = (n_t^j+1)^{1/p} \Delta t_j \mathsf{g}_p^{1/p} \leq (t+\Delta t_\mathrm{max}) \Delta t_j^{1-1/p} \mathsf{g}_p^{1/p},
  \end{equation*}
  which yields the claimed estimate by taking $p = 1/\delta$.
\end{proof}

\begin{lemma}[Uniform control over $\rho^j_{(M)}$]\label{lem:rhoM}
  Under the assumptions of Proposition~\ref{CFTSS}, for any $t \geq 0$,
  \begin{equation*}
    \lim_{M \to +\infty} \limsup_{j \to +\infty} \E\left[\|\overline{\mathbf{U}}^{N, \Delta t_j}(t)-\mathbf{U}^{N,*}(t)\|^2_{\ell^2_0(\T_N)}\ind{t > \rho^j_{(M)}}\right] = 0.
  \end{equation*}
\end{lemma}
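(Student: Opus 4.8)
The plan is to follow closely the strategy of Lemma~\ref{lem:tauM}, adapting the $\ell^\infty_0$-based stopping time there to the $\ell^2_0$-based time $\rho^j_{(M)}$, and crucially exploiting that here $N$ is fixed, so that all constants are allowed to depend on $N$. First I would use the Cauchy--Schwarz inequality together with the triangle and Jensen inequalities (as in Lemma~\ref{lem:tauM}, with the factor $8$) to separate a fourth moment from a probability:
\[
\E\left[\|\overline{\mathbf{U}}^{N, \Delta t_j}(t)-\mathbf{U}^{N,*}(t)\|^2_{\ell^2_0(\T_N)}\ind{t > \rho^j_{(M)}}\right] \leq \sqrt{8\left(\E\left[\|\overline{\mathbf{U}}^{N, \Delta t_j}(t)\|^4_{\ell^2_0(\T_N)}\right] + \E\left[\|\mathbf{U}^{N,*}(t)\|^4_{\ell^2_0(\T_N)}\right]\right)}\;\sqrt{\P\left(t > \rho^j_{(M)}\right)}.
\]
The goal is then to bound the fourth-moment factor uniformly in $j$ and to show that the probability factor tends to $0$ as $M\to\infty$, uniformly in $j$.

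For the fourth moment, since $\overline{\mathbf{U}}^{N,\Delta t_j}(t)\sim\vartheta_{N,\Delta t_j}$ I would use $\|\cdot\|_{\ell^2_0(\T_N)}\le\|\cdot\|_{\ell^4_0(\T_N)}$ from~\eqref{eq:normorder} and the $\ell^4_0(\T_N)$ bound of Proposition~\ref{prop:unifestimSS} to get $\E[\|\overline{\mathbf{U}}^{N,\Delta t_j}(t)\|^4_{\ell^2_0(\T_N)}]\le\mathsf{C}^{\Delta,0,4}$. The process $(\mathbf{U}^{N,*}(t))$, however, solves~\eqref{FVS} from $\mathbf{U}^{N,*}_0\sim\vartheta_N^*$ and is \emph{not} stationary; here I would invoke the finite-time moment estimate~\eqref{eq:supinduction} of Lemma~\ref{lem:recursivebound} with $p=4$, combined with the bound $\E[\|\mathbf{U}^{N,*}_0\|^4_{\ell^4_0(\T_N)}]\le\mathsf{C}^{\Delta,0,4}$ from Corollary~\ref{cor:relcomSS}, to obtain $\sup_{s\in[0,t]}\E[\|\mathbf{U}^{N,*}(s)\|^4_{\ell^2_0(\T_N)}]\le\mathsf{d}_0^{(4)}+\mathsf{d}_1^{(4)}\mathsf{C}^{\Delta,0,4}+\mathsf{d}_2^{(4)}t$. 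This depends only on $N$, $t$ and the fixed law $\vartheta_N^*$, hence not on $j$.

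For the probability I would apply the union bound,
\[
\P\left(t > \rho^j_{(M)}\right) \le \P\left(\max_{0 \le n \le \lfloor t/\Delta t_j\rfloor}\|\mathbf{U}^{N,\Delta t_j}_n\|_{\ell^2_0(\T_N)} \ge M\right) + \P\left(\sup_{s\in[0,t]}\|\mathbf{U}^{N,*}(s)\|_{\ell^2_0(\T_N)} \ge M\right),
\]
and treat the two terms separately. For the first, I would iterate~\eqref{eq:discretederivative}, discarding the nonpositive dissipation term, to write $\|\mathbf{U}^{N,\Delta t_j}_n\|^2_{\ell^2_0(\T_N)}$ as $\|\mathbf{U}^{N,\Delta t_j}_0\|^2_{\ell^2_0(\T_N)}$ plus the discrete martingale $M_n=\sum_{l=0}^{n-1}\langle\mathbf{U}^{N,\Delta t_j}_{l+\frac12},\Delta\mathbf{W}^{Q,N}_{l+1}\rangle_{\ell^2_0(\T_N)}$ plus $\sum_{l=0}^{n-1}\|\Delta\mathbf{W}^{Q,N}_{l+1}\|^2_{\ell^2_0(\T_N)}$. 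Then Markov's inequality, Doob's $L^2$ maximal inequality for $M_n$, the bracket bound $\E[\langle\mathbf{U}^{N,\Delta t_j}_{l+\frac12},\Delta\mathbf{W}^{Q,N}_{l+1}\rangle^2_{\ell^2_0(\T_N)}]\le\Delta t_j\mathsf{D}\,\E[\|\mathbf{U}^{N,\Delta t_j}_{l+\frac12}\|^2_{\ell^2_0(\T_N)}]$ (from~\eqref{eq:covWQN} and~\eqref{eq:sigma-glob}), the contraction $\|\mathbf{U}^{N,\Delta t_j}_{l+\frac12}\|_{\ell^2_0(\T_N)}\le\|\mathbf{U}^{N,\Delta t_j}_l\|_{\ell^2_0(\T_N)}$ from~\eqref{FB0}, stationarity of the split-step chain and~\eqref{NB} yield a bound of the form $C_{N,t}/M^2$, \emph{independent of $j$} because the horizon satisfies $\lfloor t/\Delta t_j\rfloor\,\Delta t_j\le t$, so the summed noise variance stays $\le t\mathsf{D}$. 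For the second term, the law of $(\mathbf{U}^{N,*}(s))_{s\in[0,t]}$ does not depend on $j$ and the paths are continuous, so $\sup_{s\in[0,t]}\|\mathbf{U}^{N,*}(s)\|_{\ell^2_0(\T_N)}$ is almost surely finite and the probability tends to $0$ as $M\to\infty$, uniformly in $j$.

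Combining these, $\limsup_{j\to\infty}\P(t>\rho^j_{(M)})\le C_{N,t}/M^2+\varepsilon_M$ with $\varepsilon_M\to0$, while the fourth-moment factor is bounded uniformly in $j$; taking first $\limsup_{j\to\infty}$ and then $M\to\infty$ gives the claim. The main obstacle I anticipate is keeping every estimate uniform in the time step $\Delta t_j$ while $\mathbf{U}^{N,*}$ is not stationary: this is resolved precisely by the telescoping cancellation that controls the martingale bracket and the accumulated noise by the fixed horizon $t$, and by replacing stationarity of $\mathbf{U}^{N,*}$ with the finite-time moment bound~\eqref{eq:supinduction} applied to its fixed initial law $\vartheta_N^*$.
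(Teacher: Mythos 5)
Your proposal is correct and follows essentially the same route as the paper: the Cauchy--Schwarz splitting into a fourth moment (bounded uniformly in $j$ via Proposition~\ref{prop:unifestimSS} for the stationary chain and via~\eqref{eq:supinduction} plus Corollary~\ref{cor:relcomSS} for $\mathbf{U}^{N,*}$) times the square root of $\P(t>\rho^j_{(M)})$, which is handled by a union bound, a Markov/Doob estimate for the split-step part and path continuity for $\mathbf{U}^{N,*}$. The only cosmetic difference is that you re-derive inline the uniform-in-$j$ bound on $\E[\sup_{s\in[0,t]}\|\overline{\mathbf{U}}^{N,\Delta t_j}(s)\|^2_{\ell^2_0(\T_N)}]$, which the paper isolates as Lemma~\ref{FTbound} and proves in the appendix by exactly the telescoping-plus-Doob argument you describe.
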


The proof of Lemma~\ref{lem:rhoM} follows the same outline as Lemma~\ref{lem:tauM}. Namely, it combines $\ell^4_0(\T_N)$ bounds over $\overline{\mathbf{U}}^{N, \Delta t_j}(t)$ and $\mathbf{U}^{N,*}(t)$ uniformly in $j$, and the fact that by virtue of the Markov property, the probability of the event $\{ t > \rho^j_{(M)} \}$ goes to $0$ as $M\to+\infty$. The $\ell^4_0(\T_N)$ bounds follow from Proposition~\ref{prop:unifestimSS} and~\eqref{eq:supinduction}, in which $\E[\|\mathbf{U}^{N,*}_0\|^4_{\ell^4_0(\T_N)}]$ is bounded from above by $\mathsf{C}^{\Delta,0,4}$ thanks to Corollary~\ref{cor:relcomSS}. The control of the term $\P ( t > \rho^j_{(M)})$ relies on the uniform (in $j$) bound on $\E[\sup_{s \in [0,t]} \|\overline{\mathbf{U}}^{N,\Delta t_j}(t)\|_{\ell^2_0(\T_N)}^2]$ stated in the next lemma, the proof of which is postponed to Appendix~\ref{app:estim}.

\begin{lemma}[Finite-time uniform $\ell^2_0(\T_N)$ bound on $\overline{\mathbf{U}}^{N, \Delta t_j}$]\label{FTbound}
  Under the assumptions of Proposition~\ref{CFTSS}, for all $t \geq 0$ there exists a constant $\mathsf{S}^{\Delta, 0,2}_t$ such that 
  \begin{equation*}
    \sup_{j\geq1} \E \left[ \sup_{s \in [0,t]} \left\|\overline{\mathbf{U}}^{N, \Delta t_j}(t) \right\|_{\ell^2_0(\T_N)}^2 \right] \leq \mathsf{S}^{\Delta, 0,2}_t.
  \end{equation*}
\end{lemma}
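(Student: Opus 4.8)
The plan is to reuse the one-step energy balance already derived in the proof of Proposition~\ref{existence} and to upgrade it to a maximal estimate by means of Doob's inequality. Starting from~\eqref{eq:discretederivative} (the combination of the implicit-step contraction~\eqref{FB0} and the noise-step expansion~\eqref{SEN}) and summing telescopically from $l=0$ to $n-1$, I obtain, for the chain $(\mathbf U^{N,\Delta t_j}_n)_{n\in\N}$ started from $\mathbf U^{N,\Delta t_j}_0\sim\vartheta_{N,\Delta t_j}$,
\[
\left\|\mathbf U^{N,\Delta t_j}_n\right\|_{\ell^2_0(\T_N)}^2 \leq \left\|\mathbf U^{N,\Delta t_j}_0\right\|_{\ell^2_0(\T_N)}^2 + M_n^j + S_n^j,
\]
where the nonpositive dissipation term has been dropped, $M_n^j := 2\sum_{l=0}^{n-1}\langle\mathbf U^{N,\Delta t_j}_{l+\frac12},\Delta\mathbf W^{Q,N}_{l+1}\rangle_{\ell^2_0(\T_N)}$ and $S_n^j := \sum_{l=0}^{n-1}\|\Delta\mathbf W^{Q,N}_{l+1}\|_{\ell^2_0(\T_N)}^2$. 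Since $S^j_n$ is nondecreasing in $n$ and $\overline{\mathbf U}^{N,\Delta t_j}(s)=\mathbf U^{N,\Delta t_j}_{\lfloor s/\Delta t_j\rfloor}$, setting $n^j_t=\lfloor t/\Delta t_j\rfloor$ I bound $\sup_{s\in[0,t]}\|\overline{\mathbf U}^{N,\Delta t_j}(s)\|_{\ell^2_0(\T_N)}^2$ by $\|\mathbf U^{N,\Delta t_j}_0\|^2_{\ell^2_0(\T_N)}+\max_{0\leq n\leq n^j_t}M_n^j+S^j_{n^j_t}$, and then take expectations.

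Two of the three resulting terms are immediate and uniform in $j$. The initial term is controlled by $\E[\|\mathbf U^{N,\Delta t_j}_0\|^2_{\ell^2_0(\T_N)}]\leq\sqrt{\mathsf C^{\Delta,0,4}}$, using $\mathbf U^{N,\Delta t_j}_0\sim\vartheta_{N,\Delta t_j}$, the Jensen inequality~\eqref{eq:normorder} and Proposition~\ref{prop:unifestimSS}. The accumulated noise energy satisfies $\E[S^j_{n^j_t}]=n^j_t\Delta t_j\sum_{k\geq1}\|\mathbf g^k\|_{\ell^2_0(\T_N)}^2\leq t\mathsf D$ by~\eqref{NB} and $n^j_t\Delta t_j\leq t$, so it too is bounded independently of $j$.

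The heart of the argument is the martingale term. With respect to the filtration generated by $\mathbf U^{N,\Delta t_j}_0$ and the increments $(\Delta\mathbf W^{Q,N}_l)_{l\geq1}$, the sequence $(M_n^j)_n$ is a martingale, since $\mathbf U^{N,\Delta t_j}_{l+\frac12}$ is measurable with respect to the past while $\Delta\mathbf W^{Q,N}_{l+1}$ is centered and independent of it. Applying Doob's $L^2$ maximal inequality I reduce $\E[\max_{n\leq n^j_t}M^j_n]$ to $2\sqrt{\E[(M^j_{n^j_t})^2]}$, and the orthogonality of the martingale increments gives $\E[(M^j_{n^j_t})^2]=4\sum_{l=0}^{n^j_t-1}\E[\langle\mathbf U^{N,\Delta t_j}_{l+\frac12},\Delta\mathbf W^{Q,N}_{l+1}\rangle^2_{\ell^2_0(\T_N)}]$. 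Each summand is then estimated through the covariance formula~\eqref{eq:covWQN} and the Cauchy--Schwarz inequality together with~\eqref{eq:sigma-glob}, yielding $\E[\langle\mathbf U^{N,\Delta t_j}_{l+\frac12},\Delta\mathbf W^{Q,N}_{l+1}\rangle^2]\leq\Delta t_j\mathsf D\,\E[\|\mathbf U^{N,\Delta t_j}_{l+\frac12}\|^2_{\ell^2_0(\T_N)}]$.

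The main obstacle, and the point at which uniformity in $j$ could fail, is that each term of this quadratic variation carries a factor $\Delta t_j$ while the number of terms is $n^j_t\approx t/\Delta t_j$, so a crude bound would blow up unless $\E[\|\mathbf U^{N,\Delta t_j}_{l+\frac12}\|^2_{\ell^2_0(\T_N)}]$ is controlled uniformly in both $l$ and $j$. This is exactly where I use stationarity: because the chain starts from the invariant measure $\vartheta_{N,\Delta t_j}$, every $\mathbf U^{N,\Delta t_j}_l$ has law $\vartheta_{N,\Delta t_j}$, and combined with the contraction $\|\mathbf U^{N,\Delta t_j}_{l+\frac12}\|_{\ell^2_0(\T_N)}\leq\|\mathbf U^{N,\Delta t_j}_l\|_{\ell^2_0(\T_N)}$ read off from~\eqref{FB0} and with Proposition~\ref{prop:unifestimSS}, this gives $\E[\|\mathbf U^{N,\Delta t_j}_{l+\frac12}\|^2_{\ell^2_0(\T_N)}]\leq C$ uniformly. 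Hence $\E[(M^j_{n^j_t})^2]\leq 4\mathsf D\,C\,n^j_t\Delta t_j\leq 4t\mathsf D\,C$, and collecting the three contributions produces a constant $\mathsf S^{\Delta,0,2}_t$ depending on $t$, $\nu$, $\mathsf D$ and $\Delta t_\mathrm{max}$ but not on $j$, as required.
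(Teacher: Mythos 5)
Your proof is correct and follows essentially the same route as the paper's: telescoping the one-step energy identity \eqref{eq:discretederivative}, discarding the dissipation term, controlling the martingale part via Jensen and Doob's $L^2$ maximal inequality together with the independence of $\mathbf U^{N,\Delta t_j}_{l+\frac12}$ and $\Delta\mathbf W^{Q,N}_{l+1}$, and using stationarity plus Proposition~\ref{prop:unifestimSS} to make all bounds uniform in $j$. The only cosmetic difference is that you bound the initial term through the $\ell^4_0(\T_N)$ moment and Jensen, whereas the paper uses the discrete Poincar\'e inequality and the $h^1_0(\T_N)$ moment; both are valid.
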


In order to complete the proof of Proposition~\ref{CFTSS}, it only remains to check that 
\begin{equation*}
  \lim_{j \to +\infty} \E\left[\left\|\mathbf{U}^{N, \Delta t_j}_0 - \mathbf{U}^{N,*}_0\right\|^2_{\ell^2_0(\T_N)}\right] = 0,
\end{equation*}
which follows from the $\ell^4_0(\T_N)$ estimates from Proposition~\ref{prop:unifestim} and Corollary~\ref{cor:relcomSS} by uniform integrability. A more detailed proof of this result is contained in~\cite[Section~3.4]{Mar19}.

\subsection{Discussion of the rate of convergence}\label{ss:rateSS}

Unlike Lemma~\ref{lem:ordre1}, our proof of Lemma~\ref{lem:cvLipSS} is not designed to yield a strong error estimate valid in the long time limit, because it does not directly exploit the decomposition of $\mathbf{b}$ into dissipative and contractive parts. Let us however point out the fact that all constants involved in the uniform in $\Delta t$ estimates in this section also turn out to be uniform in $N$, which is not crucial for our purpose since we establish results for a fixed value of $N$, but might indicate that both limits $\Delta t \to 0$ and $N \to +\infty$, and associated rates of convergence, could be studied simultaneously.

As far as weak error estimates are concerned, one might expect in the Lipschitz case a \emph{weak} error of order $\Delta t$ between $\vartheta_{N, \Delta t}$ and $\vartheta_N$, as is the case for explicit Euler schemes~\cite{Tal90}. For gradient SDEs with a non globally Lipschitz continuous drift, the weak backward error analysis of split-step schemes also shows order $\Delta t$~\cite{Kop14}. For the SDE~\eqref{FVS}, we also observe order $\Delta t$ on the numerical simulations of Section~\ref{s:newnum}, even with a non small (with respect to $\nu$) and non Lipschitz continuous flux function $A$.

\section{Orders of convergence: analytic case and numerical study}\label{s:newnum}

\subsection{Analytic case}\label{ss:analytic}

In this subsection, we consider the case where the flux function $A$ vanishes, so that~\eqref{SSCL} is the stochastic heat equation
\begin{equation}\label{eq:SSCL-lin}
  \dd u(t) = \nu \partial_{xx} u(t) \dd t + \sum_{k \geq 1} g^k \dd W^k(t),
\end{equation}
and the SDE~\eqref{FVS} writes
\begin{equation}\label{eq:FVS-lin}
  \dd \mathbf{U}^N(t) = \nu \mathbf{D}^{(2)}_N \mathbf{U}^N(t) \dd t + \sum_{k \geq 1} \mathbf{g}^k \dd W^k(t).
\end{equation}

\subsubsection{Computation of $\mu$ and $\vartheta_N$} Since the drift of the SDE~\eqref{eq:FVS-lin} is linear, the process $(\mathbf{U}^N(t))_{t \geq 0}$ is Gaussian, and its stationary distribution $\vartheta_N$ is the centered Gaussian measure on $\R^N_0$ with covariance matrix $\mathbf{K}_N$ solution to the Lyapunov equation
\begin{equation*}
  \nu\mathbf{K}_N\mathbf{D}^{(2)}_N + (\nu\mathbf{K}_N\mathbf{D}^{(2)}_N)^\top + \mathbf{Q}_N = 0, \qquad \mathbf{Q}_N := \sum_{k \geq 1} \mathbf{g}^k {\mathbf{g}^k}^\top.
\end{equation*}
The solution to this equation admits the explicit expression
\begin{equation*}
  \mathbf{K}_N = \int_0^{+\infty} \e^{t \nu\mathbf{D}^{(2)}_N} \mathbf{Q}_N \e^{t \nu\mathbf{D}^{(2)}_N} \dd t = \sum_{k \geq 1} \int_0^{+\infty} \bm\rho_N^k(t)\bm\rho_N^k(t)^\top \dd t,
\end{equation*}
where $\bm\rho_N^k(t) = \e^{t \nu\mathbf{D}^{(2)}_N} \mathbf{g}^k$ is the solution to the discrete heat equation
\begin{equation*}
  \frac{\dd}{\dd t} \bm\rho_N^k(t) = \nu\mathbf{D}^{(2)}_N\bm\rho_N^k(t), \qquad \bm\rho_N^k(0) = \mathbf{g}^k.
\end{equation*}
Using the duality relation
\begin{equation*}
  \langle v, \Psi_N \mathbf{w}\rangle_{L^2_0(\T_N)} = \langle \Pi_N v, \mathbf{w}\rangle_{\ell^2_0(\T_N)}, \qquad v \in L^2_0(\T), \quad \mathbf{w} \in \R^N_0,
\end{equation*}
we deduce that the pushforward measure $\mu_N$ is the centered Gaussian measure on $L^2_0(\T)$ with covariance operator $K_N : L^2_0(\T) \to L^2_0(\T)$ defined by, for any $v,w \in L^2_0(\T)$,
\begin{align*}
  \langle v, K_N w\rangle_{L^2_0(\T)} &= \sum_{k \geq 1} \int_0^{+\infty} \langle \bm\rho_N^k(t), \Pi_N v\rangle_{\ell^2_0(\T_N)} \langle \bm\rho_N^k(t), \Pi_N w\rangle_{\ell^2_0(\T_N)} \dd t\\
  &= \sum_{k \geq 1} \int_0^{+\infty} \langle \Psi_N\bm\rho_N^k(t), v\rangle_{L^2_0(\T)} \langle \Psi_N\bm\rho_N^k(t), w\rangle_{L^2_0(\T)} \dd t.
\end{align*}

A similar computation at the infinite-dimensional level of~\eqref{eq:SSCL-lin} shows that $\mu$ is the centered Gaussian measure on $L^2_0(\T)$ with covariance operator $K : L^2_0(\T) \to L^2_0(\T)$ defined by, for any $v,w \in L^2_0(\T)$,
\begin{equation*}
  \langle v, K w\rangle_{L^2_0(\T)} = \sum_{k \geq 1} \int_0^{+\infty} \langle r^k(t), v\rangle_{L^2_0(\T)} \langle r^k(t), w\rangle_{L^2_0(\T)} \dd t,
\end{equation*}
where $r^k(t)$ is the solution to the heat equation
\begin{equation*}
  \partial_t r^k(t,x) = \nu\partial_{xx} r^k(t,x), \qquad r^k(0,x) = g^k(x).
\end{equation*}

\subsubsection{Computation of $W_2(\mu_N,\mu)$} In order to compute explicitly $W_2(\mu_N,\mu)$, we now assume that $g^k = 0$ for $k \geq 2$, and take
\begin{equation*}
  g^1(x) = g(x) := \sqrt{2}\sin\left(2\pi m_0 x\right),
\end{equation*}
for some $m_0 \in \N^*$. The main advantage of this choice lies in the spectral identities
\begin{equation*}
  \partial_{xx} g = -\lambda g, \qquad \lambda = (2\pi m_0)^2,
\end{equation*}
and, with $\mathbf{g} = \Pi_N g$,
\begin{equation*}
  \mathbf{D}^{(2)}_N \mathbf{g} = -\lambda_N \mathbf{g}, \qquad \lambda_N = 2N^2\left(1-\cos\left(\frac{2\pi m_0}{N}\right)\right).
\end{equation*}
These identities allow to compute the time integrals appearing in the operators $K$ and $K_N$ and yield, for any $v,w \in L^2_0(\T)$,
\begin{equation*}
  \langle v, Kw\rangle_N = \frac{1}{2\nu\lambda}\langle g, v\rangle_{L^2_0(\T)}\langle g, w\rangle_{L^2_0(\T)}, \qquad \langle v, K_Nw\rangle_N = \frac{1}{2\nu\lambda_N}\langle \Psi_N\mathbf{g}, v\rangle_{L^2_0(\T)}\langle \Psi_N\mathbf{g}, w\rangle_{L^2_0(\T)}.
\end{equation*}
As a consequence, both operators $K$ and $K_N$ have rank $1$, and it follows from standard results on finite-dimensional Gaussian vectors~\cite{DL82} that the optimal coupling between $\mu$ and $\mu_N$ in Definition~\ref{WD} is given by the law of the pair of random variables $(u,u^N)$ defined by
\begin{equation*}
  u = \frac{Z}{\sqrt{2\nu\lambda}}g, \qquad u^N = \varepsilon_N\frac{Z}{\sqrt{2\nu\lambda_N}}\Psi_N\mathbf{g}, \qquad \varepsilon_N := \sgn\left(\langle g, \Psi_N \mathbf{g}\rangle_{L^2_0(\T)}\right),
\end{equation*}
where $Z$ is a standard, one-dimensional Gaussian variable. The Wasserstein distance $W_2(\mu_N,\mu)$ then writes
\begin{equation*}
  W_2(\mu_N,\mu) = \left\|\frac{g}{\sqrt{2\nu\lambda}} - \frac{\varepsilon_N\Psi_N\mathbf{g}}{\sqrt{2\nu\lambda_N}}\right\|_{L^2_0(\T)}.
\end{equation*}
For $N$ large enough, $\varepsilon_N = 1$ and $\lambda_N = \lambda + \mathrm{O}(1/N^2)$, so that
\begin{equation*}
  N W_2(\mu_N,\mu) \sim \frac{N}{\sqrt{2\nu\lambda}}\left\|g-\Psi_N\mathbf{g}\right\|_{L^2_0(\T)} \to \frac{1}{\sqrt{24\nu\lambda}}\|g\|_{H^1_0(\T)},
\end{equation*}
which confirms that the rate $1/N$ derived in Subsection~\ref{ss:rate} is sharp in this case. 

\subsubsection{Time discretisation} The split-step scheme associated with~\eqref{eq:FVS-lin} rewrites
\begin{equation*}
  \mathbf{U}^{N, \Delta t}_{n+1} = \left(\mathbf{I}-\nu\Delta t \mathbf{D}^{(2)}_N\right)^{-1}\mathbf{U}^{N, \Delta t}_n + \Delta \mathbf{W}^{Q,N}_{n+1},
\end{equation*}
which shows that its invariant measure is the centered Gaussian measure on $\R^N_0$ with covariance matrix $\mathbf{K}_{N, \Delta t}$ given by
\begin{equation*}
  \mathbf{K}_{N, \Delta t} = \Delta t \sum_{n=0}^{+\infty} \left(\left(\mathbf{I}-\nu\Delta t \mathbf{D}^{(2)}_N\right)^{-1}\right)^n\mathbf{Q}_N\left(\left(\mathbf{I}-\nu\Delta t \mathbf{D}^{(2)}_N\right)^{-1}\right)^n.
\end{equation*}
With the one-dimensional noise introduced above, this expression reduces to
\begin{equation*}
  \mathbf{K}_{N, \Delta t} = \Delta t \frac{(1+\nu\Delta t \lambda_N)^2}{(1+\nu\Delta t \lambda_N)^2-1}\mathbf{g}\mathbf{g}^\top.
\end{equation*}
It follows that
\begin{equation*}
  W_2(\vartheta_{N,\Delta t},\vartheta_N) = \left|\sqrt{\frac{1}{2\nu\lambda_N}}-\sqrt{\Delta t \frac{(1+\nu\Delta t \lambda_N)^2}{(1+\nu\Delta t \lambda_N)^2-1}}\right| \|\mathbf{g}\|_{\ell^2_0(\T_N)} = \Delta t(1 + \mathrm{O}(\Delta t))\sqrt{\frac{\nu \lambda_N}{2}}\|\mathbf{g}\|_{\ell^2_0(\T_N)},
\end{equation*}
which shows the order $\Delta t$ for the Wasserstein distance between $\vartheta_{N,\Delta t}$ and $\vartheta_N$, uniformly in $N$ since we recall that $\lambda_N = \lambda + \mathrm{O}(1/N^2)$.

\subsection{Numerical experiments}\label{ss:exp}

In this subsection, we study numerically the \emph{weak error} between $\vartheta_{N,\Delta t}$ and $\vartheta_N$, as a function of $\Delta t$. More precisely, we take as a test function
\begin{equation*}
  \Phi(\mathbf{v}) = \exp\left(-\|\mathbf{v}\|^2_{\ell^2_0(\T_N)}\right), \quad \mathbf{v} \in \R^N_0,
\end{equation*}
and estimate
\begin{equation*}
  \mathtt{err}_N(\Delta t) := \left|\mathbb{E}\left[\Phi(\mathbf{V}^{N,\Delta t})\right]-\mathbb{E}\left[\Phi(\mathbf{V}^N)\right]\right|, \qquad \mathbf{V}^{N, \Delta t} \sim \vartheta_{N, \Delta t}, \quad \mathbf{V}^N \sim \vartheta_N.
\end{equation*}
Notice that, since $\Phi$ is globally Lipschitz continuous, the quantity $\mathtt{err}_N(\Delta t)$ is controlled by $W_2(\vartheta_{N, \Delta t}, \vartheta_N)$ and is therefore at most of order $\Delta t$ in the case $\alpha=0$.

We work with the Burgers equation, for which $A(v) = \alpha v^2/2$, with a strength parameter $\alpha \geq 0$. We keep the one-dimensional noise introduced in Subsection~\ref{ss:analytic} and take the values $m_0=1$ in order to minimise spatial oscillations. The typical behaviour of a solution strongly depends on the relative orders of magnitude of the non-linear term and the viscous term. Whether the equation is viscous-driven or flux-driven, the noise-induced spatial oscillations are either dissipated or non-linearly transported and transformed to oscillations with higher frequency. Therefore, the exact order of convergence derived in Subsection~\ref{ss:analytic} holds in a situation which is far from representative of what ought to be expected from the Burgers equation.

In the case $\alpha=0$, under the invariant measure $\mu$, the functions $\partial_{xx} u$ and $u \partial_x u$ have respective orders of magnitude $\nu^{-1/2}$ and $\nu^{-1}$ in the $L^2_0(\T)$ norm. Therefore, we shall study the four identified regimes of the solution associated with the different ranges where, \emph{ceteris paribus}, the value of $\alpha$ lies:
\begin{itemize}
  \item the linear regime $\alpha=0$;
  \item the viscous regime $|\alpha u \partial_x u| \ll |\nu \partial_{xx} u|$, that is to say $\alpha \ll \nu^{3/2}$;
  \item the equilibrated regime $|\alpha u \partial_x u| \simeq |\nu \partial_{xx} u|$, that is to say $\alpha \simeq \nu^{3/2}$;
  \item the inviscid regime $|\alpha u \partial_x u| \gg |\nu \partial_{xx} u|$, that is to say $\alpha \gg \nu^{3/2}$.
\end{itemize}
For numerical experiments we will take $\nu=0.1$ and the different regimes correspond to $\alpha=0$, $\alpha=0.01\nu^{3/2}$, $\alpha=\nu^{3/2}$ and $\alpha=100\nu^{3/2}$.

\subsubsection{Analytic results for $\alpha=0$} In the Gaussian case $\alpha=0$, the expectations involved in the definition of $\mathtt{err}_N(\Delta t)$ are analytic and write, with the notation of Subsection~\ref{ss:analytic},
\begin{equation*}
  \begin{array}{rll}
    \displaystyle\E[\Phi(\mathbf{V}^{N,\Delta t})] &\displaystyle= \sqrt{\frac{1}{1+2\kappa_{N,\Delta t}}}, &\displaystyle\kappa_{N, \Delta t} = \frac{\Delta t(1+\nu\Delta t \lambda_N)^2}{(1+\nu\Delta t \lambda_N)^2-1}\|\mathbf{g}\|^2_{\ell^2_0(\T_N)},\\[3mm]
    \displaystyle\E[\Phi(\mathbf{V}^N)] &\displaystyle= \sqrt{\frac{1}{1+2\kappa_N}}, &\displaystyle\kappa_N := \frac{\|\mathbf{g}\|^2_{\ell^2_0(\T_N)}}{2\nu\lambda_N},
  \end{array}
\end{equation*}
with $\|\mathbf{g}\|^2_{\ell^2_0(\T_N)} = (\sin(\frac{\pi m_0}{N})/\frac{\pi m_0}{N})^2$ as soon as $N>2m_0$. We deduce from these expressions that $\mathtt{err}_N(\Delta t)$ is of order $\Delta t$, uniformly in $N$.

\subsubsection{Ergodic approximation of $\E[\Phi(\mathbf{V}^{N,\Delta t})]$}\label{sss:ergo} For $N \geq 1$ and $\Delta t > 0$, let us denote
\begin{equation*}
  I^{N,\Delta t} := \E[\Phi(\mathbf{V}^{N,\Delta t})].
\end{equation*}
From Remark~\ref{rm:ergo} and the Central Limit Theorem for Markov chains, we may expect that there exist $\Sigma^{N,\Delta t}$ such that for $T$ large enough, the random variable 
\begin{equation*}
  \widehat{I}^{N,\Delta t}_T = \frac{1}{n}\sum_{l=0}^{n-1} \Phi(\mathbf{U}^{N,\Delta t}_l), \qquad T = n\Delta t,
\end{equation*}
has the Gaussian distribution
\begin{equation*}
  \widehat{I}^{N,\Delta t}_T \sim \mathcal{N}\left(I^{N,\Delta t}, \frac{(\Sigma^{N,\Delta t})^2}{T}\right).
\end{equation*}
The evolution of the empirical average $\widehat{I}^{N,\Delta t}_t$, $t \in [0,T]$, along a single trajectory of the split-step scheme is plotted on Figure~\ref{fig:empirical_average} for the four regimes of $\alpha$. The value of $I^{N,\Delta t}$ seems to be the same for all regimes $\alpha=0$, $\alpha \ll \nu^{3/2}$ and $\alpha \simeq \nu^{3/2}$, and to be significantly larger for $\alpha \gg \nu^{3/2}$.

\begin{center}
  \begin{figure}[ht]
    \includegraphics[width=.5\textwidth]{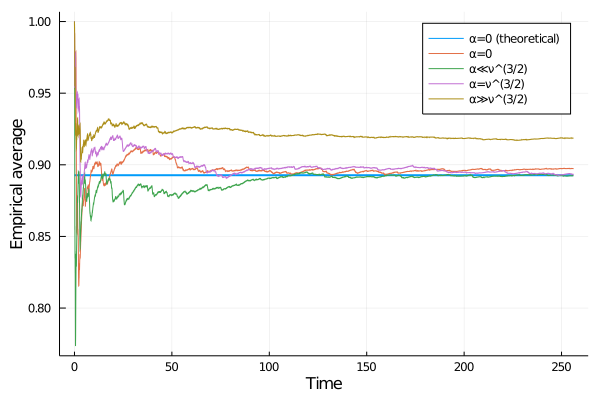}
    \caption{Evolution of $\widehat{I}^{N,\Delta t}_t$ for $t \in [0,T]$ and the four regimes of $\alpha$. In the case $\alpha=0$, the theoretical value of $I^{N,\Delta t}$ is superposed as a horizontal line. Here, $N=32$, $\Delta t=2^{-10}$ and $T=256$.}
    \label{fig:empirical_average}
  \end{figure}
\end{center}

In~§~\ref{sss:weakdt}, Monte Carlo confidence intervals for $I^{N,\Delta t}$ are obtained by fixing a time horizon $T \gg 1$ and estimating the parameters $I^{N,\Delta t}$ and $\Sigma^{N,\Delta t}$ from a sample of $M \gg 1$ independent realisations $\widehat{I}^{N,\Delta t,(1)}_T, \ldots, \widehat{I}^{N,\Delta t,(M)}_T$.

\subsubsection{Weak error between $\vartheta_{N,\Delta t}$ and $\vartheta_N$}\label{sss:weakdt} Let $N=32$, $\Delta t_\mathrm{min} = 2^{-8}$ and $\Delta t_\mathrm{max} = 2^{-1}$. Our purpose is to plot the evolution of $\mathtt{err}_N(\Delta t)$ for $\Delta t \in [\Delta t_\mathrm{min}, \Delta t_\mathrm{max}]$. To this aim, we fix $\overline{\Delta t} = 2^{-10} \ll \Delta t_\mathrm{min}$, approximate 
\begin{equation*}
  \mathtt{err}_N(\Delta t) \simeq \left|I^{N,\Delta t}-I^{N,\overline{\Delta t}}\right|,
\end{equation*}
and compute both terms in the right-hand side thanks to Monte Carlo simulations as is described in §~\ref{sss:ergo}. 

The resulting error curves are plotted on Figure~\ref{fig:err_dt} for the four regimes of $\alpha$. The weak error is observed to be of order $\Delta t$, uniformly in $\alpha$.

\begin{center}
  \begin{figure}[ht]
    \includegraphics[width=.5\textwidth]{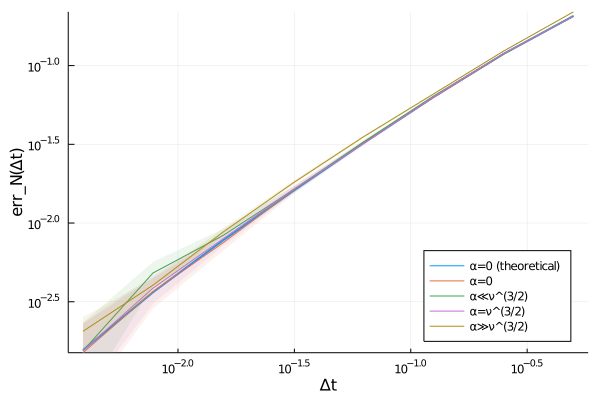}
    \caption{Evolution of $\mathtt{err}_N(\Delta t)$ for $\Delta t \in [\Delta t_\mathrm{min}, \Delta t_\mathrm{max}]$ and the four regimes of $\alpha$, with associated confidence intervals. In the case $\alpha=0$, the theoretical value of $\mathtt{err}_N(\Delta t)$ is superposed. Here, the final time horizon is $T=256$ and the number of copies for the Monte Carlo estimation is $M=200$.}
    \label{fig:err_dt}
  \end{figure}
\end{center}

\begin{remark}\label{rk:ci}
  The error curves of Figure~\ref{fig:err_dt} are plotted with the same final time horizon $T$, and the same number of Monte Carlo realisations $M$, for all values of $\Delta t$. As it turns out that the asymptotic variance $\Sigma^{N,\Delta t}$ is approximately uniform in $N$ and $\Delta t$, this results in the estimator of $\mathtt{err}_N(\Delta t)$ having the same variance for all $\Delta t$. This is the reason why, in log-log coordinates, confidence intervals appear to be larger for smaller values of $\Delta t$.
\end{remark}

\appendix

\section{Proofs of auxiliary inequalities}\label{app}

\begin{proof}[Proof of Lemma~\ref{L1}] Let $\mathbf v \in \R^N_0$ and $q \in 2\N^*$. By~\eqref{eq:sbp} we have
\begin{equation*}
  \langle \mathbf{v}^{q-1}, \mathbf{D}^{(1,-)}\overline{\mathbf{A}}^N(\mathbf{v})\rangle_{\ell^2(\T_N)} = -\langle \mathbf{D}^{(1,+)}\mathbf{v}^{q-1}, \overline{\mathbf{A}}^N(\mathbf{v})\rangle_{\ell^2(\T_N)} = -\sum_{i \in \T_N} (v_{i+1}^{q-1}-v_i^{q-1})\overline{A}(v_i,v_{i+1}).
\end{equation*}
For any $i \in \T_N$, using~\eqref{E1} and~\eqref{eq:consistency}, we get
\begin{align*}
  (v_{i+1}^{q-1}-v_i^{q-1})\overline{A}(v_i,v_{i+1}) &= \int_{v_i^{q-1}}^{v_{i+1}^{q-1}} \overline{A}(v_i,v_{i+1})\dd z\leq \int_{v_i^{q-1}}^{v_{i+1}^{q-1}} \overline{A}\left( z^{1/(q-1)} , z^{1/(q-1)} \right)\dd z=\mathcal{A}_q\left(v_{i+1}^{q-1}\right)-\mathcal{A}_q\left(v_i^{q-1}\right),
\end{align*}
where $\mathcal{A}_q$ denotes a function defined on $\R$ such that $\mathcal{A}_q' (z) = A( z^{1/(q-1)} )$. Since the sum over $i \in \T_N$ of all terms in the right-hand side vanish, the proof is completed.
\end{proof}

\begin{proof}[Proof of Lemma~\ref{comp}] 
For $\mathbf v\in\R^N_0$ and $p\in2\N^*$, we first write
\begin{align*}
  \left\langle\mathbf D^{(1,+)}_N(\mathbf v^{p-1}),\mathbf D^{(1,+)}_N\mathbf v\right\rangle_{\ell^2_0(\T_N)}&=N\sum_{i \in \T_N}\left(v^{p-1}_{i+1}-v^{p-1}_i\right)\left(v_{i+1}-v_i\right)\\
  &=N(p-1)\sum_{i \in \T_N}\left(v_{i+1}-v_i\right)\int_{v_i}^{v_{i+1}}|z|^{p-2}\dd z\\
  &=N(p-1)\sum_{i \in \T_N}\left(v_{i+1}-v_i\right)\int_{v_i}^{v_{i+1}}\left(|z|^{p/2-1}\right)^2\dd z.
 \end{align*}
 For each $i \in \T_N$, the summand in the right-hand side is a symmetric and nonnegative function of $v_i$ and $v_{i+1}$, therefore whatever the sign of $v_{i+1}-v_i$, Jensen's inequality yields
 \begin{equation*}
   \left(v_{i+1}-v_i\right)\int_{v_i}^{v_{i+1}}\left(|z|^{p/2-1}\right)^2\dd z \geq \left(\int_{v_i}^{v_{i+1}}|z|^{p/2-1}\dd z\right)^2.
 \end{equation*}
 As a consequence, we get
 \begin{align*}
  \left\langle\mathbf D^{(1,+)}_N(\mathbf v^{p-1}),\mathbf D^{(1,+)}_N\mathbf v\right\rangle_{\ell^2_0(\T_N)}&\geq N(p-1)\sum_{i \in \T_N}\left(\int_{v_i}^{v_{i+1}}|z|^{p/2-1}\dd z\right)^2\\
  &=\frac{4N(p-1)}{p^2}\sum_{i \in \T_N}\left(\int_{v_i}^{v_{i+1}}\frac\dd{\dd z}\left(\sgn(z)|z|^{p/2}\right)\dd z\right)^2\\
  &=\frac{4N(p-1)}{p^2}\sum_{i \in \T_N}\left(\sgn\left(v_{i+1}\right)\left|v_{i+1}\right|^{p/2}-\sgn\left(v_i\right)\left|v_i\right|^{p/2}\right)^2\\
  &=\frac{4(p-1)}{p^2} \|\mathbf{D}^{(1,+)}\mathbf{w}\|^2_{\ell^2_0(\T_N)},
 \end{align*}
 where $\mathbf{w}$ is the vector with coordinates $w_i = \sgn\left(v_i\right)\left|v_i\right|^{p/2}$, $i \in \T_N$. This vector does not necessarily belong to $\R^N_0$ so we cannot apply the Poincar\'e inequality~\eqref{eq:poinca-discr} direclty. Let us however notice that since $\mathbf{v} \in \R^N_0$, there exist two indices $i_-, i_+ \in \T_N$ such that $v_{i_-} \leq 0 \leq v_{i_+}$, so that $w_{i_-} \leq 0 \leq w_{i_+}$. As a consequence,
 \begin{align*}
   \left\|\mathbf w\right\|^2_{\ell^2_0(\T_N)}&=\frac1N\sum_{w_i\geq0}|w_i|^2+\frac1N\sum_{w_i<0}|w_i|^2 \\
  &\leq\frac1N\sum_{w_i\geq0}|w_i-w_{i_-}|^2+\frac1N\sum_{w_i<0}|w_i-w_{i_+}|^2\\
  &\leq\frac1N\sum_{w_i\geq0}N\sum_{j \in \T_N}|w_{j+1}-w_j|^2+\frac1N\sum_{w_i<0}N\sum_{j \in \T_N}|w_{j+1}-w_j|^2\\
  &=N\sum_{j\in\T_N}\left|w_{j+1}-w_j\right|^2\\
  &=\left\|\mathbf D^{(1,+)}_N\mathbf w\right\|_{\ell^2_0(\T_N)}^2.
 \end{align*}
 Injecting this estimate in the identity above, we deduce that
 \begin{equation*}
   \left\langle\mathbf D^{(1,+)}_N(\mathbf v^{p-1}),\mathbf D^{(1,+)}_N\mathbf v\right\rangle_{\ell^2_0(\T_N)} \geq \frac{4(p-1)}{p^2} \left\|\mathbf w\right\|^2_{\ell^2_0(\T_N)} = \frac{4(p-1)}{p^2} \left\|\mathbf v\right\|^p_{\ell^p_0(\T_N)},
 \end{equation*}
 which completes the proof.
\end{proof}

\section{Proofs of intermediary results for the uniqueness of invariant measures}\label{app:uniq}

\subsection{The semi-discrete scheme}

\begin{proof}[Proof of Lemma~\ref{entrance}]
We recall that $\mathbf b: \R^N_0 \to \R^N_0$ is locally Lipschitz continuous (for every norm over $\R^N_0$). Let $M>0$ and $\varepsilon>0$. Let us also fix the deterministic values $\mathbf u_0, \mathbf v_0 \in \R^N_0$ satisfying $\|\mathbf u_0\|_{\ell^2_0(\T_N)}\vee\|\mathbf v_0\|_{\ell^2_0(\T_N)}\leq M$, along with the following constants:
\[ t_{\varepsilon,M} :=-\frac1{2\nu}\log\frac{\varepsilon^2}{16M^2} ; \]
\[ \mathsf{L}_{M+\varepsilon} := \text{ Lipschitz constant of $\mathbf{b}$ over the ball } \left\{ \left\|\mathbf \cdot\right\|_{\ell^1_0(\T_N)}\leq M+\varepsilon\right\} ; \]
\[ \delta_\varepsilon := \frac \varepsilon 4 \e^{-\mathsf{L}_{M+\varepsilon}t_{\varepsilon,M}} . \]
Let $(\mathbf U^N(t))_{t\geq0}$ and $(\mathbf V^N(t))_{t\geq0}$ denote two solutions of~\eqref{FVS} with the initial conditions $\mathbf u_0$ and $\mathbf v_0$. We introduce the stopping times
\[ \TA^{\mathbf U^N} := \inf \left\{ t\geq0 : \|\mathbf U^N(t) \|_{\ell^1_0(\T_N)} \geq M+\varepsilon \right\}, \qquad \TA^{\mathbf V^N} := \inf \left\{ t\geq0 : \|\mathbf V^N(t) \|_{\ell^1_0(\T_N)} \geq M+\varepsilon \right\} . \]
Furthermore, we denote by $(\mathbf u^N(t))_{t\geq0}$ and $(\mathbf v^N(t))_{t\geq0}$ the noiseless counterparts of $(\mathbf U^N(t))_{t\geq0}$ and $(\mathbf V^N(t))_{t\geq0}$:
\[ \frac{\dd}{\dd t} \mathbf u^N(t) =\mathbf b\left(\mathbf u^N(t)\right) , \qquad \frac{\dd}{\dd t} \mathbf v^N(t) =\mathbf b\left(\mathbf v^N(t)\right) , \]
with respective initial conditions $\mathbf u_0$ and $\mathbf v_0$.

By Lemma~\ref{L1Cdrift}.(ii) and~\eqref{eq:poinca-discr}, we have
\begin{equation*}
\frac{\dd}{\dd t} \left( \left\| \mathbf u^N(t) \right\|_{\ell^2_0(\T_N)}^2 + \left\| \mathbf v^N(t) \right\|_{\ell^2_0(\T_N)}^2 \right) \leq -2\nu \left( \left\| \mathbf u^N(t) \right\|_{\ell^2_0(\T_N)}^2 + \left\| \mathbf v^N(t) \right\|_{\ell^2_0(\T_N)}^2 \right) ,
\end{equation*}
so that Gr\"onwall's lemma yields the upper bound
\[ \left\| \mathbf u^N(t) \right\|_{\ell^2_0(\T_N)}^2 + \left\| \mathbf v^N(t) \right\|_{\ell^2_0(\T_N)}^2 \leq \left( \left\| \mathbf u_0 \right\|_{\ell^2_0(\T_N)}^2 + \left\| \mathbf v_0 \right\|_{\ell^2_0(\T_N)}^2 \right) \e^{-2\nu t} , \]
meaning that for $t\geq t_{\varepsilon,M}$, we have
\[ \left\| \mathbf u^N(t) \right\|_{\ell^2_0(\T_N)}^2 + \left\| \mathbf v^N(t) \right\|_{\ell^2_0(\T_N)}^2 \leq \frac{\varepsilon^2}8 , \]
and consequently, by~\eqref{eq:normorder},
\[ \left\| \mathbf u^N(t) \right\|_{\ell^1_0(\T_N)} + \left\| \mathbf v^N(t) \right\|_{\ell^1_0(\T_N)} \leq \left\| \mathbf u^N(t) \right\|_{\ell^2_0(\T_N)} + \left\| \mathbf v^N(t) \right\|_{\ell^2_0(\T_N)} \leq \frac \varepsilon 2 . \]

We now consider the event
\[ \left\{ \sup_{t\in[0,t_{\varepsilon,M}]} \left\|\mathbf W^{Q,N}(t)\right\|_{\ell^1_0(\T_N)} \leq \delta_\varepsilon \right\} . \]

For any $t\leq \TA^{\mathbf U^N} \wedge \TA^{\mathbf V^N} \wedge t_{\varepsilon,M}$, the four vectors $\mathbf U^N(t)$, $\mathbf V^N(t)$, $\mathbf u^N(t)$ and $\mathbf v^N(t)$ stay in the ball $\{\|\cdot\|_{\ell^1_0(\T_N)}\leq M+\varepsilon\}$, and thanks to the local Lipschitz continuity assumption on $\mathbf b$ we have
\begin{align*}
  &\left\|\mathbf U^N(t) - \mathbf u^N(t) \right\|_{\ell^1_0(\T_N)} + \left\|\mathbf V^N(t) - \mathbf v^N(t) \right\|_{\ell^1_0(\T_N)}\\
  &= \left\| \int_0^t \left(\mathbf b\left(\mathbf U^N(s)\right)-\mathbf b\left(\mathbf u^N(s)\right) \right) \dd s +\mathbf W^{Q,N}(t) \right\|_{\ell^1_0(\T_N)} + \left\| \int_0^t \left(\mathbf b\left(\mathbf V^N(s)\right)-\mathbf b\left(\mathbf v^N(s)\right) \right) \dd s + \mathbf W^{Q,N}(t) \right\|_{\ell^1_0(\T_N)} \\
&\leq \int_0^t \left( \left\|\mathbf b\left(\mathbf U^N(s)\right)-\mathbf b\left(\mathbf u^N(s)\right) \right\|_{\ell^1_0(\T_N)} + \left\|\mathbf b\left(\mathbf V^N(s)\right)-\mathbf b\left(\mathbf v^N(s)\right) \right\|_{\ell^1_0(\T_N)} \right) \dd s +2 \left\|\mathbf W^{Q,N}(t) \right\|_{\ell^1_0(\T_N)} \\
&\leq \mathsf{L}_{M+\varepsilon} \int_0^t \left( \left\|\mathbf U^N(s) - \mathbf u^N(s) \right\|_{\ell^1_0(\T_N)} + \left\|\mathbf V^N(s) - \mathbf v^N(s) \right\|_{\ell^1_0(\T_N)} \right) \dd s + 2\delta_\varepsilon,
\end{align*}
so by Gr\"onwall's lemma, we have
\begin{equation}\label{gron}
\left\|\mathbf U^N(t) - \mathbf u^N(t) \right\|_{\ell^1_0(\T_N)} + \left\|\mathbf V^N(t) - \mathbf v^N(t) \right\|_{\ell^1_0(\T_N)} \leq 2\delta_\varepsilon \e^{\mathsf{L}_{M+\varepsilon}t} \leq 2\delta_\varepsilon \e^{\mathsf{L}_{M+\varepsilon}t_{\varepsilon,M}} = \frac \varepsilon 2 ,
\end{equation}
for every $t\in [0,\TA^{\mathbf U^N} \wedge \TA^{\mathbf V^N} \wedge t_{\varepsilon,M}]$. But it appears that the case $\TA^{\mathbf U^N}\wedge \TA^{\mathbf V^N} < t_{\varepsilon,M}$ is impossible for small values of $\varepsilon$. Indeed, it would either imply $\| (\mathbf U^N-\mathbf u^N)(\TA^{\mathbf U^N}) \|_{\ell^1_0(\T_N)} \leq \varepsilon/2$ or $\| (\mathbf V^N-\mathbf v^N)(\TA^{\mathbf V^N}) \|_{\ell^1_0(\T_N)} \leq \varepsilon/2$ which is impossible because we have on the one hand
\[ \left\| \mathbf U^N\left(\TA^{\mathbf U^N}\right) \right\|_{\ell^1_0(\T_N)} \geq M+\varepsilon \qquad \left( \text{or } \left\|\mathbf V^N\left(\TA^{\mathbf V^N}\right) \right\|_{\ell^1_0(\T_N)} \geq M+\varepsilon \right) , \]
and on the other hand
\[ \left\| \mathbf u^N\left(\TA^{\mathbf U^N}\right) \right\|_{\ell^1_0(\T_N)}\leq\left\| \mathbf u^N\left(\TA^{\mathbf U^N}\right) \right\|_{\ell^2_0(\T_N)} \leq \left\| \mathbf u_0 \right\|_{\ell^2_0(\T_N)} \leq M \qquad \left( \text{or } \left\| \mathbf v^N\left(\TA^{\mathbf V^N}\right)\right\|_{\ell^1_0(\T_N)} \leq M \right) . \]

Therefore, Inequality~\eqref{gron} holds for all $t \in [0,t_{\varepsilon,M}]$. Thus,
\begin{align*}
  \left\|\mathbf U^N(t_{\varepsilon,M})\right\|_{\ell^1_0(\T_N)} + \left\|\mathbf V^N(t_{\varepsilon,M}) \right\|_{\ell^1_0(\T_N)} &\leq \left\|\mathbf U^N(t_{\varepsilon,M}) - \mathbf u^N(t_{\varepsilon,M}) \right\|_{\ell^1_0(\T_N)} + \left\|\mathbf V^N(t_{\varepsilon,M}) - \mathbf v^N(t_{\varepsilon,M}) \right\|_{\ell^1_0(\T_N)}\\
  &\quad + \left\| \mathbf u^N(t_{\varepsilon,M}) \right\|_{\ell^1_0(\T_N)} + \left\| \mathbf v^N(t_{\varepsilon,M}) \right\|_{\ell^1_0(\T_N)}\\
  & \leq \varepsilon ,
\end{align*}
and we have just shown that
\[ \left\{ \sup_{t\in[0,t_{\varepsilon,M}]} \left\|\mathbf W^{Q,N}(t)\right\|_{\ell^1_0(\T_N)} \leq \delta_\varepsilon \right\} \subset \left\{ \left\|\mathbf U^N(t_{\varepsilon,M}) \right\|_{\ell^1_0(\T_N)} + \left\|\mathbf V^N(t_{\varepsilon,M}) \right\|_{\ell^1_0(\T_N)} \leq \varepsilon \right\} . \]
and therefore,
\[ \P_{(\mathbf u_0,\mathbf v_0)} \left( \left\|\mathbf U^N(t_{\varepsilon,M}) \right\|_{\ell^1_0(\T_N)} + \left\|\mathbf V^N(t_{\varepsilon,M}) \right\|_{\ell^1_0(\T_N)} \leq \varepsilon \right) \geq \P \left( \sup_{t\in[0,t_{\varepsilon,M}]} \left\|\mathbf W^{Q,N}(t)\right\|_{\ell^1_0(\T_N)} \leq \delta_\varepsilon \right) . \]
Notice that the right-hand side does not depend on $\mathbf u_0$ nor $\mathbf v_0$. Furthermore, it is positive since $\mathbf W^{Q,N}$ is an $\R^N$-valued Wiener process. Hence, taking the infimum over $\mathbf u_0$ and $\mathbf v_0$ on the left-hand side yields the wanted result.
\end{proof}

\begin{proof}[Proof of Lemma~\ref{entranceLB}]
From It\^o's formula, we have for all $t\geq0$,
\begin{equation}\label{eq:itoentrance}
  \begin{aligned}
    &\left\|\mathbf U^N(\tau_M\wedge t)\right\|_{\ell^2_0(\T_N)}^2+\left\|\mathbf V^N(\tau_M\wedge t)\right\|_{\ell^2_0(\T_N)}^2\\
    &=\left\|\mathbf u_0\right\|_{\ell^2_0(\T_N)}^2+\left\|\mathbf v_0\right\|_{\ell^2_0(\T_N)}^2 + \int_0^{\tau_M\wedge t}\left\langle\mathbf b(\mathbf U^N(s)),\mathbf U^N(s)\right\rangle_{\ell^2_0(\T_N)} \dd s+\int_0^{\tau_M\wedge t}\left\langle\mathbf b(\mathbf V^N(s)),\mathbf V^N(s)\right\rangle_{\ell^2_0(\T_N)} \dd s\\
    &\quad +\int_0^{\tau_M\wedge t}\left\langle\mathbf U^N(s)+\mathbf V^N(s),\dd\mathbf{W}^{Q,N}(s)\right\rangle_{\ell^2_0(\T_N)}+2\sum_{k\geq1}\int_0^{\tau_M\wedge t}\left\|\mathbf{g}^k\right\|_{\ell^2_0(\T_N)}^2\dd s.
  \end{aligned}
\end{equation}
The fifth term of the right-hand side is a martingale. Indeed, by the Cauchy--Schwarz inequality, Inequality~\eqref{eq:sigma-glob}, and the bound~\eqref{eq:integralinduction}, we have
\begin{align*}
 &\E\left[\sum_{k\geq1}\int_0^{\tau_M\wedge t}\left|\left\langle\mathbf U^N(s)+\mathbf V^N(s),\mathbf{g}^k\right\rangle_{\ell^2_0(\T_N)}\right|^2\dd s\right]\\
 &\leq \left(\sum_{k\geq1}\left\|\mathbf{g}^k\right\|_{\ell^2_0(\T_N)}^2\right)\E\left[\int_0^t\left\|\mathbf U^N(s)+\mathbf V^N(s)\right\|_{\ell^2_0(\T_N)}^2\dd s\right]\\
 &\leq 2\mathsf{D}\left(\E\left[\int_0^t\left\|\mathbf U^N(s)\right\|_{\ell^2_0(\T_N)}^2\dd s\right]+\E\left[\int_0^t\left\|\mathbf V^N(s)\right\|_{\ell^2_0(\T_N)}^2\dd s\right]\right)\\
 &\leq 2\mathsf{D}\left(2\mathsf{c}_0^{(2)}+\mathsf{c}_1^{(2)}\left(\left\|\mathbf u_0\right\|_{\ell^2_0(\T_N)}^2+\left\|\mathbf v_0\right\|_{\ell^2_0(\T_N)}^2\right)+2\mathsf{c}_2^{(2)}t\right)\\
 &<+\infty.
\end{align*}
Thus, taking the expectation in~\eqref{eq:itoentrance}, applying Lemma~\ref{L1Cdrift}.(ii), Inequality~\eqref{eq:sigma-glob}, \eqref{eq:poinca-discr} and~\eqref{eq:entrancetimedef}, we get
\begin{align*}
&\E\left[ \left\|\mathbf U^N(\tau_M \wedge t) \right\|_{\ell^2_0(\T_N)}^2 + \left\|\mathbf V^N(\tau_M \wedge t)\right\|_{\ell^2_0(\T_N)}^2 \right] - \left(\left\| \mathbf u_0 \right\|_{\ell^2_0(\T_N)}^2 + \left\| \mathbf v_0 \right\|_{\ell^2_0(\T_N)}^2\right)\\
&= 2 \E \left[ \int_0^{\tau_M \wedge t} \left( \left\langle\mathbf b\left(\mathbf U^N(s)\right) ,\mathbf U^N(s) \right\rangle_{\ell^2_0(\T_N)} + \left\langle\mathbf b\left(\mathbf V^N(s)\right),\mathbf V^N(s) \right\rangle_{\ell^2_0(\T_N)} \right) \dd s \right] + 2 \E \left[ \int_0^{\tau_M \wedge t} \sum_{k\geq1} \left\| \mathbf{g}^k \right\|_{\ell^2_0(\T_N)}^2 \dd s \right] \\
&\leq - 2 \nu \E \left[ \int_0^{\tau_M \wedge t} \left( \left\|\mathbf D^{(1,+)}_N\mathbf U^N(s)\right\|_{\ell^2_0(\T_N)}^2 + \left\|\mathbf D^{(1,+)}_N\mathbf V^N(s)\right\|_{\ell^2_0(\T_N)}^2 \right) \dd s \right] + 2 \E \left[ \tau_M \wedge t \right] \mathsf{D} \\
&\leq - 2 \nu \E \left[ \int_0^{\tau_M \wedge t} \left( \left\|\mathbf U^N(s)\right\|_{\ell^2_0(\T_N)}^2 + \left\|\mathbf V^N(s)\right\|_{\ell^2_0(\T_N)}^2 \right) \dd s \right] + 2 \E \left[ \tau_M \wedge t \right] \mathsf{D} \\
&\leq 2 \left( \mathsf{D} - \nu M^2 \right) \E \left[ \tau_M \wedge t \right].
\end{align*}
So if we choose $M > \sqrt{\mathsf{D}/ \nu}$, we get
\[ \E [ \tau_M \wedge t ] \leq \frac{\| \mathbf u_0\|_{\ell^2_0(\T_N)}^2 + \| \mathbf v_0 \|_{\ell^2_0(\T_N)}^2}{2 \left( \nu M^2 - \mathsf{D} \right)} , \]
and we deduce from the monotone convergence theorem that $\E [ \tau_M ] = \lim_{t\to\infty} \E [ \tau_M \wedge t ] < +\infty$.
\end{proof}

\subsection{The split-step scheme}

\begin{proof}[Proof of Lemma~\ref{proba}]
First, let $\varepsilon>0$ and let us fix $\mathbf u_0, \mathbf v_0 \in \R^N_0$ such that $\|\mathbf u_0\|_{\ell^2_0(\T_N)}\leq M$ and $\|\mathbf v_0\|_{\ell^2_0(\T_N)}\leq M$.

Let $(\mathbf u^{N,\Delta t}_n)_{n\in\N}$ and $(\mathbf v^{N,\Delta t}_n)_{n\in\N}$ denote the noiseless counterparts of the sequences $(\mathbf U^{N,\Delta t}_n)_{n\in\N}$ and $(\mathbf V^{N,\Delta t}_n)_{n\in\N}$, \textit{i.e.}
\begin{equation}\label{noiseless}
  \mathbf u^{N,\Delta t}_{n+1} = \mathbf u^{N,\Delta t}_n + \Delta t\mathbf b \left( \mathbf u^{N,\Delta t}_{n+1} \right), \qquad \mathbf v^{N,\Delta t}_{n+1} = \mathbf v^{N,\Delta t}_n + \Delta t\mathbf b \left( \mathbf v^{N,\Delta t}_{n+1} \right),
\end{equation}
with initial conditions $\mathbf u_0$ and $\mathbf v_0$. Then $(\mathbf u^{N,\Delta t}_n)_{n\in\N}$ and $(\mathbf v^{N,\Delta t}_n)_{n\in\N}$ are subject to non-perturbed $\ell^2_0(\T_N)$ dissipativity, and consequently the sum of their energies decreases to $0$ over time. Indeed, we have
\begin{align*}
\left\| \mathbf u^{N,\Delta t}_n \right\|_{\ell^2_0(\T_N)}^2 + \left\| \mathbf v^{N,\Delta t}_n \right\|_{\ell^2_0(\T_N)}^2 &= \left\| \mathbf u^{N,\Delta t}_{n+1} - \Delta t\mathbf b\left( \mathbf u^{N,\Delta t}_{n+1}\right) \right\|_{\ell^2_0(\T_N)}^2 + \left\| \mathbf v^{N,\Delta t}_{n+1} - \Delta t\mathbf b\left( \mathbf v^{N,\Delta t}_{n+1}\right) \right\|_{\ell^2_0(\T_N)}^2 \\
&= \left\| \mathbf u^{N,\Delta t}_{n+1} \right\|_{\ell^2_0(\T_N)}^2 + \left\| \mathbf v^{N,\Delta t}_{n+1} \right\|_{\ell^2_0(\T_N)}^2 + ( \Delta t)^2 \left( \left\|\mathbf b\left( \mathbf u_{n+1}\right) \right\|_{\ell^2_0(\T_N)}^2 + \left\|\mathbf b\left( \mathbf v_{n+1}\right) \right\|_{\ell^2_0(\T_N)}^2 \right)\\
&\quad - 2 \Delta t \left( \left\langle \mathbf u^{N,\Delta t}_{n+1} ,\mathbf b\left( \mathbf u^{N,\Delta t}_{n+1}\right) \right\rangle_{\ell^2_0(\T_N)} + \left\langle \mathbf v^{N,\Delta t}_{n+1} ,\mathbf b\left( \mathbf v^{N,\Delta t}_{n+1}\right) \right\rangle_{\ell^2_0(\T_N)} \right)
\end{align*}
therefore, using successively Lemma~\ref{L1Cdrift}.(ii) and~\eqref{eq:poinca-discr}, we get
\begin{align*}
&\left\| \mathbf u^{N,\Delta t}_{n+1} \right\|_{\ell^2_0(\T_N)}^2 + \left\| \mathbf v^{N,\Delta t}_{n+1} \right\|_{\ell^2_0(\T_N)}^2 - \left( \left\| \mathbf u^{N,\Delta t}_n \right\|_{\ell^2_0(\T_N)}^2 + \left\| \mathbf v^{N,\Delta t}_n \right\|_{\ell^2_0(\T_N)}^2 \right)\\
&\leq 2 \Delta t \left( \left\langle \mathbf u^{N,\Delta t}_{n+1} ,\mathbf b \left( \mathbf u^{N,\Delta t}_{n+1} \right) \right\rangle_{\ell^2_0(\T_N)} + \left\langle \mathbf v^{N,\Delta t}_{n+1} ,\mathbf b \left( \mathbf v^{N,\Delta t}_{n+1} \right) \right\rangle_{\ell^2_0(\T_N)} \right) \\
&\leq -2 \Delta t \nu \left( \left\|\mathbf D^{(1,+)}_N \mathbf u^{N,\Delta t}_{n+1} \right\|_{\ell^2_0(\T_N)}^2 + \left\|\mathbf D^{(1,+)}_N \mathbf v^{N,\Delta t}_{n+1} \right\|_{\ell^2_0(\T_N)}^2 \right) \\
&\leq -2 \Delta t \nu \left( \left\| \mathbf u^{N,\Delta t}_{n+1} \right\|_{\ell^2_0(\T_N)}^2 + \left\| \mathbf v^{N,\Delta t}_{n+1} \right\|_{\ell^2_0(\T_N)}^2 \right)
\end{align*}
so that
\[ \left\| \mathbf u^{N,\Delta t}_{n+1} \right\|_{\ell^2_0(\T_N)}^2 + \left\| \mathbf v^{N,\Delta t}_{n+1} \right\|_{\ell^2_0(\T_N)}^2 \leq \frac{1}{1+2\Delta t\nu} \left( \left\| \mathbf u^{N,\Delta t}_n \right\|_{\ell^2_0(\T_N)}^2 + \left\| \mathbf v^{N,\Delta t}_n \right\|_{\ell^2_0(\T_N)}^2 \right) , \]
by induction, we get for all $n\in\N$,
\[ \left\| \mathbf u^{N,\Delta t}_n \right\|_{\ell^2_0(\T_N)}^2 + \left\| \mathbf v^{N,\Delta t}_n \right\|_{\ell^2_0(\T_N)}^2 \leq \left( \frac{1}{1+2\Delta t\nu} \right)^n \left( \left\| \mathbf u_0 \right\|_{\ell^2_0(\T_N)}^2 + \left\| \mathbf v_0 \right\|_{\ell^2_0(\T_N)}^2 \right) . \]
It appears now that if we fix the value
\[ n_{\varepsilon,M} : = \left\lceil \frac{-1}{\log ( 1+2\Delta t\nu)} \log\left(\frac{\varepsilon^2}{16M^2}\right) \right\rceil , \]
we get for all $n\geq n_{\varepsilon,M}$,
\[ \left\| \mathbf u^{N,\Delta t}_n \right\|_{\ell^1_0(\T_N)} + \left\| \mathbf v^{N,\Delta t}_n \right\|_{\ell^1_0(\T_N)} \leq \left\| \mathbf u^{N,\Delta t}_n \right\|_{\ell^2_0(\T_N)} + \left\| \mathbf v^{N,\Delta t}_n \right\|_{\ell^2_0(\T_N)} \leq \frac\varepsilon2 . \]

Now, we fix $\delta_\varepsilon :=\varepsilon/(4n_{\varepsilon,M})$ and we restrict ourselves to the event
\begin{equation}\label{set}
\left\{ \sup_{n=1,\dots,n_{\varepsilon,M}} \left\| \Delta\mathbf W^{Q,N}_n \right\|_{\ell^1_0(\T_N)} \leq \delta_\varepsilon \right\} .
\end{equation}

Let $(\mathbf U^{N,\Delta t}_n)_{n\in\N}$ and $(\mathbf V^{N,\Delta t}_n)_{n\in\N}$ be two solutions of~\eqref{SSBE} with the deterministic initial conditions $\mathbf u_0$ and $\mathbf v_0$ respectively. With similar arguments as for the proof of Proposition~\ref{L1C}, we get from~\eqref{SSBE},~\eqref{noiseless} and Lemma~\ref{L1Cdrift}.(ii), for all $n\in\N$,
\begin{align*}
&\left\|\mathbf U^{N,\Delta t}_{n+1} - \mathbf u^{N,\Delta t}_{n+1} \right\|_{\ell^1_0(\T_N)} + \left\|\mathbf V^{N,\Delta t}_{n+1} - \mathbf v^{N,\Delta t}_{n+1} \right\|_{\ell^1_0(\T_N)}\\
&\leq \left\|\mathbf U^{N,\Delta t}_{n+\frac12} - \mathbf u^{N,\Delta t}_{n+1} \right\|_{\ell^1_0(\T_N)}+ \left\|\mathbf V^{N,\Delta t}_{n+\frac12} - \mathbf v^{N,\Delta t}_{n+1} \right\|_{\ell^1_0(\T_N)} + 2 \left\| \Delta\mathbf W^{Q,N}_{n+1} \right\|_{\ell^1_0(\T_N)} \\
&= \left\langle \bm\sgn \left(\mathbf U^{N,\Delta t}_{n+\frac12} - \mathbf u^{N,\Delta t}_{n+1} \right) ,\mathbf U^{N,\Delta t}_n - \mathbf u^{N,\Delta t}_n \right\rangle_{\ell^2(\T_N)} \\
&\quad + \Delta t \left\langle \bm\sgn \left(\mathbf U^{N,\Delta t}_{n+\frac12} - \mathbf u^{N,\Delta t}_{n+1} \right) ,\mathbf b \left(\mathbf U^{N,\Delta t}_{n+\frac12} \right) -\mathbf b \left( \mathbf u^{N,\Delta t}_{n+1} \right) \right\rangle_{\ell^2(\T_N)} \\
&\quad + \left\langle \bm\sgn \left(\mathbf V^{N,\Delta t}_{n+\frac12} - \mathbf v^{N,\Delta t}_{n+1} \right) ,\mathbf V^{N,\Delta t}_n - \mathbf v^{N,\Delta t}_n \right\rangle_{\ell^2(\T_N)} \\
&\quad + \Delta t \left\langle \bm\sgn \left(\mathbf V^{N,\Delta t}_{n+\frac12} - \mathbf v^{N,\Delta t}_{n+1} \right) ,\mathbf b \left(\mathbf V^{N,\Delta t}_{n+\frac12} \right) -\mathbf b \left( \mathbf v^{N,\Delta t}_{n+1} \right) \right\rangle_{\ell^2(\T_N)} + 2\left\| \Delta\mathbf W^{Q,N}_{n+1} \right\|_{\ell^1_0(\T_N)} \\
&\leq \left\|\mathbf U^{N,\Delta t}_n - \mathbf u^{N,\Delta t}_n \right\|_{\ell^1_0(\T_N)} + \left\|\mathbf V^{N,\Delta t}_n - \mathbf v^{N,\Delta t}_n \right\|_{\ell^1_0(\T_N)} + 2 \left\| \Delta\mathbf W^{Q,N}_{n+1} \right\|_{\ell^1_0(\T_N)}.
\end{align*}
On the event~\eqref{set}, we have for all $n=1,\dots,n_{\varepsilon,M}$,
\[ \left\|\mathbf U^{N,\Delta t}_{n+1} - \mathbf u^{N,\Delta t}_{n+1} \right\|_{\ell^1_0(\T_N)} + \left\|\mathbf V^{N,\Delta t}_{n+1} - \mathbf v^{N,\Delta t}_{n+1} \right\|_{\ell^1_0(\T_N)} \leq \left\|\mathbf U^{N,\Delta t}_n - \mathbf u^{N,\Delta t}_n \right\|_{\ell^1_0(\T_N)} + \left\|\mathbf V^{N,\Delta t}_n - \mathbf v^{N,\Delta t}_n \right\|_{\ell^1_0(\T_N)} +2\delta_\varepsilon . \]
In particular, by induction, we have
\[ \left\|\mathbf U^{N,\Delta t}_{n_{\varepsilon,M}} - \mathbf u^{N,\Delta t}_{n_{\varepsilon,M}} \right\|_{\ell^1_0(\T_N)} + \left\|\mathbf V^{N,\Delta t}_{n_{\varepsilon,M}} - \mathbf v^{N,\Delta t}_{n_{\varepsilon,M}} \right\|_{\ell^1_0(\T_N)} \leq 2 n_{\varepsilon,M} \delta_\varepsilon = \frac\varepsilon2 .\]
Thus,
\begin{align*}
\left\|\mathbf U^{N,\Delta t}_{n_{\varepsilon,M}} \right\|_{\ell^1_0(\T_N)} + \left\|\mathbf V^{N,\Delta t}_{n_{\varepsilon,M}} \right\|_{\ell^1_0(\T_N)} & \leq \left\|\mathbf U^{N,\Delta t}_{n_{\varepsilon,M}} - \mathbf u^{N,\Delta t}_{n_{\varepsilon,M}} \right\|_{\ell^1_0(\T_N)} + \left\|\mathbf V^{N,\Delta t}_{n_{\varepsilon,M}} - \mathbf v^{N,\Delta t}_{n_{\varepsilon,M}} \right\|_{\ell^1_0(\T_N)} + \left\| \mathbf u^{N,\Delta t}_{n_{\varepsilon,M}} \right\|_{\ell^1_0(\T_N)}+ \left\| \mathbf v^{N,\Delta t}_{n_{\varepsilon,M}} \right\|_{\ell^1_0(\T_N)}\\
& \leq \frac\varepsilon2 + \frac\varepsilon2 = \varepsilon .
\end{align*}
We just have shown that
\[ \P_{(\mathbf u_0,\mathbf v_0)} \left( \left\|\mathbf U_{n_{\varepsilon,M}} \right\|_{\ell^1_0(\T_N)} + \left\|\mathbf V_{n_{\varepsilon,M}} \right\|_{\ell^1_0(\T_N)} \leq \varepsilon \right) \geq \P \left( \sup_{n=1,\dots,n_{\varepsilon,M}} \left\| \Delta\mathbf W^{Q,N}_n \right\|_{\ell^1_0(\T_N)} \leq \delta_\varepsilon \right) > 0 . \]
Since the event~\eqref{set} does not depend on $\mathbf u_0$ nor $\mathbf v_0$, we get the result.
\end{proof}

\section{Proofs of finite-time bounds for Sections~\ref{section:convergence} and~\ref{section:convergence2}}\label{app:estim}

\begin{proof}[Proof of Lemma~\ref{lem:FTBus}]
The proof of these estimates is largely based on refinements of computations made in~\cite{MR19}.

\emph{Proof of the $L^p_0(\T)$ estimate.} Let $p\in2\N^*$ and let us repeat the proof of \cite[Lemma~3]{MR19} up to \cite[Equation~(23)]{MR19}. When the initial condition $u^*_0$ is random and has distribution $\mu^*$, this equation writes
\begin{align*}
 \E\left[\left\|u^*\left(t\wedge T_r\right)\right\|_{L^p_0(\T)}^p\right] & =\E\left[\left\|u^*_0\right\|_{L^p_0(\T)}^p\right]-p\E\left[\int_0^{t\wedge T_r}\int_\T\partial_xA(u^*(s))u^*(s)^{p-1}\dd x\dd s\right]\\
 &\quad -\nu p(p-1)\E\left[\int_0^{t\wedge T_r}\int_\T\partial_xu^*(s)^2u^*(s)^{p-2}\dd x\dd s\right]+\frac{p(p-1)}2\sum_{k\geq1}\E\left[\int_0^{t\wedge T_r}\int_\T u^*(s)^{p-2}(g^k)^2\dd x\dd s \right],
\end{align*}
 for all $t \geq 0$ and $r\geq0$, where $T_r$ is a stopping time converging almost surely towards $+\infty$ as $r\to+\infty$ (by~\cite[Corollary~2]{MR19}). Using~\cite[Equation~(24)]{MR19}, the non-positivity of the third term of the right-hand side, and bounding the $g^k$'s by their supremum, we get the inequality
 \[ \E\left[\left\|u^*\left(t\wedge T_r\right)\right\|_{L^p_0(\T)}^p\right] \leq \E\left[\left\|u^*_0\right\|_{L^p_0(\T)}^p\right]+\frac{p(p-1)}2\left(\sum_{k\geq1}\left\|g^k\right\|_{L^\infty_0(\T)}^2\right)\E\left[\int_0^{t\wedge T_r}\left\|u^*(s)\right\|_{L^{p-2}_0(\T)}^{p-2}\dd s\right] . \]
 Using now Corollary~\ref{cor:relcom}, \eqref{eq:strongerpoincare}, \eqref{boundgk}, and \cite[Equation~(18)]{MR19}, we get
 \[ \E\left[\left\|u^*\left(t\wedge T_r\right)\right\|_{L^p_0(\T)}^p\right] \leq \mathsf{C}^{0,p} + \frac{p(p-1)}2\mathsf{D}\left(C_5^{(p-2)}\left(1+\E\left[\left\|u^*_0\right\|_{L^{p-2}_0(\T)}^{p-2}\right]\right)+C_6^{(p-2)}t\right) , \]
 where the constants $C_5^{(p-2)}$ and $C_6^{(p-2)}$, defined in~\cite{MR19}, depend only on $\nu$, $p$ and $\mathsf{D}$. Using once again Corollary~\ref{cor:relcom} and letting $r\to+\infty$, we obtain
 \[ \limsup_{r\to\infty} \E\left[\left\|u^*\left(t\wedge T_r\right)\right\|_{L^p_0(\T)}^p\right] \leq C^{0,p} + \frac{p(p-1)}2\mathsf{D}\left(C_5^{(p-2)}\left(1+\mathsf{C}^{0,p-2}\right)+C_6^{(p-2)}t\right) =: \mathsf{C}^{*,0,p}_t . \]
 Applying Fatou's lemma on the left-hand side, we get
 \[ \E\left[\left\|u^*(t)\right\|_{L^p_0(\T)}^p\right] \leq \mathsf{C}^{*,0,p}_t , \]
from which we easily get the claimed inequality in the case $p\in2\N^*$. The general case $p\in[1,+\infty)$ then follows from the Jensen inequality.
 
\emph{Proof of the $H^1_0(\T)$ and $H^2_0(\T)$ estimates.} We now start from~\cite[Lemma~4]{MR19} which, when $u^*_0$ is random, gives the estimate
 \[ \E\left[\left\|u^*(t\wedge T_r)\right\|_{H^1_0(\T)}^2\right] + \nu \E\left[\int_0^{t \wedge T_r} \left\|u^*(s)\right\|_{H^2_0(\T)}^2\dd s\right] \leq\E\left[\left\|u^*_0\right\|_{H^1_0(\T)}^2\right]+C_7\left(1+\E\left[\left\|u^*_0\right\|_{L^{2\mathsf{p}_A+2}_0(\T)}^{2\mathsf{p}_A+2}\right]\right)+C_8t , \]
and from which we deduce, by applying Fatou's lemma on the left-hand side and Corollary~\ref{cor:relcom} on the right-hand side:
\[\E\left[\left\|u^*(t)\right\|_{H^1_0(\T)}^2\right] + \nu \E\left[\int_0^t \left\|u^*(s)\right\|_{H^2_0(\T)}^2\dd s\right]\leq \mathsf{C}^{1,2}+C_7\left(1+\mathsf{C}^{0,2\mathsf{p}_A+2}\right)+C_8t =:\mathsf{C}^{*,1,2}_t.\]
We conclude that
\begin{equation*}
  \E\left[\left\|u^*(t)\right\|_{H^1_0(\T)}^2\right] \leq \mathsf{C}^{1,2}+C_7\left(1+\mathsf{C}^{0,2\mathsf{p}_A+2}\right)+C_8t =:\mathsf{C}^{*,1,2}_t,
\end{equation*}
and
\begin{equation*}
  \E\left[\int_0^t \left\|u^*(s)\right\|_{H^2_0(\T)}^2\dd s\right]\leq \frac{1}{\nu}\left(\mathsf{C}^{1,2}+C_7\left(1+\mathsf{C}^{0,2\mathsf{p}_A+2}\right)+C_8t\right) =:\mathsf{C}^{*,2,2}_t.\qedhere
\end{equation*}
\end{proof}

\begin{proof}[Proof of Lemma~\ref{lem:FTBUN}] Lemma~\ref{lem:FTBUN} is a refinement of the uniform $h^2_0(\T_N)$ estimate from Proposition~\ref{prop:unifestim}. We start from~\eqref{itoH1} and use the definition of $\mathbf{b}$ to write
\begin{align*}
  \left\|\mathbf D^{(1,+)}_N\mathbf U^N(t)\right\|_{\ell^2_0(\T_N)}^2 &=\left\|\mathbf D^{(1,+)}_N\mathbf U^N_0\right\|_{\ell^2_0(\T_N)}^2 +2\int_0^t\left\langle\mathbf D^{(1,+)}_N\mathbf U^N(s),\mathbf D^{(1,+)}_N \left(-\mathbf{D}^{(1,-)}_N\overline{\mathbf{A}}^N(\mathbf U^N(s)) + \nu\mathbf{D}^{(2)}_N\mathbf U^N(s)\right)\right\rangle_{\ell^2_0(\T_N)}\dd s\\
  &\quad +2\int_0^t\left\langle\mathbf D^{(1,+)}_N\mathbf U^N(s),\dd \left(\mathbf D^{(1,+)}_N\mathbf W^{Q,N}\right)(s)\right\rangle_{\ell^2_0(\T_N)} +t\sum_{k\geq1}\left\|\mathbf D^{(1,+)}_N\mathbf{g}^k\right\|_{\ell^2_0(\T_N)}^2.
\end{align*}

By~\eqref{eq:sbp2} and Young's inequality,
\begin{align*}
  -2\left\langle\mathbf D^{(1,+)}_N\mathbf U^N(s),\mathbf D^{(1,+)}_N \mathbf{D}^{(1,-)}_N\overline{\mathbf{A}}^N(\mathbf U^N(s))\right\rangle_{\ell^2_0(\T_N)} &= 2\left\langle\mathbf D^{(2)}_N\mathbf U^N(s), \mathbf{D}^{(1,-)}_{N}\overline{\mathbf{A}}^N(\mathbf U^N(s))\right\rangle_{\ell^2_0(\T_N)}\\
  & \leq 2\nu\|\mathbf{D}^{(2)}_N\mathbf U^N(s)\|^2_{\ell^2_0(\T_N)} + \frac{1}{2\nu}\|\mathbf{D}^{(1,-)}_{N}\overline{\mathbf{A}}^N(\mathbf U^N(s))\|^2_{\ell^2_0(\T_N)},
\end{align*}
while 
\begin{equation*}
  2\nu\left\langle\mathbf D^{(1,+)}_N\mathbf U^N(s),\mathbf D^{(1,+)}_N \mathbf{D}^{(2)}_N\mathbf U^N(s)\right\rangle_{\ell^2_0(\T_N)} = -2\nu \|\mathbf{D}^{(2)}_N\mathbf U^N(s)\|^2_{\ell^2_0(\T_N)},
\end{equation*}
so that, using~\eqref{eq:sigma-glob} in addition,
\begin{align*}
  \left\|\mathbf D^{(1,+)}_N\mathbf U^N(t)\right\|_{\ell^2_0(\T_N)}^2 & \leq\left\|\mathbf D^{(1,+)}_N\mathbf U^N_0\right\|_{\ell^2_0(\T_N)}^2 +\frac{1}{2\nu}\int_0^t\|\mathbf{D}^{(1,-)}_{N}\overline{\mathbf{A}}^N(\mathbf U^N(s))\|^2_{\ell^2_0(\T_N)}\dd s\\
  &\quad +2\int_0^t\left\langle\mathbf D^{(1,+)}_N\mathbf U^N(s),\dd \left(\mathbf D^{(1,+)}_N\mathbf W^{Q,N}\right)(s)\right\rangle_{\ell^2_0(\T_N)} +t\mathsf{D}.
\end{align*}

We deduce that for any $t \geq 0$,
\begin{equation}\label{eq:inequinter}
  \sup_{s \in [0,t]} \left\|\mathbf D^{(1,+)}_N\mathbf U^N(s)\right\|_{\ell^2_0(\T_N)}^2 \leq \left\|\mathbf D^{(1,+)}_N\mathbf U^N_0\right\|_{\ell^2_0(\T_N)}^2 + \frac{1}{2\nu}\int_0^t\|\mathbf{D}^{(1,-)}_{N}\overline{\mathbf{A}}^N(\mathbf U^N(s))\|^2_{\ell^2_0(\T_N)}\dd s + 2Z^N_t + t\mathsf{D},
\end{equation}
where $Z^N_t$ is defined by
\begin{equation*}
  Z^N_t = \sup_{s \in [0,t]} \int_0^s\left\langle\mathbf D^{(1,+)}_N\mathbf U^N(r),\dd \left(\mathbf D^{(1,+)}_N\mathbf W^{Q,N}\right)(r)\right\rangle_{\ell^2_0(\T_N)},
\end{equation*}
and it remains to control the expectation of the right-hand side of the inequality~\eqref{eq:inequinter}.

First, by Proposition~\ref{prop:unifestim}, we have
\begin{equation*}
  \E\left[\left\|\mathbf D^{(1,+)}_N\mathbf U^N_0\right\|_{\ell^2_0(\T_N)}^2\right] \leq \mathsf{C}^{1,2}.
\end{equation*}
Next, by stationarity of $\mathbf{U}^N$ and~\eqref{fluxbound}, we have
\begin{equation*}
  \E\left[\int_0^t\|\mathbf{D}^{(1,-)}_{N}\overline{\mathbf{A}}^N(\mathbf U^N(s))\|^2_{\ell^2_0(\T_N)}\dd s\right] \leq t \E\left[\|\mathbf{D}^{(1,-)}_{N}\overline{\mathbf{A}}^N(\mathbf U^N_0)\|^2_{\ell^2_0(\T_N)}\right] \leq t8 \mathsf{C}_{\bar A}^2 \left(\mathsf{C}^{1,2}+\frac{\mathsf{D}}{2\nu}\mathsf{C}^{0,2\mathsf{p}_{\bar A}} \right).
\end{equation*}
Finally, we recall that by~\eqref{martingale}, the process $(\int_0^t\langle\mathbf D^{(1,+)}_N\mathbf U^N(s),\dd (\mathbf D^{(1,+)}_N\mathbf W^{Q,N})(s)\rangle_{\ell^2_0(\T_N)})_{t\geq0}$ is a martingale. Therefore, applying successively the Jensen and the Doob inequalities, the It\^o isometry, the Cauchy--Schwarz inequality, Proposition~\ref{prop:unifestim} and~\eqref{eq:sigma-glob}, we get
\begin{align*}
\E\left[Z^N_t\right]&\leq \left( \E\left[\sup_{s\in[0,t]}\left|\int_0^s\left\langle\mathbf D^{(1,+)}_N\mathbf U^N(r),\dd\left(\mathbf D^{(1,+)}_N\mathbf W^{Q,N}\right)(r)\right\rangle_{\ell^2_0(\T_N)}\right|^2\right] \right)^{1/2}\\
&\leq2 \left( \E\left[\left|\int_0^t\left\langle\mathbf D^{(1,+)}_N\mathbf U^N(r),\dd\left(\mathbf D^{(1,+)}_N\mathbf W^{Q,N}\right)(r)\right\rangle_{\ell^2_0(\T_N)}\right|^2\right] \right)^{1/2}\\
&=2 \left( \E\left[\sum_{k\geq1}\int_0^t\left\langle\mathbf D^{(1,+)}_N\mathbf U^N(r),\mathbf D^{(1,+)}_N\mathbf{g}^k\right\rangle_{\ell^2_0(\T_N)}^2\dd r\right] \right)^{1/2}\\
&\leq2\sqrt t \left( \E\left[\left\|\mathbf D^{(1,+)}_N\mathbf U^N_0\right\|_{\ell^2_0(\T_N)}^2\right] \right)^{1/2}\left(\sum_{k\geq1}\left\|\mathbf D^{(1,+)}_N\mathbf{g}^k\right\|_{\ell^2_0(\T_N)}^2\right)^{1/2}\\
&\leq 2\sqrt{t\mathsf{C}^{1,2}\mathsf{D}},
\end{align*}
which completes the proof.
\end{proof}

\begin{proof}[Proof of Lemma~\ref{FTbound}] Lemma~\ref{FTbound} is a refinement of the proof of Proposition~\ref{existence}. We fix $t>0$, and for the sake of simplicity we write $\Delta t$ in place of $\Delta t_j$. We also introduce the notation $n_t = \lfloor \frac t {\Delta t} \rfloor$.

We start from Equation~\eqref{eq:discretederivative}. For all $n=0,\dots,n_t$, we write
\begin{align*}
\left\| \mathbf U^{N, \Delta t}_n \right\|_{\ell^2_0(\T_N)}^2 &= \left\| \mathbf U^{N, \Delta t}_0 \right\|_{\ell^2_0(\T_N)}^2+\sum_{l=0}^{n-1}\left( \left\| \mathbf U^{N, \Delta t}_{l+1} \right\|_{\ell^2_0(\T_N)}^2-\left\| \mathbf U^{N, \Delta t}_l \right\|_{\ell^2_0(\T_N)}^2\right)\\
& \leq \left\|\mathbf U^{N, \Delta t}_0 \right\|_{\ell^2_0(\T_N)}^2 -2\nu \Delta t \sum_{l=0}^{n-1} \left\|\mathbf D^{(1,+)}_N\mathbf U^{N, \Delta t}_{l+\frac 1 2} \right\|_{\ell^2_0(\T_N)}^2 +2 \sum_{l=0}^{n-1} \left\langle\mathbf U^{N, \Delta t}_{l+\frac 1 2} , \Delta\mathbf W^{Q,N}_{l+1} \right\rangle_{\ell^2_0(\T_N)} + \sum_{l=0}^{n-1} \left\| \Delta\mathbf W^{Q,N}_{l+1} \right\|_{\ell^2_0(\T_N)}^2\\
& \leq \left\|\mathbf U^{N, \Delta t}_0 \right\|_{\ell^2_0(\T_N)}^2 +2 \sum_{l=0}^{n-1} \left\langle\mathbf U^{N, \Delta t}_{l+\frac 1 2} , \Delta\mathbf W^{Q,N}_{l+1} \right\rangle_{\ell^2_0(\T_N)} + \sum_{l=0}^{n-1} \left\| \Delta\mathbf W^{Q,N}_{l+1} \right\|_{\ell^2_0(\T_N)}^2 .
\end{align*}
Taking the supremum in time and the expectation, we get
 \begin{equation}\label{supexpect}
 \E \left[ \sup_{n=0,\dots,n_t} \left\|\mathbf U^{N, \Delta t}_n \right\|_{\ell^2_0(\T_N)}^2 \right] \leq \E \left[ \left\|\mathbf U^{N, \Delta t}_0 \right\|_{\ell^2_0(\T_N)}^2 \right] + 2 \E \left[ \sup_{n=0,\dots,n_t} \left| \sum_{l=0}^{n-1} \left\langle\mathbf U^{N, \Delta t}_{l+\frac 1 2} , \Delta\mathbf W^{Q,N}_{l+1} \right\rangle_{\ell^2_0(\T_N)} \right| \right] + \E \left[ \sum_{l=0}^{n_t -1} \left\| \Delta\mathbf W^{Q,N}_{l+1} \right\|_{\ell^2_0(\T_N)}^2 \right] .
 \end{equation}
 First, by~\eqref{eq:poinca-discr} and Proposition~\ref{prop:unifestimSS}, we have
 \[ \E\left[\left\|\mathbf U^{N, \Delta t}_0 \right\|_{\ell^2_0(\T_N)}^2 \right] \leq \E\left[\left\|\mathbf D^{(1,+)}_N\mathbf U^{N, \Delta t}_0 \right\|_{\ell^2_0(\T_N)}^2 \right] \leq \mathsf{C}^{\Delta,1,2} . \]
 Noticing that the sequence $( \sum_{l=0}^{n-1} \langle\mathbf U^{N, \Delta t}_{l+\frac 1 2} , \Delta\mathbf W^{Q,N}_{l+1} \rangle_{\ell^2_0(\T_N)} )_{n\geq1}$ is a martingale, we get by applying successively Jensen's and Doob's inequalities to the second term of the right-hand side,
 \begin{align*}
  \E \left[ \sup_{n=0,\dots,n_t} \left| \sum_{l=0}^{n-1} \left\langle\mathbf U^{N, \Delta t}_{l+\frac 1 2} , \Delta\mathbf W^{Q,N}_{l+1} \right\rangle_{\ell^2_0(\T_N)} \right| \right] &\leq \left(\E \left[ \sup_{n=0,\dots,n_t} \left| \sum_{l=0}^{n-1} \left\langle\mathbf U^{N, \Delta t}_{l+\frac 1 2} , \Delta\mathbf W^{Q,N}_{l+1} \right\rangle_{\ell^2_0(\T_N)} \right|^2 \right] \right)^{1/2}\\
  &\leq 2\left(\E \left[ \left| \sum_{l=0}^{n_t-1} \left\langle\mathbf U^{N, \Delta t}_{l+\frac 1 2} , \Delta\mathbf W^{Q,N}_{l+1} \right\rangle_{\ell^2_0(\T_N)} \right|^2 \right] \right)^{1/2} . \\
 \end{align*}
 From~\eqref{SSBE}, we may observe that each increment $\Delta\mathbf W^{Q,N}_{l+1}$ is independent from the family $(\mathbf U^{N, \Delta t}_{m+\frac12},\Delta\mathbf W^{Q,N}_m)_{m=0,\dots,l}$. Therefore, defining $\mathbf{V}^{N,\Delta t}$ and $\mathbf{V}^{N,\Delta t}_{\frac12}$ as in Proposition~\ref{prop:unifestimSS}, we have
 \begin{align*}
  2\left( \E \left[ \sum_{l=0}^{n_t-1} \left| \left\langle\mathbf U^{N, \Delta t}_{l+\frac 1 2} , \Delta\mathbf W^{Q,N}_{l+1} \right\rangle_{\ell^2_0(\T_N)} \right|^2 \right]\right)^{1/2} &\leq2\left(\sum_{l=0}^{n_t-1}\E\left[\left\|\mathbf U^{N, \Delta t}_{l+\frac12}\right\|_{\ell^2_0(\T_N)}^2\right]\E\left[\left\|\Delta\mathbf W^{Q,N}_{l+1}\right\|_{\ell^2_0(\T_N)}^2\right]\right)^{1/2}\\
  &\leq 2\sqrt{\mathsf{D} \Delta t} \left( n_t\E \left[ \left\|\mathbf V^{N, \Delta t}_{\frac 1 2} \right\|_{\ell^2_0(\T_N)}^2 \right] \right)^{1/2} \\
  &\leq 2\sqrt{\mathsf{D} t \mathsf{C}^{\Delta,1,2}_{\frac12}},
 \end{align*}
 where we have used~\eqref{NB} at the second line and Proposition~\ref{prop:unifestimSS} together with~\eqref{eq:poinca-discr} at the third line. Injecting this bound into~\eqref{supexpect}, and using~\eqref{NB} again, we finally get
 \[ \E \left[ \sup_{n=0,\dots,n_t} \left\|\mathbf U^{N, \Delta t}_n \right\|_{\ell^2_0(\T_N)}^2 \right] \leq \mathsf{C}^{\Delta,1,2} + 2\sqrt{\mathsf{D} t \mathsf{C}^{\Delta,1,2}_{\frac12}} + t\mathsf{D} =: \mathsf{S}^{\Delta,0,2}_t. \qedhere \]
\end{proof}

\subsection*{Acknowledgements}

The authors are thankful to Charles-\'Edouard Br\'ehier for insightful discussions and to two anonymous referees for their careful reading and valuable comments on this work.

\end{document}